\documentclass[11pt,a4paper,reqno]{amsart}
\usepackage{geometry}
\usepackage{amsmath,amsfonts,amssymb,amsthm,mathrsfs}
\usepackage{graphicx,wrapfig}
\usepackage{hyperref}
\usepackage{tikz}
\usepackage{pgfplots}
\pgfplotsset{compat=1.17}
\usepgfplotslibrary{fillbetween}
\usepackage{dsfont}
\usepackage{enumerate}
\usepackage[shortlabels]{enumitem} 
\usepackage{comment}

\newtheorem{theorem}{Theorem}
\newtheorem{lemma}{Lemma}[section]
\newtheorem{claim}[lemma]{Claim}
\newtheorem{proposition}[lemma]{Proposition}
\newtheorem{corollary}[lemma]{Corollary}

\newtheorem{example}{Example}
\newtheorem{question}{Question}

\theoremstyle{remark}
\newtheorem{remark}{Remark}

\numberwithin{equation}{section}

\newcommand{\R}{\mathbb{R}}
\newcommand{\Z}{\mathbb{Z}}
\newcommand{\N}{\mathbb{N}}
\def\P{\mathbb{P}} 
\newcommand{\E}{\mathbb{E}}
\newcommand{\F}{\mathcal{F}}

\newcommand{\id}{\mathds{1}}
\newcommand{\var}{\text{\rm{Var}}}
\newcommand{\eps}{\varepsilon}
\newcommand{\capa}{\text{\textup{Cap}}}
\newcommand{\per}{ \text{\textup{Per}}}
\newcommand{\lp}{ \theta}
\newcommand{\enn}{n}
\newcommand{\ep}{\varepsilon}
\newcommand{\lm}{\lambda}
\newcommand{\al}{\alpha}
\newcommand{\cA}{\mathcal{A}}
\newcommand{\cB}{\mathcal{B}}

\newcommand{\vol}{\textrm{Vol}}

\newcommand{\norm}[1]{\left\Vert#1\right\Vert}

\definecolor{dark gray}{gray}{0.3}
\def\gray#1{{\color{dark gray} #1}}

\begin{document}

\title[Persistence and entropic repulsion of Gaussian fields]{Persistence and entropic repulsion of stationary Gaussian fields with spectral singularity at the origin}
\author{Naomi Feldheim}
\address{N. Feldheim, Department of Mathematics, Bar-Ilan University}
\email{naomi.feldheim@biu.ac.il}
\author{Ohad Feldheim}
\address{O. Feldheim, Einstein Institute of Mathematics, Hebrew University of Jerusalem}
\email{ohad.feldheim@mail.huji.ac.il}
\author{Stephen Muirhead}
\address{S. Muirhead, School of Mathematics, Monash University}
\email{stephen.muirhead@monash.edu}
\begin{abstract}
We compute the exact log-asymptotics of the persistence probability, and determine the entropic repulsion profile conditioned on persistence, for general $d$-dimensional stationary Gaussian fields with spectral singularity at the origin of order $\alpha \in [0,d)$. Under mild regularity conditions these are shown to be universal, depending only on $\alpha$ and $d$, and to have explicit formulations in terms of the capacity and equilibrium potential of the $\alpha$-Riesz kernel. This generalises a result of Bolthausen, Deuschel and Zeitouni on the Gaussian free field to a wide class of Gaussian fields with spectral singularity.
\end{abstract}
\thanks{}
\keywords{}
\date{\today}
\maketitle

\section{Introduction}
Let $f$ be a continuous centred non-degenerate stationary Gaussian field (SGF), either on $\R^d$ or on $\Z^d$. Such a field is characterised by its \emph{covariance kernel} $K(x) = \E[f(0)f(x)]$, or equivalently by its \emph{spectral measure} $\mu=\mathcal F^{-1}[K]$, the unique non-negative, finite, symmetric measure whose Fourier transform is $K$, and we often write $f=f_\mu$. SGFs receive widespread attention because of their prevalence in physical systems: by the functional central limit theorem, a random field obtained as the sum of stationary nearly-independent infinitesimal contributions is an SGF.

A \emph{persistence}, or \emph{hard wall event}, is an event of the form $\{f|_D \ge \ell\}$ for some bounded typically large domain~$D$ and some level $\ell\in\R$. Persistence events are among the most studied rare events for SGFs, partly because they are fundamental and susceptible to analysis, but also because many physical systems behave, under suitable external conditions known as \emph{hard wall constraints}, like $f$ conditioned on persistence. Hence, in a sense, they are not rare. 

Several physical systems undergo, in response to a hard wall constraint, a predetermined macroscopic translation away from the wall. This phenomenon, known in the literature as
\emph{entropic repulsion}, has been established rigorously for a handful of models, and in particular for certain stationary Gaussian lattice models such as the \emph{Gaussian free field (GFF)}~\cite{bdz95,dg99} and the \emph{membrane model}~\cite{kur07}. It was conjectured, however, that this phenomenon is universal among strongly correlated fields~\cite{Vel06}. 

A key step in establishing entropic repulsion is the development of precise estimates, as $T \to \infty$, for the persistence probability on the log-scale
\[ \lp^\ell_{\mu}(T) := -\log \mathbb{P} \big[ f_\mu(x) \ge \ell \text{ for all } x \in B(T) \big],\]
 where $B(T)$ is the Euclidean ball of radius $T$ about the origin. For the models considered in \cite{bdz95,dg99,kur07}, this was achieved using a specific connection to the lattice Laplacian.

The theory of persistence for general SGFs was slower to develop, following its emergence in the classical papers of Rice \cite{ric45} and Slepian \cite{sle62}. The first special case to be addressed was that of non-negatively correlated processes, where one can exploit the sub-additivity of~$\lp^\ell_{\mu}(T)$. In this case Newell and Rosenblatt \cite{nr62} obtained the growth of $\lp_{\mu}(T):=\lp^0_{\mu}(T)$ up to a $\log T$ factor. More recently this was improved by Dembo and Mukherjee~\cite{dm17}, who recovered $\lp_{\mu}(T)$ up to a constant factor. 

In recent years, an intimate relation between the persistence probability and the behaviour of the spectral measure near the origin has surfaced. The coarse picture is the following \cite{ff15,ffn21}: for non-singular $\mu$ satisfying $\mu[B(\delta)]\asymp \delta^{\alpha}$ as $\delta\to0$, $\alpha > 0$, we have
\[\lp_{\mu}(T)  \begin{cases}
\asymp T^\alpha \log T & \alpha<d  ,\\
    \asymp T^d & \alpha= d ,\\
    \lesssim T^d\log T & \alpha> d ,\\
\end{cases}\]
where the three regimes correspond respectively to a spectral singularity at the origin, a `neutral' spectrum at the origin, and spectral decay at the origin; if the support of $\mu$ does not contain the origin we expect $\lp_{\mu}(T)\asymp T^{2d}$ (see \cite{ffjnn20} for the case $d=1$). These results leverage \textit{spectral decompositions}, namely the fact that if
$\mu = \mu_1+\mu_2$ for spectral measures $\mu,\mu_1,\mu_2$, then $f_\mu \stackrel{d}{=} f_{\mu_1}\oplus f_{\mu_2}$ where $f_{\mu_1}$ and $f_{\mu_2}$ are independent SGFs. The governing paradigm is that persistence events typically occur as a balance between (i) raising the Gaussian coefficient for a single low frequency component of the field, and (ii) the suppression of the remaining components of the field.

Recently, \cite{ffm25} established finer asymptotics in the `neutral' regime $\alpha=d$. 
%Denoting \[ \lp^\ell_{\mu}(T) := -\log \mathbb{P} \big[ f_\mu(x) \ge \ell \text{ for all } x \in B(T) \big],\]
It was shown that, if $\mu$ is non-singular with finite and positive spectral density at the origin (i.e.\ satisfying $\displaystyle\mu'(0)=\lim_{\delta\to 0}\mu[B(\delta)]\delta^{-d}\in (0,\infty)$), then there exists a \textit{persistence exponent} $\lp^\ell_\mu \in (0,\infty)$ for all $\ell\in\R$, in the sense that
  \[ \lp^\ell_\mu(T) = \lp^\ell_\mu \cdot T^{d} \big(1+o(1)\big)  .\]
Nevertheless, even the precise value of $\theta_\mu:=\theta^0_\mu$ remains elusive, and can be  computed only for a handful of one-dimension examples, such as $K(x)= e^{-|x|}$ (the Ornstein-Uhlenbeck process) \cite{ric58} and $K(x) =(\cosh(x))^{-1}$ \cite{ps18}. Generally  $\theta_\mu$ is predicted to have complicated dependence on the spectral measure, and even efficient approximation methods are unknown. The subtle behaviour, in this regime, of fields conditioned to persist seems entirely out-of-reach at the moment; heuristically the conditioned field remain constant on average and entropic repulsion does not occur. 
  
In this paper we draw our attention to fields with a spectral singularity at the origin, corresponding to $\alpha < d$. In this regime it is less unlikely for the low frequency component to be raised, and so a milder suppression of the remaining components is required for persistence. This 
has two implications. First, the milder suppression yields a remarkable universality in the persistence probability decay rate -- for a wide class of fields, after proper normalisation, it depends only on the order $\alpha$ of the singularity and the dimension~$d$. This is a striking example of \textit{spectral condensation}: persistence is essentially governed by a one-dimensional statistic of the spectral singularity. Second, the conditioned process is raised further and further from zero by the change in the low frequency component, resulting in entropic repulsion. In contrast to the GFF case, the predetermined macroscopic translation need not be a simple flat shift in general, and its shape can be recovered as a solution to a minimisation problem. This work is dedicated to establishing these implications for SGFs in broad generality.

We conjecture a third, finer implication
of the mild suppression of the remainder: after proper re-centring, 
the conditioned process should become closer and closer in shape to the original process, converging to it in distribution. This implication, which would generalise a phenomenon known to occur for the GFF, is left as an open problem and is further discussed in Section~\ref{ss:oq}. 

\smallskip
Our approach is largely inspired by a line of works studying large deviation events involving excursion sets of the GFF \cite{sni15, nit18,sni19,cn20,grs21}. In \cite{ms22} some of these results were generalised to a wider class of strongly correlated SGFs. A key difference in our work is that we incorporate the spectral analysis from \cite{ffn21} (and its further development in \cite{ffm25}) and crucially exploit the \textit{convexity} of the persistence event. This gives us access to potent \textit{smoothing} techniques, which allows us to treat a very wide class of SGFs. 

\smallskip
\textbf{Notation.} We make use of standard Landau notation. In addition, for positive functions $f$ and $g$ we use $f \asymp g$, $f \sim g$, and $f \ll g$ to indicate respectively that $f/g$ is bounded above and away from zero, that $f/g \to 1$, and that $f/g \to 0$. For a measure $\mu$ on $\R^d$, we denote by $ \|\mu\|$ its total mass (or total variation if $\mu$ is signed).
  
\subsection{Persistence and entropic repulsion}
We begin by presenting our results for fields with \textit{spectral singularity at the origin of order} $\alpha \in [0,d)$, meaning that 
\begin{equation}
\label{e:blowup}
 \lim_{\delta\to 0}\frac{ \log \mu[B(\delta)]}{\log \delta} =\alpha.
 \end{equation}
 Although for simplicity we consider only persistence above the mean level ($\ell = 0$), the results hold in general for persistence above arbitrary \textit{fixed} level $\ell \in \R$ (see Section \ref{ss:gen2}).

 Our first result gives precise log-asymptotics for the persistence probability. To state this we extract from $\mu$ an ideal low frequency component, raising which is the most efficient way to promote persistence. More precisely, we seek to decompose $f_\mu=\zeta h \oplus g$, where $h$ is a deterministic function satisfying $h|_D \ge 1$, $\zeta$ is a centered Gaussian variable with as large a variance as possible, and $g$ is a remainder process. Asymptotically, the probabilistic cost of making $\zeta$ sufficiently large will determine the probability of persistence at logarithmic scale, and $h$ will serve as the shape of the entropic repulsion.
 
 The optimum choice $h = h_f(D)$ is called the \textit{equilibrium potential} of $D$ w.r.t.\ $f$, and $\capa_f(D) = 1/\var(\zeta)$ is known as the \emph{capacity}. Formally these are defined as
\begin{align}
\label{e:cap2}
\capa_f(D)  & := \min \big\{ \|h\|^2_H : h \in H, h \ge 1 \text{ on } D \big\},
\\
h_f(D)  & := \arg\min \big\{ \|h\|^2_H : h \in H, h \ge 1 \text{ on } D \big\}, \text{ if } \capa_f(D)<\infty, \notag    \end{align}
where $H$ is the reproducing kernel Hilbert space (RKHS) associated to $\mu$ (i.e.\ $H=\{\F[g\, d\mu]: \, g\in L^2_\mu \}$ and $\|g d\mu\|_H = \|g\|_{L^2_\mu}$). The existence and uniqueness of $h$, as well as the following dual representations, are given in Proposition~\ref{p:basiccap}:
 \begin{align}
   \label{e:cap1}
 \nonumber  \capa_f(D)  & := \Big( \min_{ \nu \in \mathcal{P}(D) } \int_D \int_D K(x-y) d\nu(x) d\nu(y) \Big)^{-1}, \\
 \nu_f &:= \arg \min\Big\{  \int_D \int_D K(x-y) d\nu(x) d\nu(y)  :   \nu \in \mathcal{P}(D) \Big\}, \\
   h_f(D)  &:= \capa_f(D) ( K * \nu_f)  \nonumber ,
 \end{align}
 where $\mathcal{P}(D)$ is the set of Borel probability measures on $D$. A probability measure $\nu_f$ which is a minimiser as in \eqref{e:cap1} is known as an \textit{equilibrium measure}.

Beyond \eqref{e:blowup} we impose two additional regularity conditions on the field. 
Write $\mu = \mu_{a.c.} + \mu_d + \mu_{s.c.}$ for the decomposition of a measure $\mu$ into an \textit{absolutely continuous} component, a \textit{discrete} component and a \textit{singular continuous} component respectively. We require:
\begin{flalign}
\text{(Non-singularity)} && \mu_{a.c.} \neq 0 \qquad \text{and} \qquad \mu_{s.c.} = 0, && \label{e:sing}
\end{flalign}\vspace{-20pt}
\begin{flalign}
\text{(Regular capacity growth)} && \forall \epsilon > 0, \, \capa_f \big(B(T + T^{1-\epsilon})\big) \sim \capa_f \big(B(T) \big) \text{ as } T \to \infty. && \label{e:capcon}
\end{flalign}

In particular, if $\mu = \mu_{a.c.}$ with density $\rho$, conditions \eqref{e:blowup}, \eqref{e:sing}, and \eqref{e:capcon} hold under the simpler assumption that either
\begin{equation}
    \label{e:pure}
\lim_{\lambda \to 0} \rho(\lambda) |\lambda|^{d-\alpha} = c  \qquad \text{or} \qquad \lim_{x \to \infty} K(x) |x|^{\alpha} = c , \quad \alpha \in (0,d) , c > 0.
\end{equation}
This is further discussed in Section~\ref{ss:uni} and established in Section~\ref{s:rv}.

\smallskip
Our first result gives log-asymptotics of the persistence probability in terms of  dimension, order of singularity, and capacity:

\begin{theorem}[Log-asymptotics for the persistence probability]
\label{t:main}
Let $\alpha \in [0,d)$ and let $f$ be an SGF on $\R^d$ or $\Z^d$ satisfying \eqref{e:blowup}, \eqref{e:sing} and \eqref{e:capcon}. Then, as $T \to \infty$,
\begin{equation}
\label{e:main}
 \lp_\mu(T) \sim  m (d-\alpha)   \capa_f\big(B(T)\big) \log T,
 \end{equation}
 where $m = \|\mu_{a.c.}\| > 0$.
\end{theorem}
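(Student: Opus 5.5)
We prove matching lower and upper bounds on $\lp_\mu(T)$, both organised around the decomposition $f = \zeta h \oplus g$ with $h = h_f(B(T))$ and $\var(\zeta)=1/\capa_f(B(T))$. Throughout write $C_T := \capa_f(B(T))$. The heuristic is as follows. Persistence costs roughly $\tfrac12 C_T s^2$ for raising $\zeta$ to level $s$. We need $s$ to dominate the supremum of the remainder $g$ on $B(T)$, and that supremum is of order $\sqrt{2 m (d-\alpha)\log T}$. This is so because $B(T)$ contains about $T^d$ effectively independent unit cells, but only a fraction corresponding to scale $T^{-\alpha}$ of ``variance budget'' is consumed by the singular low-frequency part. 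Hence $\tfrac12 C_T \cdot 2m(d-\alpha)\log T = m(d-\alpha) C_T\log T$. The factor $m=\|\mu_{a.c.}\|$ appears because only the absolutely continuous part contributes short-range decorrelated fluctuations; the discrete part is a finite-dimensional/almost periodic component with negligible cost.

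\textbf{Lower bound on the probability (upper bound on $\lp_\mu$).} We use spectral decomposition. Fix small $\delta>0$ and write $\mu = \mu|_{B(\delta)} + \mu|_{B(\delta)^c}$, so that $f = f_1\oplus f_2$. By convexity of the persistence event (Anderson/Gaussian correlation, or a Cameron--Martin shift), we obtain
\[
\P[f\ge 0 \text{ on } B(T)] \ge \P[\zeta \ge s]\cdot \P[\sup_{B(T)} (-g) \le s - \text{error}].
\]
To make this rigorous we apply the Cameron--Martin shift by $s\,h_f(B(T))$, which costs $e^{-\frac12 s^2 C_T(1+o(1))}$ once we restrict to a good event, and then take $s = (1+\eps)\sqrt{2m(d-\alpha)\log T}$. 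It then suffices that $\P[\inf_{B(T)} f \ge -s]$ is not too small, say $\ge e^{-o(C_T\log T)}$. We handle this by splitting off a slightly enlarged ball: the regular capacity growth \eqref{e:capcon} is used to replace $B(T)$ by $B(T+T^{1-\eps})$, so that $h\ge 1$ with margin. We also use that the high-frequency part $f_2$, whose covariance is essentially integrable with total spectral mass near $m$ plus the discrete part, has supremum at most $\sqrt{2\|\mu\|\log T^{d}}$, or more precisely that its variance in the relevant direction is $m$. Here we need the refinement that the low-frequency part $f_1$ restricted to $B(T)$ is essentially captured by $\zeta h$, with effective variance of the complement giving the factor $(d-\alpha)$ rather than $d$: the fluctuation field at scale $T^{1-\eps'}$ only sees $T^{(d-\alpha)\cdot}$ independent blocks after normalisation. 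I would carry this out by a block decomposition of $B(T)$ into mesoscopic boxes of size $T^{\gamma}$, together with a Slepian/Sudakov--Fernique comparison.

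\textbf{Upper bound on the probability.} Here we use a dual measure. On persistence, $\int f\, d\nu \ge 0$ for any probability $\nu$ on $B(T)$. A single functional only gives cost $O(C_T)$ with no $\log T$, so we need the log factor from many roughly independent functionals. The plan is the following. Pick $N \approx T^{(d-\alpha)(1-\eps)}$ small disjoint balls $B_i$ of radius $r = T^{1-\eps}$-ish inside $B(T)$, together with their local equilibrium measures $\nu_i$. The averages $X_i = \int f\,d\nu_i$ decompose into a common component (correlated through the singular low frequencies, governed by $\nu_f$ and $C_T$) plus nearly independent components of variance comparable to $m\cdot(\text{local})$. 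Persistence forces all $X_i\ge 0$. Conditioned on the common part, the probability that $N$ nearly independent Gaussians are all $\ge 0$ decays super-polynomially unless the common part is raised to about $\sqrt{2m(d-\alpha)\log T}$ in units of the independent part. Optimising gives the matching bound. Smoothing is used to turn pointwise positivity into positivity of averages, since persistence is convex and the averages are linear functionals.

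\textbf{Main obstacle.} The hard step is the upper bound on the probability: showing the correct constant $m(d-\alpha)$, i.e.\ that the number of effectively independent directions is $T^{d-\alpha+o(1)}$ and that each carries variance exactly $m$ relative to the capacity normalisation, for general $\mu$ only satisfying \eqref{e:blowup}. I would first try to reduce to a clean model by spectral comparison: sandwich $\mu_{a.c.}$ near $0$ and away from $0$. I would then invoke the Gaussian correlation inequality to discard the discrete part and the nonsingular high frequencies (via a decomposition $\mu\ge \mu'$ with $\mu'$ nice, which for lower bounds on persistence is monotone by convexity). Finally I would compute directly for fields with regularly varying density, with the general case recovered from \eqref{e:capcon} by perturbing $T$.
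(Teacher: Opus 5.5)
\textbf{The upper bound on the probability has a genuine gap.} Your lower bound on the persistence probability is essentially the paper's strategy: shift along an equilibrium potential to level $(1+\eps)\sqrt{2m(d-\alpha)\log T}$, then check that suppressing the rest costs only $e^{-o(\capa_f(B(T))\log T)}$. The mechanism you describe for it is off, though. The remainder's supremum on $B(T)$ is typically $\sqrt{2md\log T}$, not $\sqrt{2m(d-\alpha)\log T}$. What is actually needed is that, for $\alpha>0$, forcing the remainder above $-\sqrt{2(m+\eps)(d-\alpha)\log T}$ costs only $T^{\alpha-c_\eps+o(1)}$, which is negligible against $\capa_f(B(T))=T^{\alpha+o(1)}$ (Proposition~\ref{p:m2}, via the Gaussian correlation inequality over mesoscopic balls).

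The real problem is that your averaging scheme cannot produce the factor $\log T$. Write $f=\xi h_f(B(T))\oplus g$ with $\xi=\int f\,d\nu_{B(T)}$ for an equilibrium measure of $B(T)$. Since $h_f(B(T))\ge 1$ on every $B_i$, on $\{\xi\ge 0\}$ you have $X_i\ge \xi+\int g\,d\nu_i$. Moreover $\var(\int g\,d\nu_i)\le E[\nu_i]=1/\capa_f(B(r))=r^{-\alpha+o(1)}$ by Corollary~\ref{c:capbounds}. Forcing $\xi\ge r^{-\alpha/2+o(1)}\sqrt{2\log(2N)}$ and taking a union bound over $i$ gives $-\log\P[X_i\ge 0\ \forall i]=O(\capa_f(B(T))\,r^{-\alpha+o(1)}\log T)=o(\capa_f(B(T))\log T)$ for $\alpha>0$.

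So your ``nearly independent components'' do not carry variance $m$. The level $\sqrt{2m(d-\alpha)\log T}$ comes from pointwise values of the absolutely continuous part over $\asymp T^d$ unit cells, and averaging destroys this; even for bounded $r$ it lowers the variance below $m$. Also, $T^{(d-\alpha)(1-\eps)}$ disjoint balls of radius $T^{1-\eps}$ do not fit in $B(T)$. Nor is the common part a scalar: it is the whole low-frequency field, which could a priori be raised unevenly, and excluding that is the real content.

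Your proposed reductions go the wrong way. The event $\{f\ge 0\text{ on }B(T)\}$ is not symmetric, so neither the Gaussian correlation inequality nor Anderson-type spectral monotonicity applies. Indeed $\P[f_\mu\ge 0\text{ on }B(T)]\le \P[f_{\mu'}\ge 0\text{ on }B(T)]$ for $\mu'\le\mu$ fails in general; take $\mu-\mu'$ to be an atom at the origin. Reducing to regularly varying densities does not help either: for general $\mu$ the capacity is never computed, so the proof must use $\capa_f$ as a black box.

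\textbf{What is missing.} First, use log-concavity and stationarity (Lemma~\ref{l:trunc}) to replace $\mu|_{B(\delta)}$ by $\mu|_{B(\delta)}\phi^2_{T^{1-\eps}}$, at the price of shrinking $B(T)$ to $B(T-T^{1-\eps})$. This is the correct comparison for an upper bound on persistence. The shrinkage is exactly why \eqref{e:capcon} is needed, namely to compare $\capa_f(B(T-2T^{1-\eps}))$ with $\capa_f(B(T))$; it is not there to give $h$ a margin.

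Second, set $\ell=\sqrt{2m(d-\alpha)}-3\gamma$ and use a trichotomy for the smoothed low-frequency field $f_{\mu_1}$:
(i) it is at least $\ell\sqrt{\log T}$ on the shrunken ball, which costs at least $\tfrac{\ell^2}{2}\capa_f(B(T-2T^{1-\eps}))\log T$ by the equilibrium-measure bound of Lemma~\ref{l:infcap};
(ii) its gradient is atypically large, which costs at least $T^{c}\capa_f(B(T))$ for some $c>0$ (Lemma~\ref{l:derbound});
(iii) it stays below $(\ell+\gamma)\sqrt{\log T}$ on a whole ball of radius $L=T^{1-\kappa}$ ($\kappa>0$ small) around its minimiser. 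Then $f_{\mu|_{B(\delta)^c}}\ge -(\ell+\gamma)\sqrt{\log T}$ on that entire ball.
After a union bound over $(T/L)^d$ balls, case (iii) costs $L^{\alpha+\gamma+o(1)}\gg \capa_f(B(T))\log T$ by the upper-tail moderate deviation bound of Proposition~\ref{p:m1}. That bound rests on the decoupling inequality of \cite{kls82} applied to a smooth approximation of the absolutely continuous part, and it is precisely where $m=\|\mu_{a.c.}\|$ enters.
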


The regularity conditions \eqref{e:sing} and \eqref{e:capcon} are discussed further in Sections \ref{ss:gen} and \ref{ss:rem} below. As the following example attests, some assumption on $\mu_{s.c.}$ is necessary for~\eqref{e:main}:

\begin{proposition}
    \label{p:counterlim}
    For every $\alpha \in (0,1)$ there exists a stationary Gaussian process on $\R$ satisfying \eqref{e:blowup}, $\mu_{a.c.} \neq 0$, and \eqref{e:capcon} such that
    \[     \liminf_{T \to \infty}
 \frac{ \lp_\mu(T) / (1-\alpha) }{  \capa_f\big(B(T)\big) \log T } = \|\mu_{a.c.}\|   <  \|\mu\|  = \limsup_{T \to \infty}
 \frac{ \lp_\mu(T)  / (1-\alpha) }{  \capa_f\big(B(T)\big) \log T }   .\]
\end{proposition}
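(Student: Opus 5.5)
We construct $\mu = \mu_1 + \mu_2$ on $\R$, where $\mu_1$ is absolutely continuous with density $\rho_1(\lambda) = |\lambda|^{\alpha-1}$ near the origin (smoothly cut off, so $\|\mu_1\| = m$). The second piece $\mu_2$ is a singular continuous measure of Cantor type, built to have the same order of singularity $\alpha$ at the origin but to be invisible at a sparse sequence of scales. The idea is that the proof of Theorem~\ref{t:main} shows the constant $\|\mu_{a.c.}\|$ arises from the cost of suppressing the ``remainder'' field on $B(T)$. That cost is roughly (mass of spectrum not used by the low-frequency component) $\times \log T$, because suppressing a smooth component on a window of length $T$ costs about its spectral mass times $\log T$ (cf.\ \cite{ffn21}).

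A singular continuous component can alternate between two behaviours along different subsequences of $T$:
\begin{itemize}[leftmargin=*]
\item[(a)] it behaves like an absolutely continuous measure at that scale, contributing its full mass to the suppression cost, giving the $\limsup$ value $\|\mu\|$;
\item[(b)] it is concentrated on a lattice-like set, so that $f_{\mu_2}$ restricted to $B(T)$ is nearly periodic or nearly degenerate. Then $f_{\mu_2}$ can be made non-negative on $B(T)$ at cost $o(\capa\,\log T)$, giving the $\liminf$ value $\|\mu_{a.c.}\|$.
\end{itemize}

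Concretely, I take $\mu_2 = \sum_k w_k \nu_k$, where $\nu_k$ is a Riesz-type product (a Cantor-like measure) living near frequencies of size $\sim T_k^{-1}$ on a very lacunary sequence $T_k$. The weights $w_k$ are chosen so that $\mu[B(\delta)] \approx \delta^\alpha$ holds overall, which gives \eqref{e:blowup}. I also choose $\mu_2$ with total mass small compared to the singular part near $0$, so that $\capa_f(B(T)) \asymp T^{1-\alpha}$ is governed by $\mu_1$. Then \eqref{e:capcon} follows from a direct comparison of capacities, as in Section~\ref{s:rv}.

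The key steps, in order, are as follows.
\begin{enumerate}
\item Compute $\capa_f(B(T))$ up to $1+o(1)$, using that $\mu_2$ adds a term to $K$ that is negligible for the equilibrium problem (or at least monotone in a controlled way).
\item Prove the upper bound $\lp_\mu(T_k) \le (1-\alpha)(m+o(1))\capa\log T_k$ along a ``good'' subsequence. Here we decompose $f = \zeta h \oplus g_1 \oplus f_{\mu_2}$. We pay the Theorem~\ref{t:main}-type cost for $f_{\mu_1}$ (invoking its proof in the absolutely continuous case), and we pay only $o(\capa\log T)$ for the event $\{f_{\mu_2} \ge -\epsilon\}$ on $B(T_k)$. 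The latter holds because at scale $T_k$ the measure $\mu_2$ is supported, up to negligible mass, on finitely many atoms' worth of frequencies. Its restriction is then well approximated by a finite trigonometric sum with $O(\log T)$ effective terms, whose coefficients can be bounded at cost $\ll T^{1-\alpha}$.
\item Prove the lower bound $\lp_\mu(T'_k) \ge (1-\alpha)(\|\mu\|-o(1))\capa\log T'_k$ along a ``bad'' subsequence. At these intermediate scales, $\mu_2$ restricted to frequencies in $[T^{-1+\epsilon}, 1]$ dominates a smooth density of comparable mass after smoothing at resolution $1/T$. Then I apply the lower-bound argument of the main theorem to $\mu_{a.c.} + (\text{smoothed }\mu_2)$, using the convexity/smoothing comparison available for persistence events.
\item The general bounds $\|\mu_{a.c.}\| \le \liminf \le \limsup \le \|\mu\|$ then pin down both limits.
\end{enumerate}

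The main obstacle is the good-scale upper bound in step 2. We must show that the singular field $f_{\mu_2}$ is cheap to control on $B(T_k)$, which requires quantitative structure of the Riesz product at the resolution $1/T_k$, namely mass concentrated within $o(1/T_k)$ of $O(T^{o(1)})$ points. Reconciling this with \eqref{e:blowup} at all scales is the main obstacle, and it will dictate how lacunary $T_k$ must be.
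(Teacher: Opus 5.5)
\textbf{The singular mass is placed where it cannot matter, and you conflate the scale at which mass is located with the resolution at which it is viewed.} By your own heuristic, and more precisely by Theorem~\ref{t:bounds}, the constant in front of $\capa_f(B(T))\log T$ is governed by $m^\pm$. These only see $\mu|_{B(\delta)^c}$ for a \emph{fixed} small $\delta$ as $T\to\infty$; mass near the origin enters only through $\capa_f$.

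Since $\sum_k w_k<\infty$, all but $\epsilon$ of your singular mass lies in finitely many pieces at frequencies bounded away from $0$. What must alternate between your two subsequences is the \emph{fine structure of that mass at resolution $1/T$}, not where it sits. Your construction is organised by location near $0$, and nothing in it specifies how the order-one pieces look at resolution $1/T$. So the key property in your step~2 (``at scale $T_k$, $\mu_2$ is, up to negligible mass, finitely many atoms' worth'') is simply asserted for the pieces that actually carry the mass.

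The near-origin pieces are at best useless. In step~2 you lift only $f_{\mu_1}$, paying a cost proportional to $\capa_{\mu_1}(B(T))$, which matches the $\capa_f(B(T))$ in the statement only if $\mu_2$ is negligible near $0$. If instead they carry mass $\asymp T_k^{-\alpha}$, then $\mu[B(\delta)]\delta^{-\alpha}$ is not slowly varying and Theorem~\ref{t:capuniv} no longer applies. Compare Proposition~\ref{p:irreg}, where lacunary structure near the origin is used precisely to break \eqref{e:capcon}. So your claim that ``\eqref{e:capcon} follows as in Section~\ref{s:rv}'' is unjustified. (Also, $\capa_f(B(T))\asymp T^{\alpha}$, not $T^{1-\alpha}$.)

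The paper takes $\mu_{a.c.}=\rho_\alpha\id_{[-1,1]}$ and puts $\mu_{s.c.}$ on $[-2,-1]\cup[1,2]$. Then \eqref{e:blowup}, \eqref{e:capcon} and $\capa_f(B(T))\sim c_{\alpha,1}T^\alpha$ come for free, and the tension with \eqref{e:blowup} you call the main obstacle never arises. The multi-scale structure is a Cantor measure whose binary digits are free exactly on the blocks $[s_{2n-1},s_{2n}]$, with $s_n\ll\log T_n\ll s_{n+1}$. At scale $T_{2n}$ it consists of $2^{s_{2n}}=T_{2n}^{o(1)}$ clusters of width $2^{-s_{2n+1}}\ll T_{2n}^{-1}$. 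At scale $T_{2n+1}$ it is uniform on $T_{2n+1}^{o(1)}$ dyadic intervals all the way down to resolution $2^{-s_{2n+2}}\ll T_{2n+1}^{-1}$.

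\textbf{Step~3 has no valid transfer mechanism.} There is no convexity comparison for persistence under \emph{convolution}-smoothing of the spectral measure. Lemma~\ref{l:trunc} only handles multiplying $\mu$ by $\phi_s^2$, i.e.\ spatially averaging the field. Nor can you feed the singular part into Lemma~\ref{l:m1} by writing $\mu_2=\mu'+\rho$ with $\mu'$ smooth. Since $\mu_2$ is singular to Lebesgue measure, $\rho_-=\mu'$, so $\|\rho_-\|$ is not small. Likewise, ``after smoothing, $\mu_2$ dominates a smooth density'' says nothing about $f_{\mu_2}$ itself.

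The paper transfers via a coupling. Set $\tilde\mu_n=\mu_{a.c.}+\mu^{s_{2n+1}}_{s.c.}$. The free digits give $\mu^{s_{2n+1}}_{s.c.}=\mu^{s_{2n+2}}_{s.c.}$, hence $W_2(\mu,\tilde\mu_n)\le 2^{-s_{2n+2}}$, which is smaller than any power of $T_{2n+1}$. By \cite{bm22}, $f_{\mu|_{B(\delta)^c}}$ and $f_{\tilde\mu_n|_{B(\delta)^c}}$ can then be coupled to stay within $1$ of each other on $B(T_{2n+1})$, outside an event of probability at most $\exp(-T_{2n+1}^{K})$ for every fixed $K$. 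Lemma~\ref{l:m1} is then applied to a smooth approximation $\mu_n'$ with $\|\mathcal{F}[\mu_n']\|_{L^1(\Z)}\lesssim 2^{3s_{2n+1}}=T_{2n+1}^{o(1)}$ (Proposition~\ref{p:odd}). A generic smoothing at resolution $1/T$ would only give $T^2W_2^2\asymp 1$ in the coupling bound, which is useless. Uniformity far below resolution $1/T$, with density $T^{o(1)}$, must be built into the measure.

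Your step~2 is in the right spirit and matches Proposition~\ref{p:m3}: the Gaussian correlation inequality handles the $T^{o(1)}$ heavy clusters at cost $T^{o(1)}$, and Dudley's bound with Borell--TIS handles the light clusters and the discretisation remainder. The event must be taken at level $\epsilon\sqrt{\log T}$ rather than $\epsilon$. Step~4 is exactly how the paper concludes, via Theorem~\ref{t:bounds} and \eqref{e:mbounds}.
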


\medskip
Our second result establishes the entropic repulsion of the field conditioned to persist. We require two additional conditions:
\begin{flalign}
\text{(Doubling)} && \exists c>0\  \forall \delta>0,\quad \mu[B(2\delta)] \le c \mu[B(\delta)], && 
\label{e:doub}
\end{flalign}\vspace{-20pt}
\begin{flalign}
\text{(Origin dominance)} &&\hspace{-8pt}\exists c>0\  \forall \delta>0,\quad  \sup_{u} \mu[u + B(\delta)]  \le c \mu[B(\delta)] .
&& 
\label{e:ds}
 \end{flalign}

In particular, \eqref{e:doub} and \eqref{e:ds} are satisfied if $\mu = \mu_{a.c.}$ has a density which is bounded outside a neighbourhood of the origin, and one of the conditions in \eqref{e:pure} holds (see Section \ref{ss:uni} and Proposition \ref{p:pos}).

\smallskip
To state the theorem we also define a suitable class of test functions. Denote by $\mathcal{M}$ the set of signed Radon measures on $\R^d$, and write $\mathcal{T} \subset \mathcal{M}$ for the set of signed measures of the form $\xi \id_D dx$, where $\xi \in C^0(\R^d)$ is Lipschitz, $D$ is a compact domain with piece-wise smooth boundary, and $dx$ denotes the Lebesgue measure. For $\eta \in \mathcal{T}$, we write $\eta_T = T^{-d} \eta(\cdot/T)$ for the rescaled signed measure of identical total mass, and $\langle g , \eta \rangle = \int g d\eta$ for the usual $L^2$-inner product, interpreting the integral as a sum over $\Z^d$ if $g$ is supported on $\Z^d$ and $\eta$ has a density. 

\smallskip
Denote the field $f$ conditioned on $\per(B(T))$ by $\tilde{f}_T$, and abbreviate $h_T = h_f(B(T))$, where $h_f(D)$ is the equilibrium potential defined in \eqref{e:cap1}. The following entropic repulsion result states that $\tilde{f}_T$ is, with high probability, close to a $\sqrt{2m(d-\alpha) \log T}$ multiple of~$h_T$ in the sense of macroscopic averages:

\begin{theorem}[Entropic repulsion]
\label{t:er}
Assume the conditions of Theorem \ref{t:main} hold, and suppose in addition that \eqref{e:doub} and \eqref{e:ds} are satisfied.  Then for every $\eta \in \mathcal{T}$, as $T \to \infty$, 
\[ \left| \frac{ \langle \tilde{f}_T, \eta_T \rangle }{ \sqrt{2m(d-\alpha) \log T }   }  - \langle h_T, \eta_T \rangle  \right| \to 0 \quad \text{in probability}. \]
\end{theorem}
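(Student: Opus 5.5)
Write $a_T=\sqrt{2m(d-\alpha)\log T}$, $X=\langle f,\eta_T\rangle$ and $Y=\langle h_T,\eta_T\rangle$. Conditioned on $\per(B(T))$, we want $X/a_T$ to concentrate at $Y$. I would prove the two one-sided bounds
\[
\P\big[X \ge (Y+\eps)a_T \,\big|\, \per(B(T))\big]\to0,\qquad \P\big[X \le (Y-\eps)a_T \,\big|\, \per(B(T))\big]\to0.
\]
Since $\P[\per(B(T))]=e^{-\lp_\mu(T)}$ and $\lp_\mu(T)\sim m(d-\alpha)\capa_f(B(T))\log T$ by Theorem~\ref{t:main}, it suffices to show that each event $\per(B(T))\cap\{X\ge (Y+\eps)a_T\}$ (and similarly with $\le$) has probability at most $\exp(-(1+\delta)m(d-\alpha)\capa_f(B(T))\log T)$ for some $\delta=\delta(\eps)>0$.

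\textbf{Main step: a perturbed persistence estimate.} The key input is a version of the lower-bound argument of Theorem~\ref{t:main}, rerun with an extra linear constraint. Decompose $f=\zeta h_T\oplus g$, where $\zeta=\langle f,\nu_f\rangle$ and $\var(\zeta)=1/\capa_f(B(T))$. This is the Cameron--Martin direction in which $\langle h_T, \cdot\rangle_H$ is tested. Under the conditioning, the upper bound in Theorem~\ref{t:main} should come from forcing $\zeta\gtrsim a_T$. More precisely, the proof of the upper bound on persistence should give that the remainder $g$ cannot substitute for $\zeta$ at cheap cost.

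For a shifted mean profile $h_T+s\,\phi_T$, where $\phi_T$ is chosen with $\langle\phi_T,\eta_T\rangle\neq0$, the cost is $\|h_T+s\phi_T\|_H^2$. Because $h_T$ is the \emph{unique} minimiser of \eqref{e:cap2}, and the problem is strictly convex (uniformly so after rescaling), this cost exceeds $\capa_f(B(T))(1+c s^2)$. Taking the Legendre dual gives a Gaussian bound: the conditional law of $X/a_T$ under persistence has a tail governed by the variance of $\langle g,\eta_T\rangle$ and by the geometry of $h_T$.

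\textbf{Converting the linear constraint into an energy cost.} Concretely, I would compute the joint Gaussian law of $(\zeta, X)$. One has $\mathrm{Cov}(\zeta,X)=\langle K*\nu_f,\eta_T\rangle=Y/\capa_f(B(T))$, so $X-\capa_f(B(T))\,\mathrm{Cov}(\zeta,X)\,\zeta$ is independent of $\zeta$ and has variance $\var(X)-Y^2/\capa_f(B(T))$. We need this residual variance to satisfy $\ll 1/(\capa_f(B(T))\log T)$ relative to $a_T^2$; equivalently, the residual fluctuation is $o(a_T)$ on the event of cost $O(\capa_f(B(T))\log T)$.

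This holds provided $\var(X)\lesssim \mu[B(1/T)]$, which follows from \eqref{e:doub}, \eqref{e:ds} and the test functions being Lipschitz with smooth-boundary support, since then $|\hat\eta_T|$ decays away from scale $1/T$. It also uses $\capa_f(B(T))\asymp 1/\mu[B(1/T)]$. The residual then exceeds $\eps a_T$ only with probability $\exp(-c\eps^2\capa_f(B(T))\log T)$. That alone beats the required bound only in combination with the persistence probability, so one must use independence from $\zeta$ together with FKG-type decoupling, i.e.\ Harris/Gaussian correlation for the convex event $\per(B(T))$.

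\textbf{Expected obstacle.} The obstacle is that the residual $X-Y\zeta$ is independent of $\zeta$ but \emph{not} of $g$, and the persistence event depends on $g$. I would handle this by smoothing: apply the spectral decomposition $\mu=\mu_{\le\delta/T}+\mu_{>\delta/T}$. The high-frequency part contributes negligibly to $X$ by the Fourier decay of $\eta_T$. For the low-frequency part, which on $B(T)$ is essentially a finite-dimensional smooth field, a compactness argument shows that persistence at cost $(1+o(1))\capa_f\log T$ forces its mean profile to lie near $a_T h_T$ in $H$-norm. This in turn uses strict convexity of the capacity problem and \eqref{e:capcon} to absorb boundary effects.
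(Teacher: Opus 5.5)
There is a genuine gap. Your orthogonal decomposition $X=Y\zeta+R$, with $R$ independent of $\zeta=\langle f,\nu_f\rangle$ and $\var R\le E[\eta_T]=O(1/\capa_f(B(T)))$, is correct. It is the same projection that drives the paper's Lemma~\ref{l:shape}. However, it only produces the extra cost once you know the field persists at height $\approx a_T$ on the support of the relevant equilibrium measure. On $\per(B(T))$ itself you only get $\zeta\ge 0$, and you also need to rule out $\zeta\le(1-c)a_T$ on persistence.

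What is missing is the statement that persistence of $f$ at level $0$ forces a smoothed low-frequency component to persist above $\ell\sqrt{\log T}$, with $\ell\approx\sqrt{2m(d-\alpha)}$, on a slightly smaller ball. You defer this to ``the proof of the upper bound should give'' and to an unspecified compactness argument. In the paper this is the core of the proof, in two steps:
\begin{enumerate}
\item The intermediate band $\mu|_{B(\delta)}(1-\phi^2_{T^{1-\eps}})$ is discarded via the convexity/smoothing Lemma~\ref{l:trunc}. Its version with an exceedance event costs the error event $F$, which is controlled by \eqref{e:enbound3}--\eqref{e:decompeta2}.
\item Claim~\ref{cl:events containment2} then places persistence-plus-deviation inside $(A_1\cap A_1'')\cup A_2\cup A_1'\cup A_2'$. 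Here $A_2$ and $A_1'$ cost $\capa_f(B(T))\,T^{c}$ for some $c>0$, through the moderate-deviation constant $m^-$ on mesoscopic balls and the smoothness of $f_{\mu_1}$. The event $A_2'$ is negligible by Corollary~\ref{c:enbound}(2).
\end{enumerate}
Your split at frequency $\delta/T$ cannot support this. The ``high'' part still contains frequencies in $(\delta/T,\delta_0)$, which are strongly correlated on mesoscopic scales, so the $m^-$ estimate is unavailable for it. Moreover, for small $\delta$ this part generally contributes variance of the same order $1/\capa_f(B(T))$ to $X$, so it is not negligible there either.

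The tools you name for the combination step also point the wrong way. Harris/FKG and the Gaussian correlation inequality give lower bounds $\P[A\cap B]\ge \P[A]\P[B]$, whereas you need an upper bound on $\P[\per(B(T))\cap\{|R|\ge \eps a_T\}]$. In addition, positive association of a Gaussian field would require $K\ge 0$, which is not assumed. What does work is exact independence applied to the event $\{\langle f_{\mu_1},\nu\rangle\ge \ell\}$ that high-level persistence implies. The paper packages this as a single Gaussian tail bound for $\langle f, a\nu-(\eta/\Delta-\tfrac{E[\nu,\eta/\Delta]}{E[\nu]}\nu)\rangle$ in Lemma~\ref{l:shape}.

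Finally, the compactness step is unsupported. Theorem~\ref{t:er} assumes no regular variation, so the rescaled kernels need not converge. A compactness argument would have to work along subsequences and prove convergence of capacities and equilibrium potentials there, and none of this is indicated. Your ``uniform strict convexity after rescaling'' must be quantified. The paper does this with the stability Lemma~\ref{l:stab}, combined with \eqref{e:capcon} (Corollary~\ref{c:regh}) and with $E[\eta_T]\capa_f(B(T))=O(1)$ from \eqref{e:doub}--\eqref{e:ds}. This is what lets it pass from the equilibrium potential of the shrunken ball back to $h_T$.
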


In particular, plugging in $\eta = \id_{B(1)} dx \in \mathcal{T}$, and recalling that $h_T|_{B(T)} \ge 1$, we deduce repulsion of the typical spatially-averaged height of the conditioned process:

\begin{corollary}[Entropic repulsion of the typical average height]
Assume the conditions of Theorem \ref{t:er} hold. Then for every $\delta > 0$, as $T \to \infty$,
\[ \P \Big[ \frac{1}{\normalfont{\vol}\big(B(T)\big)} \int_{B(T)} f(x) \,dx \ge  \sqrt{2m(d-\alpha - \delta) \log T } \: \Big| \: \per\big(B(T)\big) \Big] \to 1 .\]
\end{corollary}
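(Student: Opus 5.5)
The plan is to apply Theorem \ref{t:er} with the test measure $\eta = \id_{B(1)}\,dx$, normalised to total mass one. This choice lies in $\mathcal{T}$: it is $\xi\id_D\,dx$ with $\xi \equiv 1/\vol(B(1))$ (constant, hence Lipschitz) and $D = B(1)$, a compact domain with smooth boundary. The rescaled measure is then
\[ \eta_T = T^{-d}\eta(\cdot/T) = \frac{\id_{B(T)}\,dx}{\vol(B(T))}, \]
so that $\langle f, \eta_T\rangle$ is exactly the spatial average of $f$ over $B(T)$ appearing in the corollary. On $\Z^d$ the pairing is the lattice sum weighted by the density; the lattice-point count differs from $\vol(B(T))$ by a factor $1+o(1)$, which is harmless because everything is normalised below.

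Next I need a lower bound on the deterministic term. Since $h_T = h_f(B(T)) \ge 1$ on $B(T)$ by the definition \eqref{e:cap2} of the equilibrium potential, we get
\[ \langle h_T, \eta_T\rangle \ge 1 \]
(or at least $1 - o(1)$ in the lattice case). Theorem \ref{t:er} then gives, for every $\epsilon>0$,
\[ \P\Big[ \langle \tilde f_T,\eta_T\rangle \ge (1-\epsilon)\sqrt{2m(d-\alpha)\log T} \Big] \to 1 . \]
This holds because $\langle \tilde f_T,\eta_T\rangle / \sqrt{2m(d-\alpha)\log T}$ is, with probability tending to one, within $\epsilon$ of a quantity that is at least $1$.

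To conclude, fix $\delta>0$ (only $\delta < d-\alpha$ is non-trivial). Choose $\epsilon>0$ so small that $(1-\epsilon)^2(d-\alpha) \ge d-\alpha-\delta$. With this choice,
\[ (1-\epsilon)\sqrt{2m(d-\alpha)\log T} \ge \sqrt{2m(d-\alpha-\delta)\log T}, \]
and the corollary follows, since $\tilde f_T$ has the law of $f$ conditioned on $\per(B(T))$.

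There is no real obstacle here; all the content sits in Theorem \ref{t:er}. The only points needing care are checking that the normalised indicator belongs to $\mathcal{T}$, and handling the discretisation of the average in the $\Z^d$ case.
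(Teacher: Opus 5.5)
Your proposal is correct and matches the paper's argument: the paper plugs $\eta = \id_{B(1)}\,dx$ into Theorem~\ref{t:er} and uses $h_T \ge 1$ on $B(T)$, exactly as you do, and your normalisation by $\vol(B(1))$ is cosmetic. You also spell out the $\epsilon$-to-$\delta$ step and the lattice-point count in the $\Z^d$ case, both of which the paper leaves implicit.
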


For the reader familiar with the GFF, we emphasise that the equilibrium potential $h_T$ need not be asymptotically flat on $B(T)$ ($\sup_{x \in B(T)} h_T(x) = 1 + o(1))$ as it is for the GFF. For instance, for the membrane model, $\lim_{T \to \infty} h_T(0) \in (1,\infty)$ whereas $\lim_{T \to \infty} h_T(x) = 1$ for $x \in \partial B(T)$.

\smallskip
Theorems~\ref{t:main} and ~\ref{t:er} were previously known only for a handful of special fields related to the lattice Laplacian, such as the GFF~\cite{bdz95, dg99} ($\alpha=d-2$), the membrane model \cite{kur07} ($\alpha=d-4$), and certain generalisations of these~\cite{sak03} ($\alpha=d-2k$, $k\in \mathbb{N}$).
In Section~\ref{ss:oq} we discuss stronger entropic repulsion results that have been established for the GFF, which we conjecture to hold in much greater generally.

\subsection{Universality for regularly varying fields}
\label{ss:uni}
As an extension of Theorems \ref{t:main} and \ref{t:er}, we provide refined results for a wide class of `regularly varying' fields, where a stronger notion of universality governs both persistence and entropic repulsion.

\smallskip
Recall the $\alpha$\textit{-Riesz kernel}, $\alpha \in [0,d)$,
\[ k_\alpha(x) = B_{\alpha,d} \, |x|^{-\alpha} \ ,  \quad  B_{\alpha,d} = \frac{ \Gamma(1+ \alpha/2) \Gamma(d/2) }{ \pi^{\alpha} \Gamma((d-\alpha)/2) } , \] 
which is the canonical kernel on $\R^d$ with spectral singularity of order $\alpha$ at the origin, and coincides with the classical \textit{Coulomb-Newton kernel} if $\alpha = d-2$. We associate to $k_\alpha$ a spectral measure $\mu_\alpha$ which satisfies $k_\alpha = \mathcal{F}[\mu_\alpha]$: if $\alpha \in (0,d)$, $\mu_\alpha$ is the absolutely continuous measure with density
\[ \rho_\alpha(\lambda) := A_{\alpha,d}|\lambda|^{\alpha-d} \ , \quad A_{\alpha,d} = \frac{\alpha \Gamma(\tfrac d 2)}{2\pi^{d/2}}, \]
while for $\alpha=0$ we have $\mu_0 := \delta_0$, a Dirac mass at $0$. In particular $\mu_\alpha[B(\delta)]  = \delta^\alpha$, which motivates the choice of the normalising constant $B_{\alpha,d}$.

\smallskip 
Although $k_\alpha$ is singular (i.e., $\|\mu_\alpha\|=\infty$ for $\alpha\neq 0$), it is still possible to define the notions of capacity and equilibrium potential with respect to $k_\alpha$, via \eqref{e:cap2}--\eqref{e:cap1} (see \cite{lan72}). 
Let $c_\alpha = c_{\alpha,d}\in (0,\infty)$ and $  h_\alpha(\cdot) = h_{\alpha,d}(\cdot)\in (0,
\infty)$ denote the capacity and equilibrium potential of the unit ball $B(1)$ with respect to $k_\alpha$. Due to symmetry, one can compute $c_\alpha$ and $h_\alpha$ explicitly (see Appendix \ref{a:explicit}); in particular $c_0 = 1$. We note that $h_\alpha$ is continuous, strictly-positive, and decays at infinity. We also note a phase transition at the Coulomb-Newton point $\alpha = d-2$: if $\alpha \ge d-2$ then $h_\alpha|_{B(1)} = 1$, whereas if $\alpha \in (0,d-2)$ then $h_\alpha|_{\mathbb{S}^{d-1}} = 1$ but $h_\alpha > 1$ on the interior of $B(1)$.

\smallskip
Recall that a function $w: \mathbb{R}^+ \to \mathbb{R}^+$ is said to be \textit{slowly varying} as $x\to x_0$ if, for all $u>0$, $\tfrac{w(u x)}{w(x)}\to 1$ as $x\to x_0$. A real-valued function $W$ on $\R^d$ or $\Z^d$ is \textit{slowly varying} as $x \to x_0$ if there exists a slowly varying function $w:\R_+\to\R_+$ such that $W(x) \sim w(|x|)$ as $x\to x_0$.

\smallskip
Our next results establish a stronger sense of universality for both persistence probability (Theorem~\ref{t: univ pro}) and entropic repulsion (Theorem~\ref{t: univ ent}) for fields which are asymptotically described by the $\alpha$-Riesz kernel in the sense of regular variation. 
We show that for such fields both 
the exponential order of the persistence probability and the shape of the entropic repulsion depend only the order $\alpha$ of the singularity and the dimension~$d$.
We emphasise that these results do not assume that $K \ge 0$ and can be applied to both discrete and continuous fields.

\begin{theorem}[Universality of persistence probability]
\label{t: univ pro}
Let $\alpha \in [0,d)$, let $f$ be an SGF on $\R^d$ or $\Z^d$, non-singular as per \eqref{e:sing}, and suppose that one of the following hold:
\begin{enumerate}[{\rm (i)}]
\item\label{item:1} 
\[ \frac{K(x)}{k_\alpha(x)} \text{ is slowly varying, as } x\to\infty. \]
\item\label{item:2} $\alpha>0$ and $\mu$ has density $\rho$ in a neighbourhood of the origin such that
\[ \frac{\rho(\lambda)}{\rho_\alpha(\lambda)} \text{ is slowly varying, as }\lambda \to 0.\] 

\item\label{item:3} $\mu$ is radial (i.e.\ the field $f$ is isotropic), and
\[ \frac{\mu[B(\delta)]}{\mu_\alpha[B(\delta)]} \text{ is slowly varying, as } \delta\to 0. \]

\item\label{item:4} $\alpha=0$ and $\mu$ has an atom at the origin.

\end{enumerate}
Then
\begin{equation}\label{get:univ1}
\theta_\mu(T) \sim c_{\alpha,d}\, m(d-\alpha) \frac{\log T}{\mu[B(\tfrac 1 T)]}, \quad \text{ as }T\to\infty,
\end{equation}
where $m=\|\mu_{a.c.}\|$.
\end{theorem}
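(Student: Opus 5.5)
Theorem~\ref{t: univ pro} will follow from Theorem~\ref{t:main} once two things are shown under each of (i)--(iv): that \eqref{e:blowup} and \eqref{e:capcon} hold, and that the capacity has the asymptotics
\begin{equation*}
\capa_f\big(B(T)\big) \sim \frac{c_{\alpha,d}}{\mu[B(1/T)]}, \qquad T\to\infty .
\end{equation*}
Substituting this into \eqref{e:main} gives \eqref{get:univ1}. Moreover, \eqref{e:capcon} follows immediately from this asymptotic and the slow variation of $\mu[B(\delta)]/\delta^\alpha$, since $1/(T+T^{1-\epsilon}) \sim 1/T$. Likewise \eqref{e:blowup} follows from the regular variation of $\mu[B(\delta)]$ with index $\alpha$. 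The work is therefore entirely in the capacity asymptotics.

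\textbf{Step 1: reduction to regular variation of $\mu[B(\delta)]$.} The first step is to show that each of (i)--(iv) implies $\mu[B(\delta)] = \delta^\alpha L(1/\delta)$ with $L$ slowly varying. Case (iv) is trivial, since $\mu[B(\delta)] \to \mu(\{0\})>0$. In case (ii) one integrates the density in polar coordinates and applies Karamata's theorem. Case (iii) is the hypothesis itself. Case (i) requires an Abelian/Tauberian passage from the decay of $K$ at infinity to the mass of $\mu$ near the origin. I would smooth with a nonnegative bump $\phi$ and compare $\int K(x)\hat\phi(x/R)\,dx = R^d\int \phi(R\lambda)\,d\mu(\lambda)$. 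The left side is computed from $K\sim k_\alpha L$ by dominated convergence together with Potter bounds, using $\alpha<d$. Sandwiching $\id_{B(\delta)}$ between such bumps then recovers $\mu[B(\delta)] \sim \delta^\alpha L(1/\delta)$. On $\Z^d$ the same argument applies on the torus.

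\textbf{Step 2: capacity via scaling.} Next I rescale. Set $K_T(x) := K(Tx)/\mu[B(1/T)]$; its spectral measure is $\mu_T := \mu(\cdot/T)/\mu[B(1/T)]$, which satisfies $\mu_T[B(r)] \to r^\alpha = \mu_\alpha[B(r)]$. Hence $\mu_T \to \mu_\alpha$ vaguely. For $\alpha=0$ the radial mass assignment forces the limit $\delta_0$; for $\alpha>0$ it forces the limit in cases (i) and (ii), where the full limiting measure is determined. Case (iii) uses radiality. Since
\begin{equation*}
\capa_f\big(B(T)\big) = \mu[B(1/T)]^{-1}\, \capa_{K_T}\big(B(1)\big)
\end{equation*}
by the energy formula \eqref{e:cap1}, it remains to show that $\capa_{K_T}(B(1)) \to c_{\alpha,d}$.

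For the \emph{upper bound} on capacity, i.e.\ a lower bound on the minimal energy, I would use the RKHS formulation \eqref{e:cap2}. Take a test function $h$ in the limiting Riesz space with $h\ge 1$ on a slightly enlarged ball, mollified so that $\hat h$ is nice, and compute $\|h\|^2_{H_T} = \int |\hat h|^2/\rho_T$. Here one needs a lower bound on $\mu_T$ away from the origin in order to control the $H_T$-norm. Since this lower bound can fail, I would instead bound the energy from below: for every $\nu\in\mathcal P(B(1))$,
\begin{equation*}
\iint K_T\,d\nu\,d\nu = \int |\hat\nu|^2\, d\mu_T \ \ge\ \int |\hat\nu|^2 \psi\, d\mu_T ,
\end{equation*}
where $\psi$ is a low-pass cutoff. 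This converges to the $\mu_\alpha$-energy truncated at high frequency, and taking $\liminf$, then a $\min$ over $\nu$, then removing the cutoff, gives $\liminf \min \text{energy} \ge 1/c_\alpha$. The equicontinuity needed here holds because $\hat\nu$ is uniformly Lipschitz on compacts.

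For the \emph{lower bound} on capacity, i.e.\ an upper bound on the energy, I plug in the Riesz equilibrium measure $\nu_\alpha$, smoothed so that its energy changes only slightly. I then need $\iint K_T\,d\nu\,d\nu \to \iint k_\alpha\,d\nu\,d\nu$. At low frequencies this is vague convergence; the difficulty is the tail, where $\mu_T$ puts mass $\|\mu\|/\mu[B(1/T)]$ at frequencies of order $T$. These high-frequency contributions are killed by the decay of $|\hat\nu|^2$ for smooth $\nu$. The intermediate frequencies $\epsilon \le |\lambda| \le M$ are controlled via the Potter bound $\mu_T[B(r)] \lesssim r^{\alpha-\epsilon'}$ for $r \ge 1$, combined with dyadic summation against $|\hat\nu|^2 \lesssim |\lambda|^{-N}$.

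\textbf{Main obstacle.} I expect the main obstacle to be the Tauberian step in case (i) for sign-changing $K$, together with the uniform control of $\mu_T$ near the origin ($r\to0$) needed for the energy convergence. Here the Potter bound $\mu_T[B(r)] \lesssim r^{\alpha-\epsilon'}$ for $r \le 1$ makes $\int_{B(\epsilon)} |\hat\nu|^2\, d\mu_T$ small uniformly in $T$, since $\alpha - \epsilon' > 0$. For $\alpha=0$ this is instead handled directly, because $|\hat\nu|\le 1$ and the mass is concentrated at the origin.
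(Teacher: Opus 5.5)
Your proposal is correct, and it proves the capacity asymptotics (Theorem~\ref{t:capuniv}) by a genuinely different, more unified route. The outer reduction is the same as the paper's: Step~1 as in Proposition~\ref{p:taub}, then Theorem~\ref{t:main}, with \eqref{e:capcon} read off from $\capa_f(B(T))\sim c_{\alpha,d}/\mu[B(1/T)]$.

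\textbf{The paper's route.} It treats the four cases separately.
\begin{itemize}
\item In case (i) it works in physical space. For the test measures $\nu_\alpha^\theta(\cdot/T)$ it uses Potter bounds and dominated convergence. For the lower bound on the minimal energy it needs a decomposition $\mu=\mu_1+\mu_2$ with $K_1>0$ and $K_1\sim K$ (from \cite{ms22}), the uniform convergence theorem, and Fatou's lemma for weakly converging measures.
\item In case (ii) it applies Fatou to the density.
\item In case (iii) it integrates by parts against $\mu[B(\lambda)]$. This forces it to use radial equilibrium measures (Claim~\ref{c:iso}) and to pass through the cutoff $\phi_{\theta T}^2$ and Lemma~\ref{l:captruncriesz}.
\end{itemize}

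\textbf{Your route and what it buys.} You funnel everything through a single statement, $\mu_T\to\mu_\alpha$ vaguely, together with Potter bounds on $\mu_T[B(r)]$. Because $\mu_T\ge 0$, the liminf of the minimal energy follows from a low-pass cutoff and weak compactness of $\mathcal P(B(1))$. This needs no positive-kernel decomposition and no radial $\nu$. The limsup follows from uniform integrability of $|\hat\nu|^2$ for a smoothed equilibrium measure. As a by-product, the same argument would appear to remove the radiality restriction on $\eta$ in case (iii) of Theorem~\ref{t: univ ent}.

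\textbf{What it costs.} The price is establishing the vague convergence itself.
\begin{itemize}
\item In case (i) it does not follow from $\mu_T[B(r)]\to r^\alpha$, so your ``Hence'' must be backed by running the Step~1 smoothing with non-radial test functions. Concretely, show $\int K_T\hat\phi\to\int k_\alpha\hat\phi$ for $\phi\in C_c^\infty$, using the domination $|K_T(x)|\lesssim\max(|x|^{-\alpha-\epsilon},|x|^{-\alpha+\epsilon})$. Near the origin this domination comes from $|K|\le\|\mu\|$ and $\mu[B(1/T)]\ge T^{-\alpha-\epsilon}$.
\item In case (iii) vague convergence follows from rotation invariance, by averaging test functions over rotations.
\end{itemize}

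\textbf{A minor correction to your frequency bookkeeping.} Once vague convergence is available, the $r\le 1$ Potter bound is unnecessary, since $|\hat\nu|^2$ is continuous at $0$. The range $\epsilon\le|\lambda|\le M$ is handled by vague convergence, not by Potter. Only $|\lambda|\ge M$ needs the $r\ge 1$ Potter bound (for $M\le|\lambda|\le T/T_0$), plus the total-mass bound $\mu_T(\R^d)=T^{\alpha+o(1)}$ and the decay of $\hat\nu$ for $|\lambda|\gtrsim T$.
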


\begin{remark}
\label{r:k}
In case~\ref{item:1} one can rewrite~\eqref{get:univ1} in terms of $K$, namely
 \begin{equation}
 \label{get:univ1_K}
 \theta_\mu(T) \sim  c_{\alpha,d} \, B_{\alpha,d} \, m  (d-\alpha)  \frac{ \log T}{K(T)} , \end{equation}
 since in that case $\mu[B(1/T)] \sim K(T) / B_{\alpha,d}$ by Proposition \ref{p:taub} below. Similarly, in case \ref{item:2} one can rewrite~\eqref{get:univ1} in terms of $\rho$ as
 \begin{equation}
 \label{get:univ1_rho}
 \theta_\mu(T) \sim  c_{\alpha,d} \, A_{\alpha,d} \,  m  (d-\alpha)  \frac{T^d \log T}{\rho(\tfrac 1 T)} , \end{equation}
 since in that case $\mu[B(\tfrac 1 T)] \sim T^{-d} \rho(\tfrac 1 T)  / A_{\alpha,d}$ by Proposition \ref{p:taub}.
\end{remark}

\begin{theorem}[Universality of entropic repulsion]\label{t: univ ent}
Assume the conditions of Theorem~\ref{t: univ pro} hold, along with the origin dominance condition \eqref{e:ds}. Then 
 \begin{equation}
     \label{get:univ2} \frac{ \langle \tilde{f}_T, \eta_T \rangle }{ \sqrt{2m(d-\alpha) \log T }  }  \to \langle h_\alpha , \eta \rangle  \quad \text{in probability, \quad as } T\to\infty, \end{equation}
for every $\eta \in \mathcal{T}$ in cases \ref{item:1},\ref{item:2},\ref{item:4}, or for every radial $\eta \in \mathcal{T}$ in case~\ref{item:3}.
\end{theorem}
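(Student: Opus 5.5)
We deduce Theorem~\ref{t: univ ent} from Theorem~\ref{t:er}. It then remains to show that the rescaled equilibrium potentials converge, $\langle h_T,\eta_T\rangle \to \langle h_\alpha,\eta\rangle$. Since $\langle h_T,\eta_T\rangle = \langle h_T(T\,\cdot),\eta\rangle$ (interpreting sums over $\Z^d$ as Riemann sums), the statement reduces to an analytic convergence of $H_T(x):=h_T(Tx)$ to $h_\alpha$, tested against $\eta$.

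\textbf{Step 1: checking the hypotheses of Theorem~\ref{t:er}.} We need \eqref{e:blowup}, \eqref{e:capcon} and \eqref{e:doub}; \eqref{e:ds} is assumed and \eqref{e:sing} is given. In cases~\ref{item:1}--\ref{item:3} the Tauberian Proposition~\ref{p:taub} gives $\mu[B(\delta)] = \delta^\alpha L(1/\delta)$ with $L$ slowly varying. This yields \eqref{e:blowup}, and \eqref{e:doub} follows from the uniform convergence theorem for slowly varying functions. Condition \eqref{e:capcon} should follow from the capacity asymptotics $\capa_f(B(T)) \sim c_{\alpha,d}/\mu[B(1/T)]$, which we take from the proof of Theorem~\ref{t: univ pro} (Section~\ref{s:rv}), combined with slow variation. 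In case~\ref{item:4}, $\alpha=0$: the atom at $0$ makes the capacity bounded, and doubling is trivial since $\mu[B(\delta)]\ge \mu(\{0\})>0$.

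\textbf{Step 2: convergence of the rescaled kernel.} Let $K_T(x) := K(Tx)/\mu[B(1/T)]$. The spectral measure of $K_T$ is the pushforward of $\mu$ under $\lambda\mapsto T\lambda$, normalised by $\mu[B(1/T)]$. Under the regular variation hypotheses this converges vaguely to $\mu_\alpha$. In case~\ref{item:3} the convergence holds only for radial spectral data, which is why we restrict to radial $\eta$ there: radial averages of $K_T$ then converge to those of $k_\alpha$. The RKHS norm of $H_T$ for $K_T$ equals $\mu[B(1/T)]\,\capa_f(B(T)) \to c_{\alpha,d}$. By the variational characterisation \eqref{e:cap2}, $H_T$ is a near-minimiser of the $\alpha$-Riesz energy problem for $B(1)$ with obstacle $\ge 1$.

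\textbf{Step 3: identifying the limit.} We pass to the dual formulation. Let $\nu_T$ be the rescaled equilibrium measures on $B(1)$. These are probability measures on a compact set, so they are tight; take a weak limit $\nu$. The central task is to show that the energies converge, $\iint K_T\,d\nu_T d\nu_T \to \iint k_\alpha \,d\nu d\nu$, and that $\nu$ minimises the energy. Uniqueness of the $\alpha$-Riesz equilibrium measure then gives $\nu=\nu_\alpha$. After that, $H_T = \capa\cdot K_T*\nu_T \to c_\alpha\, k_\alpha*\nu_\alpha = h_\alpha$, at least after pairing with $\eta$. This pairing is legitimate because $\langle K_T*\nu_T,\eta\rangle = \langle \nu_T, K_T*\eta\rangle$, and $K_T*\eta \to k_\alpha*\eta$ locally uniformly for Lipschitz $\eta$ with compact support: $k_\alpha$ is locally integrable since $\alpha<d$, and the slowly varying factor is controlled by Potter bounds.

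\textbf{Main obstacle.} The hardest step is the lower semicontinuity of energies near the diagonal, together with showing that $\nu$ has finite energy. At small scales $K_T$ is bounded (by $K(0)/\mu[B(1/T)]$) rather than singular, and $K$ may take negative values at intermediate distances. We would handle this through the spectral side: write the energy as $\int |\hat\nu_T|^2\, d\mu_T$, where $\mu_T$ is the rescaled spectral measure. Lower semicontinuity then follows from vague convergence $\mu_T\to\mu_\alpha$ on compacts away from $0$ and Fatou's lemma. Near the origin, \eqref{e:ds} and doubling prevent mass escaping from compacts. For the matching upper bound, it suffices to use $\nu_\alpha$, mollified, as a test measure, since this gives $\limsup \capa^{-1}$ bounds.
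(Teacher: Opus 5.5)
Your reduction matches the paper's: check the hypotheses of Theorem~\ref{t:er} via Proposition~\ref{p:cond}, then prove $\langle h_T,\eta_T\rangle\to\langle h_\alpha,\eta\rangle$. Your energy step is essentially Proposition~\ref{prop: for univ}--\ref{item:b}: the liminf comes from vague convergence $\mu_T\to\mu_\alpha$ plus Fatou, and the limsup from the mollified $\nu_\alpha^\theta$. Your vague-convergence form of the lower bound even avoids the integration by parts the paper needs in case~\ref{item:3}.

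The routes split at the last step. The paper never identifies the limit of the rescaled equilibrium measures $\nu_T$. It applies Lemma~\ref{l:stab} with the smooth test measure $\nu_\alpha^\theta(\cdot/T)$, whose Fourier transform decays stretched-exponentially, so $E[\nu_\alpha^\theta(\cdot/T),\eta_T]$ only sees the spectrum near the origin. There \eqref{e:ds} enters only through $E[\eta_T]\capa_f(B(T))=O(1)$ (Corollary~\ref{c:enbound}). You instead pair $\nu_T\to\nu_\alpha$ directly against $K_T*\eta$, and that is where the gap is.

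\textbf{The gap.} Your claim that $K_T*\eta\to k_\alpha*\eta$ locally uniformly ``by Potter bounds'' requires $K$ itself to be regularly varying, i.e.\ hypothesis~\ref{item:1}. Under \ref{item:2} or \ref{item:3} only $\mu$ near the origin is controlled. Condition \eqref{e:ds} still allows mass at frequencies of order one, with local concentration comparable to that at the origin. After dividing by $\mu[B(1/T)]\to0$, this mass need not disappear from $K_T*\eta$.

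Concretely, for $d\ge3$ and $\alpha\in(d-1,d)$, take the radial density $\rho_\alpha(\lambda)\chi_0(|\lambda|)+\chi_1(|\lambda|)\,\big||\lambda|-1\big|^{\alpha-d}$. Here $\chi_0,\chi_1$ are smooth cutoffs, with $\chi_0$ non-increasing and equal to $1$ near $0$, and $\chi_1$ supported near $1$ with $\chi_1(1)=1$. This density satisfies \ref{item:2}, \ref{item:3}, \eqref{e:sing} and \eqref{e:ds}. Yet for the radial $\eta=\id_{B(1)}\,dx$ one computes $(K_T*\eta)(0)-(k_\alpha*\eta)(0)=C\,T^{(d-3)/2}\cos(2\pi T-\phi)+o(1)$ for some $C\neq0$. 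So the convergence of $\langle\nu_T,K_T*\eta\rangle$ cannot come from convergence of $K_T*\eta$; it must use that $\nu_T$ has bounded energy.

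\textbf{A repair.} Write $\langle\nu_T,K_T*\eta\rangle=\int\mathcal{F}[\nu_T]\overline{\mathcal{F}[\eta]}\,d\mu_T$ and split at $B(R)$.
\begin{itemize}
\item The part over $B(R)$ converges, by vague convergence of $\mu_T$ together with locally uniform convergence of $\mathcal{F}[\nu_T]$.
\item The part over $B(R)^c$ is at most $E_{\mu_T}[\nu_T]^{1/2}\big(\int_{B(R)^c}|\mathcal{F}[\eta]|^2d\mu_T\big)^{1/2}$. The first factor tends to $c_\alpha^{-1/2}$. The second tends to $0$ as $R\to\infty$, uniformly in $T$, because \eqref{e:ds} says exactly that $\sup_{u,T}\mu_T[u+B(1)]<\infty$. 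This is the mechanism of Lemma~\ref{l:enbound} and Corollary~\ref{c:enbound}.
\end{itemize}
This is where \eqref{e:ds} is genuinely needed. In your plan it is invoked for the lower semicontinuity, where Fatou with nonnegative integrands needs no such input.

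A minor point: for $\alpha=0$ the Riesz equilibrium measure is not unique, so ``uniqueness gives $\nu=\nu_\alpha$'' is false there. This is harmless, since $k_0*\nu\equiv1$ for every $\nu\in\mathcal{P}(B(1))$.
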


\begin{remark}
A particular case of~\eqref{get:univ2} is
\begin{equation}
     \label{e:er3}
   \frac{1}{\sqrt{\log T} \, \normalfont{\vol}(B(T)) } \int_{B(T)} \tilde{f}_T(x) \,dx  \to   \frac{\sqrt{2m(d-\alpha )} }{\normalfont{\vol}(B(1)) } \int_{B(1)}h_\alpha(x)
 \,dx \quad \text{in probability}.   
 \end{equation} 
If $\alpha \in \{0\} \cup [d-2,d)$ the limiting constant in \eqref{e:er3} is $\sqrt{2m(d-\alpha )}$, since in these cases $h_\alpha|_{B(1)} = 1$ (see Appendix \ref{a:explicit}).
\end{remark}

We deduce Theorems~\ref{t: univ pro} and \ref{t: univ ent} from Theorems~\ref{t:main} and~\ref{t:er} using certain scaling properties of the capacity and equilibrium potential stated in Section~\ref{s:rv}. To validate the conditions in Theorems~\ref{t:main} and~\ref{t:er} we make use of the following result:

\begin{proposition}
\label{p:cond}
Let $\alpha \in [0,d)$, and let $f$ be an SGF on $\R^d$ or $\Z^d$ satisfying one of the conditions~\ref{item:1}, \ref{item:2}, \ref{item:3} and \ref{item:4} in Theorem~\ref{t: univ pro}. Then:
\begin{enumerate}
    \item $\delta \mapsto \delta^{-\alpha} \mu[B(\delta)]$ is slowly varying as $\delta \to 0$; and
    \item $T \mapsto T^{-\alpha} \capa_f(B(T))$ is slowly varying as $T \to \infty$.
\end{enumerate}
In particular \eqref{e:blowup}, \eqref{e:capcon} and \eqref{e:doub} hold.
\end{proposition}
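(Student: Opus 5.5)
We prove (1) in each case by a Tauberian-type argument and then derive (2) from (1) via scaling of the capacity. The three listed properties then follow at once. If $\delta \mapsto \delta^{-\alpha}\mu[B(\delta)]$ is slowly varying, then $\log \mu[B(\delta)]/\log\delta \to \alpha$, since slowly varying functions are $\delta^{o(1)}$; this gives \eqref{e:blowup}. Doubling \eqref{e:doub} follows because $\mu[B(2\delta)]/\mu[B(\delta)] \to 2^\alpha$, together with positivity and monotonicity on compact ranges of $\delta$. Finally, \eqref{e:capcon} follows from (2), since $(T+T^{1-\epsilon})/T \to 1$ and slowly varying functions satisfy the uniform convergence theorem on compacts.

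For (1): case \ref{item:3} is the hypothesis itself, since $\mu_\alpha[B(\delta)]=\delta^\alpha$. Case \ref{item:4} is trivial, because $\mu[B(\delta)] \to \mu(\{0\})>0$. In case \ref{item:2} we integrate in polar coordinates: $\mu[B(\delta)] = \int_{B(\delta)} \rho \sim \ell(\delta)\int_{B(\delta)}\rho_\alpha = \ell(\delta)\delta^\alpha$, using Karamata's theorem for integrals of regularly varying functions of index $\alpha-d>-d$. The angular dependence is harmless because $\rho(\lambda)\sim \rho_\alpha(\lambda) w(|\lambda|)$ uniformly. Case \ref{item:1} is the genuine Tauberian step. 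We would invoke Proposition \ref{p:taub}, stated later and presumably intended for exactly this, which gives $\mu[B(1/T)] \sim K(T)/B_{\alpha,d}$.

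If we had to prove it ourselves, we would test $\mu$ against a smooth radial mollifier. Take $\phi$ Schwartz with $\hat\phi \ge 0$ compactly supported. Then $\int \hat\phi(T\lambda)\,d\mu(\lambda) = T^{-d}\int K(x)\phi(x/T)\,dx \sim w(T)\int k_\alpha\phi$, by dominated convergence using Potter bounds; near the origin we only use $|K|\le K(0)$. Sandwiching $\id_{B(\delta)}$ between such $\hat\phi$ gives (1), since the limit functional $\phi\mapsto\int k_\alpha\phi = \int \hat\phi\, d\mu_\alpha$ is continuous in the sup norm of $\hat\phi$ on compactly supported test functions, and $\mu_\alpha$ has no mass on spheres. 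On $\Z^d$ we use the same argument with sums, noting that $\mu$ lives on the torus.

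For (2), the natural route is to show $\capa_f(B(T)) \sim c_{\alpha,d}/\mu[B(1/T)]$; this is what Theorem \ref{t: univ pro} needs anyway. By the dual representation \eqref{e:cap1}, $1/\capa_f(B(T)) = \min_\nu \iint K(x-y)\,d\nu\,d\nu$. Rescaling $\nu$ to $B(1)$ and writing energies in Fourier form gives
\[\iint K(T(x-y))\,d\nu\,d\nu = \int |\hat\nu(\lambda)|^2\, d\mu_T(\lambda),\]
where $\mu_T = \mu(\cdot/T)$. By (1), $\mu_T/\mu[B(1/T)] \to \mu_\alpha$ vaguely, with uniform control of tails coming from regular variation; the mass far from the origin is negligible since it is $O(\|\mu\|)\ll \mu[B(1/T)]$ when $\alpha<d$. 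For the upper bound on the minimal energy, take $\nu$ to be a smoothed version of the Riesz equilibrium measure. Then $|\hat\nu|^2$ decays fast, and the normalised energy converges to $1/c_\alpha$.

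The main obstacle is the lower bound: we need uniformity over all $\nu\in\mathcal P(B(1))$, including near-singular ones. Our first attempt would use positivity of the Fourier integrand. For any $\nu$, replace $\mu_T$ by its restriction to a small ball $B(R/T)$ and compare it from below with $(1-\epsilon)\mu[B(1/T)]\mu_\alpha$ restricted to $B(R)$; here we use regular variation of $\delta\mapsto\mu[B(\delta)]$ and, in cases \ref{item:1} and \ref{item:3}, a convolution-smoothing trick to pass from ball masses to a density comparison. Then $1/c_\alpha$ is approximated by the truncated Riesz energy minimum as $R\to\infty$, which follows by weak compactness of $\mathcal P(B(1))$ and lower semicontinuity. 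In case \ref{item:3} the isotropy hypothesis is what makes ball masses suffice. For general $\nu$ one can symmetrise: averaging $\nu$ over rotations preserves the support in $B(1)$ and does not increase the energy for a radial kernel, by convexity.
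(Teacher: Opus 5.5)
Your part (1) matches the paper: cases \ref{item:3} and \ref{item:4} are trivial, case \ref{item:2} is Karamata, and case \ref{item:1} is the Parseval--Potter mollifier sandwich of Proposition~\ref{p:taub}. So do your derivation of \eqref{e:blowup}, \eqref{e:doub} and \eqref{e:capcon}, your reduction of (2) to $\capa_f(B(T))\sim c_{\alpha,d}/\mu[B(1/T)]$, and your upper bound on the minimal energy via a smoothed Riesz equilibrium measure (Theorem~\ref{t:capuniv}, Proposition~\ref{prop: for univ}(a), Lemma~\ref{lem:thetatozero}). Two small slips: the Parseval step should give $\sim T^{-\alpha}w(T)\int k_\alpha\phi$, and the discarded contribution is $O(\|\mu\|T^{-d})$ (or $O(\|\mu\|e^{-cT^v})$), not $O(\|\mu\|)$.

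The lower bound on the minimal energy is where you depart from the paper, and as written it does not work.

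\emph{Case \ref{item:1}, no directional information.} Here $\mu_T/\mu[B(1/T)]\to\mu_\alpha$ vaguely (with $\mu_T$ the pushforward of $\mu$ under $\lambda\mapsto T\lambda$) does not follow ``by (1)''. Masses of balls centred at $0$ carry no directional information: a non-radial measure, e.g.\ one carried by a line through $0$, can have the same ball masses. For the same reason, no smoothing of ball masses yields a local density comparison. What you need is $\int\psi\,d\mu_T/\mu[B(1/T)]\to\int\psi\,d\mu_\alpha$ for every $\psi\in C_c^\infty(\R^d)$. Your mollifier computation proves exactly this once you drop radiality of the test function.

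\emph{The comparison fails near the rescaled origin.} The inequality $\mu_T|_{B(R)}\ge(1-\epsilon)\mu[B(1/T)]\,\mu_\alpha|_{B(R)}$ cannot hold on all of $B(R)$, even after smoothing and even in case \ref{item:2}. Near the rescaled origin, $w(T/|\lambda|)/w(T)$ is not uniformly close to $1$ and $\rho_\alpha$ is unbounded. You must therefore excise some $B(r)$, which is harmless for $\alpha>0$ since it costs at most $r^\alpha$. For $\alpha=0$ the target $\mu_0=\delta_0$ is an atom, so the comparison is impossible unless $\mu$ itself has an atom at $0$.

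A cleaner way to use your spectral-positivity idea avoids density comparisons altogether. In all four cases $m_T:=\mu_T/\mu[B(1/T)]\to\mu_\alpha$ vaguely:
\begin{itemize}
\item case \ref{item:1} as above;
\item case \ref{item:2} from the density away from $0$ plus the ball asymptotics near $0$;
\item case \ref{item:3} because radial measures whose radial distribution functions $G(r/T)/G(1/T)$ converge to $r^\alpha$ at every $r>0$ converge vaguely;
\item case \ref{item:4} directly.
\end{itemize}
Take near-minimisers $\nu_n\in\mathcal P(B(1))$ along $T_n\to\infty$ and a weakly convergent subsequence $\nu_n\to\nu_\infty$. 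The functions $|\hat\nu_n|^2$ are nonnegative, equi-Lipschitz and converge locally uniformly. Hence $\liminf_n\int|\hat\nu_n|^2\,dm_{T_n}\ge\int|\hat\nu_\infty|^2\,d\mu_\alpha=E_\alpha[\nu_\infty]\ge c_{\alpha,d}^{-1}$.

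This is the paper's case \ref{item:2} argument run uniformly in all cases, and that is what it buys you. The paper stays on the spatial side in case \ref{item:1}, so it needs the decomposition $\mu=\mu_1+\mu_2$ from \cite{ms22} with $K_1>0$, $K_1\sim K$ in order to apply Fatou. In case \ref{item:3} it symmetrises (Claim~\ref{c:iso}) and integrates by parts in the radial variable; because the resulting integrand has no sign, it must add a $\phi_\theta^2$ truncation and Lemma~\ref{l:captruncriesz}. On the spectral side, positivity of $|\hat\nu|^2$ is automatic, $\alpha=0$ is included, and no symmetrisation is needed. The price is that in case \ref{item:1} you must prove the full directional Tauberian limit rather than just the ball asymptotics. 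On $\Z^d$ the upper-bound test measures still have to be discretised as in Section~\ref{s:fordiscrete}.
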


\begin{remark}
 Conditions \ref{item:1}, \ref{item:2} and \ref{item:3} are generally not equivalent, but have considerable overlap (see~\cite{lo13}). It would of  interest to fully characterise when \eqref{get:univ1} and \eqref{get:univ2} hold.
\end{remark}

\begin{remark}
\label{r:ds}
As we show in Proposition \ref{p:pos} below, \eqref{e:ds} is automatically satisfied in case~\ref{item:1} of Theorem \ref{t: univ pro}, and evidently also in case \ref{item:4}, but not in general.
\end{remark}

\subsection{Applications}
\label{s:examples}
In this section we apply our results to several examples, which demonstrate the prevalence of the `universal' persistence behaviour presented in Section \ref{ss:uni}.

\smallskip
We first consider a class of discrete fields related to the lattice Laplacian, which includes the GFF and membrane model:

\begin{example}[Discrete fields related to the Laplacian]
\label{e:lap}
{\rm
Let $1 \le k \le m$, $d \ge 2k + 1$, $q_k,\ldots,q_m \in \R$ with $q_k > 0$, and suppose that $\sum_{j=k}^m q_j r^j > 0$ for every $r \in (0,2]$. Then the operator 
\[ J = \sum_{j = k}^m q_j (-\Delta)^j\] 
is positive definite on $\Z^d$, where $\Delta$ is the lattice Laplacian. Let $f$ be the SGF on $\Z^d$ with covariance $K = J^{-1}$. It is known~\cite{sak03} that $f$ has a spectral density and $K(x) \sim (1/ \gamma) |x|^{2k-d}$, for a constant $\gamma = \gamma(k,q_k,d) > 0$. Then the conclusions of Theorems \ref{t: univ pro} and \ref{t: univ ent} hold with $\alpha = d - 2k$ and $m=K(0)$. Plugging this into Remark~\ref{r:k} (and noting that $K(0) = \|\mu\|$) we obtain
\[
\lp_\mu(T) \sim c T^{d-2k} \log T, \quad \text{where } c =  K(0) \, 2k \,   \gamma  \, c_{d-2k,d} \, B_{d-2k,d}   > 0.
\]}
\end{example}

This result was already known for the GFF ($k=m=1$) \cite{bdz95} and membrane model ($k=m=2$) \cite{sak03,kur07}, but in general only under a certain additional assumption which implies that $K \ge 0$ \cite{kur07}.

\smallskip
We further generalise these examples to a much wider class of discrete and continuous fields:

\begin{example}[General SGFs]
\label{e:smooth}
{\rm
$\,$
\begin{enumerate}[itemsep=4pt]
\item Let $\alpha \in (0,d)$ and consider an SGF on $\R^d$ or $\Z^d$ with $\mu_{s.c.} = 0$ and $K(x) \sim  (1/ \gamma) |x|^{-\alpha}$ for $\gamma > 0$. Then the conclusions of Theorems \ref{t: univ pro} and \ref{t: univ ent} hold. In particular \linebreak$\lp_\mu(T) \sim  c T^{\alpha} (\log T)$, where $c = K(0)  (d-\alpha)   \gamma  B_{\alpha,d}  c_{\alpha,d} > 0$.
\item Consider an SGF on $\R^d$ or $\Z^d$ satisfying $\mu_{s.c.} = 0$ and $K(x)  \sim (1/\gamma) (\log |x|)^{-\beta}$ for $\gamma,\beta> 0$. Then the conclusions of Theorems \ref{t: univ pro} and \ref{t: univ ent} hold for $\alpha = 0$. In particular $\lp_\mu(T) \sim K(0)  d  \gamma (\log T)^{1+\beta}$.
\item Consider an SGF on $\R^d$ or $\Z^d$ with $\mu_{s.c.} = 0$ and such that $\mu$ has a non-trivial atom at the origin. Then the conclusions of Theorems \ref{t: univ pro} and \ref{t: univ ent} hold for $\alpha = 0$. In particular $\P[ \per(B(T))  ] =  T^{ -  \|\mu_{a.c.}\|   d / \mu(0)   + o(1) }$.
 \end{enumerate}
 }\end{example}

For the validity of these examples see Remarks \ref{r:k} and \ref{r:ds}, and also recall that $B_{0,d} = c_{0,d} = 1$.

\subsection{General bounds on the persistence probability}
\label{ss:gen}
We deduce Theorem \ref{t:main} from upper and lower bounds on the persistence probability which are valid under weaker assumptions and are of independent interest. To state these we introduce constants
\[\alpha^- := \liminf_{\delta \to 0} \frac{\log  \mu[B(\delta)] }{\log \delta} \quad \text{and} \quad \alpha^+ :=  \limsup_{\delta \to 0}   \frac{\log  \mu[B(\delta)] }{\log \delta} \]
which bound the order of the spectral singularity from below and above respectively; in particular if \eqref{e:blowup} holds for $\alpha \in [0,\infty]$ then $\alpha^- = \alpha^+ = \alpha$, and vice versa. We say that $\mu$ is \textit{regular} if for every $\eta > 0$ there exists an $\eps > 0$ such that
\begin{equation}
    \label{e:rss}
    \limsup_{\delta \to 0 } \frac{ \log \mu[B(\delta)] - \log  \mu[B(\delta^{1-\eps})]}{\log \delta} < \eta  . 
    \end{equation}
This is satisfied, for instance, if \eqref{e:blowup} holds for $\alpha \in  [0, \infty)$ (take $\eps < \eta/\alpha$), or under \eqref{e:doub}.

\smallskip
We also introduce two constants
which control the scale of the moderate deviation regime for the infimum of $f$ once the spectral singularity has been removed:
\begin{align}
    \label{e:m-}
 & m^-  := \sup \Big\{ u \ge 0 : \exists  \delta_0  > 0 \ \text{such that, for all } \delta \in (0,\delta_0) \text{ and } \beta \in [0,d), 
 \text{ as } T \to \infty \\
 &  \nonumber \qquad   \qquad  - 
 \log \P \Big[ \inf_{x \in B(T)} f_{\mu|_{B(\delta)^c}}(x) \ge - \sqrt{2 u ( d - \beta)  \log T} \Big] \ge T^{\beta + \delta_0(d-\beta)  }  \text{ eventually} \Big\}
\end{align}
and
\begin{align}
    \label{e:m+}
 & m^+  := \inf \Big\{ u \ge 0 :  \exists  \delta_0  > 0 \ \text{such that, for all } \delta \in (0,\delta_0) \text{ and } \beta \in (0,d), \text{ as } T \to \infty  \\ 
&  \nonumber \qquad     \qquad  - \log \P \Big[ \inf_{x \in B(T)} f|_{\mu_{B(\delta)^c}}(x) \ge -  \sqrt{2 u (d-\beta)  \log T} \Big] \le T^{\beta - \delta_0(d-\beta)  }  \text{ eventually}   , \\
 & \nonumber \qquad \qquad \qquad \qquad \text{ and  } \P \Big[ \inf_{x \in B(T)} f|_{\mu_{B(\delta)^c}}(x) \ge -  \sqrt{2 u d  \log T} \Big]  \to 1 \Big\}  .
\end{align}
It is simple to check that if $f$ is an i.i.d.\ Gaussian field on $\Z^d$ then $m^- = m^+ = \|\mu\|$. In Corollary~\ref{c:m} we show that, in general,
\begin{equation}
    \label{e:mbounds}
m \le m^- \le m^+ \le m + \|\mu_{s.c.}\|, 
\end{equation} 
and in particular $m^- = m^+ = m$ if $\mu_{s.c.} = 0$.

 \begin{theorem}[Bounds on the persistence probability]
\label{t:bounds}
Let $f$ be an SGF on $\R^d$ or $\Z^d$. Then for every $\delta > 0$, as $T \to \infty$, eventually 
\[    \lp_\mu(T) \le (m^+  (d-\alpha^-)_+ + \delta) \log T\, \capa_f(B(T + T^\delta ) )  , \]
where $(x)_+ = \max\{0,x\}$. Moreover if $\mu$ is regular in the sense of \eqref{e:rss}, then for every $\delta > 0$ there exists an $\eps > 0$ such that, as $T \to \infty$, eventually 
\[ \theta_\mu(T)  \ge (m^- (d-\alpha^+ ) - \delta) \log T\,\capa_f(B(T-T^{1-\eps}))  .\]
\end{theorem}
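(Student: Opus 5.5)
Both bounds rest on the spectral decomposition $f_\mu \stackrel{d}{=} f_{\mu|_{B(\delta')}} \oplus f_{\mu|_{B(\delta')^c}}$ at a $T$-dependent scale $\delta' = T^{-1+\eps'}$ (or $T^{-\gamma}$ with $\gamma$ close to $1$). They also use the capacity/equilibrium-potential representation of the low-frequency part given in Proposition~\ref{p:basiccap}. Throughout, write $D=B(T+T^\delta)$, $h=h_f(D)$, and $\capa=\capa_f(D)$.

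\textbf{Upper bound on $\lp_\mu(T)$.} We lower bound the persistence probability. First decompose $f = \zeta h \oplus g$, where $\zeta = \langle f, h\rangle_H/\|h\|_H^2$ has variance $1/\capa$; this is the Cameron--Martin projection onto the direction $h$. We then need two events. Event (a): $\zeta \ge (1+\delta)\sqrt{2u(d-\beta)\log T}$ for suitable $u>m^+$ and $\beta$. Its cost is $\exp(-(1+\delta)^2 u(d-\beta)\capa\log T)$. Event (b): $\inf_{B(T)} g \ge -\sqrt{2u(d-\beta)\log T}$. Since $h\ge1$ on $B(T)$, (a) and (b) together imply persistence.

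Event (b) must be compared to the high-pass field $f_{\mu|_{B(\delta')^c}}$. The plan is to split $g$ further into that high-pass part plus a residual low-frequency part $g_{\mathrm{low}}$. The residual is the low-frequency field with its $h$-component removed. Its variance on $B(T)$ should be small after conditioning, because the low frequencies are nearly constant at scale $T$; making this precise is where the enlargement $T^\delta$ of the ball is used, via smoothing/mollification of $h$. We then use convexity of the persistence event. By an Anderson/Gaussian-correlation type inequality (or Harris--FKG for decreasing-in-infimum events combined with positive association of convex symmetric sets), the probability of the intersection is at least the product of the probabilities. We also choose $\beta$ with $T^{\beta}\ll \capa\log T$, so the cost of (b), which by definition of $m^+$ is at most $T^{\beta-\delta_0(d-\beta)}$, is negligible. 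Since $\capa \approx T^{\alpha^-+o(1)}$ along subsequences, this forces $\beta$ close to $\alpha^-$, giving the factor $(d-\alpha^-)_+$. When $\alpha^-\ge d$, we instead take $\beta$ near $d$ (or use the second clause of $m^+$, that the infimum event has probability $\to1$) and get only the $\delta\log T\,\capa$ term.

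\textbf{Lower bound on $\lp_\mu(T)$.} We upper bound the probability, using the same decomposition with $D'=B(T-T^{1-\eps})$. Persistence on $B(T)$ implies persistence on $D'$. Write $f=\zeta' h'\oplus g'$, where $h'$ is the equilibrium potential of $D'$. On persistence, either $\zeta'$ is large, at least $(1-\delta)\sqrt{2m^-(d-\beta)\log T}$, or the high-pass part has infimum at least $-\sqrt{2m^-(d-\beta)\log T}$. The key step is to average against the equilibrium measure $\nu'$: $\langle f,\nu'\rangle = \zeta'/\capa'$, so persistence forces $\langle f_{\mathrm{low}},\nu'\rangle$ plus a high-pass average to be non-negative. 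We bound the probability by a union over a grid of values $t$ for the low-frequency average. For each $t$, we pay the Gaussian cost $\exp(-t^2\capa'/2)$ for the low part, and the cost of the high-pass infimum lying above $-t$, which by the definition of $m^-$ is at least $\exp(-T^{\beta+\delta_0(d-\beta)})$ with $t=\sqrt{2m^-(d-\beta)\log T}$. Independence of the two spectral pieces lets us multiply the costs. Optimising over $\beta$ shows that the minimal total cost is $\approx m^-(d-\alpha^+)\capa'\log T$, using $\capa' \approx T^{\alpha^+-o(1)}$. Regularity \eqref{e:rss} is what ensures that the capacity computed from $\mu|_{B(\delta')}$ with $\delta'=T^{-1+\eps}$ is comparable to that of the full $\mu$ at scale $T^{-1}$.

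\textbf{Main obstacle.} The hardest step is showing that the low-frequency field $f_{\mu|_{B(\delta')}}$ is well approximated on $B(T)$ by its single $h$-component, with a controlled error. This error is exactly where the boundary layers $T^\delta$ and $T^{1-\eps}$ enter. I would first try to handle it with a smoothing argument: convolve with a bump of width $T^{\delta}$, then exploit convexity of the persistence event and the Cameron--Martin shift to control the error term.
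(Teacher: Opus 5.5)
\textbf{Lower bound on $\lp_\mu(T)$ (your second part): genuine gap.} Suppose persistence holds and $\langle f_{\mathrm{low}},\nu'\rangle\le t$. (Note $\langle f,\nu'\rangle=\zeta'$, not $\zeta'/\capa'$.) This only tells you that $f_{\mathrm{low}}\le t$ on some set of positive $\nu'$-mass, which may be tiny. So it only forces $f_{\mathrm{high}}\ge -t$ on an uncontrolled set. The definition of $m^-$, however, prices the event $f_{\mathrm{high}}\ge -t$ on a whole ball of polynomially growing radius. Your dichotomy ``either $\zeta'$ is large or the high-pass infimum is above $-t$'' therefore does not follow, and neither does multiplying the two costs over a grid of $t$.

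Approximating $f_{\mathrm{low}}$ on $B(T)$ by $\zeta'h'$, as your last paragraph proposes, cannot rescue this. A low part with cutoff near $T^{-1+\eps}$ varies on scale $T^{1-\eps}$. After removing the $h'$-mode, its residual has variance of order $\mu[B(T^{-1+\eps})]$, typically much larger than $1/\capa$ (e.g.\ $T^{-\alpha(1-\eps)}$ versus $T^{-\alpha}$ in the regularly varying case). So the residual exceeds $\eta\sqrt{\log T}$ with probability far larger than $e^{-\capa\log T}$.

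The missing idea is mesoscopic localization. The paper proceeds as follows.
\begin{enumerate}
\item It keeps the high/low split at a fixed $\delta$. The definition of $m^-$ is for fixed $\delta$ as $T\to\infty$, so your $\delta'=T^{-1+\eps'}$ is not admissible.
\item It discards the band $\mu|_{B(\delta)}(1-\phi^2_{T^{1-\eps}})$ using the log-concavity Lemma~\ref{l:trunc}. This, not mere inclusion, is where $B(T-T^{1-\eps})$ comes from.
\item It shows persistence forces one of three events. First, $\inf f_{\mu_1}\ge \ell\sqrt{\log T}$ on the shrunk ball, with cost at least $\frac12\ell^2\capa\log T$ by averaging against the equilibrium measure (Lemma~\ref{l:infcap}) plus monotonicity. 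Second, $|\nabla f_{\mu_1}|\ge \gamma L^{-1}\sqrt{\log T}$ somewhere. Third, $f_{\mu_2}\ge -(\ell+\gamma)\sqrt{\log T}$ on the whole ball $B_{x_0}(L)$, $L=T^{1-2\eps-2\zeta}$, around the minimiser $x_0$ of $f_{\mu_1}$. By $m^-$ at scale $L$ and a union bound over $(T/L)^d$ balls, this costs $L^{\alpha^++\gamma}\gg \capa\log T$.
\end{enumerate}
Regularity \eqref{e:rss} is what makes the gradient event costlier than $\capa\log T$ (via Lemmas~\ref{l:derbound} and~\ref{l:capbounds}). It is not needed to compare capacities, since $\mu_1\le\mu$ gives $\capa_{\mu_1}\ge\capa_\mu$ for free.

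\textbf{Upper bound on $\lp_\mu(T)$ (your first part): right heuristic, faulty mechanism.} With $h=h_f(D)$ the equilibrium potential of the whole field, $\zeta$ involves all frequencies. Hence $g_{\mathrm{low}}=f_{\mathrm{low}}-\zeta h$ is correlated with $f_{\mathrm{high}}$, and the inequalities you invoke do not handle this. The Gaussian correlation inequality needs symmetric convex sets, and $\{\inf f_{\mathrm{high}}\ge -c\}$ is not symmetric; passing to two-sided events would give $\|\mu_{a.c.}+\mu_{s.c.}\|$ instead of $m^+$. Harris--FKG needs non-negative correlations, which are not assumed. Also, the low piece is small not because low frequencies are constant at scale $T$ (they are not, at cutoff $T^{-1+\eps'}$ or at a fixed cutoff). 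It is small because it carries small non-atomic mass.

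The paper avoids any rank-one split of the whole field:
\begin{enumerate}
\item It takes independent pieces $\mu\phi^2_{T^\delta}$, $\mu(1-\phi^2_{T^\delta})|_{B(\eps)^c}$ and $\mu(1-\phi^2_{T^\delta})|_{B(\eps)}$, with $\eps$ fixed.
\item It applies the entropic Cameron--Martin bound of Lemma~\ref{l:infcap} to the first piece alone, shifting by $(\ell+\delta)\sqrt{\log T}$ times that piece's own equilibrium potential. This only needs $\P[\inf_{B(T)} f_{\mu\phi^2_{T^\delta}}\ge-\delta\sqrt{\log T}]\to1$, which is Lemma~\ref{l:infconv}.
\item It bounds the second piece via $m^+$ and the third via Lemma~\ref{l:infconv}, then multiplies by independence.
\end{enumerate}
The enlargement to $B(T+T^\delta)$ enters only through Lemma~\ref{l:captrunc}, $\capa_{\phi^2_{T^\delta}\mu}(B(T))\le \capa_f(B(T+T^\delta))$. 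This is exactly your ``mollify $h$ at width $T^\delta$'', and it is needed because monotonicity goes the wrong way for the truncated measure. Your rank-one decomposition would work if performed inside the independent piece $f_{\phi^2_{T^\delta}\mu}$ with the mollified potential, rather than with $h_f(D)$ of the whole field.
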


\begin{proof}[Reduction of Theorem \ref{t:main} to Theorem \ref{t:bounds}]
Under the assumptions of Theorem~\ref{t:main} we have $\alpha^- = \alpha^+ = \alpha \in [0,d)$, $m^- = m^+ = m$ (by \eqref{e:mbounds}), and for every $\delta,\eps > 0$,
\[ \capa_f(B(T+T^\delta)) \sim \capa_f(B(T - T^{1-\eps})) \sim \capa_f(B(T)) .\]
Then Theorem \ref{t:main} follows from Theorem \ref{t:bounds} by letting $T \to \infty$ and then taking $\delta \to 0$.
\end{proof}

\begin{remark}
\label{r:weak}
Comparing the conclusions of Theorem \ref{t:bounds} with Proposition \ref{p:counterlim} shows that $m = m^- < m^+ = m + \|\mu_{s.c.}\|$ for the examples in Proposition \ref{p:counterlim} (which are regular in the sense of \eqref{e:rss}), and so \eqref{e:mbounds} is tight.
\end{remark}

By combining with \emph{a priori} bounds on the capacity, one can deduce from Theorem~\ref{t:bounds} very general bounds on persistence, albeit at lower precision than in Theorem \ref{t:main}. For instance, we show in Lemma \ref{l:capbounds} below that
\begin{equation}
    \label{e:aprioricap}
  T^{\alpha^- + o(1) } \le  \capa(B(T)) \le T^{\alpha^+ + o(1)} , 
\end{equation}
and also that the doubling condition \eqref{e:doub} implies that $\capa(B(T)) \asymp 1 / \mu[B(1/ T) ]$. As immediate consequences of Theorem \ref{t:bounds} we deduce:

\begin{corollary}
Let $f$ be an SGF on $\R^d$ or $\Z^d$ with spectral singularity $\alpha \in (0,d)$ as per \eqref{e:blowup} and $\mu_{a.c.} \neq 0$. Then, as $T \to \infty$, $\theta_\mu(T) =  T^{\alpha + o(1) }$.
\end{corollary}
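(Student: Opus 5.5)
The plan is to sandwich $\theta_\mu(T)$ between the two bounds of Theorem~\ref{t:bounds} and control the capacity terms using the a priori estimate \eqref{e:aprioricap}. The singularity has order $\alpha\in(0,d)$ by \eqref{e:blowup}, so $\alpha^-=\alpha^+=\alpha$. Hence \eqref{e:aprioricap} gives $\capa_f(B(T))=T^{\alpha+o(1)}$. Since $T\pm T^{\delta}$ and $T-T^{1-\eps}$ all lie between $T/2$ and $2T$ for large $T$, the same estimate yields $\capa_f(B(T+T^\delta))=T^{\alpha+o(1)}$ and $\capa_f(B(T-T^{1-\eps}))=T^{\alpha+o(1)}$. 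The $\log T$ factors are of course $T^{o(1)}$.

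\emph{Upper bound.} Fix $\delta>0$; we may take $\delta\le 1$ so that $T+T^{\delta}\le 2T$. By \eqref{e:mbounds}, $m^+\le \|\mu_{a.c.}\|+\|\mu_{s.c.}\|\le\|\mu\|<\infty$. The first part of Theorem~\ref{t:bounds} then gives, eventually,
\[
\theta_\mu(T)\le (\|\mu\|(d-\alpha)+\delta)\log T\cdot \capa_f(B(T+T^\delta)) = T^{\alpha+o(1)} .
\]

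\emph{Lower bound.} Here we need $\mu$ to be regular in the sense of \eqref{e:rss}. As noted after \eqref{e:rss}, this follows from \eqref{e:blowup} with $\alpha<\infty$: given $\eta$, take $\eps<\eta/\alpha$. Then
\[
\frac{\log\mu[B(\delta)]-\log\mu[B(\delta^{1-\eps})]}{\log\delta}\to \alpha-(1-\eps)\alpha=\eps\alpha<\eta .
\]
Next, \eqref{e:mbounds} gives $m^-\ge m=\|\mu_{a.c.}\|>0$, using the hypothesis $\mu_{a.c.}\neq0$. Since $\alpha<d$, we may choose $\delta< m(d-\alpha)/2$, which makes the prefactor $m^-(d-\alpha)-\delta$ strictly positive. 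The second part of Theorem~\ref{t:bounds} then yields, for some $\eps>0$,
\[
\theta_\mu(T)\ge c\log T\cdot\capa_f(B(T-T^{1-\eps})) = T^{\alpha+o(1)} .
\]

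The only delicate point is keeping the lower prefactor positive. That is exactly where the hypotheses $\mu_{a.c.}\neq0$ and $\alpha<d$ enter, through $m^-\ge m$ and the freedom to take $\delta$ small. The rest is bookkeeping with \eqref{e:aprioricap}, which I take as given from Lemma~\ref{l:capbounds}.
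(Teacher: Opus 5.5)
Your proposal is correct and follows the same route as the paper, which states this corollary as an immediate consequence of Theorem~\ref{t:bounds}. As you do, it bounds $m^\pm$ via \eqref{e:mbounds}, uses \eqref{e:blowup} for regularity, and uses \eqref{e:aprioricap} (Corollary~\ref{c:capbounds}) to turn $\capa_f(B(T+T^\delta))$ and $\capa_f(B(T-T^{1-\eps}))$ into $T^{\alpha+o(1)}$.
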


\begin{corollary}
Let $f$ be an SGF on $\R^d$ or $\Z^d$ with $\alpha^+ < d$, $\mu_{a.c.} \neq 0$, which satisfies the doubling conditions \eqref{e:doub}. Then, as $T \to \infty$, $\theta_\mu(T) \asymp  (\log T)  / \mu[B(1/T)] .$
\end{corollary}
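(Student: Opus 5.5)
We deduce the corollary directly from Theorem~\ref{t:bounds}, combined with the fact (Lemma~\ref{l:capbounds}) that under the doubling condition \eqref{e:doub} we have $\capa_f(B(T)) \asymp 1/\mu[B(1/T)]$. The plan has four steps: pin down the constants $m^\pm$ and $\alpha^\pm$ in Theorem~\ref{t:bounds}, check regularity, and then remove the perturbations of the radius in the capacity.

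\emph{Constants.} By \eqref{e:mbounds} we have $m \le m^- \le m^+ \le m + \|\mu_{s.c.}\|$. Since $\mu$ is a finite measure and $\mu_{a.c.} \neq 0$, this gives
\[ 0 < m^- \le m^+ < \infty. \]
Doubling gives $\mu[B(2^{-k})] \ge c^{-k}\mu[B(1)]$, which forces $\alpha^+ \le \log_2 c < \infty$. We are assuming in any case that $\alpha^+ < d$. Since $\alpha^- \le \alpha^+ < d$, both $d - \alpha^+$ and $(d - \alpha^-)_+$ lie in $(0, d]$. Hence the prefactors $m^+(d-\alpha^-)_+ + \delta$ and $m^-(d-\alpha^+) - \delta$ are bounded above and below by positive constants, the latter once $\delta$ is small.

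\emph{Regularity.} As noted after \eqref{e:rss}, doubling implies regularity. To see this directly, iterate doubling roughly $\eps \log_2(1/\delta)$ times between $\delta$ and $\delta^{1-\eps}$. This gives $\mu[B(\delta^{1-\eps})] \le c^{\eps\log_2(1/\delta)+1}\mu[B(\delta)]$, so the $\limsup$ in \eqref{e:rss} is at most $\eps\log_2 c$, which is less than $\eta$ for $\eps$ small. Theorem~\ref{t:bounds} therefore applies with both its upper and lower bound. Fix a small $\delta$ and the corresponding $\eps$.

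\emph{Removing the radius perturbations.} This is the only real point. By Lemma~\ref{l:capbounds},
\[ \capa_f\big(B(T\pm T^{\delta})\big) \asymp \frac{1}{\mu[B(1/(T\pm T^{\delta}))]}, \qquad \capa_f\big(B(T-T^{1-\eps})\big) \asymp \frac{1}{\mu[B(1/(T-T^{1-\eps}))]}. \]
For $\delta<1$ and large $T$, all the radii $T+T^\delta$ and $T-T^{1-\eps}$ lie in $[T/2, 2T]$. By monotonicity of $\delta' \mapsto \mu[B(\delta')]$ and one application of doubling,
\[ \mu[B(1/(2T))] \ge c^{-1}\mu[B(1/T)] \quad\text{and}\quad \mu[B(2/T)] \le c\,\mu[B(1/T)]. \]
Hence each of the perturbed quantities $1/\mu[B(\cdot)]$ is within a factor $c$ of $1/\mu[B(1/T)]$.

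\emph{Conclusion.} Inserting these comparisons into the two bounds of Theorem~\ref{t:bounds} yields
\[ C^{-1}\frac{\log T}{\mu[B(1/T)]} \le \theta_\mu(T) \le C\,\frac{\log T}{\mu[B(1/T)]} \]
eventually, which is the claim.
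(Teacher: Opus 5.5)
Your proposal is correct and is the route the paper intends: the paper states this corollary as an immediate consequence of Theorem~\ref{t:bounds}, using that doubling implies regularity \eqref{e:rss} and the comparison \eqref{e:capcomp}, while \eqref{e:mbounds} keeps the prefactors bounded away from $0$ and $\infty$. Your extra application of \eqref{e:doub} to absorb the perturbed radii $T+T^\delta$ and $T-T^{1-\eps}$ supplies the one detail the paper leaves implicit.
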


\subsection{Heuristics and outline of the proof}
\label{ss:heu}
Let us assume the conditions of Theorem \ref{t:main} and outline the main ideas in the proof that
\[  \theta_\mu(T) \sim  m (d-\alpha)  \log T\,\capa_f(B(T)).\]
As discussed, $\per(B(T))$ typically occurs as a balance between raising the Gaussian coefficient $\zeta$ in the decomposition $f_\mu = \zeta h_T \oplus g$, where $h_T= h_f(B(T))$ is the equilibrium potential, and the suppression of the remainder $g$. Let us show that this strategy is optimised by raising $\zeta$ to level
\[ \ell_T \approx \sqrt{2m(d-\alpha) \log T} . \]
 Since $\capa_f(B(T)) = 1/\textrm{Var}[\zeta]$, and recalling the \emph{a priori} bound on $\capa_f(B(T))$ in \eqref{e:aprioricap}, standard Gaussian tail bounds (see Claim \ref{c:gtb}) applied to $\zeta$ give that
\begin{equation}
    \label{e:heu1}
  -\log \mathbb{P}[\zeta \ge \ell_T] \approx   \frac{\ell_T^2 \capa_f(B(T))}{2} \approx  \ell_T^2 T^{\alpha +o(1)}  .  
      \end{equation}
To address the cost of suppressing $g$ we observe that, since neither $\zeta h$ nor the spectral singularity contribute to the large deviations of the infimum, we may replace $g$ with $f_{\mu'}$ where $\mu' = \mu|_{B(\delta)^c}$ for $\delta > 0$ small. After a minor smoothing, we may also suppose that $K' = \mathcal{F}[\mu']$ is rapidly decaying. Putting these together, we obtain
\begin{equation}
    \label{e:heu3}
 \P \big[ \inf_{x \in B(T)} g(x)  \ge -\ell_T \big] \approx \P[ f_{\mu'}(0) \ge - \ell_T ]^{c_d T^d} \approx e^{ - e^{ - \ell_T^2 /(2m) } T^d } .  \end{equation}
Indeed the lower bound in \eqref{e:heu3} follows from the Gaussian correlation inequality, whereas we deduce the upper bound from the rapid decay of $K'$ using a general decoupling inequality for SGFs \cite{kls82}.

 \smallskip
Comparing \eqref{e:heu1} and \eqref{e:heu3} we wish to choose $\ell_T$ so that
\[\ell_T^2 T^{\alpha}  \asymp e^{ - \ell_T^2 /(2m) } T^d  \implies \ell_T \approx \sqrt{2m(d-\alpha) \log T} . \]
Moreover, if instead we choose  $\ell_{T  +\delta}=  \sqrt{(2m(d-\alpha) + \delta)  \log T}$ then \eqref{e:heu3} decays with $\delta$ much faster than \eqref{e:heu1} increases. Hence, by taking $\delta \to 0$, we are led to the estimate
\[  \theta_\mu(T) \approx \frac{\ell_T^2 \capa_f(B(T))}{2}  \approx m(d-\alpha) \log T\, \capa_f(B(T))  . \]
This also suggests that the conditioned field $\tilde{f}_T$ is close to $\ell_T h_T$, which is roughly the content of our entropic repulsion result.

\smallskip
It is not hard to turn this argument into a rigorous lower bound by estimating the probabilistic cost of this strategy. On the the other hand, for the upper bound we are faced with the harder challenge of bounding the cost of \textit{all} strategies to persist. To overcome this we show 
that, with sufficiently high probability (in an appropriate moderate deviation sense governed by the constant $m^-$ defined in \eqref{e:m-}), the infimum of $f_{\mu'}$ over any mesoscopic ball $B(T^{1-\ep})$ remains close to its expectation. On the other hand, due to its low frequency, $f_\mu - f_{\mu'} \stackrel{d}{=} f_{\mu|_{B(\delta)}}$ attains values close to its infimum over an entire such ball. By bounding the fluctuations of $\sup(f'_\mu - f'_{\mu'})$ with extremely high probability we deduce that, in order for persistence to occur, $f_\mu-f_{\mu'}$ must persist above a level close to $\ell_T$, an event whose probability we can compare to the probability that $\zeta h_T$ persists above a similar level, and hence to the lower bound.

Key to this analysis is our ability to bound the capacity and persistence probabilities of the approximating measure with those of the original measure by varying slightly the domain (see Lemmas~\ref{l:captrunc} and \ref{l:trunc}), which exploits the \textit{convexity} of the persistence event. 

To elevate this argument and obtain entropic repulsion, we must also rule out the possibility that $f_{\mu'}$ incurs macroscopic shape changes at the scale of $\ell_T$. For this we rely on the stability of the equilibrium potential to a small change in scale (see Corollary \ref{c:regh}), which requires the additional assumptions of doubling \eqref{e:doub} and origin dominance~\eqref{e:ds}.

\smallskip
Observe that these heuristics are still valid even in the absence of a spectral singularity, as exploited in \cite{ffn21,ffm25}. 
The key difference in that case is that the optimum level $\ell_T$ is bounded. As a result the balance between raising the coefficient $\zeta$ and suppressing the remainder $g$ becomes more delicate: the field is repelled by merely a constant, universality does not ensue, and a parallel of Theorem~\ref{t:er} could only be expected when considering persistence above a high level tending to infinity.
 
\subsection{Further remarks and open problems}
\label{ss:rem}
\subsubsection{Sufficient conditions for regular capacity growth} We have already noted that regular capacity growth as in~\eqref{e:capcon} holds for `regularly varying' fields (Proposition \ref{p:cond}). We provide further sufficient conditions:

\begin{proposition}
\label{p:suffcapcon}
Suppose \eqref{e:blowup} holds for $\alpha \in (0,\infty]$. Then the following are sufficient conditions for \eqref{e:capcon}:
\begin{enumerate}
\item $\alpha > d-1$ and $K \ge 0$; or
\item $\alpha \in (0,d-1)$, and $K$ is radial and eventually non-negative.
\end{enumerate}
\end{proposition}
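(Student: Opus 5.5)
The plan is to prove \eqref{e:capcon} by comparing the equilibrium measures of $B(T)$ and $B(T')$, where $T' = T + T^{1-\epsilon}$. Monotonicity gives $\capa_f(B(T)) \le \capa_f(B(T'))$ for free, so only the upper bound $\capa_f(B(T')) \le (1+o(1))\capa_f(B(T))$ needs work. By the dual formula \eqref{e:cap1},
\[ 1/\capa_f(B(T')) = \min_{\nu\in\mathcal P(B(T'))} \iint K(x-y)\,d\nu(x)\,d\nu(y). \]
It therefore suffices to show that the energy $I(\nu')$ of the equilibrium measure $\nu'$ of $B(T')$ is at least $(1-o(1))$ times the minimal energy on $B(T)$. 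I will do this by transporting $\nu'$ into $B(T)$ with the dilation $x\mapsto sx$, where $s = T/T' = 1 - O(T^{-\epsilon})$. Writing $\nu_s$ for the pushforward, we have
\[ I(\nu_s) = \iint K(s(x-y))\,d\nu'(x)\,d\nu'(y), \]
and the task reduces to bounding $I(\nu_s) \le (1+o(1)) I(\nu')$. (On $\Z^d$ the dilation has to be rounded to lattice points; I expect this to cost only a negligible error.)

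\textbf{Case (1): $\alpha > d-1$ and $K \ge 0$.} Here I avoid dilations and instead use a thin shell. Since $K\ge0$, every energy is dominated by the total mass pairing $\iint K \le \|\ldots\|$. Split $\nu' = \nu'|_{B(T)} + \nu'|_{A}$, where $A = B(T')\setminus B(T)$ is a shell of width $T^{1-\epsilon}$. The claim is that the shell carries an asymptotically negligible share of the energy, which follows if the potential $K*\nu'$ is roughly constant ($\approx 1/\capa$) while $A$ is thin. More concretely, I will cover $A$ by $O(T^{(d-1)\epsilon'})$ balls of radius $T^{1-\epsilon'}$ and use subadditivity of capacity, $\capa(A\cup B)\le \capa(A)+\capa(B)$, which holds by a standard argument from \eqref{e:cap2} since $\max(h_1,h_2)$ is admissible, together with the a priori bound \eqref{e:aprioricap}. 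This gives
\[ \capa(A) \lesssim T^{(d-1)\epsilon'} T^{(1-\epsilon')\alpha+o(1)} = T^{\alpha - \epsilon'(\alpha-(d-1)) + o(1)} \ll T^{\alpha - o(1)} \le \capa(B(T)), \]
using the lower bound in \eqref{e:aprioricap} and $\alpha > d-1$. Subadditivity then yields $\capa(B(T')) \le \capa(B(T)) + \capa(A) \sim \capa(B(T))$. I will need to choose $\epsilon'$ as a function of $\epsilon$ so that the covering balls match the shell width. On reflection this argument does not use $K\ge 0$ essentially, but I keep the hypothesis in case subadditivity needs it.

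\textbf{Case (2): $\alpha < d-1$, $K$ radial and eventually non-negative.} Now the shell has capacity comparable to the whole ball, so subadditivity is useless and I return to the dilation. Radiality lets me write $K(x) = k(|x|)$. The main obstacle is showing $k(s r) \le (1+o(1)) k(r) + (\text{error})$ uniformly in $r$. This holds for large $r$ only if $k$ is roughly monotone or regular, which is not assumed. The plan is to replace pointwise comparison with an averaged one: average the dilation over $s' \in [s,1]$, which amounts to convolving $\nu'$ radially. The averaged kernel $\int_s^1 k(s' r)\,ds'/(1-s)$ is then compared to $k(r)$ in the quadratic form, using positive-definiteness and the identity $\mathcal F[K(s\cdot)](\lambda) = s^{-d}\mu(\lambda/s)$ on the spectral side. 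The aim is the bound
\[ \int |\hat\nu'|^2\, d\mu(\cdot/s)\, s^{-d} \le (1+o(1)) \int |\hat\nu'|^2 \, d\mu + \text{error}. \]
The error from the region where $K<0$ (bounded $|x|$) should be $O(1)$, hence negligible against $1/\capa \gtrsim T^{-\alpha-o(1)}$ only after checking that the near-diagonal mass of $\nu'$ is small. That last check, via the spreading of $\nu'$ forced by $\alpha<d-1$ (equilibrium measures then have no concentration at the boundary), is where I expect the real difficulty, and it is not yet verified.
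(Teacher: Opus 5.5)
\textbf{Case (1).} This is the paper's argument: cover the shell $B(T+T^{1-\eps})\setminus B(T)$ by $c_dT^{\eps(d-1)}$ translates of $B(T^{1-\eps})$ (so $\eps'=\eps$), apply subadditivity, and compare using $\capa_f(B(r))=r^{\alpha+o(1)}$. This gives a relative error $T^{\eps(d-1-\alpha)+o(1)}\to0$.

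Your justification of subadditivity is wrong, however, and so is your remark that $K\ge0$ is inessential. In a general RKHS, $\max(h_1,h_2)$ need not lie in $H$. For example, if $\mu$ is compactly supported then $H$ consists of band-limited, hence analytic, functions. Even when it does lie in $H$, nothing gives $\|\max(h_1,h_2)\|_H^2\le\|h_1\|_H^2+\|h_2\|_H^2$; that is a Dirichlet-form property. Subadditivity genuinely fails without positivity: for two points with $K(0)=1$ and $K(x_1-x_2)=\rho\in(-1,0)$ one gets $\capa(\{x_1,x_2\})=2/(1+\rho)>2$. Use Claim~\ref{c:sa} instead: split the equilibrium measure of the union, drop the cross term (non-negative because $K\ge0$), and apply Cauchy--Schwarz.

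\textbf{Case (2).} Here there is a genuine gap, and it is more than an unverified check: your route lacks the mechanism that makes a dilation cheap.

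On the spectral side your target is $\int|\mathcal F[\nu'](s\lambda)|^2\,d\mu\le(1+o(1))\int|\mathcal F[\nu'](\lambda)|^2\,d\mu$. But $\mathcal F[\nu']$ varies on scale $1/T$, while the dilation moves $\lambda$ by $(1-s)|\lambda|\asymp T^{-\eps}|\lambda|$. So nothing controls the integrand once $|\lambda|\gg T^{\eps-1}$. There $\mu$ carries mass of order one, against a total energy of only $T^{-\alpha+o(1)}$. Fine structure of $\mu$ at frequencies $\gg1/T$ is exactly what the counterexample of Proposition~\ref{p:irreg} exploits. Averaging over dilations does not help by itself: by convexity it only bounds the energy of the average by the average of the dilated energies.

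The paper supplies three ingredients you are missing.
\begin{enumerate}
\item[(a)] With $S=T^{1-\eps}$, it first replaces $\mu$ by $\phi_S^2\mu$. Lemma~\ref{l:captrunc} gives $\capa_f(B(T))\ge\capa_{\phi_S^2\mu}(B(T-S))$, and monotonicity handles the other side, so only frequencies $|\lambda|\le S^{-1+\eps''}$ matter.
\item[(b)] It excises the inner ball $B(S')$, $S'=T^{1-\eps'}$, using \eqref{e:saext2}. For a radial equilibrium measure, with $K\ge0$ outside a compact set, the negative cross term is at most $cS'^{-(d-1)}$. This is $o(T^{-\alpha})$ precisely when $(1-\eps')(d-1)>\alpha$. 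This step is the correct form of your ``near-diagonal mass'' check, and it cannot work without removing a neighbourhood of the origin, where a radial measure may concentrate. Your error accounting is also reversed: since $1/\capa_f(B(T))\to0$, an $O(1)$ error is fatal, and every error must be $o(T^{-\alpha})$.
\item[(c)] The radial equilibrium measure of the annulus $B(T+S)\setminus B(S')$ satisfies $|\mathcal F[\nu](\lambda)|\lesssim\min\{1,(|\lambda|S')^{-(d-1)/2}\}$ by Proposition~\ref{c:3}. Dilating its outer shell by $(T-S)/(T+S)$ into $B(T-S)$ then changes the energy by at most a constant times $(S/S')\sum_{2^k\le S'/S^{1-\eps''}}2^{-k(d-2)}\mu[B(2^k/S')]$. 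This is $T^{-\alpha-\eps'+o(1)}$ for small $\eps',\eps''$ exactly because $\alpha<d-1$.
\end{enumerate}
Without (a)--(c), the comparison you propose in case (2) has no reason to hold.
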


\noindent The following example shows that \eqref{e:capcon} is not true in full generality: 
\begin{proposition}
\label{p:counterexample2}
For every $\alpha \in (0,1)$ there exists a stationary Gaussian process on $\R$ satisfying \eqref{e:blowup} such that \eqref{e:capcon} fails.
\end{proposition}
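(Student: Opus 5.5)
We construct an explicit example: a stationary Gaussian process on $\R$ with singularity of order $\alpha\in(0,1)$ whose capacity $\capa_f(B(T))$ jumps by a constant factor over scales $T\to T+T^{1-\eps}$ along a subsequence. The natural mechanism is a \emph{singular continuous} or discrete spectral component near the origin. Its covariance oscillates slowly, so adding a small amount of length to the interval changes the equilibrium problem a lot. Since \eqref{e:blowup} only constrains $\mu[B(\delta)]$ up to $\delta^{o(1)}$ factors, we have room to insert such oscillations.

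\textbf{Construction.} I would take $\mu=\mu_{a.c.}+\sum_k a_k(\delta_{\lambda_k}+\delta_{-\lambda_k})$. Here $\mu_{a.c.}$ has density $|\lambda|^{\alpha-1}$ near $0$. The atoms sit at frequencies $\lambda_k\to0$ extremely rapidly, say $\lambda_k=1/T_k$ with $T_{k+1}\ge e^{T_k}$, and have masses $a_k=\lambda_k^{\alpha}$. Then $\mu[B(\delta)]\asymp\delta^\alpha$, so \eqref{e:blowup} holds. The covariance is $K=K_{a.c.}+\sum_k 2a_k\cos(\lambda_k x)$. The term $2a_k\cos(x/T_k)$ is of the same order as $K_{a.c.}(T_k)\asymp T_k^{-\alpha}$, but it changes sign at $x=\pi T_k/2$.

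\textbf{Capacity estimates.} I use the dual formula \eqref{e:cap1}: $\capa_f(D)^{-1}=\min_{\nu\in\mathcal P(D)}\iint K\,d\nu\,d\nu$. The claim is that on intervals $[-T,T]$ with $T$ near the critical scale $cT_k$, a measure $\nu$ can be placed near the endpoints. Such a $\nu$ makes $\int\!\!\int \cos(\lambda_k(x-y))\,d\nu\,d\nu=|\hat\nu(\lambda_k)|^2$ nearly vanish, once $\lambda_k T$ reaches a threshold like $\pi/2$, so that $\cos(\lambda_k T)$ hits $0$. Before the threshold the atom contributes a positive amount $\gtrsim a_k$ for any $\nu$, since $\cos(\lambda_k(x-y))\ge c>0$ when $|x-y|\le 2T<\pi T_k$. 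So I would pick scales $T'_k<T''_k$ with $T''_k=T'_k+(T'_k)^{1-\eps}$ straddling the point where the atom's contribution can be killed. I would then show
\[ \capa_f(B(T''_k))\ge(1+c)\capa_f(B(T'_k)). \]
For the upper bound at $T'_k$, I test with uniform measure, or simply use positivity of the $k$-th term. For the lower bound at $T''_k$, I pick $\nu$ as a combination of the equilibrium measure of the a.c.\ part and a symmetric measure near $\pm T''_k$ tuned so that $\hat\nu(\lambda_k)=0$. The atoms $j<k$ are essentially constant on this scale, so they cost $\sum_{j<k} 2a_j$. Because of the super-exponential spacing, this sum is dominated by $a_{k-1}$, which is huge compared with $T_k^{-\alpha}$, and it is the same for both scales. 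The atoms $j>k$ are constant $\approx 2a_j$ and are negligible.

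\textbf{Main obstacle.} The difficulty is that the low-frequency atoms $j<k$ act as near-constants of size $a_{k-1}\gg a_k$. These swamp the relative change. To fix this, I would pass to a sub-scale regime or re-choose masses so that only one atom is active at each scale, e.g.\ $a_k$ decreasing slower than $\lambda_k^\alpha$ is not allowed by \eqref{e:blowup}. The cleaner choice is a single sign-changing atom per scale with the \emph{a.c.}\ part dominating the constant term. Realistically, I would aim for the following: the minimal energy is of order $T^{-\alpha}$ at scale $T$, and the atoms with $j<k$ contribute constants $\approx 2a_j$. These constants must be $\ll T_k^{-\alpha}$, which is impossible for atoms at lower frequency. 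So instead I would place atoms only at \emph{higher} frequencies, where $a_j$ is larger but oscillation makes $|\hat\nu(\lambda_j)|^2$ small. At the decisive scale, I would let the $k$-th atom, with $\lambda_kT\approx\pi/2$, be the single one that matters. Verifying this precise energy jump, uniformly and rigorously, is where I expect the main work.
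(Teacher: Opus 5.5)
There is a genuine gap: the process you propose is not a counterexample, because it in fact satisfies \eqref{e:capcon}. Take $\mu_{a.c.}$ an exact power law near $0$ and one pair of atoms $\pm\lambda_k=\pm 1/T_k$ per super-exponentially separated scale. Then for large $T$ at most one atom has $\lambda_kT$ of order one. The atoms with $\lambda_jT\to 0$ carry total mass $o(T^{-\alpha})$, and those with $\lambda_jT\to\infty$ contribute negligibly against near-optimal measures that are smooth on scale $T$. Rescaling $\nu=\nu_1(\cdot/T)$ gives $\capa_f(B(T))^{-1}=T^{-\alpha}\big(G(\lambda_kT)+o(1)\big)$, where, up to normalising constants,
\[G(u)=\min_{\nu_1\in\mathcal P(B(1))}\big\{E_\alpha[\nu_1]+2u^\alpha|\hat\nu_1(u)|^2\big\}.\]
Since $u\mapsto\hat\nu_1(u)$ is Lipschitz uniformly over $\nu_1\in\mathcal P(B(1))$, $G$ is continuous. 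It is bounded below by the pure Riesz value and tends to that value as $u\to0$ or $u\to\infty$. Replacing $T$ by $T+T^{1-\eps}$ multiplies $u$ by $1+O(T^{-\eps})$, so the capacity changes only by a factor $1+o(1)$.

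Concretely, both halves of your jump argument fail.
\begin{itemize}
\item Before the threshold, the atom does not contribute $\gtrsim a_k$ uniformly. $\cos(\lambda_k(x-y))$ can be negative for $|x-y|\le 2T$, and the correct bound $|\hat\nu(\lambda_k)|\ge\cos(\lambda_kT)$ degenerates continuously to $0$ as $\lambda_kT\uparrow\pi/2$.
\item After the threshold, forcing $\hat\nu(\lambda_k)=0$ requires concentrating $\nu$ near $\pm T$. Such a measure has a.c.\ energy of order $\|\mu_{a.c.}\|$, not $T^{-\alpha}$. So the a.c.\ component you insist on is exactly what rules out the singular test measure the mechanism needs.
\end{itemize}
Your ``main obstacle'' is also misdiagnosed: since $\lambda_k\downarrow 0$, the atoms with $j<k$ are at \emph{high} frequency and are not near-constant. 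Separately, an upper bound on $\capa_f(B(T'_k))$ cannot come from testing a measure in \eqref{e:cap1}, which only bounds capacity from below.

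The missing idea is this. A jump across an additive distance $s=o(T)$ needs spectral mass of order $T^{-\alpha}$ spread over frequencies up to $\asymp 1/s$. That mass must be arranged coherently, so that one test measure annihilates all of it at scale $T$ but not at scale $T-s$.

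The paper does this in Proposition \ref{p:irreg}, with $g_T=T^{1-\eps}$, using a \emph{purely atomic} spectrum. This is legitimate, since only \eqref{e:blowup} is required. It takes $\mu\propto\sum_i\mu_i$, where $\mu_i$ charges every odd multiple $\lambda$ of $1/T_i$ with weight $\asymp T_i^{-1}|\lambda|^{\alpha-1}$, smoothly cut off beyond frequency $\approx\eps/T_{i-1}$. The ratios $T_i/T_{i-1}$ are odd, so the lattices are nested.
\begin{itemize}
\item \emph{Lower bound at $B(T_i/4)$.} The two-point measure $\tfrac12(\delta_{-T_i/4}+\delta_{T_i/4})$ has transform $\cos(\pi\lambda T_i/2)$, which vanishes on all odd multiples of $1/T_i$. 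So it annihilates $\mu_1,\dots,\mu_i$; equivalently, every RKHS function $h$ of $\mu_1+\dots+\mu_i$ satisfies $h(x+T_i/2)=-h(x)$. Its energy is therefore at most $\sum_{j>i}\|\mu_j\|\lesssim\eps^\alpha T_i^{-\alpha}$, giving $\capa_f(B(T_i/4))\gtrsim\eps^{-\alpha}T_i^\alpha$.
\item \emph{Upper bound at a slightly smaller ball.} By Poisson summation (Corollary \ref{cc:4}), $\mu_i$ alone carries an RKHS function $h$. It is an alternating-sign sum of translates by $kT_i/2$ of $\id_{[-T_i/4,T_i/4]}$, smoothed at scale $T_{i-1}/\eps$. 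This $h$ is $\ge 1$ on $B(T_i/4-T_{i-1}/\eps)$ and has $\|h\|_H^2\lesssim T_i^\alpha$.
\end{itemize}
Taking $\eps$ small and $T_i$ growing fast enough that $T_{i-1}/\eps\le T^{1-\eps'}$ for $T=T_i/4-T_{i-1}/\eps$ gives a capacity ratio $\gtrsim\eps^{-\alpha}$ across a distance at most $T^{1-\eps'}$. This violates \eqref{e:capcon}. None of these ingredients appears in your proposal: the nested lattices reaching frequencies $\gg 1/T$, the annihilating two-point measure, or the absence of an a.c.\ part.
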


\subsubsection{A sufficient condition for origin dominance} We provide a convenient way to verify the origin dominance condition~\eqref{e:ds}: 

\begin{proposition}
\label{p:pos}
Assume \eqref{e:doub}, $\liminf\limits_{\delta \to 0} \mu[B(\delta)] \delta^{-d} > 0$, and $K$ is eventually positive. Then~\eqref{e:ds} holds.
\end{proposition}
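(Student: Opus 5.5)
We must show: under \eqref{e:doub}, $\liminf_{\delta\to0}\mu[B(\delta)]\delta^{-d}>0$, and $K$ eventually positive, there is $c>0$ with $\sup_u \mu[u+B(\delta)]\le c\,\mu[B(\delta)]$ for all $\delta>0$. We split into large and small $\delta$, and for small $\delta$ into frequencies $u$ near the origin and away from it.

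\emph{Large $\delta$.} Fix $\delta_0>0$. For $\delta\ge\delta_0$ we have $\mu[u+B(\delta)]\le\|\mu\|$ and $\mu[B(\delta)]\ge\mu[B(\delta_0)]>0$. Non-degeneracy gives $\mu\neq0$, and \eqref{e:doub} forces $\mu[B(\delta)]>0$ for all $\delta$: if some ball $B(\delta)$ were null, doubling would make every larger ball null. So the ratio is at most $\|\mu\|/\mu[B(\delta_0)]$, and it suffices to treat $\delta<\delta_0$.

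\emph{Small $\delta$, $u$ near the origin.} If $|u|\le 2\delta$, then $u+B(\delta)\subset B(3\delta)\subset B(4\delta)$. Two applications of \eqref{e:doub} give
\[
\mu[u+B(\delta)]\le c^2\mu[B(\delta)].
\]

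\emph{Small $\delta$, $u$ away from the origin.} Here we use positivity of $K$, following the standard Fej\'er-kernel argument. Choose a non-negative Schwartz function $\phi$ with $\phi\ge1$ on $B(1)$ and $\hat\phi$ supported in a compact set, for instance $\phi=|\check\psi|^2$ with $\psi$ a smooth bump, suitably normalised. Then
\[
\mu[u+B(\delta)]\le\int\phi\big((\lambda-u)/\delta\big)\,d\mu(\lambda)=\delta^d\int e^{i\langle u,x\rangle}\,\hat\phi(\delta x)\,K(x)\,dx .
\]
On $\Z^d$ the integral is a sum, and we restrict to $\delta<\pi$ so that the torus periodisation is harmless. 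Take absolute values and split $K=K_+-K_-$, where $K_-$ is supported in a fixed ball $B(R)$ because $K$ is eventually positive. This gives
\[
\mu[u+B(\delta)]\le\delta^d\int|\hat\phi(\delta x)|\,K(x)\,dx+2\,\delta^d\,\|\hat\phi\|_\infty\int_{B(R)}|K| .
\]

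\emph{The main obstacle} is controlling the first integral by $\mu[B(\delta)]$ despite $|\hat\phi|$ not being a positive-definite weight. I would pick $\phi$ so that $|\hat\phi|\le\hat\chi$, where $\hat\chi\ge0$ is a non-negative function whose inverse transform $\chi$ is itself non-negative and supported in a ball $B(C)$. Take $\hat\chi$ to be a Fej\'er-type kernel, $\hat\chi=|\hat\psi|^2$ with $\psi$ a compactly supported bump. Then, again using $K\ge0$ outside $B(R)$,
\[
\delta^d\int\hat\chi(\delta x)K(x)\,dx=\int\chi\big(\lambda/\delta\big)\,d\mu(\lambda)\le\|\chi\|_\infty\,\mu[B(C\delta)]\le c'\mu[B(\delta)],
\]
where the last step uses \eqref{e:doub} finitely many times. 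The negative part of $K$ on $B(R)$ only contributes an error of order $\delta^d$.

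\emph{The remaining error term.} Both error terms are $O(\delta^d)$. The hypothesis $\liminf_{\delta\to0}\mu[B(\delta)]\delta^{-d}>0$ gives $\delta^d\le C''\mu[B(\delta)]$ for all $\delta<\delta_0$. Combining the three cases yields \eqref{e:ds}. This is exactly where that hypothesis is needed: without it, the local negativity of $K$ could let a shifted ball beat the origin.
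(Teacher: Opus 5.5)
Your proof is correct and follows essentially the same route as the paper. You majorise the indicator of $u+B(\delta)$ by a non-negative function with non-negative, compactly supported Fourier transform, apply Parseval together with $|e^{i\langle u,x\rangle}|\le 1$, split off the negative part of $K$ on a fixed ball as an $O(\delta^d)$ error absorbed by the $\liminf$ hypothesis, and compare the remaining $u=0$ term with $\mu[B(\delta)]$ using \eqref{e:doub}. The auxiliary majorant $\hat\chi$ is unnecessary, and you assert rather than construct the bound $|\hat\phi|\le\hat\chi$ (it is easy to arrange): with $\psi\ge0$ your $\hat\phi$ is the autocorrelation of $\psi$ and hence already non-negative, and the paper then bounds $\int\phi(\lambda/\delta)\,d\mu(\lambda)$ directly by summing the rapid decay of $\phi$ over dyadic shells against \eqref{e:doub}.
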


\subsubsection{Generalisations}
\label{ss:gen2}
Our results can be generalised in several directions with minimal change to the proofs.

\smallskip
\noindent \textit{General levels.} One can consider persistence $f|_{B(T)} \ge u_T$ above levels $u_T$ satisfying
\[     \frac{u_T}{\sqrt{\log T}} \to u' \in \big(  -\sqrt{2m(d- \alpha^+)} , \infty \big)   \quad \text{as } T \to \infty. \]
In that case one replaces $m(d-\alpha)$ in Theorems \ref{t:main} and \ref{t:er} with $(\sqrt{2m (d-\alpha)} + u')^2    /2$,
and similarly for $m^+ (d-\alpha^-)$ and $m^-(d-\alpha^+)$ in Theorem \ref{t:bounds}. See also \cite{bdz95} where an analogous generalisation was obtained for the GFF.

\smallskip
\noindent \textit{General domains.} One can consider persistence $f|_{TD} \ge 0$ on rescaled domains $TD$, where $D \subset \R^d$ is a compact domain. In that case one replaces $B(T + s)$ with the $s$-neighbourhood of $TD$ in the sup-norm, and Theorems \ref{t:main}, \ref{t:er} and \ref{t:bounds} remain valid. However our proof of Theorems \ref{t: univ pro} and \ref{t: univ ent} and Proposition \ref{p:suffcapcon} are not valid in full generality (for the former, one could take $D$ Lipschitz, but in the latter we use that $D = B(1)$ is radial).

\subsubsection{Open problems}
\label{ss:oq}

We propose two sets of open problems concerning persistence of processes with spectral singularity.

\smallskip
\noindent \textit{Relaxing the conditions.} It would be appealing to relax the conditions in our main results:

\begin{question}
Do the conclusions of Theorems \ref{t:main} and \ref{t:er} hold assuming only \eqref{e:blowup} and \eqref{e:sing}? 
\end{question}
\begin{question}
Does  $\lp_\mu(T) \asymp  \capa_f\big(B(T)\big) \log T$ hold assuming only that $\alpha^+ < d$?
\end{question}

It would also be interesting to investigate the case of marginal spectral singularity, that is if  $\mu[B(\delta)] \delta^{-d} \to \infty$ and $\mu[B(\delta)] \delta^{-d} = \delta^{o(1)}$, so that $\alpha^+ = d$. Suppose for concreteness that $\mu$ has a density and $\mu[B(\delta)] \delta^{-d} \sim (\log 1/\delta)^{\beta}$, $\beta > 0$. Then we expect that $\capa_f(B(T)) \sim c'_{\beta,d} T^d (\log T)^{-\beta}$, and by the same heuristics as in Section~\ref{ss:heu}, persistence is governed by entropic repulsion to level $\ell_T = \sqrt{2 m \beta \log \log T }$, where $m = \|\mu\|$. This suggests that
\begin{equation}
    \label{e:alphad}
  \theta_\mu(T) \sim c'_{\beta,d}  m \beta T^d (\log T)^{-\beta} (\log  \log T)   .
\end{equation}

\begin{question}
Are the asymptotics in \eqref{e:alphad} correct?
\end{question}

For technical reasons our proof does not extend to marginal singularities, since $\ell_T \ll \sqrt{\log T}$ affects our ability to do spectral approximations.

\smallskip
\noindent \textit{Local entropic repulsion.} A natural extension of Theorem \ref{t:er} would be to investigate the mean and fluctuations of the conditioned field $\tilde{f}_T$ on local scales (i.e.\ studying the point-wise behaviour, rather than averaging over test functions). This has been thoroughly studied in the case of the GFF, using tools specific to that setting: 

\begin{theorem}[\cite{bdz95,dg99}]
For the GFF, as $T \to \infty$,
\begin{equation}
    \label{e:erlocal}
\sup_{x \in B(T)} \Big| \frac{\E[ \tilde{f}_T(x)  ] }{ \sqrt{2m(d-\alpha)\log T}}  - h_T(x)  \Big| \to 0 
\end{equation}
(recall that in this case $h_T|_{B(T)} = 1$ and $\alpha = d-2$). Moreover,
\begin{equation}
    \label{e:fluct}
\tilde{f}_T - \E[\tilde{f}_T] \ \Rightarrow  \   f  \quad \text{ in law.} 
\end{equation}
\end{theorem}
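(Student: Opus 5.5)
The plan is to exploit two structural features that are specific to the GFF and are unavailable for general SGFs: the \emph{domain Markov property} and the \emph{FKG (Holley) monotonicity} of the conditioned measure. Since $\alpha = d-2$ and $h_T|_{B(T)} = 1$, the target level in \eqref{e:erlocal} is $\ell_T := \sqrt{4 m \log T}$ with $m = G(0)$, the Green's function at the origin. Throughout we use the standard fact that the law of $\tilde f_T$ is increasing in the wall and in boundary data. This holds because the hard-wall conditioned GFF is a Gibbs measure with log-concave nearest-neighbour interaction, so Holley's criterion applies.

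\textbf{Step 1: macroscopic input.} First I would extract from Theorem~\ref{t:er}, specialised to the GFF (or directly from Theorem~\ref{t:main}), that for every $\delta>0$ the conditioned field has average height at least $(1-\delta)\ell_T$ on all macroscopic sub-boxes with probability tending to one. I would then upgrade this to the matching upper statement for the averages.

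\textbf{Step 2: pointwise lower bound via FKG.} Fix $x \in B(T)$ and a mesoscopic box $Q = B(x, T^{\eps})$ with $\eps>0$ small. Condition on the configuration on $\partial Q$. By the Markov property, inside $Q$ the field is the harmonic extension $H_Q$ of the boundary data plus an independent zero-boundary GFF $\phi_Q$, conditioned on $H_Q + \phi_Q \ge 0$ in $Q \cap B(T)$.

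The key observation is that $\inf_Q \phi_Q \approx -\sqrt{2 m d \eps \log T}$, which is $\ll \ell_T$ once $d\eps < 2$. Hence, as soon as $H_Q \ge (1-\delta)\ell_T$ on $Q$, the wall constraint is satisfied with probability $1-o(1)$, and the conditioning is asymptotically irrelevant. It therefore suffices to show that $H_Q(x) \ge (1-\delta)\ell_T$ with high probability, uniformly in $x$.

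I would do this using the averaging from Step 1 together with monotonicity. Replacing the boundary data by its lower envelope only lowers $\tilde f_T$. Moreover, a harmonic function at $x$ is an average over $\partial Q$ with harmonic measure, which is spread out, so a few low boundary values cannot drag it down. For the corresponding upper bound I would compare with the unconditioned GFF shifted by $\ell_T(1+\delta)$, again using Holley. Boundary points of $B(T)$ are treated separately: there the wall only acts on one side, and $h_T$ still equals $1$ there, so the same box argument applies with $Q$ intersected with $B(T)$.

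\textbf{Step 3: fluctuations.} On the high-probability event from Step 2, the field inside $Q$ equals $H_Q + \phi_Q$ with the wall nonbinding. The fluctuation $H_Q(y) - H_Q(x)$ for $|y-x| = O(1)$ vanishes as $T\to\infty$, since harmonic extensions from distance $T^{\eps}$ are nearly constant on bounded scales. Meanwhile $\phi_Q$ converges locally to the full-space GFF $f$, because $G_Q \to G$ as $T^\eps \to \infty$ in $d\ge 3$.

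Next, $\E[\tilde f_T(x)] = \E[H_Q(x)] + o(1)$ after controlling the bad event. On the bad event I would use a crude Gaussian tail bound on $\tilde f_T$, which is available via Brascamp--Lieb. Combining these gives \eqref{e:fluct} together with a local version of \eqref{e:erlocal}.

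\textbf{Main obstacle.} The hardest step is making Step 2 \emph{uniform} in $x \in B(T)$, in particular showing that $H_Q(x)$ cannot be atypically low at a few exceptional sites. What I would try first is the Bolthausen--Deuschel--Zeitouni approach. Lower bound the probability of the event $\{\tilde f_T(x) \le (1-\delta)\ell_T\}$ at a single site by a cost, computed via FKG and the persistence asymptotics of Theorem~\ref{t:main}. Show that this cost is polynomially large in $T$, so that a union bound over the $T^d$ sites closes. Near $\partial B(T)$ I expect this requires the separate boundary estimate, using that outside $B(T)$ the field is unconstrained.
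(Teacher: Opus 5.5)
The paper gives no proof of this statement. It is quoted from \cite{bdz95,dg99} as background, and its extension beyond the GFF is left open. Judged on its own, your plan has a genuine gap. It sits exactly at the step you call the main obstacle, and that step is essentially the theorem itself.

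\textbf{The mesoscopic step.} Theorem~\ref{t:er} controls only fixed macroscopic test functions, so it says nothing about the harmonic average $H_Q(x)$ over a particular sphere of radius $T^{\eps}$. The remedy you propose aims at a false statement. By the general-level form of Theorem~\ref{t:main} (Section~\ref{ss:gen2}), forcing $f\ge(1-\delta)\ell_T$ on all of $B(T)$ costs $(2-\delta)^2$ times as much as persistence. Hence, under the conditioning, $\tilde f_T$ drops below $(1-\delta)\ell_T$ somewhere with probability tending to one; heuristically this happens at about $T^{d-2\delta^2}$ sites, each of probability $T^{-2\delta^2+o(1)}$. So no per-site bound $o(T^{-d})$ can hold, and the union bound cannot close. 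If you redirect the union bound to $H_Q(x)$, fluctuations are not the issue. The conditioned law is a Gaussian restricted to a convex set, so Brascamp--Lieb gives $\var(H_Q(x))\le\var_{\mathrm{GFF}}(H_Q(x))=O(T^{-\eps(d-2)})$. What is missing is the \emph{value} of $\E[H_Q(x)]=\sum_z\mathrm{hm}_Q(x,z)\,\E[\tilde f_T(z)]$, that is, \eqref{e:erlocal} at scale $T^{\eps}$, uniformly in $x$. Theorem~\ref{t:main} cannot supply this. Its error term $o(T^{d-2}\log T)$ swamps the cost of a mesoscopic dent, which is of order $\delta^2T^{\eps(d-2)}\log T$.

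\textbf{Lower bound, upper bound, and Step~3.} For the lower half of \eqref{e:erlocal}, your own tools suffice if used differently. Given $f|_{Q^c}$, the conditioned field in $Q$ is $H_Q+\phi_Q$ conditioned on an increasing event, so Harris--FKG gives $\E[\tilde f_T(x)\mid f|_{Q^c}]\ge H_Q(x)$; that is, $x\mapsto\E[\tilde f_T(x)]$ is superharmonic. Take $Q$ macroscopic, average over radii, and use Theorem~\ref{t:er} plus the Brascamp--Lieb bound to pass from convergence in probability to convergence of means. This transfers the macroscopic lower bound to single sites with no union bound, with some care needed for uniformity in $x$.

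The upper half is the real difficulty, and your argument for it fails. Holley's criterion cannot compare a hard-wall measure with the unconditioned shifted GFF, and the latter does not dominate $\tilde f_T$: for $x\in B(T)$, $\P[\tilde f_T(x)\ge0]=1$. Moreover, $(1+\delta)\sqrt{4m\log T}<\sqrt{2dm\log T}$ for $d\ge3$ and small $\delta$, so the shifted field still violates the wall at about $T^{d-2(1+\delta)^2}$ sites. Even the legitimate comparison (wall kept, boundary value $(1+\delta)\ell_T$) leaves you to prove that the wall raises the mean by only $o(\sqrt{\log T})$, uniformly in $x$. A superharmonic mean profile can carry local spikes that are invisible both to macroscopic averages and to energy bounds. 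Genuinely local control of where the wall is active is therefore required.

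Finally, Step~3 needs two inputs that Step~2 does not provide. Near $\partial Q$ the function $H_Q$ follows individual boundary values, so for the wall to be nonbinding you need a lower bound on $\min_{\partial Q}\tilde f_T$, not only on $H_Q(x)$. In addition, \eqref{e:fluct} requires $H_Q(x)-\E[H_Q(x)]\to0$. Both follow from Brascamp--Lieb concentration once the mean is under control.
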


We expect \eqref{e:erlocal} and \eqref{e:fluct} to be true in much wider generality:

\begin{question}
Does \eqref{e:erlocal} hold under the conditions of Theorem~\ref{t:er}? Does it hold assuming only \eqref{e:blowup} and \eqref{e:sing}? Can the domain in \eqref{e:erlocal} be extended to the entire space?
\end{question}

\begin{question}
Does \eqref{e:fluct} hold assuming only that $\mu$ has a density with a singularity at the origin (i.e.\ $\mu[B(\delta)]\delta^{-d} \to \infty$)?
\end{question}

\subsection{Outline of the paper}
In Section \ref{s:prelim} we collect preliminary results which underpin the proof, including properties of the capacity and of the infimum of stationary Gaussian processes. In Sections \ref{s:proof} and \ref{s:er} we complete the proof of our main results, namely the persistence bounds in Theorem~\ref{t:bounds} and the entropic repulsion estimates in Theorem \ref{t:er} respectively. In Section \ref{s:rv} we prove our universality results, namely Theorems \ref{t: univ pro} and \ref{t: univ ent}. In Section \ref{s:ca} we study the regularity of the capacity, in particular proving Propositions \ref{p:suffcapcon} and \ref{p:counterexample2}. In Section \ref{s:sc} we study the effect of singular continuous measures, establishing Proposition \ref{p:counterlim}. Finally, Appendix~\ref{a:capacity} contains a proof of the equivalence of the dual definitions \eqref{e:cap2} and \eqref{e:cap1}  of the capacity, Appendix~\ref{a:taub} gives sufficient conditions for a regularly varying spectral singularity, and Appendix \ref{a:explicit} records explicit forms for the Riesz capacity and equilibrium potential of the unit ball.

\subsection*{Acknowledgements}
N.F.\ and O.F.\ are supported by the Israel Science Foundation grant 3541/24. S.M.\ is supported by the Australian Research Council (ARC) Future Fellowship FT240100396. A large part of this work was carried out while S.M.\ was a Research Fellow at the University of Melbourne, supported by the ARC Discovery Early Career Researcher Award DE200101467. S.M.\ also acknowledges the hospitality of the Statistical Laboratory, University of Cambridge, where part of this work was carried out.

%%%%%%%%%%%%
%%%%%%%%%
 \medskip

 \section{Preliminary results}
 \label{s:prelim}

 In this section we collect useful preliminary results. As before, $f$ is a continuous centred non-degenerate SGF on either $\R^d$ or $\Z^d$; we refer to the former as the `continuous case' and the latter as the `discrete case'. The covariance kernel of $f$ is denoted $K$ and its spectral measure $\mu$. A function, measure or set on $\R^d$ or $\Z^d$ is \textit{Hermitian} if it is symmetric with respect to reflection through the origin. A function, measure or set on $\R^d$ is \textit{radial} if it depends only on the distance from the origin. The density of a (signed) measure on $\R^d$ or the torus is always taken with respect to the Lebesgue measure, whereas a density on $\Z^d$ is taken respect to the counting measure on $\Z^d$.  For a (signed) measure $\nu$ we write $\|\nu\|_\infty$ and $\|\nu\|_{L^k}$ to denote respectively the sup-norm and $L^k$-norm of its density, assuming it exists.
 
\subsection{The Fourier transform}
\label{ss:fp}

\subsubsection{Standard facts}
We use the convention 
\[ \mathcal{F}[ f ](\lambda) = \int e^{-2 \pi i \langle \lambda,x \rangle} f(x) \,dx \]
to define the Fourier transform $\mathcal{F}$ of a function $f$ on $\R^d$ or $\Z^d$. The following are well-established facts which we state here without proof.
 
 \begin{proposition}
[Low frequencies]
 \label{c:2}
 There exists a $c_d > 0$ such that, for every Hermitian probability measure $\nu \in \mathcal{P}(B(c_d))$, $\mathcal{F}[\nu](x) \ge 1/2$ for all $x \in B(1)$.
 \end{proposition}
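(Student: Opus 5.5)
The plan is a direct estimate on the real part of the Fourier transform. Since $\nu$ is Hermitian, $\mathcal{F}[\nu]$ is real-valued:
\[ \mathcal{F}[\nu](x) = \int \cos(2\pi\langle \lambda, x\rangle)\, d\nu(\lambda), \]
because the imaginary parts $\sin(2\pi\langle\lambda,x\rangle)$ cancel under the symmetry $\lambda\mapsto-\lambda$. If we only knew that $\nu$ is a probability measure, we would still have $\mathcal{F}[\nu](x) = \int \cos(\cdot)\,d\nu$ after taking real parts. So the statement reduces to a uniform lower bound on the cosine over the relevant range.

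For $x\in B(1)$ and $\lambda\in B(c_d)$, Cauchy--Schwarz gives $|2\pi\langle\lambda,x\rangle|\le 2\pi c_d$. Choose $c_d = 1/6$, so that $2\pi c_d = \pi/3$. Then $\cos(2\pi\langle\lambda,x\rangle)\ge \cos(\pi/3)=1/2$ for every $\lambda$ in the support of $\nu$, since cosine is even and decreasing on $[0,\pi]$. Integrating against the probability measure $\nu$ yields $\mathcal{F}[\nu](x)\ge 1/2$.

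The same argument covers the discrete case. There $\nu$ lives on the torus $(\R/\Z)^d$, identified with $[-1/2,1/2)^d$, and $x\in\Z^d\cap B(1)$. Since $c_d = 1/6 < 1/2$, the ball $B(c_d)$ embeds in the fundamental domain, and the bound $|\langle\lambda,x\rangle|\le|\lambda||x|$ holds for the representative in $[-1/2,1/2)^d$. The phase $e^{-2\pi i\langle\lambda,x\rangle}$ is well defined for integer $x$, so nothing changes.

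There is no real obstacle. The only point needing care is that the Fourier convention includes the factor $2\pi$, which dictates the choice $c_d=1/6$; in particular the constant can be taken independent of $d$.
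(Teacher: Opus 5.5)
Your argument is correct. Since $\nu$ is Hermitian, $\mathcal{F}[\nu](x)=\int\cos(2\pi\langle\lambda,x\rangle)\,d\nu(\lambda)$, and taking $c_d=1/6$ gives $|2\pi\langle\lambda,x\rangle|\le\pi/3$ on the support, hence the bound $1/2$. The paper lists this among well-established facts stated without proof, so there is nothing to compare against, and your cosine estimate is the standard justification. It also gives by scaling the version with $\nu\in\mathcal{P}(B(c_d/T))$ and $x\in B(T)$, which is the form used in Lemma~\ref{l:capbounds}.
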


 \begin{proposition}
     [Radial measures]
 \label{c:3}
Let $0  < T \le T'$, and let $\nu \in \mathcal{P}(B(T') \setminus B(T))$ be radial. Then there exists a $c_d > 0$ such that
\[  |\mathcal{F}[\nu](x)| \le  c_d (1 + |x|T)^{-(d-1)/2} ,  \]
and more generally if $k$ is a multi-index,
\[  |\partial^k \mathcal{F}[\nu](x)| \le c_d (T')^{|k|} ( 1 + |x|T)^{-(d-1)/2}    . \]
 Moreover, 
\[  |\mathcal{F}[\upsilon_{B(T)}](x)| \le  c_{d} (1 + |x|T)^{-(d+1)/2},\]
where $\upsilon_{B(T)}$ denotes the uniform measure on $B(T)$.
\end{proposition}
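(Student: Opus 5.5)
We prove each bound by reducing to explicit Bessel-function formulas for uniform measures on spheres and balls.

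\textbf{Setup.} Since $\nu$ is radial and supported in the annulus $B(T')\setminus B(T)$, it is a mixture of normalised surface measures:
\[ \nu=\int_{[T,T']}\sigma_r\,d\bar\nu(r), \]
where $\sigma_r$ is the uniform probability measure on the sphere $\{|y|=r\}$ and $\bar\nu$ is the radial marginal, a probability measure on $[T,T']$. By linearity, $\mathcal F[\nu]=\int \mathcal F[\sigma_r]\,d\bar\nu(r)$. It therefore suffices to prove each bound for a single $\sigma_r$ with $r\ge T$, uniformly in $r$.

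\textbf{First bound.} We use the classical formula
\[ \mathcal F[\sigma_r](x)=\Gamma(\tfrac d2)\,(\pi r|x|)^{-(d-2)/2}\,J_{(d-2)/2}(2\pi r|x|). \]
Combine it with two standard Bessel estimates for $\nu_0\ge -1/2$:
\begin{itemize}
\item near the origin, $|t^{-\nu_0}J_{\nu_0}(t)|\le C$;
\item at infinity, $|J_{\nu_0}(t)|\le C t^{-1/2}$.
\end{itemize}
Together these give $|\mathcal F[\sigma_r](x)|\le c_d(1+r|x|)^{-(d-1)/2}$. Since $r\ge T$, this is at most $c_d(1+T|x|)^{-(d-1)/2}$. (For $d=1$ the bound is trivial, as $|\mathcal F[\nu]|\le 1$.)

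\textbf{Uniform measure on $B(T)$.} We use the analogous formula
\[ \mathcal F[\upsilon_{B(T)}](x)=c_d\,(T|x|)^{-d/2}\,J_{d/2}(2\pi T|x|). \]
This can also be obtained by integrating the sphere formula against the radial density $\propto r^{d-1}$ and applying the identity $\frac{d}{dt}\big(t^{\nu_0+1}J_{\nu_0+1}\big)=t^{\nu_0+1}J_{\nu_0}$. The same two Bessel estimates then give decay $(1+T|x|)^{-(d+1)/2}$.

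\textbf{Derivatives: the main obstacle.} Differentiating under the integral gives
\[ \partial^k\mathcal F[\sigma_r](x)=\mathcal F\big[(-2\pi i y)^k\sigma_r\big](x). \]
The weighted measure $(-2\pi i y)^k\sigma_r$ is no longer radial, so the Bessel formula does not apply directly.

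Writing $y=r\omega$ with $\omega$ on the unit sphere, we factor out $r^{|k|}\le (T')^{|k|}$. We then decompose the restriction of the monomial $\omega^k$ to the unit sphere into finitely many spherical harmonics $Y_l$ of degree $l\le|k|$, with coefficients depending only on $k$ and $d$.

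For each harmonic, the Funk--Hecke formula gives
\[ \mathcal F[Y_l\,\sigma_r](x)=c_{l,d}\,Y_l\big(x/|x|\big)\,(r|x|)^{-(d-2)/2}\,J_{l+(d-2)/2}(2\pi r|x|). \]
This obeys the same bound $C(1+r|x|)^{-(d-1)/2}$, by the same two Bessel estimates (now with order $\nu_0=l+(d-2)/2$). Summing finitely many terms and integrating in $r$ gives
\[ |\partial^k\mathcal F[\nu](x)|\le c_{d,k}\,(T')^{|k|}\,(1+|x|T)^{-(d-1)/2}. \]

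If one prefers to avoid harmonic analysis, an alternative is stationary phase on the sphere with the smooth amplitude $\omega^k$. This yields the same $t^{-(d-1)/2}$ decay, with constants depending on finitely many derivatives of the amplitude.
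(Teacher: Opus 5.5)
Your proof is correct. The reduction to uniform measures on spheres $\{|y|=r\}$ with $r\ge T$, the explicit Bessel formulas for $\mathcal{F}[\sigma_r]$ and $\mathcal{F}[\upsilon_{B(T)}]$, and the Funk--Hecke treatment of the non-radial weights $\omega^k$ all go through. The paper gives no proof of this proposition (it lists it among ``well-established facts which we state here without proof''), and your argument is the standard derivation it relies on.

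As your notation $c_{d,k}$ indicates, the constant in the derivative bound must depend on $k$ as well as $d$. Already for $d=1$ and $\nu=\tfrac12(\delta_{T'}+\delta_{-T'})$ one has $|\partial^k\mathcal{F}[\nu](0)|=(2\pi T')^{|k|}$ for even $k$. This is how the statement's $c_d$ should be read.
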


\subsubsection{Simultaneous truncation and mollification}
The following standard construction of \textit{simultaneous}  mollification and truncation in the spatial and spectral domains respectively is fundamental to our approach.
For $a,b>0$ consider the smooth radial `bump' function 
\[ \xi'_{a,b}(x)=\xi'(x) =  \exp \left( -\frac{b}{\big(1-4|x|^2 \big)^a} \right) \id_{B(1/2)}(x),\]
and let
\[\xi_{a,b} = \xi = (\xi' * \xi') \varphi,\]
where 
$\varphi$ is the standard Gaussian density, and $*$ denotes convolution.
Some properties of this construction are summarised in the next proposition, which is standard to verify (see~\cite{joh15}).

\begin{proposition}
 [Simultaneous truncation and mollification]
\label{c:1}
For every $v \in (0,1)$ there exist 
$a,b>0$ such that the function $\xi=\xi_{a,b}$ and its Fourier transform $\zeta=\F[\xi]$ obey the  following:
\begin{itemize}
\item $(\xi,\zeta)$ are a Fourier transform pair, i.e.\ $\zeta = \mathcal{F}[\xi]$ and $\xi = \mathcal{F}[\zeta]$;
\item  $\xi , \zeta \in L^1(\R^d) \cap C^\infty(\R^d)$;
\item $\xi$ and $\zeta$ are radial, non-negative, and satisfy 
$\xi(0) = \zeta(0) = 1$ (consequently, $\xi(t)\le 1$ and $\zeta(t)\le 1$ for all $t$);
\item $\xi$ is supported on $B(1)$;
\item $\zeta(x) > 0$, and for every multi-index $k$ there exists $c_k > 0$ such that $| \partial^k \zeta(x)| \le c_k e^{-|x|^v}$.
\end{itemize}
\end{proposition}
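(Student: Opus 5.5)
The final statement is Proposition~\ref{c:1}, the simultaneous truncation and mollification construction. The plan is to derive each property from the structure $\xi = (\xi'*\xi')\varphi$, one bullet at a time.

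\emph{Transform pair, regularity, support.} The function $\xi'$ is smooth and compactly supported in $B(1/2)$, so $\xi'*\xi'$ is smooth and supported in $B(1)$. Multiplying by $\varphi$ preserves both properties, so $\xi\in C_c^\infty$ with support in $B(1)$. Hence $\zeta=\F[\xi]$ is Schwartz, in particular in $L^1\cap C^\infty$. Since $\xi$ is radial and real, it is even, and Fourier inversion gives $\xi=\F[\zeta]$.

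\emph{Radial symmetry, positivity, normalisation.} Radiality of $\xi$ and $\zeta$ is immediate, and $\xi\ge 0$ is clear. For $\zeta$, write
\[ \zeta = \F[\xi'*\xi']*\F[\varphi] = \big(\F[\xi']\big)^2 * \varphi , \]
where $\F[\xi']$ is real because $\xi'$ is even, and $\F[\varphi]=\varphi$ with the convention in use. A real square is non-negative, and convolving a non-negative, not identically zero function with the strictly positive Gaussian gives $\zeta(x)>0$ everywhere. For the normalisation I will rescale by constants, replacing $\xi$ by $\xi/\xi(0)$. The constraint $\zeta(0)=1$ then requires $\int\xi=\xi(0)$, which can be arranged by a dilation $\xi'\mapsto\xi'(\cdot/s)$ combined with $\varphi\mapsto$ an appropriately scaled Gaussian. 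If this is not literally compatible with the fixed form, the normalisation is adjusted by a multiplicative constant and the dilation $x\mapsto cx$; this degree of freedom is a routine rescaling that I would take for granted, as the paper does. The bounds $\xi\le 1$ and $\zeta\le 1$ then follow from $|\F[g]|\le\int g=\F[g](0)$ for $g\ge 0$, applied once in each direction.

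\emph{Decay of $\zeta$: the main obstacle.} The real content is the bound $|\partial^k\zeta(x)|\le c_k e^{-|x|^v}$ for the given $v<1$. For compactly supported bumps of the form $\exp(-b(1-4|x|^2)^{-a})$, the classical result (Johnson~\cite{joh15}, ultimately via Gevrey / Denjoy--Carleman type estimates) is that the Fourier transform decays like
\[ \exp\!\big(-c|\lambda|^{a/(a+1)}\big). \]
The proof obtains Gevrey bounds $\|\partial^n\xi'\|_\infty\le C^n (n!)^{1+1/a}$ by Cauchy estimates on complex polydiscs near the boundary singularity, then optimises $|\lambda|^{-n}\|\partial^n\xi'\|_{L^1}$ over $n$. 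I would therefore choose $a$ large enough that $a/(a+1)>v$, giving $|\F[\xi'](\lambda)|\le Ce^{-2|\lambda|^{v'}}$ with $v'>v$.

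\emph{Transferring the decay to $\zeta$ and its derivatives.} Squaring keeps decay of the same form. Convolving with $\varphi$ preserves decay of the form $e^{-c|x|^{v'}}$: split the convolution integral into $|y|\le|x|/2$ and its complement, and use subadditivity of $t\mapsto t^{v'}$ together with the Gaussian tail. This loses only constants and lets $v'$ absorb them, leaving $e^{-|x|^v}$. Derivatives fall on $\varphi$ in the convolution, $\partial^k\zeta=(\F[\xi'])^2*\partial^k\varphi$, and $\partial^k\varphi$ is a polynomial times a Gaussian, so the same splitting applies. The parameter $b$ only enters the constants.
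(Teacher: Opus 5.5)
\textbf{Most of the argument is fine.} Your treatment of support, smoothness, the transform pair and radiality is correct. So is strict positivity via $\zeta=(\F[\xi'])^2*\F[\varphi]$. The decay bound is also correct: Gevrey estimates for $e^{-b(1-4|x|^2)^{-a}}$ give Fourier decay $e^{-c|\lambda|^{a/(a+1)}}$, you choose $a$ with $a/(a+1)>v$, and then pass through the square and the Gaussian convolution. This is the standard argument the paper has in mind; the paper gives no proof beyond calling the construction standard and citing Johnson. One small slip: with the convention $\F[g](\lambda)=\int e^{-2\pi i\langle\lambda,x\rangle}g(x)\,dx$ one has $\F[\varphi](\lambda)=e^{-2\pi^2|\lambda|^2}\neq\varphi$. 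This is harmless, since it is still a strictly positive Gaussian.

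\textbf{The normalisation step fails, and no rescaling can fix it.} Under the paper's convention $\zeta(0)=\int\xi$. Combine $\varphi(x)<\varphi(0)$ for $x\neq 0$, $\int(\xi'*\xi')=(\int\xi')^2$, $(\xi'*\xi')(0)=\int(\xi')^2$, and Cauchy--Schwarz on $B(1/2)$:
\[
\int\xi<\varphi(0)\Big(\int\xi'\Big)^2\le\vol\big(B(1/2)\big)\,\varphi(0)\int(\xi')^2=\vol\big(B(1/2)\big)\,\xi(0)\le\xi(0).
\]
None of the rescalings you propose changes this:
\begin{itemize}
\item Multiplying by a constant leaves the ratio $\int\xi/\xi(0)$ unchanged.
\item Rescaling $\xi'$ (keeping the support of $\xi'*\xi'$ in $B(1)$) or the Gaussian gives the same bound.
\item A dilation $\xi(\cdot/s)$ multiplies the ratio by $s^d$, so raising it to $1$ forces $s>1$ and pushes the support outside $B(1)$.
\end{itemize}
Hence $\zeta(0)<\xi(0)$ is unavoidable, and the clause $\xi(0)=\zeta(0)=1$ of the statement cannot be met. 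For $d\ge 13$ it fails for every non-negative $\xi$ supported in $B(1)$ with $\zeta\ge 0$, since then $\xi\le\xi(0)$ and $\int\xi\le\vol(B(1))\,\xi(0)<\xi(0)$.

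\textbf{What to do instead.} Rather than appealing to a \emph{routine rescaling}, normalise only $\zeta(0)=\int\xi=1$. This gives $0<\zeta\le 1$ and makes $\F[\phi_s]$ a probability density supported in a ball, which is what the later arguments use (Lemmas~\ref{l:captrunc} and~\ref{l:trunc}). The bound $\xi\le 1$ must then be weakened to $\xi\le\xi(0)$. This only changes constants, e.g.\ the value of $\|\F[\phi_T]\|_\infty$ in the proof of Proposition~\ref{p:pos}.
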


For $v\in [0,1]$, we fix $(\xi,\zeta)$ to be a pair of functions whose existence is given by Proposition~\ref{c:1}. The \textit{spectral truncation function} with parameter $v$ is defined by
 \begin{equation}\label{eq:stf} 
 \phi_{s,v}(\lm) = \phi_s (\lm)= 
\zeta(2s \lm). \end{equation}
 Note that
   \begin{equation}
   \label{e:stfsupp}
   \textrm{supp}(\mathcal{F}[\phi_{s}]) \subseteq B(s/2) 
   \end{equation} 
    and, for every multi-index $k$, there exists $c_k > 0$ such that, for all $s \ge 1$,
 \begin{equation}
     \label{e:stfdecay}
 |\partial^k \phi_{s}(x)|  \le   c_k e^{-(s|x|)^v} .
  \end{equation} 

\subsubsection{Poisson duality}
Let $t > 0$ and $g : \R^d \to \R$ be a Schwartz function. The $t$-\textit{discretisation} of $g$ is defined by
\[ \text{Dis}_t(g)(\cdot) = \sum_{z \in t\Z^d} g(\cdot) \delta_z(\cdot) ,\]
where $\delta_x(\cdot)$ denotes a Dirac mass at $x$. The $t$-\textit{periodisation} of $g$ is defined by
\[ \text{Per}_t(g)(\cdot) = \sum_{z \in t\Z^d} g(\cdot + z) = \sum_{z \in t\Z^d} (g * \delta_z)(\cdot) .\]
Note that $\text{Dis}_t(g)$ is naturally identified with the function $g|_{t \Z^d}$ on $t\Z^d$, and $\text{Per}_t(g)$ is naturally identified with its restriction to the torus $t \mathbb{T}^d$. In particular, when discussing the spectral measure of discrete fields, we will often identify a function on the torus $\mathbb{T}^d$ with its $1$-periodisation, and vice versa.

The Fourier duality between these objects is captured by the Poisson summation formula: 
\begin{proposition}[Poisson summation formula]
\label{c:4}
For every $t > 0$ and Schwartz function $g : \R^d \to \R$,
\[ \mathcal{F}[\text{Dis}_{1/t}(g)] = t\text{Per}_{t}(\mathcal{F}[g]) .  \]
\end{proposition}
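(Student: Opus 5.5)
We prove the Poisson summation formula by computing the Fourier transform of the discretisation directly and then identifying the resulting lattice sum with the periodised transform. Throughout, $\mathcal{F}$ acting on a measure is understood distributionally: for a tempered measure $\nu$, $\mathcal{F}[\nu](\lambda) = \int e^{-2\pi i\langle\lambda,x\rangle}\,d\nu(x)$, paired against Schwartz test functions.

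First compute the left-hand side. Since $g$ is Schwartz, $\text{Dis}_{1/t}(g) = \sum_{z\in t^{-1}\Z^d} g(z)\delta_z$ is a tempered measure. Its Fourier transform is the function
\[ \mathcal{F}[\text{Dis}_{1/t}(g)](\lambda) = \sum_{z \in t^{-1}\Z^d} g(z) e^{-2\pi i \langle \lambda, z\rangle}, \]
and this series converges absolutely and uniformly by the rapid decay of $g$. The resulting function is $t\Z^d$-periodic in $\lambda$, because $\langle w, z\rangle \in \Z$ for $w\in t\Z^d$ and $z\in t^{-1}\Z^d$.

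Next treat the right-hand side. The function $P(\lambda) := \text{Per}_t(\mathcal{F}[g])(\lambda) = \sum_{w\in t\Z^d}\mathcal{F}[g](\lambda+w)$ is smooth and $t\Z^d$-periodic, since $\mathcal{F}[g]$ is again Schwartz and so the series converges with all derivatives. Expand $P$ in a Fourier series on the torus $\R^d/t\Z^d$, whose dual lattice is $t^{-1}\Z^d$. The coefficient indexed by $z \in t^{-1}\Z^d$ is
\[ \frac{1}{t^d}\int_{[0,t)^d} P(\lambda) e^{2\pi i\langle \lambda, z\rangle}\,d\lambda = \frac{1}{t^d}\int_{\R^d}\mathcal{F}[g](\lambda)e^{2\pi i\langle\lambda,z\rangle}\,d\lambda = t^{-d} g(z). \]
The middle equality unfolds the sum over $w$ into an integral over all of $\R^d$, using periodicity of the exponential. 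The last equality is Fourier inversion for Schwartz functions.

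Because $P$ is smooth, its Fourier series converges absolutely and uniformly to $P$, so
\[ P(\lambda) = t^{-d}\sum_{z} g(z)e^{-2\pi i\langle\lambda,z\rangle}, \]
after relabelling $z\mapsto -z$ where needed. Comparing with the first step gives $\mathcal{F}[\text{Dis}_{1/t}(g)] = t^{d}\,\text{Per}_t(\mathcal{F}[g])$.

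The factor $t^d$ is the reciprocal of the covolume of $t^{-1}\Z^d$. So the normalisation $t$ in the statement as written is correct when $d=1$, and in general the factor should read $t^d$, or else the convention must absorb it. I would flag this to the authors. The only real checks needed are the convergence and unfolding justifications in the second step, which are routine for Schwartz $g$, and this constant.
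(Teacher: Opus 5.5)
Your proof is correct. It is the standard Fourier-series argument on the torus $\R^d/t\Z^d$; the paper gives no proof of its own and simply quotes the Poisson summation formula as a well-known fact.

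Your flag about the constant is also right: the general identity is $\mathcal{F}[\text{Dis}_{1/t}(g)] = t^d\,\text{Per}_t(\mathcal{F}[g])$, so the paper's factor $t$ is correct only for $d=1$. This does no harm to the paper's results, because the formula is only used in dimension one, namely in Corollary~\ref{cc:4} and through it in the proof of Proposition~\ref{p:irreg}.
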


In Section \ref{s:ca} we use this to control the difference between the spectra of discretisations of the same function $g$ in two scales:

\begin{corollary}
\label{cc:4}
Suppose $d=1$. Then for every $T > 0$ and Schwartz function $g : \R \to \R$,
\[
\mathcal{F}[\text{Dis}_{1/T}(g)] - \mathcal{F}[\text{Dis}_{2/T}(g)]= (T/2) \sum_{k \in \Z} (-1)^k (\mathcal{F}[g] * \delta_{k T/2})(x) .\]
\end{corollary}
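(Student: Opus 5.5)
Start from Proposition \ref{c:4} applied at $t = T$ and at $t = T/2$:
\[ \mathcal{F}[\text{Dis}_{1/T}(g)] = T\,\text{Per}_T(\mathcal{F}[g]) = T \sum_{j \in \Z} \mathcal{F}[g] * \delta_{jT} ,\]
\[ \mathcal{F}[\text{Dis}_{2/T}(g)] = \tfrac{T}{2}\,\text{Per}_{T/2}(\mathcal{F}[g]) = \tfrac{T}{2} \sum_{k \in \Z} \mathcal{F}[g] * \delta_{kT/2} .\]
All series converge absolutely and uniformly, since $\mathcal{F}[g]$ is Schwartz. Here $\text{Per}_t(h)(x) = \sum_{z} h(x+z)$, and the sign convention $\delta_{z}$ versus $\delta_{-z}$ is immaterial because each lattice $t\Z$ is symmetric. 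We may therefore reindex freely.

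Next, split the sum over $k \in \Z$ in the second identity according to the parity of $k$. The even indices $k = 2j$ give shifts $kT/2 = jT$. Hence
\[ T\,\text{Per}_T(\mathcal{F}[g]) = \tfrac{T}{2}\sum_{k \text{ even}} \mathcal{F}[g]*\delta_{kT/2} + \tfrac{T}{2}\sum_{k\text{ even}}\mathcal{F}[g]*\delta_{kT/2}, \]
that is, the first expression is twice the even part of the second. Subtracting the full second sum gives
\[ \mathcal{F}[\text{Dis}_{1/T}(g)] - \mathcal{F}[\text{Dis}_{2/T}(g)] = \tfrac{T}{2}\Big(\sum_{k\text{ even}} - \sum_{k \text{ odd}}\Big) \mathcal{F}[g]*\delta_{kT/2} = \tfrac{T}{2}\sum_{k\in\Z} (-1)^k\, \mathcal{F}[g]*\delta_{kT/2}. \]
This is the claimed identity.

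The only points requiring care are bookkeeping rather than genuine obstacles. First, the identification of the discrete objects with measures on $\R$ (sums of Dirac masses) must make Proposition \ref{c:4} hold as an identity of tempered distributions or periodic functions. This is consistent with how Proposition \ref{c:4} is stated, so we can take it as given. Second, rearranging the sums into even and odd parts is legitimate by absolute convergence, which holds because $\mathcal{F}[g]$ decays rapidly.
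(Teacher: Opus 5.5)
Your proof is correct and follows the paper's argument. You apply Proposition \ref{c:4} at $t=T$ and $t=T/2$ and take the difference. Your splitting of the $T/2$-periodisation into even and odd shifts just spells out the step that the paper's one-line proof leaves to the reader.
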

\begin{proof}
The right-hand side can be written as
\[T \text{Per}_{T}(\mathcal{F}[g]) - (T/2) \text{Per}_{T/2}(\mathcal{F}[g])  , \]
which is equal to the left-hand side by Proposition~\ref{c:4}.
\end{proof}

In order to work with discrete fields we also define the \textit{periodisation} of the spectral truncation function in  \eqref{eq:stf}
\[ \tilde{\phi}_{s} = \textrm{Per}_1(\phi_s) = \sum_{z \in \Z^d} \zeta((2s) ( \cdot + z))  . \]
Similarly to \eqref{e:stfsupp}--\eqref{e:stfdecay},
 \[ \textrm{supp}(\mathcal{F}[\tilde{\phi}_{s}]) \subseteq B(s/2) \cap  \Z^d  \qquad \text{and} \qquad  |\partial^k \tilde{\phi}_{s}(x) |  \le   c e^{-(|x|s)^v} . \]

\subsection{Energy and capacity}
\label{ss:cap}
In this section we develop basic properties of energy and capacity. Let us start by setting up relevant definitions. 

Recall that $\mathcal{P}(D)$ denotes the set of probability measures on a compact domain $D$, and $\mathcal{M}$ is the set of signed measures. For $\nu,\eta \in \mathcal{M}$ define the energy functional
\[  E[ \nu, \eta ] = \iint K(x-y) d\nu(x) d\eta(y) \ , \quad E[\nu] = E[\nu,\nu] . \]
Denote by $\mathcal{M}_{<\infty} \subset \mathcal{M}$ the signed measures of finite energy $E[\nu] < \infty$; the functional $E$ defines an inner product on the space $\mathcal{M}_{<\infty}$. The energy can also be written as
\begin{equation}
\label{e:fenergy}
  E[ \nu, \eta ] = \int \mathcal{F}[\nu] \overline{\mathcal{F}[\eta]} d\mu \ , \quad E[\nu] = \int |\mathcal{F}[\nu]|^2 d\mu  . 
\end{equation}
Occasionally we write $E_\mu$ to emphasise the dependence of the energy on the spectral measure.

 Let $\mathcal{M}_{>0} \subset \mathcal{M}$ be the signed measures satisfying $E[\nu] > 0$. For each $\nu \in \mathcal{M}_{>0}$ define the \textit{potential}
\[ h_\nu(x) = \frac{ (K * \nu)(x)}{E[\nu]} = \frac{ \int K(x-y) d\nu(y)}{E[\nu]} . \]
Elsewhere in the literature it is common to omit the normalisation $E[\nu]^{-1}$, but it will be convenient for our purposes. Note that for $\nu \in \mathcal{M}_{>0} \cap \mathcal{M}_{<\infty}$ and $\eta \in \mathcal{M}_{<\infty}$ we have \begin{equation}
    \label{e:heta}
 \langle h_\nu, \eta \rangle =  \frac{ \int (K * \nu) d\eta }{ E[\nu] } =  \frac{ E[\nu,\eta] }{ E[\nu] }.   
 \end{equation} 
The reproducing kernel Hilbert space (RKHS) associated to $(f,K,\mu)$ is the set
\[ H = \left\{ h(\cdot) = \int K(\cdot-y) d\nu(y) :\: \nu \in \mathcal{M}_{<\infty}  \right\} \]
equipped with the inner product inherited from $E$, i.e.\
\[ \Big\langle \int K(\cdot-y) d\nu(y) ,  \int K(\cdot-y) d\eta(y) \Big\rangle_H  = E[\nu,\eta] . \] 
This coincides with the previous definition given immediately above \eqref{e:cap1}.

Denote by $\nu_D$ an arbitrarily chosen \textit{equilibrium measure} as defined following \eqref{e:cap1}. We remark that the  equilibrium measure is unique if $K$ is strictly positive definite in the sense that $\mathcal{M} \setminus \mathcal{M}_{>0}$ only contains the zero measure, although this fact will not be of use to us. Assuming $E[\nu_D] > 0$,  we refer to the corresponding potential $h_D = h_{\nu_D}$ as the \textit{equilibrium potential}, that is, the optimiser in \eqref{e:cap2}, which is unique and does not depend on the choice of $\nu_D$ (see Proposition~\ref{p:basiccap}). By definition, the equilibrium potential  satisfies $h_D \ge 1$ on $D$ and
\[ \|h_D\|_H^2 = 1/E[\nu_D] = \capa_f(D) .\]

\smallskip
Next, we record some facts about the energy, capacity, equilibrium measure, and equilibrium potential. These are mostly classical, except for the introduction of an `isotropy' property (Claim \ref{c:iso}) and an extension of the classical subadditivity property (see \eqref{e:saext} and \eqref{e:saext2}).

\begin{claim}[Convexity]
\label{c:convex}
The mapping $\nu \mapsto E[\nu]$ is convex on $\mathcal{M}_{<\infty}$ and the mapping \linebreak$h \mapsto \|h\|_H^2$ is strictly convex on $H$.
\end{claim}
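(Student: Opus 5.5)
The two assertions are proved separately, each from a nonnegativity property of the kernel.

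For the first, I will use the Fourier representation \eqref{e:fenergy}, $E[\nu]=\int |\mathcal{F}[\nu]|^2\,d\mu$. Take $\nu,\eta\in\mathcal{M}_{<\infty}$ and $t\in[0,1]$. Linearity of the Fourier transform gives $\mathcal{F}[t\nu+(1-t)\eta]=t\mathcal{F}[\nu]+(1-t)\mathcal{F}[\eta]$. The map $z\mapsto |z|^2$ is convex on $\mathbb{C}$, so pointwise in $\lambda$
\[ \big|t\mathcal{F}[\nu]+(1-t)\mathcal{F}[\eta]\big|^2 \le t|\mathcal{F}[\nu]|^2+(1-t)|\mathcal{F}[\eta]|^2 . \]
Integrating against the non-negative measure $\mu$ yields $E[t\nu+(1-t)\eta]\le tE[\nu]+(1-t)E[\eta]$. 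In particular $t\nu+(1-t)\eta$ again has finite energy. This proves convexity of $\nu\mapsto E[\nu]$.

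An equivalent route avoids Fourier analysis. Since $E$ is a positive semidefinite bilinear form,
\[ tE[\nu]+(1-t)E[\eta]-E[t\nu+(1-t)\eta]=t(1-t)E[\nu-\eta]\ge 0 , \]
and the right-hand side is non-negative because $E[\nu-\eta]=\int|\mathcal{F}[\nu]-\mathcal{F}[\eta]|^2\,d\mu\ge0$.

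For the second assertion I will use the same identity in the Hilbert space $H$, where $\|\cdot\|_H$ is a genuine norm. For $h_1,h_2\in H$ and $t\in(0,1)$,
\[ t\|h_1\|_H^2+(1-t)\|h_2\|_H^2-\|th_1+(1-t)h_2\|_H^2=t(1-t)\|h_1-h_2\|_H^2 . \]
This quantity is strictly positive whenever $h_1\neq h_2$, which is exactly strict convexity.

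The only point needing care is that $\|\cdot\|_H$ is a norm rather than a seminorm. This holds because $H$ is the set of functions $K*\nu$, so two elements are equal if and only if they agree as functions. If $\|h\|_H=0$ with $h=K*\nu$, then $E[\nu]=0$. By Cauchy--Schwarz for the semi-inner product $E$, for every $x$
\[ h(x)=E[\nu,\delta_x] \quad\text{satisfies}\quad |h(x)|\le E[\nu]^{1/2}K(0)^{1/2}=0, \]
so $h\equiv0$. Hence $H$ is well defined as an inner product space and strict convexity follows.

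I do not expect a real obstacle here. The only subtleties are this quotient/well-definedness issue and checking that $\mathcal{M}_{<\infty}$ is closed under convex combinations, which the first inequality already supplies.
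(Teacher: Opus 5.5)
Your proof is correct and is essentially the paper's: the paper proves convexity via the same identity $E[t\nu+(1-t)\eta]=tE[\nu]+(1-t)E[\eta]-t(1-t)E[\nu-\eta]$, and strict convexity via the analogous identity in $H$. For strict convexity the paper simply asserts that $\|h\|_H=0$ implies $h=0$ because $H$ is a Hilbert space. Your Cauchy--Schwarz argument, using $h(x)=E[\nu,\delta_x]$ (valid since $K$ is even), is a useful explicit justification of that step.
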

\begin{proof}
For every $\nu,\eta \in \mathcal{M}_{<\infty}$ and $t \in [0,1]$ one may check that 
\begin{equation}
    \label{e:convex}
 E(t  \nu + (1-t) \eta) = t E(\nu) + (1-t) E(\eta) - t(1-t) E(\nu - \eta),
 \end{equation}
from which convexity of $E$ follows. The proof that $\|\cdot\|_H$ is strictly convex is similar, using that $H$ is a Hilbert space (so that $\|h\|_H=0$ implies $h=0$).
\end{proof}

\begin{claim}[Isotropy]
\label{c:iso}
Suppose that $f$ is isotropic and $D$ is a radial domain (e.g.\ $B(T) \setminus B(S)$ for some $0 \le S < T$). Then there exists an equilibrium measure which is radial.
\end{claim}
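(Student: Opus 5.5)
The idea is to average an arbitrary equilibrium measure over the rotation group and show that the average is still an equilibrium measure. Let $\nu_D \in \mathcal{P}(D)$ be any equilibrium measure, i.e.\ a minimiser of $E[\nu]$ over $\mathcal{P}(D)$; its existence is part of Proposition~\ref{p:basiccap}. Let $\sigma$ be the normalised Haar measure on the orthogonal group $O(d)$. Write $R_*\nu$ for the pushforward of $\nu$ by $R \in O(d)$. Define the rotational average
\[ \bar\nu := \int_{O(d)} R_*\nu_D \, d\sigma(R). \]
Since $D$ is radial, each $R_*\nu_D$ belongs to $\mathcal{P}(D)$, and hence so does $\bar\nu$. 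It is radial by invariance of Haar measure. The goal is to show $E[\bar\nu] \le E[\nu_D]$, which forces $\bar\nu$ to be an equilibrium measure.

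The first step is rotation invariance of the energy. Since $f$ is isotropic, $K(Rx-Ry)=K(x-y)$, so $E[R_*\nu_D] = E[\nu_D]$ for every $R$. The second step is the convexity argument. Using the representation \eqref{e:fenergy}, $E[\nu] = \int |\mathcal{F}[\nu]|^2 d\mu$ is a non-negative quadratic form, hence convex (Claim~\ref{c:convex}). Jensen's inequality for this form under the probability measure $\sigma$ then gives
\[ E[\bar\nu] \le \int_{O(d)} E[R_*\nu_D]\, d\sigma(R) = E[\nu_D]. \]
To justify Jensen without appealing to infinite-dimensional convexity, I would compute directly. First, $\mathcal{F}[\bar\nu](\lambda) = \int \mathcal{F}[R_*\nu_D](\lambda)\, d\sigma(R)$ by Fubini, which applies since everything is bounded: the measures are probability measures and $K$ is bounded. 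Then $|\int g\, d\sigma|^2 \le \int |g|^2 d\sigma$ pointwise in $\lambda$ by Cauchy--Schwarz. Integrating against $\mu$ and using Fubini and Tonelli once more gives the inequality. Alternatively, one can write $E[\bar\nu] = \iint E[R_*\nu_D, S_*\nu_D]\,d\sigma(R)\,d\sigma(S)$ and bound each term by $E[R_*\nu_D]^{1/2}E[S_*\nu_D]^{1/2} = E[\nu_D]$, using Cauchy--Schwarz for the positive semidefinite form $E$.

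Minimality of $\nu_D$ then forces $E[\bar\nu] = E[\nu_D]$, so $\bar\nu$ is a radial equilibrium measure.

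The main obstacle is the discrete case, where $f$ lives on $\Z^d$. There "isotropic" must mean invariance under the lattice symmetry group: the finite hyperoctahedral group of coordinate permutations and sign changes. A "radial domain" should then be interpreted as invariant under this group. The same averaging argument works verbatim with $\sigma$ the uniform measure on this finite group, but only lattice-symmetric invariance is obtained. I would therefore state the claim with this interpretation in the discrete case. The only other technicality is measurability of $R \mapsto R_*\nu_D$, which is routine since the map $(R,x)\mapsto Rx$ is continuous.
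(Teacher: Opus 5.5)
Your proposal is correct and is essentially the paper's proof: average an equilibrium measure over the rotation group, use isotropy to see that every rotated copy has the same energy, apply convexity to get $E[\bar\nu]\le E[\nu_D]$, and conclude by minimality. Three points you add go beyond what the paper writes out: the explicit Cauchy--Schwarz justification of the Jensen step, the use of $O(d)$ (which also handles $d=1$, where $SO(1)$ is trivial), and the caveat that on $\Z^d$ one only obtains invariance under the lattice symmetry group.
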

\begin{proof} 
Let $\mathcal{R}$ be the set of rotations about the origin equipped with the Haar measure. Let~$\nu$ be an equilibrium measure. For any rotation $R\in \mathcal R$, we have 
\begin{equation*}
E[R \nu] = \int |\mathcal{F}[R\nu]|^2 d\mu 
 = \int |\mathcal{F}[R\nu]|^2 d(R\mu) 
 =\int |\mathcal{F}[\nu]|^2 d(\mu)  = E[\nu],
\end{equation*}
where we used that $R\mu=\mu$.
Let $\bar \nu$ be the average of $\nu$ over $\mathcal{R}$. Clearly $\bar{\nu} \in \mathcal{P}(D)$ is radial. As we have shown that all rotations of $\nu$ have the same energy, convexity (Claim~\ref{c:convex}) yields
$E[\bar{\nu}] \le E[\nu]$; by minimality, this must be an equality. Thus
$\bar{\nu}$ is a radial equilibrium measure.
\end{proof}

\begin{claim}[Monotonicity]
\label{c:mon}
For every $D \subseteq D'$, $\nu \in \mathcal{M}_{<\infty}$, and spectral measure $\mu' \le \mu$,
\[ \capa_{\mu'}(D') \ge \capa_\mu(D) \qquad \text{and} \qquad E_{\mu'}[\nu] \le E_\mu[\nu] .\]
\end{claim}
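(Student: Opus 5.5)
The energy inequality comes straight from the Fourier representation \eqref{e:fenergy}, and the capacity inequality then follows from the dual variational formula \eqref{e:cap1}.

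\emph{Energy.} For $\nu \in \mathcal{M}_{<\infty}$ we have
\[ E_{\mu'}[\nu] = \int |\mathcal{F}[\nu]|^2 \, d\mu' \le \int |\mathcal{F}[\nu]|^2 \, d\mu = E_\mu[\nu]. \]
The integrand $|\mathcal{F}[\nu]|^2$ is non-negative, and $\mu' \le \mu$ means $\mu - \mu'$ is a non-negative measure. Integrating a non-negative function against the non-negative measure $\mu - \mu'$ gives a non-negative number, which is exactly the displayed inequality. Finiteness of $E_\mu[\nu]$ therefore also gives finiteness of $E_{\mu'}[\nu]$.

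\emph{Capacity.} I would use the dual formulation
\[ \capa(D) = \Big(\min_{\nu \in \mathcal{P}(D)} E[\nu]\Big)^{-1} \]
from \eqref{e:cap1}, which is justified by Proposition~\ref{p:basiccap}. The argument has two steps.
\begin{itemize}
\item Since $D \subseteq D'$, we have $\mathcal{P}(D) \subseteq \mathcal{P}(D')$. Minimising the same functional over a larger set can only lower the minimum, so $\min_{\mathcal{P}(D')} E_\mu \le \min_{\mathcal{P}(D)} E_\mu$.
\item By the energy inequality just proved, $E_{\mu'} \le E_\mu$ pointwise on measures. Hence $\min_{\mathcal{P}(D')} E_{\mu'} \le \min_{\mathcal{P}(D')} E_\mu$.
\end{itemize}
Chaining the two steps and taking reciprocals gives $\capa_{\mu'}(D') \ge \capa_\mu(D)$.

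\emph{Degenerate cases.} The reciprocals are taken with the conventions $1/0 = \infty$ and $1/\infty = 0$. If $\mu'$ is such that the minimal energy vanishes, then $\capa_{\mu'}(D') = \infty$ and the inequality holds trivially. If the minimum is not attained, I would replace $\min$ by $\inf$ throughout, and the same chain of inequalities goes through unchanged.

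\emph{Cross-check via the primal formulation.} In the RKHS form \eqref{e:cap2}, $\mu' \le \mu$ gives $d\mu'/d\mu \le 1$, so $H_{\mu'}$ embeds in $H_\mu$ with $\|\cdot\|_{H_\mu} \le \|\cdot\|_{H_{\mu'}}$. Moreover, a function that is $\ge 1$ on $D'$ is in particular $\ge 1$ on $D$. So every admissible $h$ for $\capa_{\mu'}(D')$ is admissible for $\capa_\mu(D)$ with no larger norm, which gives the same inequality.

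There is no real obstacle here. The only point needing care is the legitimacy of the dual formula, which rests on Proposition~\ref{p:basiccap}, together with the handling of the degenerate infinite and zero cases above.
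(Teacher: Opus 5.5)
Your proof is correct and follows the paper's route. The paper's proof just says both inequalities are clear from the Fourier form \eqref{e:fenergy} of the energy and the definitions \eqref{e:cap2}--\eqref{e:cap1} of the capacity, and you have spelled exactly that out: energy monotonicity from $\mu-\mu'\ge 0$, enlarging $\mathcal{P}(D)$ to $\mathcal{P}(D')$ in the dual formula, and your RKHS embedding as the primal version of the same argument.
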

\begin{proof}
These are clear from the definitions \eqref{e:cap2}, \eqref{e:cap1}, and \eqref{e:fenergy}.
\end{proof}

In Section \ref{s:ca} we make use of the subadditivity of the capacity:

\begin{claim}[Subadditivity]
\label{c:sa}
If $K \ge 0$, then for  all compact domains $D_1,D_2$,
\[ \capa_f(D_1 \cup D_2) \le \capa_f(D_1) + \capa_f(D_2)  .\]
More generally, for every $K$ and compact domains $D_1,D_2$
\begin{equation}
    \label{e:saext}
 \capa_f(D_1 \cup D_2) \le \frac{1}{ \frac{1}{\capa_f(D_1) + \capa_f(D_2)} - 2 \int_{D_1} \int_{D_2} (-K(x-y))_+ \, d\nu(x) d\nu(y)} 
 \end{equation}
where $\nu = \nu_{D_1 \cup D_2}$ is an equilibrium measure for $D_1 \cup D_2$. In particular, if $f$ is isotropic and $K$ is eventually non-negative, then there exists a $c > 0$ such that, for all $0 < S < T$,
\begin{equation}
    \label{e:saext2}
 \capa_f(B(T)) \le \frac{1}{ \frac{1}{\capa_f(B(S)) + \capa_f(B(T)\setminus B(S))} -  c S^{-(d-1)} } .
 \end{equation}
\end{claim}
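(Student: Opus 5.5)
The plan is to work throughout with the energy formulation \eqref{e:cap1}: $\capa_f(D)^{-1} = \min_{\nu\in\mathcal{P}(D)} E[\nu]$. The subadditivity claim is then an upper bound on $\capa_f(D_1\cup D_2)$, i.e.\ a lower bound on $E[\nu]$ for $\nu = \nu_{D_1\cup D_2}$.

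\textbf{Step 1: decomposing the energy.} Write $\nu = \nu_1 + \nu_2$, where $\nu_1 = \nu|_{D_1}$ and $\nu_2 = \nu|_{D_2\setminus D_1}$, with masses $p$ and $1-p$. Then
\[ E[\nu] = E[\nu_1] + E[\nu_2] + 2E[\nu_1,\nu_2] . \]
\begin{itemize}
\item For the diagonal terms, $\nu_i/\|\nu_i\|$ is a probability measure on $D_i$ (or $\nu_i=0$). By minimality, $E[\nu_1] \ge p^2/\capa_f(D_1)$ and $E[\nu_2]\ge (1-p)^2/\capa_f(D_2)$.
\item For the cross term, $E[\nu_1,\nu_2] = \iint K(x-y)\,d\nu_1 d\nu_2 \ge -\iint (-K(x-y))_+ \,d\nu_1(x)d\nu_2(y)$.
\end{itemize}
Since $\nu_1,\nu_2\le \nu$ and both are supported in the respective sets, this cross term is bounded below by $-\int_{D_1}\int_{D_2}(-K(x-y))_+\,d\nu(x)d\nu(y)$. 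Minimising $p^2/a + (1-p)^2/b$ over $p\in[0,1]$, with $a=\capa_f(D_1)$ and $b=\capa_f(D_2)$, gives the value $1/(a+b)$, attained at $p = a/(a+b)$. Hence
\[ E[\nu] \ge \frac{1}{\capa_f(D_1)+\capa_f(D_2)} - 2\int_{D_1}\int_{D_2}(-K(x-y))_+\,d\nu(x)d\nu(y) . \]
Inverting gives \eqref{e:saext}, provided the right-hand side is positive; otherwise the bound is vacuous, interpreted as $+\infty$. When $K\ge0$ the negative-part term vanishes, which gives the plain subadditivity.

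\textbf{Step 2: the isotropic estimate \eqref{e:saext2}.} Take $D_1 = B(S)$ and $D_2 = B(T)\setminus B(S)$, and use Claim~\ref{c:iso} to choose $\nu$ radial. This is legitimate because \eqref{e:saext} holds for any equilibrium measure. Since $K$ is eventually non-negative, $(-K)_+$ is bounded by $\|K\|_\infty = K(0)$ and supported in $B(R)$ for some fixed $R$. The integral is therefore at most
\[ K(0)\int \nu\big(B(x,R)\cap (B(T)\setminus B(S))\big)\, d\nu(x), \qquad x\in B(S), \]
and only points $x$ within distance $R$ of the sphere $\partial B(S)$ contribute.

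To make this small I would split $\nu$ into radial shells. Because $\nu$ is radial, the mass of $\nu$ in a ball $B(x,R)$ with $|x|\approx r$ is at most the shell mass $\nu(\{|y|\in[r-R,r+R]\})$ times $cR^{d-1}/r^{d-1}$. This is the fraction of the sphere of radius $\approx r$ covered by a ball of radius $R$; it is uniform over the sphere by rotation invariance. With $r\approx S$ this fraction is $\lesssim S^{-(d-1)}$, and the remaining integral over $x$ is bounded by total mass $1$. This gives the bound $cS^{-(d-1)}$ for $S \ge 2R$.

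\textbf{Main obstacle.} The shell-covering estimate, and the regime of small $S$, need care. For $S\le 2R$ I would absorb this case into the constant: the whole integral is at most $K(0)$, so choosing $c \ge K(0)(2R)^{d-1}$ makes the claimed bound trivially true (the right side is then nonpositive and thus vacuous) or at least consistent. I also need to handle $d=1$, where the exponent $d-1=0$ makes the claim trivial with $c=K(0)$.
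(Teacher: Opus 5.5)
Your proof is correct and follows the paper's argument: the same split $\nu_1=\nu|_{D_1}$, $\nu_2=\nu|_{D_2\setminus D_1}$ is used, your explicit minimisation of $p^2/a+(1-p)^2/b$ plays the role of the paper's Cauchy--Schwarz step, and \eqref{e:saext2} follows from the same radial-measure bound on the negative part of the cross term. Your separate treatment of small $S$ is harmless but unnecessary: every $y\in B(T)\setminus B(S)$ has $|y|\ge S$, so the fraction of the sphere of radius $|y|$ covered by $B(x,R)$ is already at most $\min\{1,c(R/S)^{d-1}\}$.
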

\begin{proof}
Define $\nu_1 = \nu|_{D_1}$ and $\nu_2 = \nu|_{D_2 \setminus D_1}$. Then
\begin{align*}
  1/\capa_f(D_1\cup D_2) =  E[\nu] & = E[\nu_1] + E[\nu_2] + 2 \int_{D_1} \int_{D_2\setminus D_1} K(x-y) \, d\nu_1(x) d\nu_2(y) \\
    & \ge E[\nu_1] + E[\nu_2] - 2 \int_{D_1} \int_{D_2} (-K(x-y))_+ \,  d\nu(x) d\nu(y)  .
     \end{align*}
    Moreover,
  \[    E[\nu_1] + E[\nu_2]  \ge \frac{ (|\nu_1| + |\nu_2|)^2}{ |\nu_1|^2 / E[\nu_1] + |\nu_2|^2 / E[\nu_2] } \ge \frac{1}{ \capa_f(D_1) + \capa_f(D_2) }   , \]
    where the first inequality is by Cauchy-Schwarz, and the second inequality used $|\nu_1| + |\nu_2| = 1$ and the definition \eqref{e:cap2} of the capacity. Combining gives \eqref{e:saext}.
    
    For \eqref{e:saext2}, let $D_1 = B(S)$, $D_2 = B(T) \setminus B(S)$, and let $r > 0$ be sufficiently large so that $K(x) \ge 0$ for $|x| \ge r$. Recall that we may assume that $\nu$ is radial by Claim~\ref{c:iso}. Then observe that, for all $x \in B(S)$, 
    \[ \int_{B(T) \setminus B(S)} (-K(x-y)) d\nu(y) \le \|K\|_\infty  \int_{\substack{y \in B(T) \setminus B(S),\\ |y-x| \le r}} d\nu(y) \le \|K\|_\infty  c_d r^d S^{-(d-1)}, \]
    where the final inequality used that $\nu$ is radial.
\end{proof}

The following lemmas are tailored to our approach, and involve the spectral truncation function $\phi_s$ from \eqref{eq:stf}. 

\smallskip
First we state an important smoothing property of the capacity. Recall that the RKHS $H$ consists of functions of the form $h = K * \mathcal{F}[g]$, or equivalently $h = \mathcal{F}[g \mu]$, for complex Hermitian $g \in L^2(\mathbb{R}^d)$ satisfying $\|h\|_H^2 = \int |g|^2  d\mu < \infty$. 

\begin{lemma}[Smoothing for the capacity]
\label{l:captrunc}
For every $0 < s < T$,
\[ \capa_\mu(B(T)) \ge \capa_{\phi_s^2 \mu}(B(T-s)) \]
and
\[ \capa_\mu(B(T) \cap \Z^d) \ge \capa_{\tilde{\phi}_s^2 \mu}(B(T-s) \cap \Z^d) .\]
\end{lemma}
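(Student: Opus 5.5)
Take the equilibrium potential $h = h_\mu(B(T))$, which lies in the RKHS of $\mu$ and satisfies $h \ge 1$ on $B(T)$ and $\|h\|_{H_\mu}^2 = \capa_\mu(B(T))$. Write it as $h = \mathcal{F}[g\mu]$ with $\int |g|^2\, d\mu = \capa_\mu(B(T))$. The plan is to smooth $h$ by convolving it with the probability measure $\psi := \mathcal{F}[\phi_s]$, which we normalise. Then we check two things about $\tilde h := h * \psi$:
\begin{enumerate}[(a)]
\item $\tilde h \ge 1$ on $B(T-s)$;
\item $\tilde h$ lies in the RKHS of $\phi_s^2\mu$ with norm at most $\|h\|_{H_\mu}$.
\end{enumerate}
Together these give $\capa_{\phi_s^2\mu}(B(T-s)) \le \|\tilde h\|^2 \le \capa_\mu(B(T))$, as claimed.

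For (a), we need $\psi$ to be a probability measure supported in $B(s)$. By Proposition~\ref{c:1}, $\mathcal{F}[\zeta] = \xi$, and $\xi \ge 0$ is supported in $B(1)$. After rescaling, $\psi(x) = \mathcal{F}[\zeta(2s\,\cdot)](x) = (2s)^{-d}\xi(x/(2s))$. This is non-negative and supported in $B(2s)$; the stated support bound \eqref{e:stfsupp} gives $B(s/2)$, and the exact radius depends on conventions and is harmless either way since it is at most $s$. Its total mass is $\int \psi = \phi_s(0) = \zeta(0) = 1$. So $\psi$ is a probability measure on $B(s)$. For $x \in B(T-s)$ we have $x - y \in B(T)$ for all $y \in \mathrm{supp}\,\psi$, hence $\tilde h(x) = \int h(x-y)\,\psi(y)\,dy \ge 1$.

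For (b), the Fourier side gives $\tilde h = \mathcal{F}[g\mu]*\psi = \mathcal{F}[g\,\phi_s\,\mu]$, up to sign and conjugation conventions, which are harmless because $\phi_s$ is real and radial. Writing $g\phi_s\mu = (g/\phi_s)\,(\phi_s^2\mu)$, which is legitimate since $\zeta > 0$ everywhere, we get
\[ \|\tilde h\|^2_{H_{\phi_s^2\mu}} = \int |g/\phi_s|^2 \phi_s^2 \, d\mu = \int |g|^2\,d\mu = \capa_\mu(B(T)). \]
In fact we get equality here, not just the inequality needed, and Hermitian symmetry of $g\phi_s$ is preserved.

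The main point needing care is technical. We must justify the Fourier identity $\mathcal{F}[g\mu]*\psi = \mathcal{F}[g\phi_s\mu]$ for a general finite measure $\mu$ and $g \in L^2_\mu$. This follows from Fubini, since $g \in L^1_\mu$ by finiteness of $\mu$ and $\psi \in L^1$. We must also handle the case $\capa_\mu(B(T)) = \infty$, where the inequality is trivial.

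The discrete case is identical. Replace $\phi_s$ by $\tilde\phi_s$ on the torus; its inverse transform is the restriction of $\psi$ to $\Z^d$, which is non-negative, supported in $B(s)\cap\Z^d$, and has total mass $\tilde\phi_s(0)$. Normalising by this mass, which is at least $1$ since $\zeta \ge 0$, only helps the norm bound. The convolution argument then goes through on $B(T-s)\cap \Z^d$.
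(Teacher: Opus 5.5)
Your construction is the paper's own. You convolve the equilibrium potential with $\psi=\mathcal{F}[\phi_s]$, use $h*\psi=\mathcal{F}[g\phi_s\mu]$ to see that the $H_{\phi_s^2\mu}$-norm equals $\|h\|_{H_\mu}$, and use non-negativity, unit mass and compact support of $\psi$ to get $\tilde h\ge 1$ on the smaller ball. Several of your details are fine: the Fubini justification, the division by $\phi_s>0$, and the discrete case. In the discrete case you normalise by $\tilde\phi_s(0)\ge 1$, a point the paper's ``identical'' passes over.

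The one step that is wrong as written is the support radius. From the displayed definition $\phi_s(\lambda)=\zeta(2s\lambda)$ you correctly compute that $\psi(x)=(2s)^{-d}\xi(x/(2s))$ is supported in $B(2s)$. You then say the radius is harmless ``since it is at most $s$''. But $2s$ is not at most $s$, and the radius does matter. Averaging $h$ over a $2s$-neighbourhood only gives $\tilde h\ge 1$ on $B(T-2s)$, i.e.\ $\capa_\mu(B(T))\ge \capa_{\phi_s^2\mu}(B(T-2s))$, which is weaker than the lemma.

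What the argument needs is $\textrm{supp}\,\psi\subseteq B(s)$. This is exactly the property \eqref{e:stfsupp} that the paper's proof invokes. Your computation actually exposes a normalisation slip in the paper: \eqref{e:stfsupp} holds for a rescaled truncation such as $\phi_s(\lambda)=\zeta(s\lambda/2)$, not for $\zeta(2s\lambda)$. So either take \eqref{e:stfsupp} as the defining property of $\phi_s$, as the paper does, or fix the scaling in the definition. With either fix, your proof is complete and coincides with the paper's.
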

\begin{proof}
Let $h$ be an equilibrium potential as defined following \eqref{e:cap2} and consider the function $h' = h * \mathcal{F}[\phi_s]$. By construction, the norm of $h'$ in the RKHS $H_{\phi_s^2 \mu}$ associated to $\phi_s^2 \mu$ is equal to the norm of $h$ in the RKHS $H_\mu$ associated to $\mu$. Moreover, since $h \ge 1$ on $B(T)$, and since $\phi_s$ satisfies $\mathcal{F}[\phi_s] \ge 0$, $\| \mathcal{F}[\phi_s]\|_{L^1(\R^d)} = 1$, and $\mathcal{F}[\phi_s]$ is supported on $B(s/2)$, we have $h' \ge 1$ on $B(T-s)$. Taking $h'$ as a candidate in the definition \eqref{e:cap2} for $\capa_{\phi_s^2 \mu}(T-s)$, we have
\[ \capa_{\phi_s^2 \mu}(T-s) \le \|h'\|^2_{H_{\phi_s^2 \mu}} =  \|h\|^2_{H_{\mu}} = \capa_\mu(B(T)) . \]
The proof of the second statement is identical after restricting to the lattice $ \Z^d$.
\end{proof}

Next, we state \emph{a priori} bounds on the capacity in terms of the spectral mass near the origin:  

\begin{lemma}[Bounds on the capacity]
\label{l:capbounds}
For every $\gamma >  \gamma' > 0 $ there exist $c_d > 0$ and $T_0= T_0(d,\gamma, \gamma') > 0$ such that, for all $T \ge T_0$,
\[ \frac{1}{\mu[ B( (\log T)^{1+\gamma} / T ) ] + \|\mu \| e^{- (\log T)^{1+\gamma'}}}  \le \capa(B(T))  \le  \frac{4}{  \mu[B(c_d / T) ] } .\]
Moreover, suppose that the doubling condition \eqref{e:doub} holds, and let $c_0 > 0$ be the constant in this condition. Then there exists a constant $c = c(d,c_0) > 0$ such that, for all $T \ge 1$,
\begin{equation}
    \label{e:capcomp}
 1/c \le \capa(B(T)) \mu[B(1/ T) ]  \le c .
 \end{equation}

\end{lemma}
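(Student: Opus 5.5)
The plan is to bound the capacity above by testing the dual formula \eqref{e:cap1} against a well-chosen probability measure, and below by testing the primal formula \eqref{e:cap2} against an explicit function in $H$ that is at least $1$ on $B(T)$.

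\emph{Upper bound.} Take $\nu$ to be a Hermitian probability measure on $B(T)$ whose Fourier transform is bounded below on a ball of radius comparable to $1/T$. A rescaling of Proposition~\ref{c:2} applies directly, with the roles of space and frequency swapped. If $\nu = \upsilon_{B(T)}$, or any Hermitian $\nu \in \mathcal{P}(B(T))$, then $\mathcal{F}[\nu](\lambda)\ge 1/2$ for $|\lambda| \le c_d/T$. Hence, by \eqref{e:fenergy},
\[ E[\nu] = \int |\mathcal{F}[\nu]|^2 d\mu \ge \tfrac14 \mu[B(c_d/T)], \]
and \eqref{e:cap1} gives $\capa(B(T)) \le 1/E[\nu] \le 4/\mu[B(c_d/T)]$. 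For the lattice case, restrict to $B(T)\cap\Z^d$ and use the uniform measure on these points; the same low-frequency bound holds on the torus.

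\emph{Lower bound.} By Cauchy--Schwarz duality, it suffices to show $E[\nu] \le \mu[B(r_T)] + \|\mu\| e^{-(\log T)^{1+\gamma'}}$ for every $\nu \in \mathcal{P}(B(T))$, where $r_T = (\log T)^{1+\gamma}/T$. Split the energy integral \eqref{e:fenergy} over $B(r_T)$ and its complement. On $B(r_T)$ use $|\mathcal{F}[\nu]|\le 1$. On the complement, $\mathcal{F}[\nu]$ need not be small for a general $\nu$, since $\nu$ could be a point mass. So I would argue in primal form instead. Set $h(x) = \mathcal{F}[g\mu](x)$ with $g = \phi$ a smoothed indicator of $B(r_T)$, normalised so that $g\mu$ has mass $\ge 1$ and $h\ge 1$ on $B(T)$. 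Concretely, take $g = \lambda\mapsto \zeta(\lambda T/(\log T)^{1+\gamma''})/\mu[\cdot]$ — more precisely, $h = K * \psi$, where $\psi$ is chosen so that $\mathcal{F}[\psi]$ is $\approx 1$ near the origin and essentially supported on $B(r_T)$.

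The cleanest route is via the dual: $1/\capa = \min_\nu E[\nu]$. Take $\nu$ optimal and convolve it with a mollifier $\mathcal{F}[\phi_s]$ of spatial scale $s = T/(\log T)^{1+\gamma''}$, which loses only a negligible boundary (cf.\ Lemma~\ref{l:captrunc}). This shows $E_\mu[\nu] \ge E_{\phi_s^2\mu}[\nu'] - (\text{error})$. Then the decay \eqref{e:stfdecay}, $\phi_s(\lambda)\le e^{-(s|\lambda|)^v}$, makes the contribution of $|\lambda|\ge r_T$ at most $\|\mu\| e^{-(\log T)^{(1+\gamma)v}\cdots}$. With $v$ close to $1$ and $\gamma''$ between $\gamma'$ and $\gamma$, this is at most $\|\mu\|e^{-(\log T)^{1+\gamma'}}$. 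The direction of this inequality needs care, and I expect it to be the main obstacle: one must arrange that mollification decreases capacity of a slightly larger ball, exactly as in Lemma~\ref{l:captrunc}, applied with $T$ replaced by $T+s$. This causes a harmless change of $r_T$ by a factor $1+o(1)$, which is absorbed into the choice $\gamma > \gamma''$.

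\emph{Doubling case.} For \eqref{e:capcomp}, the upper bound gives $\capa \le 4/\mu[B(c_d/T)] \le c/\mu[B(1/T)]$ after doubling $O(\log(1/c_d))$ times. For the lower bound, the crude estimate above loses a logarithm, so redo it with a fixed scale $s = T/C$ and polynomial tails. Then
\[ E \le \sum_k \mu[B(2^{k}/T)] e^{-c 2^{kv}} \le \mu[B(1/T)]\sum_k c_0^k e^{-c2^{kv}} \le c\,\mu[B(1/T)], \]
where doubling converts the dyadic annuli into multiples of $\mu[B(1/T)]$ and the stretched-exponential decay makes the sum converge. The ratio $\capa(B(T+s))/\capa(B(T))$ for $s\asymp T$ is itself controlled by the two-sided bound at comparable scales, using doubling again.
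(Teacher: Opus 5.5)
You have the two variational formulas the wrong way round. A feasible $h$ in \eqref{e:cap2} gives $\capa(B(T))\le\|h\|_H^2$. A single test measure $\nu$ in \eqref{e:cap1} gives $\capa(B(T))\ge 1/E[\nu]$. So ``\eqref{e:cap1} gives $\capa(B(T))\le 1/E[\nu]$'' is false for a generic $\nu$. Your upper bound can still be rescued, because what you actually prove is the \emph{uniform} estimate $E[\nu]\ge\frac14\mu[B(c_d/T)]$. You need it for the minimiser, hence for every $\nu\in\mathcal{P}(B(T))$, Hermitian or not. This follows from $|\mathcal{F}[\nu]|\ge\mathrm{Re}\,\mathcal{F}[\nu]=\mathcal{F}[\tfrac12(\nu+\nu(-\,\cdot))]$ together with a rescaled Proposition~\ref{c:2}. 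That argument is the dual of the paper's, which tests \eqref{e:cap2} with $h=\mathcal{F}[g\,d\mu]$, $g=2\,\id_{B(c_d/T)}/\mu[B(c_d/T)]$.

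The lower bound, however, has a genuine gap.
\begin{enumerate}
\item Requiring $E[\nu]\le\mu[B(r_T)]+\|\mu\|e^{-(\log T)^{1+\gamma'}}$ for \emph{every} $\nu$ is false (point masses, as you note) and also unnecessary.
\item The detour to the primal form can only bound $\capa$ from above, so it cannot help here.
\item In your mollification step, the inequality $E_\mu[\nu]\ge E_{\phi_s^2\mu}[\nu']-(\text{error})$ for the optimal $\nu$ bounds $\min E$ from \emph{below}. That is again the wrong direction; you flag this yourself but do not resolve it.
\item With $s=T/(\log T)^{1+\gamma''}$, the decay \eqref{e:stfdecay} only gives $\phi_s^2\lesssim e^{-2(\log T)^{(\gamma-\gamma'')v}}$ at $|\lambda|=r_T$. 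This is weaker than the required $e^{-(\log T)^{1+\gamma'}}$, for example whenever $\gamma\le 1$.
\end{enumerate}

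The missing observation is that one explicit test measure suffices. Let $\nu$ be the probability measure with density $\mathcal{F}[\phi_T]$. By \eqref{e:stfsupp} it is supported in $B(T/2)$, and $\mathcal{F}[\nu]=\phi_T$, so $\capa(B(T))\ge(\int\phi_T^2\,d\mu)^{-1}$. Taking $v=(1+\gamma')/(1+\gamma)$ in \eqref{e:stfdecay} gives $\phi_T^2\le\id_{B(r_T)}+e^{-(\log T)^{1+\gamma'}}$ for large $T$, which is exactly the claimed lower bound. Equivalently, apply Lemma~\ref{l:captrunc} with $s$ comparable to $T$ and test with $\delta_0$. The domain shift you worried about is harmless, since any single test measure will do. Your dyadic sum in the doubling case is then precisely the estimate $\int\phi_T^2\,d\mu\le c\,\mu[B(1/T)]$ of \eqref{e:dc}. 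No comparison of $\capa(B(T+s))$ with $\capa(B(T))$ is needed.
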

\begin{proof}
Fix $v = (1+\gamma')/(1+\gamma) \in (0,1)$, and recall $\phi_{s,v}=\phi_s$ from  \eqref{eq:stf}. Let $c_d > 0$ be the constant in Claim~\ref{c:2}. We first claim that, for every $T > 0$, 
\begin{equation}\label{eq:cap bnd1} 
    \frac{1}{\int  \phi_T^2(\lambda) d\mu(\lambda) }  \le \capa(B(T))  \le  \frac{4}{  \mu[B(c_d / T)] } .
\end{equation}
Indeed, the right-hand side of \eqref{eq:cap bnd1} is obtained as $\|h\|^2_H$ for \[ h = \mathcal{F}[ g d\mu] \, , \quad g(x) = 2 \cdot\id_{B(c_d/T)}(x) / \mu[B(c_d / T)], \]
which, by Proposition~\ref{c:2}, satisfies $h \ge 1 $ on $B(T)$. From the definition of capacity in \eqref{e:cap2} we deduce that 
\[  \capa(B(T))  \le \int g^2 d\mu  = 4 / \mu[B(c_d / T)]. \]
On the other hand, the left-hand side of \eqref{eq:cap bnd1} is obtained as
$\left(\int_{\R^d} \int_{\R^d} K(x-y) d\nu(x) d\nu(y)\right)^{-1}$
taking $\nu \in \mathcal{P}(B(T))$ with  density $T^{-d}\mathcal{F}[\phi_T]$, which gives,  by \eqref{e:cap1},
\[  \capa(B(T))  \ge \frac{ T^{2d} }{\int_{\R^d \times \R^d} K(x-y) \, \mathcal{F}[\phi_T](x) \mathcal{F}[\phi_T](y) \, dx dy  } = \frac{1}{\int  \phi^2_T(\lambda) d\mu(\lambda) },\]
concluding the proof of 
\eqref{eq:cap bnd1}.

Next, by \eqref{e:stfdecay}
we have for $T \ge T_0$,
\begin{equation}
\label{e:phibound}
\phi_T(T (\log T)^{1+\gamma} ) \le e^{- (\log T)^{(1+\gamma ) v}} = e^{- (\log T)^{1+\gamma'}}.
\end{equation}
Since $\phi_T \in [0,1]$ we deduce that 
for $T \ge T_0(d,\gamma,\gamma')$ we have
\[   0 \le  \phi_T^2(x) \le  \id_{ B( (\log T)^{1+\gamma} / T )   } (x)  + e^{- (\log T)^{1+\gamma'}}.\]
The first statement of the proposition follows.

For the second statement we use the fact that, by \eqref{e:stfdecay} and \eqref{e:doub}, we have
\begin{align}
    \label{e:dc}
 \int  \phi^2_T(x) d\mu(x)  &  \le  \mu[B(1/T)] + c \sum_{k=0}^\infty \mu[B(2^{k+1}/T)]   e^{-(2^k)^v } \\
\nonumber &  \le c  \mu[B(1/T)]  \Big( 1 +  \sum_{k=0}^\infty c_0^{k+1}   e^{-(2^k)^v } \Big)   \\
\nonumber & \le c' \mu[B(1/T)] 
 \end{align}
 and
 \[ \mu[B(c_d/T)] \ge c' \mu[B(1/T)] ,\]
  where $c' > 0$ depends only on $c_0$, $c_d$, and the choice of $v$.
\end{proof}

\begin{corollary}
\label{c:capbounds}
Suppose $\alpha^+ \in [0,\infty)$. Then as $T \to \infty$,
 \[ T^{\alpha^- + o(1) } \le  \capa(B(T)) \le T^{\alpha^+ + o(1)} . \]
\end{corollary}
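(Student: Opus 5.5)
This follows directly from Lemma \ref{l:capbounds}, by turning the mass of small balls into powers of $T$ via the definitions of $\alpha^\pm$. We treat the two inequalities separately.

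\emph{Upper bound.} Lemma \ref{l:capbounds} gives $\capa(B(T)) \le 4/\mu[B(c_d/T)]$. By definition of $\alpha^+$, for every $\epsilon>0$ and all sufficiently small $\delta$,
\[ \mu[B(\delta)] \ge \delta^{\alpha^+ + \epsilon}. \]
Recall that $\log\delta<0$, so $\log\mu[B(\delta)]/\log\delta \le \alpha^++\epsilon$ means $\log \mu[B(\delta)] \ge (\alpha^++\epsilon)\log\delta$. Taking $\delta = c_d/T$ gives
\[ \capa(B(T)) \le 4 c_d^{-\alpha^+-\epsilon}\, T^{\alpha^+ + \epsilon}. \]
Since $\epsilon$ is arbitrary, $\capa(B(T)) \le T^{\alpha^+ + o(1)}$.

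\emph{Lower bound.} Fix any $\gamma>\gamma'>0$, say $\gamma=2$ and $\gamma'=1$. Lemma \ref{l:capbounds} gives
\[ \capa(B(T)) \ge \Big(\mu[B(\delta_T)] + \|\mu\| e^{-(\log T)^{1+\gamma'}}\Big)^{-1}, \qquad \delta_T = (\log T)^{1+\gamma}/T. \]
Since $\delta_T\to0$, the definition of $\alpha^-$ gives, for every $\epsilon>0$ and large $T$,
\[ \mu[B(\delta_T)] \le \delta_T^{\alpha^- - \epsilon} = T^{-(\alpha^- - \epsilon)+o(1)}. \]
Here $\alpha^-\le\alpha^+<\infty$, and the polylogarithmic factor $(\log T)^{(1+\gamma)(\alpha^- - \epsilon)}$ is $T^{o(1)}$. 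The second term satisfies $e^{-(\log T)^{1+\gamma'}} = T^{-(\log T)^{\gamma'}}$, which is $o(T^{-M})$ for every $M$, so it is negligible against $T^{-(\alpha^-+1)}$. Hence the denominator is at most $T^{-\alpha^- + \epsilon + o(1)}$, and $\capa(B(T)) \ge T^{\alpha^- - \epsilon + o(1)}$.

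If $\alpha^- = 0$, the estimate $\mu[B(\delta)] \le \delta^{-\epsilon}$ is not useful, but then it suffices that $\mu[B(\delta_T)]\le\|\mu\|$, which gives $\capa \ge c > 0 = T^{0+o(1)}$.

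No step is a real obstacle. The only care needed is with the direction of the inequalities (because $\log\delta<0$) and with checking that the polylogarithmic inflation of the radius and the superpolynomially small error term both contribute only $T^{o(1)}$. The assumption $\alpha^+<\infty$ is what keeps these $o(1)$ exponents meaningful.
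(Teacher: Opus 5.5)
Your proof is correct and follows the paper, which obtains this corollary directly from the two bounds of Lemma~\ref{l:capbounds}: the ball masses are converted into powers of $T$ via the definitions of $\alpha^\pm$, and the polylogarithmic radius inflation and the $\|\mu\| e^{-(\log T)^{1+\gamma'}}$ term cost only $T^{o(1)}$. Your separate treatment of $\alpha^- = 0$ is harmless but unnecessary, since the bound $\mu[B(\delta_T)] \le \delta_T^{-\epsilon}$ already gives $\capa(B(T)) \ge T^{-\epsilon + o(1)}$ for every $\epsilon > 0$.
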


Finally we establish \emph{a priori} bounds on the energy of signed measures. This is the only place in the paper where the origin dominance condition \eqref{e:ds} is used.

\begin{lemma}[Energy bounds]
\label{l:enbound}
 There exists an absolute constant $c > 0$ such that, for every $T \ge 1$ and every $\nu \in \mathcal{M}$ supported on $B(T)$,
   \[  E[\nu]  \le c  T^d \|\nu\|_{L^2}^2 \|\mu\|   . \] 
  If additionally \eqref{e:doub} and \eqref{e:ds} hold, then there exists a constant $c > 0$ such that, for every $T \ge 1$ and every measure $\nu \in \mathcal{M}$ supported on $B(T)$,
   \[  E[\nu]  \le  \frac{c  T^d \|\nu\|_{L^2}^2   }{\capa_f(B(T))}  . \] 
   \end{lemma}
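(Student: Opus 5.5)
The plan is to work in Fourier space via \eqref{e:fenergy}: $E[\nu] = \int |\mathcal{F}[\nu]|^2 \, d\mu$. For the first bound I will use the trivial estimate $|\mathcal{F}[\nu](\lambda)| \le \|\nu\|_{L^1} \le \vol(B(T))^{1/2}\|\nu\|_{L^2}$, which follows from Cauchy--Schwarz on $B(T)$. This gives $E[\nu] \le \|\mu\| \, \vol(B(T)) \|\nu\|_{L^2}^2 \le c T^d \|\nu\|_{L^2}^2 \|\mu\|$ directly. In the discrete case the same argument works with sums over $B(T)\cap\Z^d$ in place of integrals; the constant absorbs the lattice-point count, which is comparable to $T^d$ for $T\ge1$.

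For the second bound I will partition frequency space into cubes $Q_j$ of side length $1/T$, indexed by $j \in \Z^d$ (or the torus in the discrete case). On each cube I will bound $\mu[Q_j] \le \sup_u \mu[u + B(\sqrt d/T)] \le c\,\mu[B(\sqrt d / T)] \le c'\mu[B(1/T)]$, using origin dominance \eqref{e:ds} and then doubling \eqref{e:doub}. Writing $\mu[B(1/T)] \asymp 1/\capa_f(B(T))$ by \eqref{e:capcomp}, it then suffices to show
\[ \sum_j \sup_{\lambda \in Q_j} |\mathcal{F}[\nu](\lambda)|^2 \le c T^d \|\nu\|_{L^2}^2 . \]
This is a Plancherel--Pólya (sampling) inequality for functions whose Fourier transform is supported in $B(T)$. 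Plancherel alone gives $\int |\mathcal{F}[\nu]|^2 d\lambda = \|\nu\|_{L^2}^2$, and since each cube has volume $T^{-d}$, the Riemann-sum version with suprema should cost exactly a factor $T^d$.

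To prove the sampling inequality rigorously I will use the smoothing device of the paper. Let $\psi = \mathcal{F}[\phi_{T}]$-type bump, rescaled so that $\psi$ is a Schwartz function in $\lambda$ with $\check\psi \equiv 1$ on $B(T)$; this is a rescaled $\xi$-type function whose transform is compactly supported and equal to one near $B(T)$. Then $\mathcal{F}[\nu] = \mathcal{F}[\nu] * \Psi_T$ with $\Psi_T(\lambda) = T^d \Psi(T\lambda)$ rapidly decaying. Cauchy--Schwarz gives
\[ |\mathcal{F}[\nu](\lambda)|^2 \le \|\Psi_T\|_{L^1} \int |\mathcal{F}[\nu](\lambda')|^2 |\Psi_T(\lambda - \lambda')| \, d\lambda' . \]
Taking the supremum over $\lambda \in Q_j$, summing over $j$, and using the decay of $\Psi$ yields $\sum_j \sup_{Q_j} |\Psi_T(\lambda-\cdot)| \le c\,T^d$ pointwise, and hence the claim. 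In the discrete case I will do the same on the torus, using $\tilde\phi$.

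I expect the main obstacle to be bookkeeping rather than anything conceptual. The frequency cubes must be matched correctly to balls for \eqref{e:ds}, and the constants must remain uniform in $T\ge1$. One further point needs care: if $\mu$ has an atom, the covering argument is unaffected, because \eqref{e:ds} bounds the atom's mass by $c\,\mu[B(1/T)]$.
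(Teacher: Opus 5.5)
Your proof is correct, but it is arranged differently from the paper's.

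\textbf{The paper's route.} The paper inserts a cutoff $\mathcal{F}[\psi]((x-y)/T)$ into the energy. This cutoff equals $1$ whenever $x,y\in B(T)$, so $E[\nu]=\int|\mathcal{F}[\nu]|^2\,d(\mu*\Psi_T)$, where $\mu*\Psi_T$ is the bounded density obtained by smoothing $\mu$ at scale $1/T$. Plancherel then gives $E[\nu]\le\|\nu\|_{L^2}^2\,\sup_s(\mu*\Psi_T)(s)$. Both bounds come from estimating this supremum:
\begin{itemize}
\item for the first bound, trivially by $T^d\|\psi\|_\infty\|\mu\|$;
\item for the second, by summing over dyadic shells of radius $2^k/T$ around $s$, using \eqref{e:ds} and \eqref{e:doub} at every such scale against the stretched-exponential decay of $\psi$, and then applying \eqref{e:capcomp}.
\end{itemize}

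\textbf{Your route.} Your first bound, $E[\nu]\le K(0)\|\nu\|_{L^1}^2$ followed by Cauchy--Schwarz, is more elementary and needs no smoothing. For the second bound you put the smoothing on the other factor. You leave $\mu$ unsmoothed and use \eqref{e:ds} and \eqref{e:doub} only at a single scale comparable to $1/T$ to control $\mu[Q_j]$. All the multi-scale decay is moved into a deterministic Plancherel--P\'olya inequality for the band-limited function $\mathcal{F}[\nu]$, which you prove with the same kind of reproducing bump.

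\textbf{Comparison.} The two are dual forms of one uncertainty-principle argument. Yours isolates a classical lemma. The paper's intermediate identity $E[\nu]=\int|\mathcal{F}[\nu]|^2\,d(\mu*\Psi_T)$ is what makes Corollary~\ref{c:enbound}(2) immediate, since there the frequency integral is simply split into $B(\delta/2)$ and its complement. With your formulation, that step would need a frequency-localised version of the sampling inequality. This is easy to obtain from your reproducing-kernel argument, but it is an extra step.
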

   
  \begin{proof}
Let $v \in (0,1)$ be arbitrary, and fix a constant $c_1 > 0$ and a function $\psi$ such that $|\psi(x)| \le c_1 e^{-|x|^v}$ for all $x$, and $\mathcal{F}[\psi]\big|_{B(2)}\equiv 1$. (For instance, one can take $\psi = \mathcal{F}[\id_{B(4)}]  \zeta$, where $\zeta$ is the function from Proposition~\ref{c:1}.) Since $\nu$ is supported on $B(T)$, we have
\begin{align*}
E[\nu] & = \int \int K(x-y) \mathcal{F}[\psi]( (x-y) / T ) \, d\nu(x) d\nu(y)  \\ 
& = \int |\mathcal{F}[\nu]|^2(\lambda) \, d(\mu  * \psi(\cdot/T) )(\lambda) \\
& \le T^d \|\nu\|_{L^2}^2  \sup_s   \int | \psi((x-s)/T) |
d\mu(x)  .
\end{align*}
In general the above is bounded by $T^d \|\nu\|_{L^2}^2 \| \psi\|_{\infty} \|\mu\| $ which concludes the first part of the lemma. Assuming \eqref{e:doub} and \eqref{e:ds},
we can further improve this via
\begin{align*}
   \int  |\psi((x-s)/T) | d\mu(x)  &   \le  c_2 \mu[s + B(1/T)] + c_2 \sum_{k=0}^\infty \mu[s + B(2^{k+1}/T)] e^{-(2^k)^v}  \\
   &  \le  c_2 \mu[B(1/T)] + c_2 \sum_{k=0}^\infty \mu[B(2^{k+1}/T)] e^{-(2^k)^v} \\
   &  \le c_3 \mu[ B(1/T)] 
   \end{align*}
   where \eqref{e:ds} was used in the second step, and \eqref{e:doub} was used in the third step,  much like it was used to obtain~\eqref{e:dc}. The second part of the lemma now follows from \eqref{e:capcomp}.
\end{proof}

We deduce from Lemma \ref{l:enbound} (and its proof) energy bounds on the rescaled measures $\eta_T := T^{-d} \eta(\cdot/T)$ that appear in our entropic repulsion result:

\begin{corollary}
\label{c:enbound}
Suppose \eqref{e:doub} and \eqref{e:ds} hold, and let $\eta \in \mathcal{M}$ be compactly supported such that $\|\eta\|_{L^2} < \infty$.  
\begin{enumerate}[{\bf (1)}]
    \item As $T \to \infty$,
\[ E[ \eta_T ] = O\Big( \frac{1}{\capa_f(B(T))} \Big) . \]
\item Fix $\delta > 0$, and suppose that $\alpha^+ < \infty$. Then as $T \to \infty$,
\[  \widetilde{E}[ \eta_T ] = o\Big( \frac{1}{\capa_f(B(T))} \Big)  ,\]
where $\widetilde{E}[\cdot]$ denotes energy with respect to the spectral measure $\tilde{\mu} := \mu_{B(\delta)^c}$.
\end{enumerate}
\end{corollary}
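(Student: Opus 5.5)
We prove Corollary \ref{c:enbound} by rescaling Lemma \ref{l:enbound} and then re-running its proof with $\mu$ replaced by $\tilde\mu$.

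\textbf{Part (1).} Since $\eta$ is compactly supported, say in $B(R)$, the rescaled measure $\eta_T$ is supported in $B(RT)$. Its density is $T^{-d}\eta(\cdot/T)$, so
\[ \|\eta_T\|_{L^2}^2 = T^{-2d}\cdot T^d \|\eta\|_{L^2}^2 = T^{-d}\|\eta\|_{L^2}^2 . \]
Lemma \ref{l:enbound}, applied on $B(RT)$, then gives
\[ E[\eta_T] \le c (RT)^d T^{-d}\|\eta\|_{L^2}^2 / \capa_f(B(RT)) . \]
By \eqref{e:capcomp} and \eqref{e:doub}, $\capa_f(B(RT)) \asymp 1/\mu[B(1/(RT))] \asymp 1/\mu[B(1/T)] \asymp \capa_f(B(T))$, with constants depending on $R$. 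This proves (1).

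In the discrete case the density is summed over $\Z^d$. The same scaling bound holds for bounded $L^2$ densities, or one interprets $\eta_T$ as a function on $\Z^d$ with $\ell^2$ norm $\asymp T^{-d}\|\eta\|^2_{L^2}$ by Riemann sums. This step needs $\eta$ to be nice, as elements of $\mathcal T$ are.

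\textbf{Part (2).} We rerun the proof of Lemma \ref{l:enbound} with $\tilde\mu$ in place of $\mu$, taking $T$ replaced by $RT$. This yields
\[ \widetilde E[\eta_T] \le T^{-d}(RT)^d\|\eta\|_{L^2}^2 \sup_s \int |\psi((x-s)/(RT))|\, d\tilde\mu(x) . \]
We must show that the supremum is $o(\mu[B(1/T)])$, since $\mu[B(1/T)] \asymp 1/\capa_f(B(T))$. Split the integral according to whether $|x-s| \le r_T$ or not, with $r_T := (\log T)^{2}/T$.

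\emph{Far region.} Using $|\psi(y)| \le c_1 e^{-|y|^v}$, the contribution from $|x-s| > r_T$ is at most
\[ c_1 \|\mu\| e^{-(\log T)^{2v}/R^v} . \]
Choose $v \in (1/2,1)$. Then this decays faster than any power of $T$. Since $\alpha^+ < \infty$, we have $\mu[B(1/T)] \ge T^{-\alpha^+-1}$ eventually, so the far contribution is $o(\mu[B(1/T)])$.

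\emph{Near region.} The contribution from $|x-s| \le r_T$ is at most $c\,\tilde\mu[B(s,r_T)]$. If $|s| \ge \delta/2$, then by \eqref{e:ds} together with covering $B(s,r_T)$ by balls of radius $1/T$,
\[ \tilde\mu[B(s,r_T)] \le \mu[B(s,r_T)] \le c\,\mu[B(r_T)] . \]
If $|s| < \delta/2$, then $B(s,r_T) \cap B(\delta)^c = \emptyset$ for large $T$, so this term vanishes. In either case, doubling only gives $\mu[B(r_T)] \le (\log T)^{C}\mu[B(1/T)]$, which is too weak.

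This comparison is the main obstacle: the argument must use genuine separation from the origin, not just doubling. Write $\mu_{>\delta/2}$ for $\mu$ restricted to $B(\delta/2)^c$. We use the following refinement. Every ball $B(u,\epsilon)$ with $|u| \ge \delta/2$ satisfies
\[ \mu[B(u,\epsilon)] = o(\mu[B(\epsilon)]) \quad \text{as } \epsilon \to 0 , \]
uniformly in $u$, because $\mu[B(\epsilon)] \ge \epsilon^{\alpha^+ + o(1)}$ blows up relative to $\epsilon^{d}$, whereas away from the origin the mass is spread over a fixed region. This uniformity is not automatic from \eqref{e:ds} alone. To obtain it, use \eqref{e:ds} to bound the mass near $s$, and exploit that the total mass of $\mu_{>\delta/2}$ is finite while $T^{-d}\|\eta\|^2$ scaling is lossless.

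Concretely, bound the inner integral using $|\mathcal F[\eta_T]|^2$ directly instead of the crude sup. We have
\[ \widetilde E[\eta_T] = \int_{B(\delta)^c} |\mathcal F[\eta](T\lambda)|^2 \, d\mu(\lambda) . \]
Since $\eta \in L^2$ is compactly supported, $\mathcal F[\eta]$ is smooth and in $L^2$. Average over unit cells: by \eqref{e:ds} at scale $1/T$, the integral is at most
\[ c\,\mu[B(1/T)] \sum_{k \in \Z^d,\ |k| \ge \delta T/2} \sup_{Q_k} |\mathcal F[\eta]|^2 . \]
Here the sum is the tail beyond radius $\delta T/2$ of a convergent sum, which is finite for $\eta \in \mathcal T$ by the Lipschitz density and piecewise smooth boundary, via a Plancherel–Polya type bound. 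This tail tends to $0$, which gives $o(\mu[B(1/T)])$.
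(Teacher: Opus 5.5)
Your proof is correct in its final form, and it reaches Part (2) by a different route from the paper's. For Part (1) you follow the paper. The only difference is that you compare $\capa_f(B(RT))$ with $\capa_f(B(T))$ through \eqref{e:capcomp} and doubling, whereas monotonicity (Claim~\ref{c:mon}, with $R\ge 1$) already gives $\capa_f(B(RT))\ge\capa_f(B(T))$.

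\textbf{Comparison for Part (2).}

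\emph{The paper's route.} It keeps the $\psi$-smoothing from the proof of Lemma~\ref{l:enbound}. It writes $\widetilde E[\eta_T]$ as the integral of $|\mathcal F[\eta_T]|^2$ against $\tilde\mu$ convolved with a rescaled $\psi$, whose density is $O(T^d\mu[B(1/T)])$ by \eqref{e:doub}--\eqref{e:ds}. It then splits at $|\lambda|=\delta/2$:
\begin{itemize}
\item Outside $B(\delta/2)$ only Plancherel is needed: $\int_{B(\delta/2)^c}|\mathcal F[\eta_T]|^2=T^{-d}\int_{B(\delta T/2)^c}|\mathcal F[\eta]|^2=o(T^{-d})$.
\item Inside $B(\delta/2)$, the smoothed $\tilde\mu$ has stretched-exponentially small density because $\tilde\mu$ lives outside $B(\delta)$. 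This beats $1/\capa_f(B(T))\ge T^{-\alpha^+-o(1)}$.
\end{itemize}

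\emph{Your route.} You leave $\tilde\mu$ rough and put the regularity on $\mathcal F[\eta]$. You use cubes of side $1/T$, control the $\mu$-mass of each by \eqref{e:ds}--\eqref{e:doub}, and use square-summability of $\sup_{Q_k}|\mathcal F[\eta]|$. This has three advantages:
\begin{itemize}
\item The support restriction of $\tilde\mu$ is automatic, so there is no separate low-frequency piece.
\item There is no explicit appeal to $\alpha^+<\infty$, which in any case follows from \eqref{e:doub}.
\item Summing over all $k$ gives Part (1) as well.
\end{itemize}
The price is importing the Plancherel--P\'olya inequality. That inequality is itself proved by the same reproducing-kernel smoothing the paper performs on the spectral side.

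\textbf{Two corrections to the write-up.}

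\emph{The restriction to $\mathcal T$ is unnecessary.} The summability bound holds for every compactly supported $\eta$ with $L^2$ density, which is the generality of the statement. Take a Schwartz $\chi$ with $\chi\equiv 1$ on the support of $\eta$. Then $\mathcal F[\eta]=\mathcal F[\eta]*\mathcal F[\chi]$, and Cauchy--Schwarz gives $\sum_k\sup_{Q_k}|\mathcal F[\eta]|^2\le C_\chi\|\eta\|_{L^2}^2$.

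\emph{The intermediate ``refinement'' is false under the hypotheses.} The claim was that $\mu[B(u,\epsilon)]=o(\mu[B(\epsilon)])$ uniformly over $|u|\ge\delta/2$.
\begin{itemize}
\item A Gaussian spectral density satisfies \eqref{e:doub} and \eqref{e:ds}, with $\alpha^+=d<\infty$. Yet $\mu[B(u,\epsilon)]\asymp\epsilon^d\asymp\mu[B(\epsilon)]$ for every fixed $u$.
\item Atoms at $0$ and at some $u_0\neq 0$ give another counterexample.
\end{itemize}
Your concrete cube argument never uses this claim. Delete that paragraph, together with the exploratory near/far split before it.
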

\begin{proof}
\textbf{(1)} Let $r \ge 1$ be such that $\eta$ is supported on $B(r)$. By Lemma \ref{l:enbound}, and using that $\|\eta_T\|_{L^2}^2 = T^{-d} \|\eta\|_{L^2}^2$, we have
\[ E[ \eta_T ] \le \frac{ c (rT)^d T^{-d} \| \eta \|^2_{L^2} }{ \capa_f(B(rT)) } \le \frac{ c r^d \| \eta \|^2_{L^2} }{ \capa_f(B(T)) } ,\]
where the second inequality is by the monotonicity of the capacity (Claim \ref{c:mon}).

\textbf{(2)} Let $\psi$ be as in the proof of Lemma \ref{l:enbound}. As in that proof 
\[ \widetilde{E}[\eta_T]  = \int |\mathcal{F}[\eta_T]|^2(\lambda) \, d(\tilde{\mu}  * \psi(\cdot/T) )(\lambda) .\]
We split the domain of this integral into $\lambda \in B(\delta/2)^c$ and $\lambda \in B(\delta/2)$. For the first we have 
\begin{align*}
    \int_{\lambda \in B(\delta/2)^c} |\mathcal{F}[\eta_T]|^2(\lambda) \, d(\tilde{\mu}  * \psi(\cdot/T) )(\lambda)  &  \le T^d \||\mathcal{F}[\eta_T]| \|_{L^2(B(\delta/2)^c)}^2  \sup_s   \int | \psi((x-s)/T) | d\mu(x) .
\end{align*}
Since $ \||\mathcal{F}[\eta]| \|_{L^2} = \| \eta\|_{L^2} < \infty$, we have 
\[\||\mathcal{F}[\eta_T]| \|_{L^2(B(\delta/2)^c)}^2 = T^{-d}  \||\mathcal{F}[\eta]| \|_{L^2(B(c_\delta T)^c)}^2 = o(T^{-d}). \]
Since also, as in the proof of Lemma \ref{l:enbound},  
\[   \int  |\psi((x-s)/T) | d\mu(x)     \le \frac{c}{\capa_f(B(T)}, \] 
we see that
\[  \int_{\lambda \in B(\delta/2)^c} |\mathcal{F}[\eta_T]|^2(\lambda) \, d(\tilde{\mu}  * \psi(\cdot/T) )(\lambda)   = o\Big( \frac{1}{\capa_f(B(T))} \Big) . \]
For the second domain we have instead
\begin{align*}
    \int_{\lambda \in B(\delta/2)} |\mathcal{F}[\eta_T]|^2(\lambda) \, d(\tilde{\mu}  * \psi(\cdot/T) )(\lambda)  &  \le T^d \||\mathcal{F}[\eta_T]| \|_{L^2}^2  \sup_{s \in B(\delta/2)}   \int | \psi((x-s)/T) | d\tilde{\mu}(x) \\
    & = \|\eta\|_{L^2}  \sup_{s \in B(\delta/2)}   \int | \psi((x-s)/T) | d\tilde{\mu}(x).
\end{align*}
Since $|\psi(x)| \le c_1 e^{-|x|^{\nu}}$, and $\tilde{\mu}$ is supported outside $B(\delta)^c$,
\[  \sup_{s \in B(\delta/2)}   \int | \psi((x-s)/T) | d\tilde{\mu}(x) \le \|\tilde{\mu}\| c_1 e^{-|\delta T/2|^{\nu}}= o\Big( \frac{1}{\capa_f(B(T))} \Big)\]
where the final step is by Corollary \ref{c:capbounds}, using that $\alpha^+ < \infty$.
\end{proof}

\subsection{The infimum of SGFs}
In this section we study properties of the infimum of general SGFs, discrete or continuous, and related results. We first state standard Gaussian tail bounds.

\begin{claim}[Gaussian tail bounds]
\label{c:gtb}
Let $Z$ be a standard Gaussian random variable. Then
  \[ \frac{1}{\sqrt{2 \pi}} \left(\frac 1 x - \frac 1 {x^3} \right) e^{-x^2/2} \le  \P[ Z \ge x ] \le \frac12 e^{-x^2/2} \, , \quad x > 0 . \]
\end{claim}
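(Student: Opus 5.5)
The upper bound is the easier half: I would use the Chernoff bound. For every $\lambda>0$,
\[ \P[Z \ge x] \le e^{-\lambda x}\,\E[e^{\lambda Z}] = e^{-\lambda x + \lambda^2/2}, \]
and choosing $\lambda = x$ gives $\P[Z\ge x]\le e^{-x^2/2}$. To gain the factor $\frac12$, I would write
\[ \P[Z\ge x] = \frac{1}{\sqrt{2\pi}}\int_0^\infty e^{-(x+t)^2/2}\,dt = e^{-x^2/2}\,\frac{1}{\sqrt{2\pi}}\int_0^\infty e^{-xt-t^2/2}\,dt . \]
For $x>0$ the weight $e^{-xt}$ is at most $1$, so the last factor is at most $\frac{1}{\sqrt{2\pi}}\int_0^\infty e^{-t^2/2}\,dt = \frac12$. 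This proves the upper bound.

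For the lower bound I would use the classical integration-by-parts argument on the tail integral. Write $\varphi(t)=e^{-t^2/2}/\sqrt{2\pi}$, so that $\varphi'(t)=-t\varphi(t)$. One checks directly that
\[ \frac{d}{dt}\Big[-\Big(\frac1t-\frac1{t^3}\Big)\varphi(t)\Big] = \Big(1-\frac{3}{t^4}\Big)\varphi(t) \le \varphi(t), \qquad t>0. \]
Integrating this inequality from $x$ to $\infty$, and using that the boundary term vanishes at infinity, gives
\[ \Big(\frac1x-\frac1{x^3}\Big)\varphi(x) \le \int_x^\infty \varphi(t)\,dt = \P[Z\ge x]. \]
This is exactly the lower bound. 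For $x\le 1$ the left-hand side is non-positive, so the bound is trivial there.

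The only step needing care is the derivative computation. Expanding,
\[ \frac{d}{dt}\Big[-\big(t^{-1}-t^{-3}\big)\varphi\Big] = \big(t^{-2}-3t^{-4}\big)\varphi + \big(t^{-1}-t^{-3}\big)t\varphi = \big(1-3t^{-4}\big)\varphi, \]
which confirms the claimed identity. I do not expect a genuine obstacle here, since both bounds are standard Gaussian tail estimates.
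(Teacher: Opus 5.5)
Your proof is correct. The substitution $s=x+t$ together with $e^{-xt}\le 1$ gives the factor $\tfrac12$ exactly. For the lower bound, the identity $\frac{d}{dt}\bigl[-(t^{-1}-t^{-3})\varphi(t)\bigr]=(1-3t^{-4})\varphi(t)\le\varphi(t)$ integrates over $[x,\infty)$ to give the bound for every $x>0$.

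The paper gives no proof of this claim; it quotes it as a standard fact. Your argument therefore supplies the classical derivation rather than competing with one in the text. The preliminary Chernoff step is superfluous, since the shift argument alone already gives the sharper bound.
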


The next three results are about the order of the supremum (or infimum) of a SGF on growing domains.

\begin{lemma}[Spectral monotonicity]
\label{lem: mon of Esup}
For spectral measures satisfying $\mu'\le \mu$, and every domain~$D$,
\[
\E \sup_D f_{\mu'} \le \E \sup_D f_{\mu}.
\]
\end{lemma}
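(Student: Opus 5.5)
The plan is to use the spectral decomposition $f_\mu \stackrel{d}{=} f_{\mu'} \oplus f_{\mu-\mu'}$ together with Jensen's inequality. Since $\mu' \le \mu$, the difference $\mu - \mu'$ is again a non-negative, finite, symmetric measure, hence a spectral measure. So we may realise $f_\mu = f_{\mu'} + g$ on a common probability space, where $g = f_{\mu-\mu'}$ is a centred SGF independent of $f_{\mu'}$.

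\textbf{Step 1: the conditional estimate.} Condition on $f_{\mu'}$. For every $x \in D$ we have pointwise $\sup_D f_\mu \ge f_{\mu'}(x) + g(x)$. Taking conditional expectation given $f_{\mu'}$ and using $\E[g(x)] = 0$ with independence, we get
\[ \E\Big[\sup_D f_\mu \,\Big|\, f_{\mu'}\Big] \ge \sup_{x\in D} \big( f_{\mu'}(x) + \E[g(x)] \big) = \sup_D f_{\mu'} . \]
Taking expectations then yields the claim. This is simply the statement that the supremum of a sum dominates the supremum of the conditional mean, i.e.\ Jensen's inequality for the convex functional $\sup_D$.

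\textbf{Step 2: measurability and integrability.} To make this rigorous, first treat finite (or countable) sets $D_0 \subset D$. There $\sup_{D_0}$ is measurable, and $\E|\sup_{D_0} f| < \infty$ for finite $D_0$ by Gaussian integrability. For continuous fields on a general domain, pass to countable dense subsets using continuity of the sample paths. Then take the supremum over finite $D_0 \uparrow D$ and apply monotone convergence. If $\E \sup_D f_\mu = +\infty$ there is nothing to prove. Note also that $\E\sup_{D_0} f \ge \E f(x_0) = 0$, so monotone convergence applies to the nonnegative-expectation increasing family.

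\textbf{Main obstacle.} The only delicate point is this measure-theoretic justification, namely the existence of the coupling and the limit from finite to general $D$; it is routine. The core of the argument is the one-line Jensen step in Step 1.
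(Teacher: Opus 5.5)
Your argument is correct and is essentially the paper's: both rest on the independent decomposition $f_\mu \overset{d}{=} f_{\mu'} \oplus g$ with $g$ centred, but the paper evaluates $\sup_D f_\mu$ at the random maximiser $X$ of $f_{\mu'}$ and uses $\E g(X)=0$, while you condition on $f_{\mu'}$ and apply Jensen to the convex functional $\sup_D$, which has the small advantage of not requiring the supremum to be attained or a measurable argmax. One minor correction in Step 2: monotone convergence applies because, for a nested increasing family of finite sets $D_0$, each $\sup_{D_0} f$ is bounded below by the integrable variable $\sup_{D_1} f$ (the first set in the family), not because the expectations are nonnegative.
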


\begin{proof}
By the spectral decomposition, $f_\mu \overset{d}= f_{\mu'} \oplus g$, where $f_{\mu'}$ and $g=f_{\mu-\mu'}$ are independent. Let $X = \arg\max_D f_{\mu'}$ be the index of the supremum of $f_{\mu'}$ in~$D$ (breaking ties arbitrarily). Then
\[
\sup_D f_{\mu} \ge f_{\mu}(X) = f_{\mu'}(X) +g(X)
 = \sup_D f_{\mu'} + g(X).
\]
Since $X$ is independent of $g$, and $\E g(x) = 0$ for every $x\in D$, we see that $\E g(X) = 0$. The result follows by taking expectations.
\end{proof}

 \begin{lemma}[Order of the infimum]
 \label{l:infconv}
  Let $\mu$ be a spectral measure in $\R^d$ and let $\delta > 0$.  
 \begin{enumerate}[\rm (1)]
 
 \item There exists $T_0=T_0(\mu,\delta)>0$ such that, for all $T>T_0$,
 \[ \inf_{\mu' \le \mu} \E \inf_{B(T)} f_{\mu'} \ge -\sqrt{(2d\|\mu_{a.c.} + \mu_{s.c.}\|-\delta)\log T} . \]

\item As $T \to \infty$
\begin{equation*}
     \inf_{\mu' \le \mu} \P \Big[  \frac{\inf_{x \in B(T)} f_{\mu'}(x) }{ \sqrt{\log T}} \ge - \sqrt{2d\|\mu_{a.c.} + \mu_{s.c.}\|- \delta } \Big] \to 1. 
     \end{equation*}
\end{enumerate}
 \end{lemma}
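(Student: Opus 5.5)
The plan is to reduce to a single field by spectral monotonicity, and then bound the expected supremum by a union bound over a grid whose Gaussian tail estimate keeps its polylogarithmic correction. That correction is what produces the $-\delta$ inside the square root.

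\textbf{Step 1 (reduction to one field, and splitting off atoms).} By symmetry, $\E \inf_{B(T)} f_{\mu'} = -\E\sup_{B(T)} f_{\mu'}$, and by Lemma~\ref{lem: mon of Esup} we have $\E\sup_{B(T)} f_{\mu'} \le \E \sup_{B(T)} f_\mu$ for every $\mu'\le\mu$. So (1) reduces to the single bound $\E\sup_{B(T)} f_\mu \le \sqrt{(2dm'-\delta)\log T}$, where $m' := \|\mu_{a.c.}+\mu_{s.c.}\|$. Part (2) then follows by Borell--TIS concentration: for every $\mu'\le\mu$ the supremum of $f_{\mu'}$ fluctuates on scale $O(\sqrt{\|\mu\|})$ around its mean, and this is $o(\sqrt{\log T})$ uniformly in $\mu'$. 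The margin between $2dm'-\delta/2$ and $2dm'-\delta$ then absorbs these fluctuations.

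Write $f_\mu = f_{\mu_d}\oplus f_\nu$ with $\nu=\mu_{a.c.}+\mu_{s.c.}$. Fix $\varepsilon>0$ and split $\mu_d = \mu_d^{(1)}+\mu_d^{(2)}$, where $\mu_d^{(1)}$ has finitely many atoms and $\|\mu_d^{(2)}\|<\varepsilon$. The field $f_{\mu_d^{(1)}}$ is a finite trigonometric sum with Gaussian coefficients, so $\E\sup_{\R^d} f_{\mu_d^{(1)}} = O(1)$.

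\textbf{Step 2 (grid union bound for the remaining field).} For $g = f_{\nu+\mu_d^{(2)}}$, with variance $\sigma^2 \le m'+\varepsilon$, take a grid $\Lambda\subset B(T)$ of mesh $\rho$, so that $N := |\Lambda|\asymp (T/\rho)^d$. Integrating the sharp upper bound of Claim~\ref{c:gtb} against the union bound $\P[\max_\Lambda g > u]\le N\,\P[\sigma Z>u]$ gives
\[
\E \max_\Lambda g \le u_0 + O(\sigma^2/u_0), \qquad u_0^2 = \sigma^2\big(2\log N - \log\log N - O(1)\big).
\]
The off-grid oscillation $\sup_{B(T)} g - \max_\Lambda g$ would be controlled by a Dudley/Borell estimate on each cell combined with a union bound over the $N$ cells. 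This requires some modulus of continuity of $g$, which I would obtain after the mollification of Proposition~\ref{c:1}, using Lemma~\ref{l:captrunc}-type convexity to compare with a slightly enlarged domain.

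\textbf{Step 3 (choosing the parameters, and the main obstacle).} The three contributions must be compared with the required slack, which is $\sqrt{2dm'\log T} - \sqrt{(2dm'-\delta)\log T}\approx \delta/(2\sqrt{2dm'\log T})$.
\begin{itemize}
\item The extra variance $\varepsilon$ costs about $\varepsilon\sqrt{d\log T}$. This is far larger than the slack unless $\varepsilon$ is allowed to depend on $T$.
\item The finite atomic part costs $O(1)$, which is also larger than the slack.
\item The grid gain from the $-\log\log N$ term is only of order $\log\log T/\sqrt{\log T}$.
\end{itemize}
So the hard step, and where I expect the plan might fail, is making the atomic and $\varepsilon$-errors $o(1/\sqrt{\log T})$. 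What I would try first is to avoid adding suprema at all. I would bound $\P[\sup_{B(T)} f_\mu>u]$ directly by conditioning on the finitely many atomic coefficients, which enter only as an $O(1)$ random shift of the level $u$. Then the $O(1)$ shift inside a Gaussian tail at level $u\sim\sqrt{\log T}$ changes the exponent by only $O(\sqrt{\log T})$, which the $-\delta\log T$ slack in the exponent absorbs. The $\varepsilon$-part would be handled by taking $\varepsilon=\varepsilon_T\to0$ slowly, with tail bounds for the total variance $m'+\varepsilon_T$ rather than additively. The grid estimate of Step~2 would then be carried out at the level of tail probabilities with exponent $u^2/(2\sigma^2)$, so that the $-\delta$ appears directly in the exponent.
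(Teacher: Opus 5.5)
\textbf{The target inequality is false as printed.} That, rather than the atoms or the $\eps$-error, is why Step~3 cannot be closed. Take $K(x)=\prod_{i=1}^d(1-|x_i|)_+$.
\begin{itemize}
\item It is the covariance of a continuous SGF on $\R^d$ with absolutely continuous spectral measure, density $\prod_i\big(\sin(\pi\lambda_i)/(\pi\lambda_i)\big)^2$. So $\|\mu_{a.c.}+\mu_{s.c.}\|=\|\mu\|=1$.
\item Since $K$ vanishes on $\Z^d\setminus\{0\}$, the values $\{f(k)\}_{k\in\Z^d}$ are i.i.d.\ standard Gaussians.
\end{itemize}
Hence
\[ \E\sup_{B(T)} f \;\ge\; \E\max_{k\in\Z^d\cap B(T)} f(k) \;=\; (1+o(1))\sqrt{2d\log T}, \]
which exceeds $\sqrt{(2d-\delta)\log T}$ for large $T$. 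Likewise the probability in (2) tends to $0$, not $1$. For purely atomic $\mu$ the printed right-hand side $\sqrt{-\delta\log T}$ is not even defined.

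The $-\delta$ is a typo for $+\delta$, and that is how the lemma is used later. For example, in the proof of Proposition~\ref{p:m2} it is invoked once the relevant constant exceeds $2m'd$, including the case $m'=0$.

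Your slack estimate is also off by a factor $\log T$:
\[ \sqrt{2dm'\log T}-\sqrt{(2dm'-\delta)\log T}\;\ge\;\tfrac{\delta}{2\sqrt{2dm'}}\sqrt{\log T}. \]
So the literal target asks you to undercut the true first-order asymptotics by a multiple of $\sqrt{\log T}$. Handling the atoms by conditioning, taking $\eps_T\to0$, or using the $-\log\log N$ correction (worth only $O(\log\log T/\sqrt{\log T})$) cannot achieve that. Your final remark already exposes the sign problem: at $u^2=(2dm'-\delta)\log T$ the union bound gives $N\,\P[\sigma Z>u]$ of order $T^{\delta/(2m')}\to\infty$ up to logarithmic factors. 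The slack in the exponent works against you.

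\textbf{With the sign corrected to $+\delta$, your Steps~1--2 already give a proof.} The Step~3 obstacle disappears, because the slack $\asymp\delta\sqrt{\log T}$ now works in your favour.
\begin{itemize}
\item Fix $\eps$ small compared with $\delta$. The finitely many atoms contribute $O(1)$.
\item Run the grid union bound at level $\sqrt{(2d(m'+\eps)+\delta/2)\log T}$ with a fixed small mesh $\rho$. No $\log\log N$ refinement is needed.
\item Off the grid, no mollification or convexity is needed. By continuity of the field and dominated convergence, the mean oscillation of $g$ over a $\rho$-cell and $\sup_{|x|\le\sqrt d\rho}\var(g(x)-g(0))$ both tend to $0$ as $\rho\to0$. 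So Borell--TIS plus a union bound over the $\asymp(T/\rho)^d$ cells costs at most $\eta(\rho)\sqrt{\log T}$, with $\eta(\rho)\to0$.
\item Lemma~\ref{lem: mon of Esup} gives uniformity over $\mu'\le\mu$, and Borell--TIS with variance $\|\mu'\|\le\|\mu\|$ gives (2).
\end{itemize}

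The paper proceeds differently. It quotes Marcus \cite{mar72,mar74} for the single-field bound in $d=1$, asserts the extension to general $d$, and adds the infimum over $\mu'$ via Lemma~\ref{lem: mon of Esup} and Markov's inequality. Your corrected route is self-contained and shows explicitly why atoms do not contribute at order $\sqrt{\log T}$.
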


\begin{proof}
For $d=1$, and if $f_{\mu'}$ is replaced by $f_{\mu}$, these statements are proven in \cite{mar72,mar74}. The adaptation to higher dimensions is straightforward. The addition of the infimum follows from Lemma~\ref{lem: mon of Esup} and Markov's inequality.
\end{proof}

\begin{lemma}\label{l:DudBTIS}
 Let $\mu$ be a spectral measure in $\R^d$ such that $\int |\lm|^2 d\mu(\lm)\le q<\infty$. Then there exist $T_0(q,d)>0$ and $c(q,d) > 0$ such that, for every $T>T_0$,
\[\P\Big(\sup_{B(T)} f_\mu > 2c\sqrt{ \|\mu\| \log T}\Big) \le T^{-\frac{c^2}{2}}.\]
\end{lemma}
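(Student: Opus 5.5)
The natural route is Dudley's entropy bound for the expected supremum, followed by the Borell--TIS inequality for the concentration. Write $\sigma^2 = K(0) = \|\mu\|$ for the pointwise variance of $f_\mu$.

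\emph{Step 1: Lipschitz control of the canonical metric.} The canonical metric of $f_\mu$ satisfies
\[ d(x,y)^2 = \E[(f(x)-f(y))^2] = 2\int (1-\cos(2\pi\langle \lambda, x-y\rangle))\, d\mu(\lambda) \le 4\pi^2 |x-y|^2 \int |\lambda|^2 d\mu(\lambda) \le 4\pi^2 q |x-y|^2 . \]
So $d(x,y) \le 2\pi\sqrt{q}\,|x-y|$, and also trivially $d(x,y)\le 2\sigma$. In the discrete case the metric is bounded by $2\sigma$ and $B(T)\cap\Z^d$ has at most $cT^d$ points, so the covering estimate below is only easier.

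\emph{Step 2: Dudley's bound.} The number of $d$-balls of radius $\epsilon$ needed to cover $B(T)$ is at most $N(\epsilon) \le c_d (1+ \sqrt q\, T/\epsilon)^d$. Dudley's entropy integral then gives
\[ \E \sup_{B(T)} f_\mu \le C\int_0^{2\sigma} \sqrt{\log N(\epsilon)}\, d\epsilon \le C' \sigma\sqrt{d \log T} + C'\int_0^{2\sigma}\sqrt{\log(1+\sqrt q/\epsilon)}\,d\epsilon . \]
The constants produced here are not uniform in $\sigma$, and absorbing the lower-order terms into $\sigma\sqrt{\log T}$ is the delicate point; this is the step I expect to be the main obstacle. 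The plan is to split the entropy integral at $\epsilon=\sigma/T$, using that for $\epsilon\le\sigma/T$ the entropy is $O(d\log(\sqrt q T/\sigma))$. If $\sigma$ is small relative to $q$, one uses $\sigma^2=\|\mu\|$ together with the dependence of the constant on $q$; since the statement allows $c$ and $T_0$ to depend on $q$ and $d$, one may also simply assume a lower bound on $\|\mu\|$ in the regimes where the paper applies the lemma. The aim of this step is $\E\sup_{B(T)} f_\mu \le c\sqrt{\|\mu\|\log T}$ for $T>T_0(q,d)$.

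\emph{Step 3: Borell--TIS.} Borell--TIS gives
\[ \P\Big(\sup_{B(T)} f_\mu > \E\sup_{B(T)} f_\mu + u\Big) \le e^{-u^2/(2\sigma^2)} . \]
Taking $u = c\sqrt{\|\mu\|\log T}$ yields
\[ \P\Big(\sup_{B(T)} f_\mu > 2c\sqrt{\|\mu\|\log T}\Big) \le e^{-c^2\log T/2} = T^{-c^2/2}, \]
provided $c$ is at least the Dudley constant from Step 2. The constant $c$ can be enlarged freely, which completes the proof.
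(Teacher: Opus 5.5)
Your two-step structure matches the paper's. The paper takes $\E\sup_{B(T)} f_\mu\le c\sqrt{\|\mu\|\log T}$ for $T>T_0(q,d)$ from \cite[Lemma 3.12]{ffn21}, then applies Borell--TIS exactly as in your Step~3. The gap is Step~2, which you leave as a plan. It cannot be completed with $c$ and $T_0$ depending only on $(q,d)$.

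After the substitution $\epsilon=\sigma t$, your entropy integral is of order $\sigma\sqrt{d\log(e+\sqrt q\,T/\sigma)}$. No choice of splitting point removes the $\log(1/\sigma)$, because this order is attained. To see this:
\begin{itemize}
\item Let $g$ be the field whose spectral measure is the uniform probability measure on $\{1\le|\lambda|\le 2\}$, and choose $A$ with $|K_g(x)|\le 1/2$ for $|x|\ge A$ (Riemann--Lebesgue).
\item Put $f_\mu(x)=\sigma g(Rx)$ with $R=\sqrt q/(2\sigma)$, so that $\|\mu\|=\sigma^2$ and $\int|\lambda|^2\,d\mu\le q$.
\item The ball $B(RT)$ contains at least $c_d(RT/A)^d$ points at mutual canonical distance at least $1$ for $g$. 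Sudakov minoration then gives $\E\sup_{B(T)}f_\mu\ge c_0\sigma\sqrt{d\log(RT)}$ for large $RT$.
\item Borell--TIS keeps $\sup_{B(T)}f_\mu$ within $\sigma\sqrt{2\log T}$ of its mean with probability at least $1-2T^{-1}$.
\item Take $\sigma=T^{-K}$ with $K$ large compared with $c^2$. Then $\P\big(\sup_{B(T)}f_\mu>2c\sqrt{\|\mu\|\log T}\big)\ge 1-2T^{-1}$ for all large $T$, which is incompatible with $T^{-c^2/2}$.
\end{itemize}
So, read with constants uniform over all $\mu$ with $\int|\lambda|^2\,d\mu\le q$, the expectation bound you aim for is false, and so is the lemma. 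That uniform reading is how the notation $T_0(q,d)$, $c(q,d)$ and the later use in Lemma~\ref{l:m1} treat it.

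Your fallback of assuming a lower bound on $\|\mu\|$ is the wrong way round for the paper. The uniform version is used, via the second statement of Lemma~\ref{l:m1} in Proposition~\ref{p:odd}, for $\mu=\rho_-$, whose mass is small.

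Your Dudley computation does give either of the following, and you should state one explicitly.

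(a) For fixed $\mu$, $\E\sup_{B(T)}f_\mu\le C_d\sqrt{\|\mu\|\log T}$ as soon as $T\ge\sqrt{q/\|\mu\|}$ and $T\ge T_1(d)$. With your Step~3 this proves the lemma with $c=c(d)$, but with $T_0$ depending on $q/\|\mu\|$, not on $q$ alone.

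(b) Assume the scale-invariant hypothesis $\int|\lambda|^2\,d\mu\le q\|\mu\|$. Then the canonical metric is at most $2\pi\sqrt q\,\sigma|x-y|$, and the entropy integral becomes $\sigma\int_0^2\sqrt{\log c_d+d\log(1+2\pi\sqrt q\,T/t)}\,dt\le C_d\,\sigma\sqrt{\log T}$ for $T\ge T_0(q,d)$. The whole argument is then uniform in $\mu$.

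Version (b) is the one that matches the application. There $\rho_-$ is supported in a fixed compact set, so $\int|\lambda|^2\,d\rho_-\le q\|\rho_-\|$.
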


\begin{proof}
By \cite[Lemma 3.12]{ffn21} there are $T_0=T_0(q,d) > 0$ and $c=c(q,d) > 0$ such that, for all $T>T_0$,
\[ \E \sup_{B(T)} f_\mu \le c\sqrt{\|\mu\|\log T},\]
and the statement follows from the Borell-TIS inequality.
\end{proof}

Our next result gives the persistence analogue of the smoothing property of the capacity given in Lemma \ref{l:captrunc}. To obtain it we exploit the convexity of the persistence event. With a view towards entropic repulsion, we also smooth exceedence events for averages of the process. The result is inspired by \cite[Lemma 1.2]{ffm25}, which proved this statement in dimension $d=1$ (without the exceedence event). Recall the spectral truncation function $\phi_s=\phi_{s,v}$ from \eqref{eq:stf}. 

\begin{lemma}[Smoothing for the persistence]
\label{l:trunc} 
Let $\mu = \mu_1 + \mu_2$, let $f'$ be an SGF with spectral measure $\mu_1 + \phi_s^2 \mu_2$. Then for every $0 < s <T$ and $\ell \in \R$,
\[  \P \Big[ \inf_{x \in B(T)} f(x) \ge \ell \Big]  \le  \P \Big[ \inf_{x \in B(T-s)} f'(x) \ge \ell  \Big] .\]
 Moreover, if $\eta \in \mathcal{M}$, then for every $0 < s <T$, $\ell,\ell' \in \R$, and $\delta > 0$,
\begin{align*}
& \P \Big[\inf_{x \in B(T)} f(x) \ge \ell , \langle 
 f,\eta \rangle \ge \ell' \Big] \\
& \qquad \le  \P \Big[ \inf_{x \in B(T-s)} f'(x) \ge \ell , \langle f',\eta \rangle \ge \ell' -\delta \big]  + \P \big[ \sup_{u \in B(s/2)} | \langle f, \eta - \eta_u \rangle | \ge \delta  \Big],
\end{align*}
where $\eta_u(\cdot) = \eta(\cdot - u)$ is the translation of $\eta$. The same result holds in the discrete case after replacing $\phi_s$ with $\tilde{\phi}_s$.
\end{lemma}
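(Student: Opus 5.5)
The plan is to couple $f = f_1 \oplus f_2$, with $f_1$ and $f_2$ independent SGFs with spectral measures $\mu_1$ and $\mu_2$, and to take
\[ f' := f_1 + f_2 * \psi_s , \qquad \psi_s := \mathcal{F}[\phi_s] . \]
By Proposition~\ref{c:1} and \eqref{e:stfsupp}, $\psi_s$ is a probability density supported on $B(s/2)$, since it is non-negative with total mass $\phi_s(0) = \zeta(0) = 1$. Moreover $f_2 * \psi_s$ is stationary with spectral measure $\phi_s^2 \mu_2$, so $f'$ has spectral measure $\mu_1 + \phi_s^2\mu_2$ as required. The difficulty is that only $f_2$ is smoothed, so the naive pathwise argument fails. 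That argument would say: persistence of $f$ on $B(T)$ implies persistence of $f*\psi_s$ on $B(T-s)$, because $f*\psi_s(x)$ is a $\psi_s$-average of $f$ over $x + B(s/2) \subset B(T)$. But it controls $f * \psi_s$, not $f'$. I will therefore replace the pathwise averaging by averaging at the level of conditional probabilities, using log-concavity.

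First, for a deterministic continuous function $g$ and a compact domain $D$, define
\[ q_D(g) := \P\Big[\inf_{x \in D} (f_1 + g)(x) \ge \ell\Big] . \]
\textbf{Step 1 (log-concavity).} The set $\{ w : \inf_D (w+g) \ge \ell\}$ is jointly convex in $(w,g)$ and $f_1$ is Gaussian. By Prékopa's theorem, applied to finite-dimensional restrictions to finite nets of $D$ and then passed to the limit by continuity and monotone convergence (or directly on $\Z^d$), the map $g \mapsto q_D(g)$ is log-concave.
\textbf{Step 2 (stationarity).} For $u \in B(s/2)$ let $\tau_u g := g(\cdot - u)$. Since $f_1(\cdot - u) \stackrel{d}{=} f_1$,
\[ q_{B(T-s)}(\tau_u g) = q_{B(T-s) - u}(g) \ge q_{B(T)}(g), \]
because $B(T-s) - u \subset B(T)$.
\textbf{Step 3 (averaging).} Write $g * \psi_s = \int \tau_u g \, \psi_s(u)\,du$, which is a convex combination of the translates. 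Log-concavity in its integral (Jensen) form, justified by Riemann-sum approximation of the convex combination together with upper semicontinuity of $q$, gives
\[ q_{B(T-s)}(g*\psi_s) \ge \exp \int \psi_s(u) \log q_{B(T-s)}(\tau_u g)\, du \ge q_{B(T)}(g). \]
Finally substitute $g = f_2$ and take expectations over $f_2$, which is independent of $f_1$. This gives $\P[\inf_{B(T-s)} f' \ge \ell] \ge \P[\inf_{B(T)} f \ge \ell]$.

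For the second statement I repeat the argument with the convex set
\[ \big\{ w : \inf_{B(T-s)}(w+g) \ge \ell ,\ \langle w+g, \eta \rangle \ge \ell' - \delta \big\} \]
in place of the persistence set. Step 2 now changes the test measure as well: after the stationary shift, the pairing becomes $\langle f_1 + g, \eta_{-u}\rangle$, a translate of $\eta$ by $u \in B(s/2)$. Hence I intersect with the good event $G := \{\sup_{u \in B(s/2)} |\langle f, \eta - \eta_u\rangle| < \delta\}$, on which $\langle f,\eta\rangle \ge \ell'$ forces $\langle f, \eta_u \rangle \ge \ell' - \delta$ for all $u \in B(s/2)$ simultaneously. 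To keep this inside the product structure, I apply the log-concavity step to the function
\[ g \mapsto \P\big[ f_1 + g \in C_u ,\ \text{for all } u \in B(s/2) \big], \]
where $C_u$ is the shifted persistence-and-average set. This is an intersection of convex sets, hence again log-concave. On $G$ the inequality of Steps 2--3 goes through, and the event $G^c$ contributes at most $\P[G^c]$. The discrete case is identical, with $\tilde\phi_s$ in place of $\phi_s$ and sums over $\Z^d$ in place of integrals.

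The main obstacle I expect is Step 3 together with its adaptation to the averaged event. Log-concavity only provides the inequality in the correct direction if every translate $\tau_u f_2$ satisfies the lower bound of Step 2 with the same right-hand side $q_{B(T)}(f_2)$. For the $\eta$-constraint the shifted test measures differ, so the right-hand side genuinely depends on $u$. Arranging the bookkeeping so that the loss is only $\delta$ in the level plus the single error probability $\P[G^c]$, rather than an error for each $u$ separately, is the delicate point. I would handle it by conditioning on $f_2$ first and expressing $\{\langle f,\eta\rangle \ge \ell'\} \cap G$ as a subset of an event invariant under all shifts in $B(s/2)$.
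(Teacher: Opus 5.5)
Your proposal is correct and follows the paper's proof. It uses the same coupling $f'=f_1+\psi_s*f_2$ and the same log-concavity, in the deterministic shift $g$, of the probability of the convex target event. It then uses stationarity to move each translate $\tau_u g$ onto the domain and test measure, and averages against $\psi_s$ with $\int\psi_s=1$ before integrating out $f_2$.

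The paper resolves your `delicate point' along the lines of your last sentence, with no conditioning on $G$. Log-concavity is applied to the unshifted target event. Each shifted probability is then bounded below by the single $u$-independent event $\{\inf_{B(T)}(f_1+g)\ge\ell,\ \inf_{u\in B(s/2)}\langle f_1+g,\eta_u\rangle\ge\ell'-\delta\}$. Only after averaging over $f_2$ is $\P[G^c]$ subtracted, using that this event contains $\{\inf_{B(T)}f\ge\ell,\ \langle f,\eta\rangle\ge\ell'\}\cap G$. By contrast, applying log-concavity to the intersection event itself, as one of your sentences reads, would produce shifts over $B(s)$ rather than $B(s/2)$.
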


\begin{proof}
It suffices to prove the second statement, since the first is obtained by taking $\delta \to \infty$.

We follow the proof of \cite[Lemma 1.2]{ffm25}. Let $f_{\mu_1}$ and $f_{\mu_2}$ be independent SGFs with spectral measures $\mu_1$ and $\mu_2$ respectively. Abbreviate $H = \mathcal{F}[\phi_s]$, which satisfies $H \ge 0$, $\int H = 1$, and $\textrm{supp}(H) \subseteq B(s/2)$. Notice that $\phi_s^2 \mu_2$ is the spectral measure of the SGF $H \star f_{\mu_2}$. Let $v(x)$ be a continuous function. Since $\{ \inf_{x \in B(T-s)} f_{\mu_1}(x)  \ge \ell , \langle f_{\mu_1} , \eta \rangle \ge \ell' -\delta \} $ is a convex event, the log-concavity of the Gaussian measure implies that 
 \begin{align}
 \label{e:lc1}
 &  \log \P \Big[ \inf_{x \in B(T-s)} f_{\mu_1}(x) +  (H \star v)(x)   \ge \ell , \langle f_{\mu_1} + H \star v , \eta \rangle  \ge \ell' -\delta \Big]  \\
\nonumber & \qquad  \qquad  \ge  \int H(u) \log \P \Big[ \inf_{x \in B(T-s)} f_{\mu_1}(x) +  v(x-u)  \ge \ell , \langle f_{\mu_1} + v(\cdot-u) , \eta \rangle  \ge \ell' -\delta \Big] du   .
 \end{align}
By stationarity of $f_{\mu_1}$, and since $\textrm{supp}(H) \subseteq B(s/2)$, the right-hand side of \eqref{e:lc1} equals
\begin{align*}
& \int H(u) \log \P \Big[ \inf_{x \in u + B(T-s)} f_{\mu_1}(x) +  v  \ge \ell , \langle f_{\mu_1} + v(x) , \eta_u \rangle  \ge \ell' -\delta \Big] du  \\
& \qquad \ge \int H(u) \log  \P \Big[  \inf_{x \in B(T)} f_{\mu_1}(x) +  v(x)  \ge \ell , \inf_{u \in B(s/2)} \langle f_{\mu_1} + v , \eta_u \rangle   \ge \ell' -\delta \Big] du   \\
& \qquad = \log  \P \Big[  \inf_{x \in B(T)} f_{\mu_1}(x) +  v(x)  \ge \ell , \inf_{u \in B(s/2)} \langle f_{\mu_1} + v , \eta_u \rangle   \ge \ell' -\delta \Big] 
\end{align*}
where the final equality used that $\int H = 1$. Averaging over $f_{\mu_2} \equiv v$, and recalling that $f_{\mu_2}$ is independent of $f_{\mu_1}$ and that $f \stackrel{d}{=} f_{\mu_1} + f_{\mu_2}$ and $f' \stackrel{d}{=} f_{\mu_1} + H \star f_{\mu_2}$, this yields that
\[ \P \Big[ \inf_{x \in B(T-s)} f'(x) \ge \ell , \langle f',\eta \rangle \ge \ell' -\delta  \Big] \ge \P \Big[ \inf_{x \in B(T)} f(x) \ge \ell , \inf_{u \in B(s/2)} \langle f,\eta_u \rangle \ge \ell' -\delta \Big]    \]
which is bounded below by 
\[ \P \Big[ \inf_{x \in B(T)} f(x) \ge \ell , \langle f,\eta \rangle \ge \ell'  \Big] -  \P \Big[ \sup_{u \in B(s/2)} | \langle f, \eta - \eta_u \rangle | \ge \delta \Big] \]
completing the proof.
\end{proof}

We proceed with a study of moderate deviations for the infimum of an SGP. Our first result shows that, even after extracting the spectral measure in a neighbourhood of the origin, the lower tail of the moderate deviation regime for the infimum is upper bounded by the corresponding tail for an i.i.d.\ Gaussian field with variance $m = \|\mu_{a.c.}\|$.

\begin{proposition}[Upper tail for moderate deviations]
\label{p:m1}
Suppose $m = \|\mu_{a.c.}\| > 0$. Then for every $\eps \in (0,m)$ there exists $\delta_0 > 0$ such that, for all $\delta \in (0, \delta_0)$ and $\beta \in [0,d)$, as $T \to \infty$,
\[ -\log \P \Big[ \inf_{x \in B(T)} f_{\mu|_{B(\delta)^c}}(x) \ge - \sqrt{2 (m-\eps) ( d - \beta)   \log T} \Big] \ge T^{\beta + \frac{\eps}{2m}(d-\beta) + o(1)}  . \]
\end{proposition}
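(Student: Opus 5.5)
We need to show that persistence of $g:=f_{\mu|_{B(\delta)^c}}$ above $-\ell_T$, with $\ell_T=\sqrt{2(m-\eps)(d-\beta)\log T}$, costs at least $T^{\beta+\frac{\eps}{2m}(d-\beta)+o(1)}$. The plan is to extract a small piece of the absolutely continuous part with nearly white-noise behaviour, and to discard the rest using convexity/Anderson-type monotonicity.

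\emph{Step 1 (decomposition).} Write $\mu|_{B(\delta)^c}=\mu_1+\mu_2$, where $\mu_1$ has a bounded, compactly supported density, lies away from the origin, and has mass $\|\mu_1\|\ge m-\eps/4$. This is possible for $\delta$ small, because $\mu_{a.c.}|_{B(\delta)^c}$ has mass close to $m$ and its density can be truncated at a large height and radius. Then $g\stackrel{d}{=}g_1\oplus g_2$ with $g_1,g_2$ independent. Since $\{\inf_{B(T)}(\cdot)\ge -\ell_T\}$ is convex and symmetric in the direction we need, Anderson's inequality gives
\[\P[\inf g\ge-\ell_T]\le \sup_v\P[\inf (g_1+v)\ge-\ell_T]\le \P[\inf g_1 \ge -\ell_T]\]
after conditioning on $g_2$. (Symmetric convexity fails for one-sided events, so instead I would use $\P[g_1+v\in A]\le$ ... via Gaussian correlation or the log-concavity shift bound. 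Alternatively, I would restrict to a finite set of points and bound the probability that $g_2$ is small on many of them, using Chebyshev on the empirical count.) The cleanest route I would try is to reduce to a finite set of well-separated points: choose $N\approx T^{d}/R^d$ points $x_i\in B(T)$ at mutual distance $R=T^{o(1)}$.

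\emph{Step 2 (approximate independence on the grid).} Because $\mu_1$ has bounded density, smoothing by the spectral truncation $\phi_s$ (Lemma~\ref{l:trunc}, which only weakens the event) makes the covariance decay stretched-exponentially. Hence $\mathrm{Cov}(g_1(x_i),g_1(x_j))\le e^{-R^{v}}$ for $i\neq j$. By the decoupling inequality of \cite{kls82} mentioned in the heuristics, or directly by comparing the Gaussian vector $(g(x_i))$ with an i.i.d.\ vector of variance $\|\mu\|\ge m-\eps/4$ via a small covariance perturbation, one obtains
\[\P[g(x_i)\ge -\ell_T\ \forall i]\le \exp\big(-c\,N\,\P[Z\sqrt{m'}\le-\ell_T]\big)\]
with $m'$ close to $m$. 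The point is to handle the full $g$, not only $g_1$. Writing $g=g_1+g_2$ with $g_2$ of small variance is not what happens here; rather $g$ itself has variance $\|\mu|_{B(\delta)^c}\|\ge m-\eps/4$. Its covariance need not decay, however, owing to atoms, singular parts, or heavy tails. I would therefore condition on $g_2$ and use that $g_1$'s gridded values are nearly i.i.d.\ with variance $\ge m-\eps/4$. By independence, $\P[g_1(x_i)\ge -\ell_T-g_2(x_i)\ \forall i]$ is at most $\exp(-c\sum_i \bar\Phi((\ell_T+g_2(x_i))/\sqrt{m-\eps/4}))$. Taking $\|\mu_2\|$ small and using Lemma~\ref{l:DudBTIS}-type sup bounds, $\sup|g_2|\le \eta\sqrt{\log T}$ with probability $1-T^{-c}$. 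A sub-polynomial failure probability is too large, though, so instead I would average: the fraction of $i$ with $g_2(x_i)>\eta\sqrt{\log T}$ has mean $T^{-\eta^2/2\|\mu_2\|}$, and would need exponential concentration.

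\emph{Step 3 (conclusion and the main obstacle).} By the Gaussian tail bound (Claim~\ref{c:gtb}), $\bar\Phi(\ell_T(1+o(1))/\sqrt{m-\eps/4})=T^{-(d-\beta)(m-\eps)/(m-\eps/4)+o(1)}$. Multiplying by $N=T^{d-o(1)}$ gives exponent $\beta+(d-\beta)\frac{3\eps/4}{m-\eps/4}$, which exceeds $\beta+\frac{\eps}{2m}(d-\beta)$, with room to absorb the $\eta$ errors. The main obstacle is the interference of the non-decaying part $g_2$, namely the discrete part, the singular part and the tails. It cannot be controlled with probability $1-e^{-T^{\beta+\dots}}$ by sup bounds. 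I would try to defeat it with a conditional argument: $g_2$ restricted to the grid is a Gaussian vector, and the count of grid points where $g_2(x_i)\ge\eta\sqrt{\log T}$ being a positive fraction is a convex-type large deviation of cost that is itself superpolynomial when $\|\mu_2\|$ is small. Failing that, I would try Gaussian-comparison (Slepian) after projecting $g_2$ onto a low-dimensional subspace.
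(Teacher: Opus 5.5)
Your proposal has a genuine gap, and it is the one you flag yourself: $g_2$ is never disposed of. It cannot be made small. Indeed $\mu_2=\mu|_{B(\delta)^c}-\mu_1$ contains $(\mu_d+\mu_{s.c.})|_{B(\delta)^c}$, and the hypotheses put no constraint on its mass (only $m=\|\mu_{a.c.}\|$ enters). So ``taking $\|\mu_2\|$ small'' and sup bounds of Lemma~\ref{l:DudBTIS} type are unavailable. Anderson's inequality does not apply to the one-sided event, and the counting/concentration idea is never carried out. The ``log-concavity shift bound'' you mention in passing is the right tool, but you must aim it at $\mu_2$, not at $\mu_1$. By Lemma~\ref{l:trunc} with $s=T/2$,
\[
\P\Big[\inf_{B(T)} f_{\mu_1+\mu_2}\ge -\ell_T\Big]\le \P\Big[\inf_{B(T/2)} f_{\mu_1+\phi_{T/2}^2\mu_2}\ge -\ell_T\Big].
\]
Since $\mu_2$ lives on $B(\delta)^c$, \eqref{e:stfdecay} gives $\|\phi_{T/2}^2\mu_2\|\le c\|\mu_2\|e^{-c'(\delta T)^v}$. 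Restrict to $B(T/2)\cap\Z^d$ and fix any $u\ge 1$. A union bound shows $f_{\phi_{T/2}^2\mu_2}\le u$ at every point except with probability $e^{-e^{T^{c}}}$. What remains is $\P[f_{\mu_1}(x)\ge-\ell_T-u\ \forall x\in B(T/2)\cap\Z^d]$, and halving the radius changes the number of points only by a constant factor. In this way atoms, singular parts and the tail of the a.c.\ part are averaged out by the convexity of the event, rather than controlled pathwise.

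Second, your mechanism for decorrelating $g_1$ does not work. Multiplying the spectral measure by $\phi_s^2$ convolves the covariance with the compactly supported $\mathcal{F}[\phi_s^2]$; this smooths the field but gives no decay of $K_1$. The decoupling inequality of \cite{kls82}, or any perturbative comparison with an i.i.d.\ vector, needs summable covariances, e.g.\ $\|\mathcal{F}[\mu_1]\|_{L^1(\Z^d)}<\infty$. That forces a continuous density, and a continuous minorant $\mu_1\le\mu$ with mass close to $m$ need not exist: if the a.c.\ density is the indicator of a fat Cantor set, every continuous minorant vanishes.

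The paper (Lemma~\ref{l:m1}) instead writes the target measure as $\mu'+\rho_+-\rho_-$. Here $\mu'$ is smooth and compactly supported, $\|\rho_-\|<\eps'$, and $\rho_+$ is unrestricted, so the singular part of $\mu|_{B(\delta)^c}$ can be absorbed into it. It removes $\rho_-$ by adding an independent field. Since $f_{\mu'+\rho}\oplus f_{\rho_-}\stackrel{d}{=}f_{\mu'+\rho_+}$, and $\P[\inf_{B(T)}f_{\rho_-}\ge-u]\ge1/2$ for $u=\kappa\sqrt{\|\rho_-\|\log T}$ by Lemma~\ref{l:infconv}, one gets
\[
\P\Big[\inf_{B(T)} f_{\mu'+\rho}\ge-\ell_T\Big]\le 2\,\P\Big[\inf_{B(T)} f_{\mu'+\rho_+}\ge-\ell_T-u\Big],
\]
at a cost of $O(\sqrt{\eps'})$ in the exponent. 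Then $\rho_+$ is killed by the smoothing step above. Applying \cite{kls82} to $\mu'$ on $B(T/2)\cap\Z^d$ yields exactly the count you compute in Step~3, and your exponent bookkeeping there is fine once these two ingredients are in place.
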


Proposition~\ref{p:m1} will be deduced from the following quantitative lemma for smooth approximations, which will also be of use in Section~\ref{s:sc}.
\begin{lemma}
\label{l:m1}
Let $\mu$ be a spectral measure with density in $C^2(\R^d)$ such that $\|\mathcal{F}[\mu]\|_{L_1(\Z^d)} < \infty$. Let $\rho$ be a signed measure with Jordan decomposition $\rho = \rho_+ - \rho_-$, and assume that $\mu+\rho \ge 0$. Then there exist positive constants $T_0=T_0(\rho)$, $c_1=c_1(d)$ and $c_2=c_2(d)$, such that, for all $\beta \in [0,d)$, $\ep<\|\mu\|$, and $T>T_0$,
\begin{align}
    \label{e:m15}
&  \P \Big[ \inf_{x \in B(T)} f_{\mu+\rho}(x) \ge - \sqrt{2 (\|\mu\|-\ep) ( d - \beta)   \log T} \Big] \\
& \nonumber  \qquad \qquad \qquad \le 2 \exp \Big( \tfrac{-  c_1 \|\mu\|}{ \|\mathcal{F}[\mu]\|_{L^1(\Z^d)}}T^{\beta + \frac{\eps}{\|\mu\|}(d-\beta) -c_2 \sqrt{\frac{\|\rho_{-}\|}{\|\mu\|}}}  \Big)  + e^{-e^{T^{c_1}}} .
\end{align}
Moreover, if $\int_{\R^d} |\lm|^2 d\rho_-(\lm) \le q < \infty$, then \eqref{e:m15} holds for  constants $T_0$ and $c_2$ chosen to depend only on $q$ and $d$.
\end{lemma}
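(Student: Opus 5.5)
We first treat the case $\rho = 0$ and then absorb $\rho$ by a spectral decomposition and Gaussian comparison. Write $K_\mu = \mathcal{F}[\mu]$, which is summable on $\Z^d$ by assumption, and put $\ell_T = \sqrt{2(\|\mu\|-\eps)(d-\beta)\log T}$.

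\textbf{Step 1: sparse sublattice.} We restrict the infimum to a sparse sublattice $\Lambda_T = (r\Z^d) \cap B(T)$. The spacing $r$ is a large constant, chosen so that $\sum_{z \in r\Z^d\setminus\{0\}} |K_\mu(z)| \le \theta \|\mu\|$ for a small $\theta$ of our choosing. This is possible because $K_\mu \in L^1(\Z^d)$; the $C^2$ density makes $K_\mu$ decay, so the tail sums over $r\Z^d$ are controlled. Since $|\Lambda_T| \asymp T^d r^{-d}$, we then only need to bound the probability that a nearly-independent Gaussian vector $(f_\mu(z))_{z\in\Lambda_T}$, with variance $\|\mu\|$, stays above $-\ell_T$ at every site.

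\textbf{Step 2: decoupling.} For this we use the decoupling inequality of \cite{kls82} (a Slepian/normal-comparison type bound). It compares the probability with the i.i.d.\ product $\P[Z\sqrt{\|\mu\|} \ge -\ell_T]^{|\Lambda_T|}$, at the price of a factor governed by the $\ell^1$ norm of off-diagonal correlations. Concretely, the probability is at most
\[
\exp\Big( - c\, \frac{\|\mu\|}{\|K_\mu\|_{L^1(\Z^d)}}\, |\Lambda_T|\, \P\big[Z\sqrt{\|\mu\|} < -\ell_T\big] \Big).
\]
This is where the ratio $\|\mu\|/\|\mathcal{F}[\mu]\|_{L^1(\Z^d)}$ in \eqref{e:m15} comes from. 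An alternative route is to split $\Lambda_T$ into blocks and use the Gaussian correlation inequality in reverse form. By Claim \ref{c:gtb},
\[
\P\big[Z\sqrt{\|\mu\|} < -\ell_T\big] = T^{-(1-\eps/\|\mu\|)(d-\beta)+o(1)}.
\]
Multiplying by $T^d$ gives the exponent $\beta + \frac{\eps}{\|\mu\|}(d-\beta)$.

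\textbf{Step 3: adding $\rho$.} Write $f_{\mu+\rho} \stackrel{d}{=} f_{\mu - \rho_-} \oplus f_{\rho_+}$, with the first field well defined since $\mu+\rho\ge0$ gives $\rho_- \le \mu$. The independent summand $f_{\rho_+}$ only helps persistence if it is large. We handle it by Anderson's inequality / Gaussian correlation: conditioning on $f_{\rho_+}$, the event for a centred field shifted by an independent centred field has probability at most that of the unshifted symmetric convex event. This is exact for symmetric convex sets; for our one-sided event we instead intersect with a two-sided bound, which costs the $e^{-e^{T^{c_1}}}$ term. So it remains to bound persistence for $f_{\mu}$ with $\rho_-$ removed.

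For that, note that $f_\mu \stackrel{d}{=} f_{\mu-\rho_-} \oplus f_{\rho_-}$, so on the event $\{\sup_{B(T)} |f_{\rho_-}| \le t\}$ persistence of $f_{\mu-\rho_-}$ at level $-\ell_T$ implies persistence of $f_\mu$ at level $-\ell_T - t$. Take
\[
t = 2c\sqrt{\|\rho_-\|\log T},
\]
and bound $\P[\sup_{B(T)} |f_{\rho_-}| > t]$ by Lemma \ref{l:DudBTIS} using the second-moment bound $q$. This is the source of the dependence of the constants on $q$ only. Then
\[
(\ell_T+t)^2 \le \ell_T^2 + O\big(\sqrt{\|\rho_-\|\,\|\mu\|}\,\log T\big),
\]
which shifts the exponent by $c_2\sqrt{\|\rho_-\|/\|\mu\|}$. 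The factor $2$ and the additive error term come from this union bound. The exceptional probability must be of order $e^{-e^{T^{c_1}}}$, which needs a strong tail; this is obtained by applying Borell-TIS at level $T^{c_1}$ rather than at the level $t$.

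\textbf{Main obstacle.} The hardest step is making the decoupling in Step 2 quantitative, with constants depending only on $\|\mu\|/\|K_\mu\|_{L^1}$. My first attempt would be Slepian comparison with an equicorrelated or i.i.d.\ field on $\Lambda_T$ after blocking.
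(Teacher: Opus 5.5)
\textbf{Gap: the treatment of $\rho_+$ in Step 3.} Anderson's inequality needs a \emph{symmetric} convex set. The event $\{\inf_{B(T)} f\ge -\ell_T\}$ is upward closed, not symmetric, and averaging it over an independent centred shift can raise its probability enormously. This is exactly the low-frequency mechanism the paper is about: adding $Z\mathbf{1}$ to an i.i.d.\ field on $T^d$ sites turns $\exp(-T^{\eps d+o(1)})$ into $T^{-O(1)}$.

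Intersecting with $\{\sup_{B(T)}|f_{\rho_+}|\le t'\}$ does not repair this. For $t'$ of order $\sqrt{\log T}$, the complement has probability at least $\P[f_{\rho_+}(0)>t']=T^{-O(1)}$, which swamps the stretched-exponential main term. Any larger $t'$ ruins the exponent. You also cannot fold $\rho_+$ into the decoupling step, since $\rho_+$ is arbitrary and its lattice covariance need not be summable.

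The missing idea is the smoothing Lemma~\ref{l:trunc}, which uses convexity of the persistence event and log-concavity of Gaussian measure, averaged over translates of the domain. Apply it with $\mu_1=\mu$, $\mu_2=\rho_+$, $s=T/2$. Persistence of $f_{\mu+\rho_+}$ on $B(T)$ is then bounded by persistence of $f_{\mu+\phi_{T/2}^2\rho_+}$ on $B(T/2)$. The measure $\phi_{T/2}^2\rho_+$ has stretched-exponentially small mass (for $\rho_+$ supported away from the origin, as in all applications). So, after restricting to $B(T/2)\cap\Z^d$, a union bound of the pointwise tails $\P[f_{\phi^2_{T/2}\rho_+}(x)\ge u]$ over $O(T^d)$ points produces the $e^{-e^{T^{c_1}}}$ term. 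What remains is $\P[\bigcap_x\{f_\mu(x)\ge-\ell_T-2u\}]$, to which \cite[Theorem 1]{kls82} applies.

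\textbf{Your accounting for $\rho_-$ also needs repair.} A union bound leaves the additive error $\P[\sup_{B(T)}|f_{\rho_-}|>t]\approx T^{-c^2/2}$ from Lemma~\ref{l:DudBTIS}, which is far too large. Raising the Borell--TIS level to $T^{c_1}$ would force a shift $T^{c_1}\gg\sqrt{\log T}$ and destroy the exponent. The inclusion you wrote must instead be used multiplicatively. By independence,
\[
\P\bigl[\inf_{B(T)} f_{\mu-\rho_-}\ge-\ell_T\bigr]\cdot\P\bigl[\sup_{B(T)}|f_{\rho_-}|\le t\bigr]\le\P\bigl[\inf_{B(T)} f_\mu\ge-\ell_T-t\bigr],
\]
and dividing by $\P[\sup_{B(T)}|f_{\rho_-}|\le t]\ge 1/2$ gives the factor $2$. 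The paper does the same with the one-sided event $\{\inf f_{\rho_-}\ge-u\}$, adding $f_{\rho_-}$ to $f_{\mu+\rho}$ to get $f_{\mu+\rho_+}$ before smoothing.

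\textbf{Step 2 and Step 1.} Step~2 is the paper's argument, and it is a direct citation of \cite{kls82} rather than the real obstacle. Drop the sparse sublattice of Step~1, since the decoupling already applies on all of $\Z^d$. With spacing $r=r(\mu)$ your prefactor becomes $r^{-d}$ instead of $c_1(d)\|\mu\|/\|\mathcal F[\mu]\|_{L^1(\Z^d)}$. That exact form matters in Proposition~\ref{p:odd}, where $\mu$ varies with $T$.
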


\begin{proof}[Proof of Proposition~\ref{p:m1} using Lemma~\ref{l:m1}]
Let $\eps \in (0,m)$ and choose $\delta_0 > 0$ such that $\| \mu_{a.c.}|_{B(\delta_0)^c} \|  \ge m - \eps/4$. Let $\beta \in [0,d)$ and $\delta\in (0,\delta_0)$, and fix $\eps'\in (0,\eps/4)$ to be chosen later. 
We approximate $\mu_{a.c.}|_{B(\delta)^c}$ by a measure $\mu'=\mu'(\ep')$ with compactly supported density, in the sense that 
\[\mu_{a.c.}|_{B(\delta)^c} = \mu' + \mu^+ - \mu^-,\] 
where $\| \mu^- \|,  \| \mu^+  \|< \eps'$. 
We can also choose $\mu'$ (for instance by mollifying with a smooth kernel) so that, in addition, the norm $\|\mathcal{F}[\mu']\|_{L_1(\Z^d)}$ is finite. Choosing $\eps'$ sufficiently small we obtain, in particular, that $\|\mu'\| \ge \| \mu_{a.c.}|_{B(\delta)^c} \| - \|\mu^+\| \ge  m - \eps/2$.

 We apply Lemma~\ref{l:m1} with $\mu$ replaced by $\mu'$ and $\rho = \mu_{a.c.}|_{B(\delta)^c} - \mu' = \mu^+-\mu^-$. This yields the existence of $c_1(d),c_2(d)> 0$, such that
\[
- \log\P\Big[  \inf_{x \in B(T)} f_{\mu|_{B(\delta)^c}}(x) \ge - \ell   \Big]  \ge  \frac{ c_1 \|\mu'\|}{ \|\mathcal{F}[\mu']\|_{L^1(\Z^d)}}T^{\zeta + o(1)} , 
\]
where
\[ \zeta =\beta + \frac{\eps}{\|\mu'\|}(d-\beta) -c_2 \sqrt{\frac{\| \mu^{-} \|}{\|\mu'\|}} > \beta + \frac {\eps}{m}(d-\beta) - c_2\sqrt{\frac{\ep'}{m-\frac{\eps}{2}}} . \]
This gives the desired asymptotic bound by choosing $\eps'$ sufficiently small.
\end{proof}

\begin{proof}[Proof of Lemma~\ref{l:m1}]
In the proof we denote by 
$(c_i)_{i\in\N}$ positive constants which depend only on $d$, and may change from line to line. By the second item of Lemma \ref{l:infconv}, there are $T_0 = T_0(\rho)$ and $\kappa=\kappa(d)$ such that
\begin{equation}
\label{eq:tmp}
\P \Big[  \inf_{x \in B(T)} f_{\rho_-}(x) \ge - u   \Big]\ge \frac12
\end{equation}
where  $u = \kappa \sqrt{\| \rho\| \log T}$. Let $\ell=\sqrt{2 (\|\mu\|-\eps) ( d - \beta)   \log T}$.
Using \eqref{eq:tmp} together with the spectral decomposition $\mu+\rho +\rho_-=\mu+\rho_+$, we obtain
\begin{align*}
& \P \Big[  \inf_{x \in B(T)} f_{\mu+\rho}(x) \ge - \ell   \Big] 
 = \frac{ \P \Big[  \inf_{x \in B(T)} f_{\mu+\rho}(x) \ge - \ell\Big] \P \Big[ \inf_{x \in B(T)} f_{\rho_-}(x) \ge - u \Big] }{ \P \Big[  \inf_{x \in B(T)} f_{\rho_-}(x) \ge - u   \Big] } \\
& \qquad \qquad \le  \frac{ \P \Big[  \inf_{x \in B(T)} f_{\mu +\rho_+}(x) \ge - \ell - u  \Big]
}{ \P \Big[  \inf_{x \in B(T)} f_{\rho_-}(x) \ge - u   \Big] } \le 2\P \Big[  \inf_{x \in B(T)} f_{\mu +\rho_+}(x) \ge - \ell - u  \Big].
\end{align*} 
By applying Lemma \ref{l:trunc} with $s = T/2$, and taking a union bound, we obtain
 \begin{align} 
 \nonumber& \P \Big[  \inf_{x \in B(T)} f_{\mu+\rho_+}(x) \ge - \ell - u  \Big]  \le  \P \Big[  \inf_{x \in B(T/2)} f_{\mu + \phi_{T/2}^2\rho_+}(x) \ge - \ell-u \Big] \\
\nonumber & \qquad \le \P \Big[ \bigcap_{x \in B(T/2) \cap \Z^d } \{ f_{\mu + \phi_{T/2}^2\rho_+ }(x) \ge - \ell -u \} \Big] \\
\label{e:11} &  \qquad  \le  \P \Big[ \bigcap_{x \in B(T/2) \cap \Z^d } \{     f_{\mu}(x) \ge - \ell - 2u \} \Big] \! \! +   \sum_{x \in B(T/2) \cap \Z^d }  \P \Big[  f_{ \phi_{T/2}^2\rho_+}(x) \ge u   \Big].
 \end{align}

Denote by $Z$ a standard Gaussian random variable. To bound the first term in \eqref{e:11}, we apply the decoupling inequality appearing in \cite[Theorem 1]{kls82}, stationarity, and the fact that $\log (1-x) \le - x$ for $x \in (0,1)$, in order to obtain 
\begin{align*}
\P \hspace{25pt}&\hspace{-25pt}\Big[ \bigcap_{x \in B(T/2) \cap \Z^d } \{  f_{\mu}(x) \ge - \ell  - 2u \} \Big]  \le  \prod_{x \in B(T/2) \cap \Z^d} \P[ f_{\mu}(x)  \ge -\ell-2u]^{\|\mu\| / \|\mathcal{F}[\mu]\|_{L^1(\Z^d)}  }  \\
&  \le  e^{ c_1T^d \frac{\|\mu\|}{ \|\mathcal{F}[\mu]\|_{L^1(\Z^d)}}  \log  \big( 1 - \P \big[ Z \ge ( \ell + 2u) / \sqrt{ \|\mu\| }   \big] \big) } \\
&  \le  \exp \bigg( -\frac{ c_1 \|\mu\|}{\|\mathcal{F}[\mu]\|_{L^1(\Z^d)} } T^d \,  \P \bigg[ Z \ge \sqrt{2(1-\ep/\|\mu\|) ( d - \beta)   \log T} + 2 \kappa \sqrt{\frac{\|\rho_-\|}{ \|\mu\|}\log T}\bigg] \bigg)  .
\end{align*}

For the second term in \eqref{e:11} we observe that, by the decay estimate \eqref{e:stfdecay},
\[ \|\phi_{T/2}^2\rho_+ \|  \le c_2 \|\rho\|e^{- \left(\frac{ T}{2}\right)^\nu} .\]
Hence, by stationarity, we have
\begin{align*}\sum_{x \in B(T/2) \cap \Z^d } \P \Big[  f_{\phi_{T/2}^2\rho_+ }(x) \ge u   \Big] &\le c_3 T^d \P \bigg[ Z \ge  \frac{ue^{\frac12 \left(\frac{ T}{2}\right)^\nu}}{\sqrt{c_2 \|\rho\|}}  \bigg]
\le 
c_3 T^d \P \bigg[ Z \ge c_4  (\log T)e^{\frac12 \left(\frac{ T}{2}\right)^\nu}  \bigg] .
\end{align*}

Now we apply the tail bounds in Claim~\ref{c:gtb}.  We obtain that the second term in \eqref{e:11} is at most $e^{-e^{T^{c_5}}}$ for large $T$, and that the first term in \eqref{e:11} is at most, for $T>T_0$,
\begin{align*}  
&\exp \bigg(- \tfrac{ c_6 \|\mu\|}{ \|\mathcal{F}[\mu]\|_{L^1(\Z^d)}}T^{\beta + \frac{\eps}{\|\mu\|}(d-\beta) -c_2 \kappa \Big(\frac{\|\rho_-\|}{\|\mu\|} + \sqrt{\frac{\|\rho_-\|}{\|\mu\|} (1-\frac {\ep}{\|\mu\|} )(d-\beta)} \Big)} \bigg)
\\ & \qquad \qquad \le \exp \bigg( - \tfrac{ c_6 \|\mu\|}{ \|\mathcal{F}[\mu]\|_{L^1(\Z^d)}}T^{\beta + \frac{\eps}{\|\mu\|}(d-\beta) -c_7  \kappa \sqrt{\frac{\|\rho_-\|}{\|\mu\|} }} \bigg)
\end{align*} 
where we used that $\norm{\rho_-}\le \norm{\mu}$. This completes the proof of the first statement. 

For the second statement the proof is identical, except we replace the constants $T_0 = T_0(\rho)$ and $\kappa=\kappa(d)$ in \eqref{eq:tmp} with constants $T_0(q,d)$ and $\kappa = \kappa(q,d)$, for which the conclusion of \eqref{eq:tmp} is guaranteed by Lemma~\ref{l:DudBTIS}. 
\end{proof}

Next, we show that the probability of a ball event of the equivalent moderate deviation regime is lower bounded by the corresponding tail for an i.i.d.\ Gaussian field with variance $\|\mu_{a.c.} + \mu_{s.c.}\|$.

\begin{proposition}[Moderate deviations for ball events]
\label{p:m2}
Denote $m' =  \|\mu_{a.c.} + \mu_{s.c.}\|$. Then for every $\eps > 0$ and $\beta \in [0,d)$, as $T \to \infty$, 
\[   -\log \inf_{\mu' \le \mu}  \P \Big[ \sup_{x \in B(T)} 
 |f_{\mu'}(x)| \le   \sqrt{2 (m' +\eps) ( d - \beta)   \log T} \Big]  \le   T^{ ( \beta - \frac{\eps}{2m'} (d-\beta))_+ + o(1)}  .\]
\end{proposition}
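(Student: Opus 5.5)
Write $\ell = \sqrt{2(m'+\eps)(d-\beta)\log T}$. The aim is to lower bound the ball probability uniformly over $\mu' \le \mu$, and the plan has five steps.

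\textbf{Step 1 (reduction to $\mu$).} Since $\mu' \le \mu$, we have $f_\mu \stackrel{d}{=} f_{\mu'} \oplus f_{\mu-\mu'}$. The event $\{\sup_{B(T)}|g|\le \ell\}$ is symmetric and convex. By Anderson's inequality, conditioning on the independent component can only decrease the probability of a centred symmetric convex set. Hence
\[ \P\big[\sup_{B(T)}|f_{\mu'}|\le \ell\big] \ge \P\big[\sup_{B(T)}|f_{\mu}|\le \ell\big], \]
and it suffices to treat $\mu$ itself.

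\textbf{Step 2 (decomposition of $\mu$).} Split $\mu = \mu_d + \mu_{ac}' + \rho$. Here $\mu_{ac}'$ is a smooth, compactly supported approximation of $\mu_{a.c.}+\mu_{s.c.}$ with $\|\mu_{ac}'\|\le m'$, chosen so that $\|\mathcal{F}[\mu_{ac}']\|$ decays rapidly. The remainder $\rho$ has small mass $\eps'$; it is not necessarily nonnegative, but since only the sum matters we may take $\mu_{ac}'\le \mu_{a.c.}+\mu_{s.c.}$ by truncation and mollification from below, so that $\rho\ge 0$. Using the Gaussian correlation inequality (all events are symmetric convex), the probability is at least the product of three factors:
\begin{itemize}
\item (a) the probability that the discrete-spectrum part $f_{\mu_d}$ satisfies $|f_{\mu_d}|\le \eta\ell$ on $B(T)$;
\item (b) the same event for $f_\rho$;
\item (c) the event $\sup_{B(T)} |f_{\mu_{ac}'}| \le (1-2\eta)\ell$.
\end{itemize}

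\textbf{Step 3 (the easy factors).} For (a), $f_{\mu_d}$ is almost periodic: a sum of countably many random sinusoids with square-summable amplitudes. The bound $\sup|f_{\mu_d}| \le \sum_j |\xi_j|\sqrt{\mu_d(\{\lambda_j\})}$ is finite only under summability, so instead we truncate to finitely many atoms, Gaussian-correlate with the small leftover atoms, and use Lemma~\ref{l:DudBTIS}-type bounds after smoothing. This yields a probability bounded below polynomially, i.e.\ a cost of $O(\log T)$ or $T^{o(1)}$. For (b), the field has variance at most $\eps'$, and Lemma~\ref{l:infconv}(2) together with a union over $\pm$ gives probability at least $1/2$ once $\eta^2 \gg 2d\eps'$.

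\textbf{Step 4 (the main factor (c), and the main obstacle).} Discretise $B(T)$ at mesh $T^{-\gamma}$; smoothness of $f_{\mu_{ac}'}$ controls the oscillation between mesh points with superpolynomially small failure probability, absorbed by one more correlation-inequality factor. For the grid points, apply the Gaussian correlation inequality pointwise over $N=T^{d+o(1)}$ symmetric slabs $\{|f(x)|\le \ell'\}$. Each slab has probability $1-p$ with $p = T^{-(1-2\eta)^2(1+\eps/m')(d-\beta)}$ by Claim~\ref{c:gtb}. Pure Gaussian correlation would then give
\[ -\log \P \le N p = T^{\beta - \frac{\eps}{m'}(d-\beta)+O(\eta)+o(1)}. \]
The catch is that the mesh introduces an extra factor $T^{d\gamma}$ in $N$, which is not negligible. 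To overcome this, apply correlation only on a unit-spaced grid, and handle each unit cell by a local sup bound, $\P[\sup_{\text{cell}}|f|>\ell']\le C\ell' e^{-\ell'^2/2m'}$ via Rice/Borell-TIS. This keeps $N\asymp T^d$ up to $\log$ factors, and correlation over the cell events (also symmetric convex) gives the same bound.

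\textbf{Step 5 (conclusion).} Combining the three factors and choosing $\eta,\eps'\to0$ yields exponent $\beta-\frac{\eps}{2m'}(d-\beta)+o(1)$, since there is slack between $\eps/m'$ and $\eps/(2m')$. When this exponent is negative, the cost is $T^{o(1)}$ from the factor (a) and the logarithmic factors, which gives the positive-part form of the statement. I expect the genuinely delicate point to be the uniform control of the singular/discrete part in (a).
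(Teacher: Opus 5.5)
\textbf{The gap is in Step 2.} The decomposition you ask for does not exist when $\mu_{s.c.}\neq 0$. A measure with smooth (indeed any absolutely continuous) density that is dominated by $\mu_{a.c.}+\mu_{s.c.}$ gives no mass to a Lebesgue-null carrier of $\mu_{s.c.}$, so it is dominated by $\mu_{a.c.}$. Any admissible choice therefore forces $\rho\ge\mu_{s.c.}$. Then $\var f_\rho=\|\rho\|\ge\|\mu_{s.c.}\|$ is not small, and your justification of factor (b) (``variance at most $\eps'$'') fails. Two related problems: even with $\mu_{s.c.}=0$, a continuous density lying below the indicator of a fat Cantor set must vanish, so ``mollification from below'' is not available in general; and if you mollify honestly, $\rho$ is signed and there is no independent field $f_\rho$ to split off. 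With $\rho\ge\mu_{s.c.}$, the event $\{\sup_{B(T)}|f_\rho|\le\eta\ell\}$ is itself a moderate-deviation event for a field with non-negligible continuous spectral mass. Its cost can be as large as $T^{d-O(\eta^2)}$: for the measure of Section~\ref{s:sc}, Proposition~\ref{p:odd} shows the singular continuous part behaves like an absolutely continuous one at the scales $T_{2n+1}$. Keeping $\eta$ bounded below instead runs factor (c) at a level that destroys the exponent. This matters because $\mu_{s.c.}$ is exactly why $m'$ rather than $m$ appears here: the proposition is what gives $m^+\le m+\|\mu_{s.c.}\|$ in Corollary~\ref{c:m}. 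As written, your argument misses the case the statement is designed for.

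\textbf{Repair, and comparison with the paper.} Step 4 never uses smoothness of $\mu'_{ac}$ or decay of its covariance. The Gaussian correlation inequality needs no decay. On a unit cell $Q$, Borell--TIS (rather than Rice, which needs differentiability) only requires $\E\sup_Q f_c=O(1)$ for $f_c:=f_{\mu_{a.c.}+\mu_{s.c.}}$. This follows from Lemma~\ref{lem: mon of Esup} and continuity of $f$. So take $\mu'_{ac}=\mu_{a.c.}+\mu_{s.c.}$ and $\rho=0$. Running factor (c) at level $(1-\eta)\ell$ gives cost $\lesssim T^{d}p$ with $p\le T^{-(1-\eta)^2(1+\eps/m')(d-\beta)+o(1)}$, which is within the claimed bound for small $\eta$.

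Step 3(a) also needs different tools. Lemma~\ref{l:DudBTIS} does not apply to the leftover atoms, since they may have no finite second spectral moment. Smoothing goes the wrong way for a lower bound on a symmetric convex event (cf.\ the convexity argument of Lemma~\ref{l:trunc}). Instead, the finitely many retained atoms give a field bounded uniformly in $x$ by a $T$-independent random variable, so their factor tends to one. The leftover atoms (variance $\eps''$) are handled by the same cell argument, at cost $\lesssim T^{d}e^{-\eta^2\ell^2/(3\eps'')}\to 0$ for small $\eps''$. So the discrete part is the easy piece, not the delicate one.

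The paper needs no decomposition at all. It applies the correlation inequality to $c_d(T/L)^d$ balls of mesoscopic radius $L=T^{1-\beta'/d}$, with $\beta'=(\beta-\frac{\eps}{2m'}(d-\beta))_+$. On each ball it uses Marcus's first-order law (Lemma~\ref{l:infconv}), which only sees the continuous spectral mass $m'$. Since $\ell^2>2dm'\log L$, this gives $\sup_{B(L)}|f_{\mu'}|\le\ell$ with probability tending to one, uniformly in $\mu'\le\mu$. Your repaired route avoids Marcus's theorem at the price of treating atoms by hand. Your Anderson reduction in Step 1 is correct and a clean substitute for the uniformity in $\mu'$.
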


\begin{proof}[Proof of Proposition \ref{p:m2}]
Let $L \in [1,T]$ and cover $B(T)$ by a collection $\mathcal{B} = \{B_i\}$ of $c_d (T/L)^d$ translated copies of $B(L)$. Then for every $\ell \ge 0$ we have
\begin{align*}
\P \Big[  \sup_{x \in B(T)} |f(x)| \le   \ell   \Big] &\ge \P \Big[ \bigcap_{B_i} \{ | f(x) | \le  \ell  \text{ for } x \in B_i  \}  \Big] \\
& \ge   \Big( \P \Big[ \sup_{x \in B_L} | f(x) | \le  \ell \Big] \Big)^{c_d (T/L)^d },
\end{align*}
where the second inequality used the Gaussian correlation inequality \cite{roy14} and stationarity. We first consider the case $m'  > 0$.  Let $\eps > 0$ and $\beta = [0,d)$ and define
\[ \beta' = \max \big\{ \beta -  \frac{ \eps(d-\beta) }{2m'} , 0 \Big\} .  \]
Define a mesoscopic scale $L = T^{1-\beta'/d}$.  Then 
\begin{align*}
     2 (m'+\eps) ( d - \beta)  ( \log T / \log L) & = 2 (m'+\eps) ( d - \beta) \Big(\frac{d}{d - \beta'} \Big)  \\
     & = \begin{cases} \frac{2d(m' + \eps)}{m' + \eps/2}  &   \text{if } \beta'> 0 \\  2 (m'+\eps) ( d - \beta) & \text{if } \beta'=0  \end{cases}  \\
     & > 2 m' d   ,
     \end{align*}
and so, by Lemma \ref{l:infconv}, 
\begin{equation*}
\inf_{\mu' \le \mu} \P \Big[   \sup_{x \in B(L)} |f_{\mu'}(x)| \le  \sqrt{ 2(m'+\eps)(d-\beta) (\log T)   }  \Big]   \to 1 . 
\end{equation*}  
Setting $\ell =  \sqrt{2 (m' +\eps) ( d - \beta)   \log T}$, we deduce that
\[  -\log \inf_{\mu' \le \mu} \P \Big[  \inf_{x \in B(T)} f_{\mu'}(x) \ge - \ell   \Big] \le c_d T^{\beta'} \]
which proves the result. In case that $m' = 0$ we still have $2 (m'+\eps) ( d - \beta)  > 2 m' d$, and so defining $\beta'=0$ the argument is still valid and the proposition follows. 
\end{proof}

Putting the previous results together we deduce bounds on the constants $m^-$ and $m^+$ defined in \eqref{e:m-} and \eqref{e:m+}:

\begin{corollary}
\label{c:m}
For every stationary Gaussian process on $\R^d$ or $\Z^d$,
\[   m \le m^- \le m^+ \le m + \|\mu_{s.c.}\| .\]
\end{corollary}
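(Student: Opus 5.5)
The three inequalities follow by unpacking the definitions \eqref{e:m-} and \eqref{e:m+} and feeding in Propositions~\ref{p:m1} and \ref{p:m2}. Throughout write $m' = \|\mu_{a.c.}+\mu_{s.c.}\| = m + \|\mu_{s.c.}\|$.

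\emph{Step 1: $m \le m^-$.} If $m=0$ there is nothing to prove, since $m^- \ge 0$ by definition. Otherwise fix $\eps \in (0,m)$ and set $u = m-\eps$. Let $\delta_0>0$ be given by Proposition~\ref{p:m1}. Then for every $\delta\in(0,\delta_0)$ and $\beta\in[0,d)$ we have
\[
-\log \P\Big[\inf_{B(T)} f_{\mu|_{B(\delta)^c}} \ge -\sqrt{2u(d-\beta)\log T}\Big] \ge T^{\beta+\frac{\eps}{2m}(d-\beta)+o(1)} .
\]
Choose the constant in \eqref{e:m-} to be $\delta_0' := \min\{\delta_0, \eps/(4m)\}$; this is also the range of $\delta$ considered. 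The $o(1)$ term is eventually smaller than $\frac{\eps}{4m}(d-\beta)$, so the right-hand side is eventually at least $T^{\beta+\delta_0'(d-\beta)}$. Hence $u\in$ the set defining $m^-$, so $m^-\ge m-\eps$, and letting $\eps\to0$ gives $m \le m^-$.

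\emph{Step 2: $m^+ \le m'$.} Fix $\eps>0$ and set $u = m'+\eps$. Apply Proposition~\ref{p:m2} to $\mu' = \mu|_{B(\delta)^c}\le\mu$, using $\{\sup|f|\le \ell\}\subseteq\{\inf f\ge -\ell\}$. This gives, for $\beta\in(0,d)$,
\[
-\log\P\Big[\inf_{B(T)} f_{\mu|_{B(\delta)^c}}\ge -\sqrt{2u(d-\beta)\log T}\Big]\le T^{(\beta-\frac{\eps}{2m'}(d-\beta))_+ + o(1)} .
\]
The positive-part truncation needs care:
\begin{itemize}
\item When $\beta - \frac{\eps}{2m'}(d-\beta) > 0$, take $\delta_0 = \eps/(4m')$; the right-hand side is then eventually at most $T^{\beta-\delta_0(d-\beta)}$.
\item When the exponent is $\le 0$, the bound is only $T^{o(1)}$, which need not beat $T^{\beta-\delta_0(d-\beta)}$ for small $\beta$. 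In this range I would rerun the proof of Proposition~\ref{p:m2} with the mesoscopic scale $L = T^{1-\beta''/d}$ for a small $\beta''<\beta$ (for instance $\beta''=\beta/2$). The same computation with Lemma~\ref{l:infconv} shows the local probability still tends to $1$, and the cost becomes $c_dT^{\beta''}$. Shrinking $\delta_0$ (depending on $\eps$ only) then gives $\beta''\le \beta-\delta_0(d-\beta)$ uniformly.
\end{itemize}
The second requirement in \eqref{e:m+} is $\P[\inf f_{\mu|_{B(\delta)^c}}\ge -\sqrt{2ud\log T}]\to1$. This follows from Lemma~\ref{l:infconv}(2), because $2ud > 2dm'$. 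Hence $m^+\le m'+\eps$ for every $\eps>0$, so $m^+ \le m'$.

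\emph{Step 3: $m^-\le m^+$.} Suppose, for contradiction, that $u_1 > u_2$ with $u_1$ admissible for $m^-$ (constant $\delta_1$) and $u_2$ admissible for $m^+$ (constant $\delta_2$). By monotonicity in the level, the event at level $u_2$ is contained in the event at level $u_1$, so $-\log\P$ at level $u_2$ is at least $-\log\P$ at level $u_1$. Fix $\beta\in(0,d)$ and $\delta<\min(\delta_1,\delta_2)$. Then, eventually,
\[
T^{\beta+\delta_1(d-\beta)} \le -\log\P[\text{level }u_1] \le -\log\P[\text{level }u_2] \le T^{\beta-\delta_2(d-\beta)},
\]
which is a contradiction. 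Thus every admissible $u$ for $m^-$ is at most every admissible $u$ for $m^+$, giving $m^-\le m^+$. If the set defining $m^+$ is empty, then $m^+=\infty$ and the inequality is trivial.

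The main obstacle is the uniformity in $\beta$ in Step 2 near the truncation threshold, where the $(\cdot)_+$ in Proposition~\ref{p:m2} must be upgraded to a genuine polynomial saving. This is why I would re-derive that bound with an adjusted scale $L$ rather than cite it verbatim.
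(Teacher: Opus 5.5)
Your Steps 1 and 3, the case $\beta-\frac{\eps}{2m'}(d-\beta)>0$ of Step 2, and your check of the second requirement in \eqref{e:m+} are all correct and follow the paper's one-line route. The gap is exactly where you flagged it, and your proposed fix does not close it. In \eqref{e:m+} the constant $\delta_0$ is fixed before $\beta$ ranges over $(0,d)$. For $\beta<\delta_0 d/(1+\delta_0)$ the target exponent $\beta-\delta_0(d-\beta)$ is \emph{negative}. So in that range you must show $-\log\P[\inf_{B(T)}f_{\mu|_{B(\delta)^c}}\ge-\ell]\le T^{-c}$ for some $c>0$, i.e.\ that this probability tends to $1$ at a polynomial rate. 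Rerunning the covering argument of Proposition~\ref{p:m2} at scale $L=T^{1-\beta''/d}$ cannot give this. It bounds $-\log\P$ by $c_dT^{\beta''}(-\log p_L)$ with $\beta''\ge 0$, while Lemma~\ref{l:infconv}(2) only gives $p_L\to1$ with no rate. Concretely, your choice $\beta''=\beta/2$ needs $\beta/2\le\beta-\delta_0(d-\beta)$, which fails for every $\beta<2\delta_0 d/(1+2\delta_0)$ however small $\delta_0>0$ is. So the claimed uniformity in $\beta$ is false.

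What is missing is a quantitative tail estimate: the expectation bound of Lemma~\ref{l:infconv}(1) combined with Borell--TIS. Presumably this is why the paper cites Lemma~\ref{l:infconv} alongside Proposition~\ref{p:m2}.
\begin{itemize}
\item If $\beta<\eps d/(m'+\eps)$, then $2(m'+\eps)(d-\beta)>2dm'$. So for small $\eta>0$ and large $T$, the level $\ell=\sqrt{2(m'+\eps)(d-\beta)\log T}$ exceeds $\sqrt{(2dm'+\eta)\log T}\ge-\E\inf_{B(T)}f_{\mu|_{B(\delta)^c}}$ by $a(\beta)\sqrt{\log T}$, with $a(\beta)>0$.
\item Borell--TIS, using $\var f_{\mu|_{B(\delta)^c}}(0)\le\|\mu\|$, then gives $\P[\inf_{B(T)}f_{\mu|_{B(\delta)^c}}<-\ell]\le T^{-a(\beta)^2/(2\|\mu\|)}$. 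Hence $-\log\P[\inf_{B(T)}f_{\mu|_{B(\delta)^c}}\ge-\ell]\le 2T^{-a(\beta)^2/(2\|\mu\|)}$.
\item For fixed $\eta$, $a(\beta)$ is non-increasing in $\beta$. Fix $\beta_1\in(0,\eps d/(m'+\eps))$ and take $\delta_0<\min\{a(\beta_1)^2/(2d\|\mu\|),\,\beta_1/d,\,\eps/(2m')\}$. Then the target exponent can only be negative for $\beta<\beta_1$, where it is at least $-\delta_0 d>-a(\beta_1)^2/(2\|\mu\|)$. For $\beta\in[\beta_1,\eps d/(m'+\eps))$ the target exponent is positive while $-\log\P\to0$.
\item Proposition~\ref{p:m2} covers $\beta>\eps d/(2m'+\eps)$. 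This range overlaps the Borell--TIS range $\beta<\eps d/(m'+\eps)$, so a single $\delta_0$ works for all $\beta\in(0,d)$.
\end{itemize}
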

\begin{proof}
The bounds $m \le m^-$ and $m^+ \le m + \|\mu_{s.c.}\|$ follow directly from Proposition \ref{p:m1}, and from a combination of Lemma \ref{l:infconv} and Proposition \ref{p:m2} respectively. The bound $m^- \le m^+$ is immediate from definitions \eqref{e:m-} and \eqref{e:m+}.
\end{proof}

\subsection{Bounds on the smoothed field}
Recall the spectral truncation function $\phi_s$ from~\eqref{eq:stf}. We next provide \emph{a priori} bounds on the fluctuations of the `smoothed' field with spectral measure $\phi^2_s \mu$. We only state this result for continuous SGFs:

\begin{lemma}[Bounds on the smoothed field]
\label{l:derbound}
Fix $\gamma > \gamma'  > 0$ satisfying $v \ge  (1+\gamma')/(1+\gamma)$, let $g_s$ denote an SGF with spectral measure $\phi_s^2 \mu$, and let $k$ be a multi-index. Then $g_s$ is almost surely smooth, and there exist $c_d > 0$ and $s_0 = s_0(d,\gamma,\gamma',v,k)$ such that, for all $s \ge s_0$, 
\begin{align*}  \textrm{Var} \big[ \partial^k g_s(0) \big]  &\le s^{-2|k|}  (\log s)^{2|k|(1+\gamma)}   \mu[ B( (\log s)^{1+\gamma} / s )] + \|\mu\| e^{-2(\log s)^{1+\gamma'} },  \\
 \E \Big[ \sup_{x \in B(s)} \partial^k g_s(x) \Big] &\le c_d s^{-|k|} (\log s)^{(|k|+1)(1+\gamma)}  \sqrt{ \mu[ B( (\log s)^{1+\gamma} / s )] } + \sqrt{ \|\mu\| } e^{-(\log s)^{1+\gamma'} } . 
 \end{align*}
The same result holds if $\phi_s$ is replaced by $\tilde{\phi}_s$.
 \end{lemma}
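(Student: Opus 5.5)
The variance bound comes from the spectral representation, and the supremum bound from a chaining argument on a lattice grid plus a derivative bound.

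\textbf{Smoothness and the variance bound.} By the spectral representation, $\partial^k g_s$ is a centred SGF with spectral measure $(2\pi)^{2|k|}|\lambda^{k}|^2\phi_s^2(\lambda)\,d\mu(\lambda)$. By \eqref{e:stfdecay}, $\phi_s(\lambda)\le c\,e^{-(s|\lambda|)^v}$, so every polynomial moment of $\phi_s^2\mu$ is finite. Kolmogorov's criterion, applied to each derivative, then gives almost sure smoothness. For the variance I split the integral
\[ \var[\partial^k g_s(0)] = (2\pi)^{2|k|}\int |\lambda^k|^2\phi_s^2(\lambda)\,d\mu(\lambda) \]
into two regions, exactly as in the proof of Lemma~\ref{l:capbounds}.
\begin{itemize}
\item On $B((\log s)^{1+\gamma}/s)$ I use $\phi_s\le 1$ and $|\lambda|\le (\log s)^{1+\gamma}/s$. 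This gives the first term $s^{-2|k|}(\log s)^{2|k|(1+\gamma)}\mu[B((\log s)^{1+\gamma}/s)]$.
\item On the complement I use $v\ge(1+\gamma')/(1+\gamma)$. Then $(s|\lambda|)^v\ge(\log s)^{1+\gamma'}$, and I keep half of the exponent to absorb the polynomial factor: $|\lambda|^{2|k|}e^{-(s|\lambda|)^v}$ is bounded by $e^{-2(\log s)^{1+\gamma'}}$ up to constants, for $s\ge s_0$. This gives the term $\|\mu\|e^{-2(\log s)^{1+\gamma'}}$.
\end{itemize}
The constant $(2\pi)^{2|k|}$ and the factor $c_k$ are absorbed by slightly enlarging $\gamma$ to $\gamma''\in(\gamma',\gamma)$ at the start, or equivalently by taking $s_0$ large. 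Writing $\sigma_k^2$ for the right-hand side, I thus get $\var[\partial^k g_s(0)]\le\sigma_k^2$.

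\textbf{Expected supremum.} I would use a simple discretisation rather than full Dudley chaining.
\begin{enumerate}
\item Cover $B(s)$ by a grid $\Lambda$ of mesh $\epsilon$, with $|\Lambda|\le c_d(s/\epsilon)^d$.
\item For each $x\in B(s)$, with $y\in\Lambda$ its nearest grid point,
\[ \sup_{B(s)}\partial^k g_s \le \max_{\Lambda}\partial^k g_s + \epsilon\sqrt d\,\sup_{B(s)}|\nabla\partial^k g_s| .\]
\item The maximum of Gaussians gives $\E\max_\Lambda\partial^k g_s\le\sigma_k\sqrt{2\log|\Lambda|}$.
\item Choose $\epsilon=s/(\log s)^{1+\gamma}$, the natural correlation length of $g_s$. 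Then $\log|\Lambda|\lesssim\log\log s$, which is well within $(\log s)^{1+\gamma}$.
\item For the gradient term, apply the same estimate again. This would be circular, so instead I bound $\E\sup|\nabla\partial^kg_s|$ crudely with a Sobolev embedding on unit cubes. That gives $\E\sup_{B(s)}|\partial^{k+e_i}g_s|\lesssim s^{d/2}\sum_{|j|\le d}\sigma_{k+e_i+j}$.
\item The problem is that $\sigma_{k+e_i+j}$ gains only factors $(\log s)^{1+\gamma}/s$, which does not beat $s^{d/2}$. I would therefore refine the argument by using the Sobolev embedding on cubes of side $\epsilon$, rescaled. The $j$-th derivative then contributes $\epsilon^{|j|}\sigma_{k+j}$, and the number of cubes is polylogarithmic. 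Hence each term is of order $\epsilon^{|j|}\,(s^{-1}(\log s)^{1+\gamma})^{|j|}\,\sigma_k$, which is $O(\sigma_k)$.
\end{enumerate}
Collecting terms gives $\E\sup\partial^k g_s\lesssim\sigma_k\,\mathrm{polylog}(s)$, with the polylogarithm fitting inside the stated factor $(\log s)^{(|k|+1)(1+\gamma)}$. The same square-root splitting then yields the additive $\sqrt{\|\mu\|}e^{-(\log s)^{1+\gamma'}}$ term.

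\textbf{Main obstacle.} The hard step is making the rescaled Sobolev (or Dudley) bound give exactly one extra power of $(\log s)^{1+\gamma}$. If that fails, I would instead apply Dudley's entropy integral directly. The canonical metric satisfies $d(x,y)\le|x-y|\,\sigma_{k+1}$ with $\sigma_{k+1}\lesssim \sigma_k(\log s)^{1+\gamma}/s$. The diameter is at most $2\sigma_k$, and the entropy integral over $B(s)$ then gives $\sigma_k\sqrt{\log((\log s)^{1+\gamma})}$ times a constant, up to the exponential remainder, which suffices. The discrete case with $\tilde\phi_s$ follows by the same computation on the torus, using the analogous decay bound for $\tilde\phi_s$.
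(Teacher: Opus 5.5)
Your variance bound is the paper's argument: split the spectral integral at radius $(\log s)^{1+\gamma}/s$, use $\phi_s\le 1$ inside and the stretched-exponential decay of $\phi_s$ outside. The problem is the supremum bound along your primary route.

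The main term of $\sigma_k$ already contains $(\log s)^{|k|(1+\gamma)}$. The target $c_d s^{-|k|}(\log s)^{(|k|+1)(1+\gamma)}\sqrt{\mu[\cdot]}$ therefore allows only one extra factor $c_d(\log s)^{1+\gamma}$ beyond $\sigma_k$. In step 6, the rescaled Sobolev inequality on a cube $Q$ of side $\epsilon$ reads $\sup_Q|h|\lesssim\sum_{|j|\le m}\epsilon^{|j|-d/2}\|\partial^j h\|_{L^2(Q)}$ with $m>d/2$. To reach $\sup_{B(s)}$ you must then take a maximum over $N\asymp(s/\epsilon)^d=(\log s)^{d(1+\gamma)}$ cubes, and your per-term count $\epsilon^{|j|}\sigma_{k+j}=O(\sigma_k)$ ignores this. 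If the maximum is handled by summing, or by $\max_Q\le(\sum_Q)^{1/2}$ and Jensen, it costs $N^{1/2}=(\log s)^{d(1+\gamma)/2}$. The gradient term $\epsilon\sqrt d\,\E\sup_{B(s)}|\nabla\partial^k g_s|$ is then of order $(\log s)^{d(1+\gamma)/2}\sigma_k$, which does not fit inside $c_d(\log s)^{1+\gamma}\sigma_k$ once $d\ge3$. Making $N$ enter only through $\sqrt{\log N}$ would need Borell--TIS for each $\sup_Q$ plus a union bound, which you do not invoke. So, as written, the claim that ``the polylogarithm fits'' is unjustified.

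Your Dudley fallback is the argument that works, and in substance it is the paper's. The paper rescales $\bar g_s(x)=g_s(sx)$ and uses a Kolmogorov-type bound $\E\sup_{B(1)}\partial^k\bar g_s\le c_d\max_{|k'|\le|k|+1}\sqrt{\var[\partial^{k'}\bar g_s(0)]}$. In effect this is chaining with a Lipschitz canonical metric and a crude bound on the logarithm, and the $(|k|+1)$-st derivative is exactly what produces the extra $(\log s)^{1+\gamma}$. Your direct entropy integral is of order $\sigma_k\sqrt{\log(2+s\sigma_{k+1}/\sigma_k)}$. It is sharper, costing only $\sqrt{\log\log s}$ on the main term, and it makes the grid-plus-gradient decomposition unnecessary.

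One correction is needed. The bound $\sigma_{k+1}\lesssim\sigma_k(\log s)^{1+\gamma}/s$ holds only for the main terms, because the remainder $\sqrt{\|\mu\|}e^{-(\log s)^{1+\gamma'}}$ gains no factor $1/s$. When the remainder dominates, $s\sigma_{k+1}/\sigma_k$ can be of order $s$, and the entropy integral costs $\sqrt{\log s}$ on that term. This must be absorbed into the exponent using the slack in $\phi_s^2(\lambda)=\zeta(2s\lambda)^2\le c\,e^{-2\cdot 2^v(s|\lambda|)^v}$. The paper does the same when it absorbs its factor $s^{|k|+1}$ by ``adjusting $s_0$''.

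The same slack is also the right way to absorb the constants on the outer region of the variance integral. Replacing $\gamma$ by some $\gamma''<\gamma$ would require $v\ge(1+\gamma')/(1+\gamma'')$, which fails in the boundary case $v=(1+\gamma')/(1+\gamma)$.
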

 
 \begin{proof}
Observe that, for $s \ge s_0 = s_0(d, \gamma, \gamma',k)$,
\[   0 \le  |\lambda|^{2|k|} \phi_s^2(\lambda) \le |\lambda|^{2|k|} \id_{ B( (\log s)^{1+\gamma} / s )   } (\lambda)  + e^{- 2(\log s)^{1+\gamma'}} , \]
which follows from the fact that, for $s \ge s_0$ and $|\lambda| \ge (\log s)^{1+\gamma} / s$,
 \[  |\lambda|^{2|k|} \phi^2_s(\lambda ) \le  c_v^2 |\lambda|^{2|k|}  e^{- 2 (2 s |\lambda|)^v} \le e^{- ( \log s)^{(1+\gamma )v}} \le e^{- 2 (\log s)^{1+\gamma'}}. \]
In particular, for $s \ge s_0$,
\begin{align}
\label{e:varbound}
  \textrm{Var} \big[ \partial^k g_s(0) \big]  & \le \int |\lambda|^{2|k|} \phi^2_s(\lambda) d\mu(\lambda) \\
\nonumber   & \le   s^{-2|k|} (\log s)^{2|k|(1+\gamma)}  \mu[ B( (\log s)^{1+\gamma} / s )] + \|\mu\| e^{-2  (\log s)^{1+\gamma'} }  .
  \end{align}
  
For the second statement, define the field $\bar{g}_s(\cdot) = g_s(s \cdot)$, which satisfies
\[     \E \Big[ \sup_{x \in B(s)} \partial^k g_s(x) \Big]  = s^{-|k| }  \E \Big[ \sup_{x \in B(1)} \partial^k \bar{g}_s(x) \Big]  .  \]
Since $\bar{g}_s$ is stationary, by Kolmogorov's theorem there exists a $c_d>0$ such that
\[  \E \Big[ \sup_{x \in B(1)} \partial^k \bar{g}_s(x) \Big]   \le c_d \max_{|k'| \le k+1} \sqrt{ \textrm{Var} \big[ \partial^{k'} \bar{g}_s(0) \big] }  = c_d \max_{|k'| \le k+1} \sqrt{ s^{2|k'|}  \textrm{Var} \big[ \partial^{k'} g_s(0) \big] }   . \]
Combining with \eqref{e:varbound}, and using that $\sqrt{a+b} \le \sqrt{a} + \sqrt{b}$, gives that
\begin{align*}
& \E \big[ \sup_{x \in B(s)} \partial^k g_s(x) \big] \\
& \qquad \le  c_d s^{-|k|} (\log s)^{(|k|+1)(1+\gamma)}  \sqrt{ \mu[ B( (\log s)^{1+\gamma} / s )] } + \sqrt{ \|\mu\| }  s^{(|k|+1)} e^{-(\log s)^{1+\gamma'} } 
\end{align*}
which gives the second statement (after adjusting $s_0$). 

The proof with $\tilde{\phi}_s$ replacing $\phi_s$ is identical.
 \end{proof}

 \begin{remark}
\label{r:dc}
Although the result is stated for continuous fields, we make use of the version with the periodised spectral truncation $\tilde{\phi}_s$ in our study of discrete fields by applying it to the \textit{canonical extension} of the field to $\R^d$, see Section \ref{s:dc}.
\end{remark}

\subsection{A sufficient condition for  origin dominance}
\label{ss:ds}

\begin{proof}[Proof of Proposition \ref{p:pos}]
We prove this in the continuous case (the discrete case is identical after replacing $\phi_s$ with $\tilde{\phi}_s$). Recall $\phi_s$ defined in \eqref{eq:stf}. Since we assume \eqref{e:doub}, by the same argument as in \eqref{e:dc} there exists a $c_1 > 0$ such that, for all $T > 0$,
\[ \int  \phi_T(\lambda) d\mu(\lambda)   \le c_1 \mu[ B(1/T) ]. \]
On the other hand, for every $u \in \R^d$ and $T > 0$,
\[  c_2 \mu[u+B(1/T)]   \le \int \phi_T(u-\lambda) d\mu(\lambda)  \]
where $c_2 = \min_{\lambda \in B(1)} \phi_1(\lambda) > 0$. Let $r_0 > 0$ be such that $K(x) \ge 0$ for $|x| \ge r_0$. Then by Parseval's theorem, for every $u \in \R^d$ and $T > 0$,
\begin{align*}
\mu[ u + B(1/T) ] & \le c_3 \int \phi_T(\lambda - u) d\mu(\lambda)   = c_3 \int \mathcal{F}[\phi_T] e^{2\pi i \langle u, x \rangle } K(x) \, dx  \\
& \le c_3 \int \mathcal{F}[\phi_T] K(x) + 2c_3 \| \mathcal{F}[\phi_T] \|_\infty  \int_{B(r_0)} |K(x)| \, dx \\
& \le c_4   ( \mu[ B(1/T) ]  + T^{-d} ),
\end{align*}
where the second inequality used that $|e^{2\pi i \langle u, x \rangle }| \le 1$, and the final step used that $\| \mathcal{F}[\phi_T] \|_\infty =  T^{-d}$. Since we assume that $T^{-d} \le c_5 \mu[B(1/T)]$ for sufficiently large $T$, the result follows.
\end{proof}

 %%%%%%%%%%%
 %%%%%%%%%%%
 
\medskip
 \section{Bounds on the persistence probability}
 \label{s:proof}

 In this section we prove the bounds on the persistence probability given in Theorem \ref{t:bounds}. We follow the general strategy outlined in Section \ref{ss:heu} above. In the proof we fix $v \in (0,1)$ arbitrarily, and recall the truncation function $\phi_s=\phi_{s,v}$ given in \eqref{eq:stf}. The proofs focus on the case of continuous SGFs, with differences in the discrete case discussed in the final subsection.

\subsection{Relating persistence to capacity.}
We begin by formalising a preliminary connection between the capacity and the probability of persistence above high levels. 
Let $f=f_\mu$ be an arbitrary SGF associated with spectral measure $\mu$. For a compact domain $D$ and $\ell \in \R$, define $\per^\ell_\mu(D)=\{f_\mu\ge \ell\text{ for all $x\in D$}\}$ and $\lp_\mu^\ell(D)= -\log \per^\ell_\mu(D)$.

\begin{lemma}
\label{l:infcap}

Let $D$ be a compact domain. Then for every $\ell,\ell' \in \mathbb{R}$ such that $\ell' \le \ell$,
\[   \frac{\ell^2 \capa_f(D)}{2}   \le \theta^\ell_\mu(D)   \le  \lp^{\ell'}_\mu(D)+ \frac{(\ell-\ell')^2\capa_f(D)+2e^{-1}  }{2\P[\per^{\ell'}_\mu(D) ] }.   \]
\end{lemma}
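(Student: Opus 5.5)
For the lower bound I would take an arbitrary probability measure $\nu \in \mathcal{P}(D)$ and consider the Gaussian variable $X = \langle f, \nu \rangle = \int f \, d\nu$. It has variance $E[\nu]$. On $\per^\ell_\mu(D)$ we have $X \ge \ell$, so by Claim~\ref{c:gtb}
\[
\P[\per^\ell_\mu(D)] \le \P[X \ge \ell] \le \tfrac12 e^{-\ell^2/(2E[\nu])}.
\]
This is only needed for $\ell > 0$; for $\ell \le 0$ and $\ell^2>0$ the lower bound should be read as applying to positive levels, and otherwise it is trivial. Minimising $E[\nu]$ over $\nu$ via \eqref{e:cap1} gives $E[\nu_f] = 1/\capa_f(D)$, and hence $\theta^\ell_\mu(D) \ge \ell^2 \capa_f(D)/2$. (If $\ell \le 0$ the claimed inequality may need $\ell \ge 0$; I expect the paper uses $\ell > 0$.)

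For the upper bound I would use a Cameron--Martin shift along the equilibrium potential $h = h_f(D)$, which lies in $H$ with $\|h\|_H^2 = \capa_f(D)$ and satisfies $h \ge 1$ on $D$. Set $a = \ell - \ell' \ge 0$. Then $\{f \ge \ell'\} \subseteq \{f + a h \ge \ell\}$ on $D$. By the Cameron--Martin theorem the law of $f + ah$ has density $\exp(a\xi - a^2\|h\|_H^2/2)$ with respect to the law of $f$, where $\xi$ is the centred Gaussian in the first chaos associated to $h$, with variance $\capa_f(D)$. Writing $A = \per^{\ell'}_\mu(D)$, I get
\[
\P[\per^\ell_\mu(D)] \ge \E\big[ e^{-a\xi - a^2 \capa_f(D)/2}\, \id_{A} \big].
\]
Jensen's inequality applied to the conditional law given $A$ then yields
\[
\log \P[\per^\ell] \ge \log \P[A] - \tfrac{a^2 \capa_f(D)}{2} - a\, \E[\xi \mid A].
\]

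The remaining step, which I expect to be the main obstacle, is to control the cross term $a\E[\xi \id_A]/\P[A]$ so that it fits the stated bound $\frac{a^2 \capa + 2e^{-1}}{2\P[A]}$. My first attempt would use $a|\xi| \le (a^2\capa + \xi^2/\capa)/2$ and $\E[\xi^2]/\capa = 1$, but this gives the constant $1$ rather than $2e^{-1}$. A better route is to keep $-a\xi$ inside the exponent and use $\E[\xi \id_A] \le \E[\xi_+]$. Alternatively, I would write the pointwise bound $e^{-x} \ge \ldots$ and use $x e^{-x} \le e^{-1}$, in the form $-\log$ of the ratio combined with $t\log t \ge -e^{-1}$: here $\E[Y \log Y] \ge -e^{-1}$-type bounds for $Y = e^{-a\xi}$ restricted to $A$ would produce exactly the $2e^{-1}$ term. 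Concretely, I would bound
\[
-\E[\id_A \log(\text{density})] \le \tfrac{a^2\capa}{2} + \E[\id_A\, a\xi],
\]
and handle $\E[\id_A a\xi] \le \E[(a\xi)_+ \id_A]$ by splitting on the sign of $\xi$. This step should give $(a^2\capa + 2e^{-1})/2$ and hence the stated upper bound after dividing by $\P[A]$.
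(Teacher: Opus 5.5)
Your lower bound is the paper's argument. Your caveat is also correct: the argument applies Claim~\ref{c:gtb} at $x=\ell\sqrt{\capa_f(D)}$, so it only covers $\ell\ge0$ ($\ell=0$ being trivial), and for $\ell<0$ the stated lower bound is false in general (let $\ell\to-\infty$).

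Your Cameron--Martin/Jensen set-up for the upper bound is sound. With $A=\per^{\ell'}_\mu(D)$ and $a=\ell-\ell'$, it reduces the claim exactly to
\[
\E\Big[\id_A\Big(a\xi+\tfrac{a^2\capa_f(D)}{2}\Big)\Big]\le \tfrac{a^2\capa_f(D)}{2}+e^{-1}.
\]
This is the step you leave open, and neither route you describe gives it.

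Your first attempt does not merely turn $2e^{-1}$ into $1$. It gives $\theta^\ell_\mu(D)\le\theta^{\ell'}_\mu(D)+\frac{a^2\capa_f(D)}{2}+\frac{a^2\capa_f(D)+1}{2\P[A]}$. For $\P[A]\to1$, which is exactly how the lemma is used in Section~\ref{ss:lowerbd} and in Lemma~\ref{l:stab}, the quadratic term is therefore doubled, and this would destroy the sharp constant in Theorem~\ref{t:main}.

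Your ``concrete'' plan fails outright. Once $\frac{a^2\capa_f(D)}{2}\P[A]$ is replaced by $\frac{a^2\capa_f(D)}{2}$, you would need $a\E[\xi\id_A]\le e^{-1}$, which is false in general. For $D=\{x\}$ and $\ell'=0$ one has $\capa_f(D)=1/K(0)$ and $\xi=f(x)/K(0)$, so $a\E[\xi\id_A]=a\sqrt{\capa_f(D)/(2\pi)}$, which is unbounded in $a$. Splitting on the sign of $\xi$ cannot help, since $\E[(a\xi)_+]$ is linear in $a$.

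The missing idea is not to estimate the cross term on $A$ at all. Instead, use $\E[\xi]=0$ to pass to the complement:
\[
\E\Big[\id_A\Big(a\xi+\tfrac{a^2\capa_f(D)}{2}\Big)\Big]=\tfrac{a^2\capa_f(D)}{2}+\E\big[\id_{A^c}\log Y\big],\qquad Y:=e^{-a\xi-a^2\capa_f(D)/2}.
\]
Here $\frac{a^2\capa_f(D)}{2}$ is the relative entropy of the shift. Since $\E[Y]=1$, set $dR=Y\,d\P$ and write $\E_R$ for expectation under $R$. Then $\E[\id_{A^c}\log Y]=-\E_R[\id_{A^c}Y^{-1}\log Y^{-1}]\le e^{-1}$ by $t\log t\ge -e^{-1}$. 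Dividing by $\P[A]$ gives exactly the stated bound.

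This is the classical entropy inequality of Bolthausen--Deuschel--Zeitouni, which the paper simply quotes. It applies it with $h=(\ell-\ell')h_f(D)$ to the event $\per^\ell_\mu(D)$, and uses that the right-hand side is decreasing in $\P[f+h\in\per^\ell_\mu(D)]\ge\P[\per^{\ell'}_\mu(D)]$. You mention $t\log t\ge-e^{-1}$ but never locate where it enters, so as written your upper bound is not proved.
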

\begin{proof}
Recall the classical `entropic bound' for Gaussian processes (see \cite[p.421]{bdz95})
\[  - \log \P[ f  \in A ] \le  - \log \P[f + h  \in A]  +   \frac{1}{\P[f + h  \in A]} \Big( \frac{ \|h\|_H^2  +2e^{-1}}{2}     \Big)   \]
valid for every $h \in H$ and event $A$. To obtain the upper bound, we apply this with $A=\per^\ell_\mu(D)$ and $h = (\ell-\ell')h_f(D) \in H$, where $h_f(D)$ is the equilibrium potential, which satisfies
\[  \P[  f + h \in A   ] = \P \big[  \inf_{x \in D} f(x) + h(x) \ge \ell \big] \ge \P \big[ \inf_{x \in D} f(x) \ge \ell' \big] = \P \big[\per^{\ell'}_\mu(D) \big]  .\]

For the lower bound, let $\nu_D \in \mathcal{P}(D)$ be an equilibrium measure, and note that if $f(x) \ge \ell$ for all $x \in D$, then also $\int f d\nu_D \ge \ell$. Using also that $\text{Var}[  \smallint f d\nu_D  ]  = E[\nu_D] = 1/\capa_f(D)$,  
\begin{align*}
    \P \big[ \per^{\ell}_\mu(D)\big] &\le  \P \big[ \smallint \! f d\nu_D  \ge  \ell   \big] = \P \Big[ Z \ge   \ell  \sqrt{ \capa_f(D)} \Big]   \le \exp \Big(   \frac{-\ell^2 \capa_f(D)}{2} \Big),
    \end{align*} 
where $Z$ is a standard normal random variable, and the final step is by Claim \ref{c:gtb}.
\end{proof}

\begin{remark}\label{rmk:cap as ldp}
    By taking $\ell \to \infty$ in Lemma \ref{l:infcap}, we obtain the following representation of the capacity as a large deviation exponent
for persistence above a high level:
\[ \lim_{\ell \to \infty} \frac{1}{\ell^2} \lp_\mu^\ell(D)   = \frac{\capa_f(D) }{2} . \]
\end{remark} 

\subsection{Proof of Theorem \ref{t:bounds} -- Part I : Lower bound on the persistence probability}\label{ss:lowerbd}

We now prove the first statement of Theorem \ref{t:bounds}, which we recall states that, for every $\delta > 0$, as $T \to \infty$ eventually 
\begin{equation}
    \label{e:bounds1}   \lp_\mu(T) \le (m^+  (d-\alpha^-)_+ + \delta) \log T\, \capa_f(B(T + T^\delta ) )  .
\end{equation} 

Let us first suppose that $\alpha^- > 0$; the case $\alpha^- = 0$ will be treated at the end. Let $\delta > 0$ be given, and let $T\ge 1$. 
Let $\eps>0$ be a small real number to be specified later, and decompose the spectral measure 
 \[ \mu = \mu_1 + \mu_2 + \mu_3, \] 
 with \[
 \mu_1 = \mu \phi_{T^\delta}^2,  \quad \mu_2 = \mu(1-\phi_{T^\delta}^2)|_{B(\eps)^c}, \text{ and }\mu_3 = \mu(1-\phi_{T^\delta}^2)|_{B(\eps)},
 \] 
 where $\phi_{T^\delta}$ is the spectral truncation function defined in~\eqref{eq:stf}.
 Let $f_{\mu_1}$, $f_{\mu_2}$ and $f_{\mu_3}$ be independent SGFs with respective spectral measures $\mu_1$, $\mu_2$ and $\mu_3$. 

Denote $\ell =  \sqrt{ 2m^+(d-\alpha^-)_+} +  3\delta  > 0$, and consider the following independent events:
 \begin{itemize}
      \item $A_1= \Big\{ \inf_{B(T)} f_{\mu_1} \ge \ell \sqrt{\log T}  \Big\} $;
     \item $A_2 =  \Big\{ \inf_{B(T)}    f_{\mu_2}  \ge  (-\ell+\delta) \sqrt{\log T} \Big\} $;
     \item $A_3 =  \Big\{ \sup_{B(T)}    |f_{\mu_3}|  \le  \delta \sqrt{\log T} \Big\}   $.
 \end{itemize}
Clearly $A_1 \cap A_2 \cap A_3  \implies \per(B(T))$ and so, by independence,
 \begin{equation}
     \label{e:lbsum}
-\log \P[\per(B(T))] \le -\log \P[A_1] - \log \P[A_2]  - \log \P[A_3]. 
 \end{equation} 
 We bound the probability of the events $A_1$, $A_2$ and $A_3$ separately. 

 \medskip
 \textbf{Bounding $\P(A_1)$.} We claim that, as $T \to \infty$,
 \begin{equation}
 \label{e:lb1}
   -\log \P[A_1]  \le    \frac{1}{2} (  \ell + \delta   )^2 (\log T) \, \capa_f(B(T + T^{\delta}))    ( 1 + o(1) ).
   \end{equation}
To see this, denote  \[ E_T = \Big\{ \inf_{x \in B(T)} f_{\mu_1}(x) \ge   - \delta  \sqrt{\log T}  \Big\}.\]
 Observe that as $T \to \infty$, eventually $\mu_1 =  \mu \phi_{T^\delta}^2  \le \mu'$ for some $\mu'$ satisfying $2d \|\mu'\| < \delta $, so that, by Lemma \ref{l:infconv}, we have $\P[E_T] \to 1$ as $T \to \infty$. Plugging this into  Lemma \ref{l:infcap}, applied to $f_{\mu_1}$ with $\ell'$ replaced by $-\delta \sqrt{\log T}$ and $\ell$ replaced by $\ell \sqrt{\log T}$, we obtain
 \[  - \log \P[A_1] \le  \Big(   \frac{ (\ell + \delta   )^2 (\log T)  \capa_{\mu_1}(B(T))  }{2}\Big)(1+o(1)).   \]
Since, by Lemma~\ref{l:captrunc}, $\capa_{\mu_1}(B(T)) \le \capa_f(B(T + T^\delta))$ we deduce \eqref{e:lb1}.

  \medskip
 \textbf{Bounding $\P(A_2)$.} We restrict our choice of $\eps$ to satisfy both $\|\mu|_{B(\eps)}\| <  \delta /(2d)$  and 
\begin{equation}
    \label{e:mu2}
 - \log \P\Big[ \inf_{x \in B(T)} f_{\mu|_{B(\eps)^c}}(x) \ge (- \ell +2\delta) \sqrt{\log T}   \Big] \le T^{ \alpha^- - c_\delta + o(1) }.
 \end{equation}
This is possible by the definitions of $\ell$ and $m^+$ as in~\eqref{e:m+}. 

Observe that, since $\mu|_{B(\eps)^c} = \mu_2 +  \mu \phi_{T^\delta}^2|_{B(\eps)^c}$,
\[ \P[A_2] \ge  \P\Big[ \inf_{x \in B(T)} f_{\mu|_{B(\eps)^c}}(x) \ge (- \ell +2\delta) \sqrt{\log T}   \Big] -  \P \big[   \sup_{x \in B(T)}    |f_{ \mu \phi_{T^\delta}^2|_{B(\eps)^c}}(x) |  \ge  \delta \sqrt{\log T} \big] . \]
By \eqref{e:mu2} the first term in this expression is at least $\exp(-T^{\alpha^- - c_\delta + o(1) })$. By Lemma \ref{l:derbound} and the Borell-TIS inequality, the second term is at most $\exp(-e^{(\log T)^{1 + c}})$ for some constant $c > 0$ and sufficiently large $T$. Combining these we conclude that
\begin{equation}
    \label{e:lb2}
- \log \P[A_2] \le T^{\alpha^- - c_\delta + o(1) } .
\end{equation}

\textbf{Bounding $\P(A_3)$.} To see that 
\begin{equation}
    \label{e:lb3}
 \P[A_3] \to 1 
\end{equation}
we simply recall that $\|\mu|_{B(\eps)}\| <  \delta /(2d)$, and apply Lemma \ref{l:infconv}. 

\medskip
\textbf{Concluding the proof.}  
Combining \eqref{e:lbsum}, \eqref{e:lb1}, \eqref{e:lb2} and \eqref{e:lb3}, we have 
\[ -\log \P[\per(B(T))] \le    \frac 1 2 (  \ell + \delta   )^2 (\log T)  \capa_f(B(T + T^\delta)) ( 1 + o(1) )    + T^{\alpha^- - c_\delta + o(1) } .  \]
Adjusting the constant $\delta > 0$, and since $\capa_f(B(T))  \ge T^{\alpha^- + o(1)}$ by Corollary \ref{c:capbounds},  this completes the proof of \eqref{e:bounds1} in the case $\alpha^- > 0$.

In the case $\alpha^- = 0$, we instead restrict $\eps$ to satisfy 
\begin{equation*}
\P\Big[ \inf_{x \in B(T)} f_{\mu|_{B(\eps)^c}}(x) \ge (- \ell +2\delta) \sqrt{\log T}   \Big] \to 1 ,
 \end{equation*}
 rather than \eqref{e:mu2}, which is again possible by the definitions of $\ell$ and $m^+$. Using this we verify $\P[A_2] \to 1$ in this case, and the remainder of the proof proceeds as before.
\qed

\subsection{Proof of Theorem \ref{t:bounds} -- Part II: Upper bound on persistence probability}
\label{s:ub}

We prove the second statement of Theorem \ref{t:bounds}, which we recall states that, if $\mu$ is regular in the sense of \eqref{e:rss}, then for every $\gamma > 0$ there exists an $\eps > 0$ such that, as $T \to \infty$, eventually 
\begin{equation}
    \label{e:bound2}
 \theta_\mu(T)  \ge (m^- (d-\alpha^+ ) - \gamma) \log T\,\capa_f(B(T-T^{1-\eps}))  .
 \end{equation}

Suppose that $m^- > 0$ and $\alpha^+ < d$, as otherwise the bound trivially holds. It is enough to prove the result for $\gamma > 0$ sufficiently small. Given $T \ge 1$ we decompose the  measure \[\mu = \mu_1 + \mu_2 + \mu_3,\] where
\[\mu_1 = \mu|_{B(\delta)} \phi^2_{T^{1-\eps} }, \quad
\mu_2 = \mu|_{B(\delta)^c},  \text{   and }\mu_3 = \mu|_{B(\delta)}  (1 - \phi^2_{ T^{1-\eps} } ),\] 
for small constants $\delta,\eps>0$ that will be chosen later. Write $f_{\mu_1}$ and $f_{\mu_2}$ for two independent SGFs with respective spectral measures $\mu_1$ and $\mu_2$. By Lemma \ref{l:trunc},
 \[ \P[ f \in \per(B(T)) ] \le \P[ f_{\mu_1} + f_{\mu_2} \in \per(B(T-T^{1-\eps}) ) ] ,\]
 so that it suffices to bound the latter probability. 

Set $\ell=\sqrt{2m^{-}(d-\alpha^+)}-3\gamma > 0$.
 Abbreviating $\widetilde{B}(T)=B(T-T^{1-\eps})$, we make the following claim:

\begin{claim}
\label{cl:events containment}
For every $L >0$ such that $L \le T - T^{1-\eps}$ we have
\[\{f_{\mu_1} + f_{\mu_2} \in \per(\widetilde{B}(T)) \}\subset A_1\cup A_2 \cup A_3,\] where, abbreviating $\widehat{B}(T) = B(T - T^{1-\eps} - L)$ and $B_x(r) := x + B(r)$:
 \begin{itemize}[itemsep=4pt]
   \item $A_1 = \{ \inf_{ \widehat{B}(T) } f_{\mu_1} \ge  \ell \sqrt{\log T}  \} $;
   \item $A_2 = \{ \exists x \in \widetilde{B}(T)  \, \text{s.t.} \, \inf_{B_x(L)} f_{\mu_2} \ge -(\ell+\gamma) \sqrt{  \log T} \}   $;
   \item $A_1' = \{ \sup_{\widetilde{B}(T)} \| \nabla f_{\mu_1} \|_2  \ge  \gamma  L^{-1}  \sqrt{\log T}  \} $.
 \end{itemize}     
\end{claim}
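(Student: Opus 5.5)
The plan is a direct deterministic argument: show that if $A_1$ and $A_1'$ both fail, then persistence forces $A_2$. I read the third event in the containment, $A_3$, as the gradient event $A_1'$ listed in the claim. Every step holds pointwise for each realisation of $(f_{\mu_1},f_{\mu_2})$, so no probabilistic input is needed. We only use that $f_{\mu_1}$ is almost surely smooth (Lemma~\ref{l:derbound}, since $\mu_1$ is a $\phi^2_{T^{1-\eps}}$-smoothed measure), so that $\nabla f_{\mu_1}$ makes sense.

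\textbf{Step 1: a low point of $f_{\mu_1}$.} Fix a realisation in $\{f_{\mu_1}+f_{\mu_2}\in\per(\widetilde B(T))\}\cap A_1^c\cap (A_1')^c$. Since $A_1$ fails, there is a point $y\in\widehat B(T)$ with $f_{\mu_1}(y)<\ell\sqrt{\log T}$. The hypothesis $L\le T-T^{1-\eps}$ ensures $\widehat B(T)$ is a genuine (possibly degenerate) ball. The triangle inequality, with $|y|\le T-T^{1-\eps}-L$, gives $B_y(L)\subseteq \widetilde B(T)$.

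\textbf{Step 2: propagate the bound over $B_y(L)$.} Take $x\in B_y(L)$. The segment $[y,x]$ lies in the convex set $\widetilde B(T)$. Since $A_1'$ fails, $\|\nabla f_{\mu_1}\|_2<\gamma L^{-1}\sqrt{\log T}$ along this segment, so the mean value inequality gives
\[ f_{\mu_1}(x)\le f_{\mu_1}(y)+|x-y|\,\gamma L^{-1}\sqrt{\log T}<(\ell+\gamma)\sqrt{\log T}. \]

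\textbf{Step 3: conclude $A_2$.} Persistence on $\widetilde B(T)\supseteq B_y(L)$ gives $f_{\mu_2}(x)\ge -f_{\mu_1}(x)>-(\ell+\gamma)\sqrt{\log T}$ for every $x\in B_y(L)$. Hence $\inf_{B_y(L)}f_{\mu_2}\ge-(\ell+\gamma)\sqrt{\log T}$. Since $y\in\widehat B(T)\subseteq\widetilde B(T)$, this is exactly the event $A_2$ with witness $x=y$, proving the containment.

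There is no real obstacle here. The only points needing care are the geometric inclusion $B_y(L)\subseteq\widetilde B(T)$, which is where the margin $L$ in $\widehat B(T)$ is used, and the convexity of the ball, which justifies the mean value inequality along segments.
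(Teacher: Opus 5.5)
Your proof is correct and is essentially the paper's argument. The paper takes $x_0$ to be the minimiser of $f_{\mu_1}$ over $\widehat B(T)$, uses the failure of $A_1'$ to bound $f_{\mu_1}$ by $(\ell+\gamma)\sqrt{\log T}$ on $B_{x_0}(L)\subset\widetilde B(T)$, and concludes $A_2$ with witness $x_0$; you instead take any point of $\widehat B(T)$ below level $\ell\sqrt{\log T}$, which changes nothing. Your reading of $A_3$ as $A_1'$ is right, and you correctly keep the factor $\sqrt{\log T}$ that the paper's one-line proof drops.
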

\begin{proof}
On the event $(A_1 \cup A_1')^c$, taking $x_0=\arg\min_{\widehat{B}(T)} f_{\mu_1}$, we have $f_{\mu_1}(y)\le \ell+\gamma$ for all $y\in B_{x_0}(L) \subset \widetilde{B}(T)$. Hence, if $\per(\widetilde B(T))$ occurs, $A_2$ holds with respect to $x=x_0$.
 \end{proof}

We are left with showing that for all sufficiently small $\gamma$, and with a suitable choice of the parameters $\delta$, $\eps$, and $L$, the events $A_1$, $A_2$, and $A_1'$, occur with sufficiently small probability. We write $c_d>0$ for a dimensional constant that may change from line to line.

\medskip
\textbf{Fixing the parameters.} 
By the definition of $m^-$~\eqref{e:m-} with $\beta=\alpha^+$, and since $\gamma > 0$ is sufficiently small, we may fix some $\delta> 0$ sufficiently small such that (recall $\mu_2 = \mu|_{B(\delta)^c}$)
\begin{equation}
    \label{e:mixing1}
 -\log \P \Big[ \inf_{B(T)} f_{\mu_2} \ge - \sqrt{2 m^-( d - \alpha^+)  \log T} \Big]  \ge  T^{\alpha^+ + \gamma  + o(1)}   .
\end{equation}
Then we choose $\zeta>0$ sufficiently small so that 
\begin{align}
\frac{\ell + 2\gamma}{\sqrt{1 - 3\zeta}} &<  \sqrt{ 2m^-(d-\alpha^+)}, \quad \text{and}   \label{e:eps1}\\
    \label{e:eps2}
 \alpha^+ + 3\zeta &<   (1 -3\zeta) ( \alpha^+ + \gamma).
\end{align}
Hence, we obtain from \eqref{e:mixing1} and \eqref{e:eps1} that for all sufficiently small $\delta$, we have
\begin{equation}
    \label{e:mixing}
 -\log \P \Big[ \inf_{x \in B(T)} f_{\mu_2}(x) \ge  \frac{ - (\ell+2\gamma)}{\sqrt{1 -3\zeta}}  \sqrt{ \log T} \Big]  \ge  T^{\alpha^+ + \gamma  + o(1)}   .
\end{equation}
Next we set $\eps\in (0,\zeta/2) $ sufficiently small to satisfy 
\begin{equation}
    \label{e:reg2}
   \frac{ \mu[ B( 1 / T^{1-\eps/2} )] }{  \mu[B(1/T^{1+\eps})]  }  \le T^{\zeta}, 
   \end{equation} 
   eventually, as $T \to \infty$. This is possible since $\mu$ is regular in the sense of \eqref{e:rss}. Finally we set
$L=T^{1-2\eps-2\zeta}$, so that $L \in ( T^{1-3\zeta} , T^{1-\eps})$.

\medskip
\textbf{Bounding $\P(A_1)$.} By Lemma \ref{l:infcap} and the monotonicity in Claim \ref{c:mon} we have
\begin{equation}
\label{e:a1}
  -\log \P[A_1]   \ge  \frac{1}{2} \ell^2 (\log T)   \capa_{f_{\mu_1} }(\widehat{B}(T) ) \ge \frac{1}{2} \ell^2 (\log T)  \capa_f(B(T-2T^{1-\eps})) .
\end{equation}

\medskip
\textbf{Bounding $\P(A_2)$.} 
Observe that we can find a collection $\mathcal{B}$ of $c_d (T/L)^d$ translated copies of the ball $B(L/c_d)$ such that every $B_x(L)$, $x \in \widetilde B(T) \subset B(T)$, contains at least one ball in $\mathcal{B}$. Using a union bound and stationarity,  we obtain
\begin{align*}
 -\log \P[A_2] & \ge - \log (c_d (T/L)^d)  -\log \P \left[  \inf_{ B(L/c_d)} f_{\mu_2} \ge -(\ell+\gamma) \sqrt{ \log T}  \right] \\
 & =   -\log c_d - d(2\eps+2\zeta)\log (T) - \log \P \left[  \inf_{ B(L/c_d)} f_{\mu_2} \ge \frac{ - (\ell+\gamma) }{ \sqrt{1 - 2\eps-2\zeta} }\sqrt{ \log L }  \right].
 \end{align*}

From \eqref{e:mixing}  we thus obtain for all sufficiently small $\delta>0$, 
 \begin{align}\label{e:a3}
 -\log \P[A_2]  & \ge  -  d(2\eps+2\zeta)\log (T)+  L^{\alpha^+ + \gamma + o(1)}  \notag \\ 
  & \ge   -  d(3\zeta)\log (T)   +  T^{(\alpha^+ + \gamma)(1-3\zeta) + o(1) } \notag
  \\ & \ge T^{\alpha^++3\zeta + o(1) }  \ge \capa_f(B(T)) \,T^{3\zeta+o(1)},  \end{align}  
where the third inequality uses \eqref{e:eps2} and the fourth uses Corollary \ref{c:capbounds}.

\medskip
\textbf{Bounding $\P(A_1')$.}
Observe that 
\[A_1' \subset \Big\{ \sup_{k\in[d]}\sup_{\widetilde B(T)}c_d|\partial^{e_k}f_{\mu_1}|\ge \gamma L^{-1}\sqrt{\log T}\Big\},\]
where $e_k$ are cardinal direction unit vectors. Covering $\widetilde{B}(T)$ by a set of $c_d T^{\eps d}$ balls of radius $T^{1-\eps d}$, taking union bound over balls of radius $T^{1-\eps}$, and using stationarity, we obtain
\begin{equation}\label{eq: PA2a}
 - \log \P[A_1']    \ge - c_d - \eps d\log T - \log \P \Big[ \sup_{k\in [d]} \sup_{ B(T^{1-\eps})} c_d\left|\partial^{e_k} f_{\mu_1}  \right|  \ge  \gamma L^{-1}  \sqrt{\log T}  \Big].
 \end{equation}
To control this probability, let $1 \le k\le d$. By Lemma \ref{l:derbound} we have for all sufficiently large $T$,
\begin{align}
 \sqrt{\textrm{Var}[ \partial^{e_k} f_{\mu_1}(0) ]} &\le  T^{-(1-\eps) }  (\log T)^{c}  \sqrt{ \mu[ B( 1/ T^{1-\eps/2} )]},   \notag\\
\E \Big[ \sup_{x \in B(T^{1-\eps})} \partial^{e_k}  f_{\mu_1}(x) \Big] &\le T^{-(1-\eps)}(\log T)^c \sqrt{ \mu[ B( 1/ T^{1-\eps/2} )] },\notag
\end{align}
where $c > 0$ is a constant. 
Recalling our choice $L=T^{1-2\eps-2\zeta}$, we see that both these bounds are asymptotically smaller than $L^{-1} \sqrt{\log T}$ as $T \to \infty$. By the Borell-TIS inequality, we thus have
\begin{align*}  
  - \log \P &\Big[ \sup_{ B(T^{1-\eps})} c_d| \partial^{e_k}  \nabla f_{\mu_1} | \ge  \gamma  L^{-1}  \sqrt{\log T}  \Big]  \notag\\& \ge \frac{\left(\gamma  L^{-1}  \sqrt{\log T} - c_d\E \Big[ \sup_{ B(T^{1-\eps})} \partial^{e_k}  f_{\mu_1} \Big]\right)^2}{2c_d^2\textrm{Var}[ \partial^{e_k} f_{\mu_1}(0) ]} \ge \frac{ T^{ 2\zeta +  o(1)}    }{ \mu[ B( 1/T^{1-\eps/2} )] }  .
\end{align*}
Using \eqref{e:reg2}, 
taking a union bound over $k$, and plugging this into the dominant term of \eqref{eq: PA2a} we obtain
\begin{align}
     - \log \P[A_1']  &\ge   \frac{ T^{\zeta + o(1)}  }{  \mu[B(1/T^{1+\eps})]  }  \ge  T^{\zeta + o(1)}  \,\capa_f(B(T)  ),\label{e:a2} 
\end{align}
where the second inequality relates $\mu[B(1/T^{1+\eps})]$ and $\capa_f(B(T))$ via Lemma~\ref{l:capbounds}.

\medskip
\textbf{Concluding the proof.}  
Gathering the estimates \eqref{e:a1}, \eqref{e:a3} and \eqref{e:a2}, we conclude that as $T \to \infty$, eventually
\begin{align*} 
& - \log \P \big[ \per(B(T))   \big]  \\
& \qquad \ge  \min\big\{\ell^2(\log T) \capa_f(B(T-2T^{1-\eps})),
T^{3\zeta+o(1)} \capa_f(B(T))  \big\}   
\\& \qquad =
(m^-(d-\alpha^+) -  3\gamma)(\log T) \capa_f(B(T-2T^{1-\eps})) .
\end{align*}
Recalling that $\ell=\sqrt{2m^{-}(d-\alpha^+)}-3\gamma$, \eqref{e:bound2} follows by adjusting the constants $\gamma, \eps > 0$.
\qed

\subsection{Adapting the argument to discrete fields}
\label{s:dc}
In the case of discrete fields we replace the spectral truncation functions $\phi_{T^\delta}$ and $\phi_{T^{1-\eps}}$ with their respective periodisations $\tilde{\phi}_{T^\delta}$ and $\tilde{\phi}_{T^{1-\eps}}$. Then the proof proceeds as in the continuous case, with the exception that the event $A_1'$ in Claim \ref{cl:events containment} must be redefined. Specifically, recall that an SGF on $\Z^d$ has a \textit{canonical extension} to a smooth SGF on $\R^d$, induced via the inclusion of its spectral measure from the torus to $\R^d$. Let $\tilde{f}_{\mu_1}$ denote the canonical extension of $f_{\mu_1}$. Then we redefine the event $A_1'$ with respect to $\tilde{f}_{\mu_1}$, and the remainder of the proof adapts immediately. In particular the conclusion of  Claim \ref{cl:events containment} is still valid, and we can apply Lemma \ref{l:derbound} to $\tilde{f}_{\mu_1}$ (see Remark~\ref{r:dc}).

 %%%%%%%%%%%
 %%%%%%%%%%%

%%%%%%
\medskip
\section{Entropic Repulsion}
\label{s:er}
In this section we prove Theorem \ref{t:er} regarding entropic repulsion. 

\subsection{Reduction to a bound on persistence with atypical shape}

Our proof relies on the following enhancement of the upper bound in Theorem~\ref{t:main}, which controls the probability of persistence with an atypical shape. Recall that $h_T = h_f(B(T))$ is the equilibrium potential of $B(T)$.

\begin{proposition}[Persistence with atypical shape]
\label{p:er2}
 Assume the conditions of Theorem \ref{t:er} hold. Then for every $\eta \in \mathcal{T}$ and $\gamma,\Delta > 0$, as $T \to \infty$, eventually
 \begin{align}
    \nonumber  &  -\log \P \Big(\per(B(T)) , \Big| \Big\langle \tfrac{f}{\sqrt{2m(d-\alpha) \log T}}, \eta_T \Big\rangle -  \langle h_T, \eta_T \rangle \Big| \ge   \Delta  \Big)  \\
 \nonumber   & \qquad \qquad \ge  (m(d-\alpha) - \gamma) \Big( \capa_f  (B(T)) + \min\Big\{ \frac{ \Delta^2-\gamma}{ E[\eta_T]}, \capa_f(B(T)) \Big\} \Big) \log T .
       \end{align} 
\end{proposition}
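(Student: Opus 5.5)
We follow the proof of the upper bound in Section~\ref{s:ub}, adding the linear constraint on $\langle f,\eta_T\rangle$ at each step.

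\textbf{Smoothing.} Fix $\eta\in\mathcal{T}$ and write $\lambda_T=\sqrt{2m(d-\alpha)\log T}$. We first apply the second statement of Lemma~\ref{l:trunc} with $s=T^{1-\eps}$ and decomposition $\mu_1+\mu_2$ as in Section~\ref{s:ub}. This replaces $f$ by $f_{\mu_1}+f_{\mu_2}$ on $\widetilde B(T)$, at the cost of the error term $\P[\sup_{u\in B(s/2)}|\langle f,\eta_T-(\eta_T)_u\rangle|\ge \gamma\lambda_T]$. Since $\eta=\xi\id_D dx$ with $\xi$ Lipschitz and $\partial D$ piecewise smooth, we have $\|\eta_T-(\eta_T)_u\|_{L^2}^2\lesssim T^{-d}\,|u|/T\le T^{-d-\eps}$. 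Lemma~\ref{l:enbound} then gives variance $O(T^{-\eps}/\capa_f(B(T)))$. Combined with a chaining bound over $u$ (derivatives in $u$ are controlled the same way), Borell--TIS makes this error $\exp(-T^{\eps+o(1)}\capa_f(B(T))\log T)$, which is negligible.

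\textbf{Splitting the linear functional.} Write $\langle f_{\mu_1}+f_{\mu_2},\eta_T\rangle=\langle f_{\mu_1},\eta_T\rangle+\langle f_{\mu_2},\eta_T\rangle$. By Corollary~\ref{c:enbound}(2), the term $\langle f_{\mu_2},\eta_T\rangle$ has variance $o(1/\capa_f(B(T)))$. Hence $\P[|\langle f_{\mu_2},\eta_T\rangle|\ge\gamma\lambda_T]$ is at most $\exp(-\omega(1)\,\capa_f(B(T))\log T)$, and we may transfer the deviation event entirely to $f_{\mu_1}$ with $\Delta$ replaced by $\Delta-\gamma$. Now use the decomposition of Claim~\ref{cl:events containment}. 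The events $A_2$ and $A_1'$ already cost $T^{\zeta+o(1)}\capa_f(B(T))$, which exceeds the right-hand side. It remains to bound
\[
\P\big[\inf_{\widehat B(T)}f_{\mu_1}\ge \ell\sqrt{\log T},\ |\langle f_{\mu_1},\eta_T\rangle-\lambda_T\langle h_T,\eta_T\rangle|\ge(\Delta-\gamma)\lambda_T\big],
\]
where $\ell\approx\lambda_T/\sqrt{\log T}$ up to $\gamma$.

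\textbf{The Gaussian two-dimensional estimate (main step).} Let $\nu$ be the equilibrium measure of $\widehat B(T)$ for $\mu_1$. On persistence, $X:=\langle f_{\mu_1},\nu\rangle\ge\ell\sqrt{\log T}$. With $Y:=\langle f_{\mu_1},\eta_T\rangle$, the pair $(X,Y)$ is centred Gaussian with $\var X=1/C$, where $C=\capa_{\mu_1}(\widehat B(T))$, and $\mathrm{Cov}(X,Y)=E_1[\nu,\eta_T]=\langle h^{(1)},\eta_T\rangle/C$ by \eqref{e:heta}. Write $Y=\langle h^{(1)},\eta_T\rangle X+W$ with $W$ independent of $X$ and $\var W\le E_1[\eta_T]\le E[\eta_T]$. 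Then the event forces either $X\ge(1+\gamma')\ell\sqrt{\log T}$, which costs $\frac12(1+\gamma')^2\ell^2C\log T\ge(1+\gamma'')m(d-\alpha)\capa\log T$, or $|W|\ge(\Delta-O(\gamma))\lambda_T$, which costs $(\Delta-O(\gamma))^2\lambda_T^2/(2E[\eta_T])$ in addition, by independence. Both cases rely on $|\langle h^{(1)},\eta_T\rangle-\langle h_T,\eta_T\rangle|\to0$ and on $X$ being near its minimum. Summing the exponents gives the stated bound, with the $\min$ arising from the first alternative once $\gamma'$ is large. Here $E[\eta_T]=O(1/\capa)$ by Corollary~\ref{c:enbound}(1), so the first alternative is only needed to cap the total cost.

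\textbf{Main obstacle.} The hardest step is showing that the potential $h^{(1)}$ of the smoothed, truncated field on the slightly smaller ball $\widehat B(T)$ satisfies $\langle h^{(1)},\eta_T\rangle=\langle h_T,\eta_T\rangle+o(1)$. This is the stability of the equilibrium potential, Corollary~\ref{c:regh}, and it is where \eqref{e:doub} and \eqref{e:ds} enter. My first attempt would use the strict convexity from Claim~\ref{c:convex}. Since $\|h^{(1)}\|^2$ and $\|h_T\|^2$ agree to first order by \eqref{e:capcon} and Lemma~\ref{l:captrunc}, the identity \eqref{e:convex} gives $E[\nu_T-\nu']=o(1/\capa)$. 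Cauchy--Schwarz against $E[\eta_T]=O(1/\capa)$ then yields $|\langle h-h',\eta_T\rangle|=o(1)$.
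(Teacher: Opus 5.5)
Your overall architecture is the paper's. You smooth via the second part of Lemma~\ref{l:trunc}, discard $\langle f_{\mu_2},\eta_T\rangle$ via Corollary~\ref{c:enbound}(2), and reuse $A_2,A_1'$ from Claim~\ref{cl:events containment}. Chaining in place of the paper's mesh argument is fine.

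You depart from the paper in the key Gaussian estimate, which the paper takes from Lemma~\ref{l:shape}/Corollary~\ref{l:infcap2}. There your single-threshold case split does not give the stated bound. Write $a=\langle h^{(1)},\eta_T\rangle$. This is of order one: for $\eta=\id_{B(1)}dx$ one has $\langle h_T,\eta_T\rangle\ge\vol(B(1))$, since $h_T\ge 1$ on $B(T)$.
\begin{itemize}
\item On your second alternative, $\ell\sqrt{\log T}\le X<(1+\gamma')\ell\sqrt{\log T}$, the term $aX$ moves by up to $|a|\gamma'\ell\sqrt{\log T}$. So the event only forces $|W|\ge(\Delta-c|a|\gamma'-O(\gamma))\lambda_T$.
\item Keeping the loss $O(\gamma)$ therefore needs $\gamma'=O(\gamma)$. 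But then the first alternative costs only $(1+O(\gamma))\,m(d-\alpha)\capa_f(B(T))\log T$.
\item The target exceeds $m(d-\alpha)\capa_f(B(T))\log T$ by about $m(d-\alpha)\min\{\Delta^2/E[\eta_T],\capa_f(B(T))\}\log T$. Since $E[\eta_T]=O(1/\capa_f(B(T)))$, this excess is a $\gamma$-independent multiple of $\capa_f(B(T))\log T$.
\item Taking $\gamma'$ large instead, as your phrase ``once $\gamma'$ is large'' suggests, makes the loss $c|a|\gamma'$ in $\Delta$ non-negligible as $\gamma\to0$.
\end{itemize}
Optimising over one threshold yields only $m(d-\alpha)(1+c(\Delta))\capa_f(B(T))\log T$ for some $c(\Delta)>0$ strictly smaller than required. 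That still implies Theorem~\ref{t:er}, but not Proposition~\ref{p:er2} as stated.

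The regression idea can be repaired by not cutting at a single level. Partition $X$ into $O(1/\gamma)$ slabs of width $\gamma\sqrt{\log T}$, and use the independence of $W$ and $X$ on each slab. It then suffices to check $\min_{x\ge\ell}\{Cx^2+(\ell\Delta-a(x-\ell))_+^2/E_{\mu_1}[\eta_T]\}\ge\ell^2(C+\min\{\Delta^2/E_{\mu_1}[\eta_T],C\})$, which follows from the Cauchy--Schwarz bound $a^2\le C\,E_{\mu_1}[\eta_T]$. Alternatively, apply Lemma~\ref{l:shape} to $f_{\mu_1}$; it does this optimisation in one stroke by testing the event against a single optimised combination of $\langle f,\nu\rangle$ and $\langle f,\eta\rangle$.

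Your stability paragraph also needs tightening, though it is fixable.
\begin{itemize}
\item $h^{(1)}\in H_{\mu_1}$ and $h_T\in H_\mu$ live in different spaces, so you must work in one energy. Apply \eqref{e:convex} inside $E_{\mu_1}$ on $\mathcal P(B(T))$.
\item For the midpoint bound you need $\capa_{\mu_1}(B(T))\le(1+o(1))\capa_f(B(T))$. Lemma~\ref{l:captrunc} gives the wrong direction here, since $\mu_1\le\phi_s^2\mu$.
\item To get this upper bound, write $h_{T+s}=\mathcal F[g\mu]$ and keep only $\mathcal F[g\phi_s\mu|_{B(\delta)}]\in H_{\mu_1}$, which has norm at most $\capa_f(B(T+s))$. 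The discarded part has sup norm at most $c\sqrt{\|\mu\|\capa_f(B(T+s))}\,e^{-(s\delta)^v}$. Then use \eqref{e:capcon}.
\item Finally, pass from $\capa\, E_{\mu_1}[\nu_T,\eta_T]$ to $\langle h_T,\eta_T\rangle$ by Cauchy--Schwarz, using $E_{\mu_2}[\eta_T]+E_{\mu_3}[\eta_T]=o(1/\capa_f(B(T)))$.
\end{itemize}
You are right that this step is genuinely needed: Corollary~\ref{c:regh} only compares radii for the same spectral measure.
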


\begin{proof}[Proof of Theorem \ref{t:er} given Proposition \ref{p:er2}]
By the first statement of Corollary \ref{c:enbound} there exists a $c > 0$ such that $E[\eta_T] \le c / \capa_f(B(T))$ for all $T \ge 1$. Now, given  $\Delta > 0$, choose $\gamma>0$ sufficiently small so that 
\[ \big( m(d-\alpha) - \gamma \big) \big( 1 + \min\{ (\Delta^2-\gamma)/c , 1 \} \big) > m(d-\alpha) + \gamma .\]
Combining Proposition \ref{p:er2} with the lower bound in Theorem \ref{t:main} gives that
\[ \P \Big[ \Big|\Big\langle \tfrac{f}{\sqrt{2m(d-\alpha) \log T}}, \eta_T \Big\rangle  -   \langle h_T, \eta_T \rangle \Big| \ge \Delta    \: \Big| \: \per(B(T))  \Big]  \to 0 ,\]
from which we deduce Theorem \ref{t:er} by taking $\Delta \to 0$.
\end{proof}

\subsection{Auxiliary results}

The proof of Proposition~\ref{p:er2} is based on two auxiliary results, whose proofs are given in Section~\ref{s:stab}. The first is an extension of the bound in Lemma~\ref{l:infcap} by additionally demanding that macroscopic averages of $f$ deviate from the equilibrium potential. This is inspired by \cite[Lemma 4.2]{nit18} and \cite[Proposition 3.3]{cn20}, which proved similar results for the GFF. Recall that $h_\nu$ is the potential associated with a signed measure $\nu \in \mathcal{M}_{>0}$.

\begin{lemma}[Persistence and deviation from a given potential]\label{l:shape}
Let $D$ be a compact domain. Then for every $\ell, \Delta > 0$, $\nu \in \mathcal{P}(D) \cap \mathcal{M}_{>0} \cap \mathcal{M}_{<\infty}$, and $\eta \in \mathcal{M}_{<\infty}$,
\begin{equation}
    \label{e:eraux1}
- \log \P \Big[  \inf_{x \in D} f(x) \ge \ell , | \langle f,\eta \rangle - \ell \langle h_\nu, \eta \rangle| \ge \ell \Delta  \Big]   \ge  \frac{\ell^2}{2} \Big( \frac{1}{E[\nu]}   + \min \Big\{ \frac{\Delta^2}{E[\eta]}, \frac{1}{E[\nu] }  \Big\} \Big)  .   
\end{equation}
\end{lemma}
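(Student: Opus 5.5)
The plan is to reduce the event to a two-dimensional Gaussian tail bound, in the spirit of the lower bound in Lemma \ref{l:infcap}. On the persistence event $\{\inf_D f \ge \ell\}$ we have, since $\nu \in \mathcal{P}(D)$, that $X := \langle f, \nu\rangle \ge \ell$. Setting $Y := \langle f, \eta\rangle$, the event in \eqref{e:eraux1} is then contained in
\[ \{ X \ge \ell \} \cap \{ |Y - \ell \langle h_\nu,\eta\rangle| \ge \ell\Delta \}. \]
The pair $(X,Y)$ is a centred Gaussian vector with $\var(X) = E[\nu]$, $\var(Y) = E[\eta]$ and $\mathrm{Cov}(X,Y) = E[\nu,\eta]$. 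By \eqref{e:heta}, $\ell\langle h_\nu,\eta\rangle = \ell E[\nu,\eta]/E[\nu]$. This is exactly the regression $\E[Y \mid X=\ell]$.

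Next, decompose $Y = \frac{E[\nu,\eta]}{E[\nu]} X + W$, where $W$ is independent of $X$ and $\var(W) = E[\eta] - E[\nu,\eta]^2/E[\nu] \le E[\eta]$. Write $X = \ell + s$ with $s \ge 0$. Then
\[ Y - \ell\langle h_\nu,\eta\rangle = W + \langle h_\nu,\eta\rangle s , \]
and the deviation requirement becomes $|W + \langle h_\nu,\eta\rangle s| \ge \ell\Delta$.

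The cleanest route is to minimise the Gaussian rate function $\frac{x^2}{2E[\nu]} + \frac{w^2}{2\var(W)}$ over the closed region
\[ \{x \ge \ell,\ |w + c(x-\ell)| \ge \ell\Delta\}, \qquad c = E[\nu,\eta]/E[\nu]. \]
Doing this by hand has an awkward coupling through $c$, so instead I would use a cruder two-case split that directly produces the $\min$:
\begin{itemize}
\item \emph{Case (a):} $X \ge 2\ell$. This has probability at most $e^{-2\ell^2/E[\nu]}$ by Claim \ref{c:gtb}, which equals $e^{-\frac{\ell^2}{2}(\frac{1}{E[\nu]}+\frac{1}{E[\nu]})\cdot 2}$ and is even better than required.
\item \emph{Case (b):} $X \in [\ell, 2\ell)$ and $|Y - \ell\langle h_\nu,\eta\rangle| \ge \ell\Delta$.
\end{itemize}
Case (b) is still coupled, so the better approach is the exact joint bound. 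The event implies that the linear functional $Z_t := \langle f, \nu + t\,\sigma(\eta - c\nu)\rangle$ is large, for a suitable sign $\sigma$ and $t \ge 0$. We then optimise over $t$ and take a union bound over the two signs, which costs only a factor $2$; that factor should be absorbed, or avoided via the convexity argument below.

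Concretely: on the event with sign $\sigma = +$, set $\eta' := \eta - c\nu$. This satisfies $E[\nu,\eta'] = 0$, $E[\eta'] \le E[\eta]$, and $\langle f,\eta'\rangle \ge \ell\Delta - c s$. The $-cs$ correction is the main obstacle, since it depends on the unknown overshoot $s$. To handle it, I would use the half-space bound: for any $a,b \ge 0$,
\[ \P\big[aX + b\langle f,\eta\rangle \ge a\ell + b(\ell c + \ell\Delta)\big] \le \exp\Big(-\frac{(a\ell + b\ell c + b\ell\Delta)^2}{2E[a\nu + b\eta]}\Big), \]
and the event (for the given sign) lies in this half-space only when $b$ is small relative to $a$. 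Optimising over $(a,b)$ in the admissible cone gives the rate $\frac{\ell^2}{2}\big(\frac{1}{E[\nu]} + \frac{\Delta^2}{\var(W)}\big)$ when the optimiser is interior. The constraint $b/a \le$ something is what produces the truncation at $\frac{1}{E[\nu]}$ in the $\min$.

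To get the statement without the factor $2$, I expect one must instead directly compute the Gaussian measure of the non-convex set $\{x\ge\ell,\ |w+c(x-\ell)|\ge \ell\Delta\}$. This can be done via the standard bound $\P[G\in A]\le \exp(-\inf_A I)$, which holds for sets whose complement is convex and contains the origin, or by treating each sign separately; in the latter case the constant is harmless for the application in Proposition \ref{p:er2}. The main technical step is verifying that the minimum of the rate function on the region is at least $\frac{\ell^2}{2}\big(\frac{1}{E[\nu]} + \min\{\frac{\Delta^2}{E[\eta]}, \frac{1}{E[\nu]}\}\big)$, using $\var(W) \le E[\eta]$.
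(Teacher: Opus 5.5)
Your $Z_t=\langle f,\nu+t\sigma(\eta-c\nu)\rangle$ is the right object. It is in fact the functional the paper uses: for each sign the paper bounds the event by $\{\langle f,a\nu-\Delta^{-1}(\eta-c\nu)\rangle\ge\ell(1+a)\}$, which is your $Z_t$ with $t=1/(a\Delta)$, up to a positive factor. The gap is that the proposal stops exactly where the proof begins. The optimisation is asserted ``when the optimiser is interior'', the $\min$ is attributed to an unspecified constraint, and the last paragraph leaves the key estimate as an expectation.

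What you do say about the constraint is also off. For the half-space $\{aX+bY\ge a\ell+b\ell(c+\Delta)\}$ as you wrote it, containment holds for \emph{every} $a,b\ge0$. The real restriction shows up in the orthogonalised form. On the $\sigma$-event, $Z_t=X+t\sigma W\ge \ell(1+t\Delta)+s(1-t\sigma c)$, so you need $t\sigma c\le 1$. With the exact $\var(W)$, the interior maximiser $t=\Delta E[\nu]/\var(W)$ violates this whenever $\sigma c\,\Delta E[\nu]>\var(W)$. That is not the condition $\Delta^2>E[\eta]/E[\nu]$ at which the $\min$ switches. For example, take $\eta=\kappa\nu$ with $\kappa>0$. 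The upward case is never interior, and your formula would give rate $+\infty$, whereas the event is $\{X\ge \ell(1+\Delta/\kappa)\}$, which has positive probability. Yet the $\min$ equals $\Delta^2/E[\eta]$ for all $\Delta\le\kappa$. So the constraint does not ``produce the truncation at $1/E[\nu]$''. The boundary case $t=1/(\sigma c)$, which reduces to a one-dimensional bound on $\sigma Y$, would need its own computation, and you have not done it.

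The paper avoids this by replacing $\var(W)$ with its upper bound $E[\eta]$ \emph{before} optimising. The maximiser is then $t=\Delta E[\nu]/E[\eta]$, which gives exactly the exponent $\frac{\ell^2}{2}\bigl(\frac{1}{E[\nu]}+\frac{\Delta^2}{E[\eta]}\bigr)$. By Cauchy--Schwarz, $|c|\le\sqrt{E[\eta]/E[\nu]}$, so $t\sigma c\le 1$ holds whenever $\Delta^2\le E[\eta]/E[\nu]$. For larger $\Delta$ the event shrinks while the right-hand side of \eqref{e:eraux1} stays constant, so one reduces to $\Delta^2=E[\eta]/E[\nu]$. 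That reduction is the source of the $\min$.

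There is also no factor $2$ to lose: Claim~\ref{c:gtb} gives $\frac12 e^{-x^2/2}$ for each sign, and the two halves sum to the stated bound. Conceding a constant would prove something weaker than the lemma. Finally, drop the bound $\P[G\in A]\le e^{-\inf_A I}$ for sets with convex complement containing the origin. It is false in general (e.g.\ for the complement of a ball in $\R^n$, $n\ge 3$). It is also inapplicable here, since the complement of $\{x\ge\ell,\ |w+c(x-\ell)|\ge\ell\Delta\}$ is not convex.
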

Applying Lemma~\ref{l:shape} with $\nu$ an equilibrium measure for $D$, and recalling that $h_\nu = h_{f}(D)$ and $E[\nu] = 1/\capa_f(D)$, we arrive at the following:

\begin{corollary}[Persistence and deviation from the equilibrium potential]\label{l:infcap2}
Let $D$ be a compact domain such that $\capa_f(D)  \in (0,\infty)$. Then for every $\ell, \Delta > 0$ and $\eta \in \mathcal{M}_{<\infty}$,
\[ - \log \P \Big[  \inf_{x \in D} f(x) \ge \ell , | \langle f,\eta \rangle - \ell \langle h_f(D), \eta \rangle| \ge \ell \Delta  \Big]   \ge  \frac{\ell^2}{2} \Big( \capa_f(D)   + \min \Big\{ \frac{\Delta^2}{E[\eta]}, \capa_f(D) \Big\} \Big)  .   \]
\end{corollary}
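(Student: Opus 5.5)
The corollary is a direct specialisation of Lemma~\ref{l:shape}, which we take as given (its proof appears in Section~\ref{s:stab}). The plan is to choose $\nu = \nu_D$, an equilibrium measure for $D$ as in \eqref{e:cap1}, verify that it satisfies the hypotheses of Lemma~\ref{l:shape}, and then rewrite each quantity in the conclusion \eqref{e:eraux1} in terms of $\capa_f(D)$ and $h_f(D)$.

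\textbf{Checking the hypotheses.} By definition $\nu_D \in \mathcal{P}(D)$. The minimal energy is $E[\nu_D] = 1/\capa_f(D)$. The assumption $\capa_f(D) \in (0,\infty)$ therefore gives $E[\nu_D] \in (0,\infty)$, so $\nu_D \in \mathcal{M}_{>0} \cap \mathcal{M}_{<\infty}$.

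\textbf{Identifying the potential.} The normalised potential of $\nu_D$ is
\[
h_{\nu_D} = \frac{K * \nu_D}{E[\nu_D]} = \capa_f(D)\,(K * \nu_D),
\]
which is exactly the equilibrium potential $h_f(D)$ defined in \eqref{e:cap1}. By Proposition~\ref{p:basiccap} this function does not depend on which equilibrium measure is chosen, so the event inside the probability matches the one in the corollary.

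\textbf{Substituting.} Putting $h_{\nu_D} = h_f(D)$ and $1/E[\nu_D] = \capa_f(D)$ into \eqref{e:eraux1} yields the claimed inequality verbatim.

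\textbf{Degenerate case $E[\eta] = 0$.} This is the only point needing care, since the right-hand side should then be read with $\Delta^2/E[\eta] = \infty$, i.e.\ the minimum equals $\capa_f(D)$. It also follows directly:
\begin{itemize}
\item $\langle f,\eta\rangle$ has variance $E[\eta] = 0$, so $\langle f,\eta\rangle = 0$ almost surely.
\item By \eqref{e:heta} and Cauchy--Schwarz for the inner product $E$, $|\langle h_f(D),\eta\rangle| = |E[\nu_D,\eta]|/E[\nu_D] = 0$.
\end{itemize}
Hence the event $\{|\langle f,\eta\rangle - \ell\langle h_f(D),\eta\rangle| \ge \ell\Delta\}$ is null, and the left-hand side is $+\infty$. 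Apart from this bookkeeping the argument is a direct substitution, with no real obstacle.
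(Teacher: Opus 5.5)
Your proposal is correct and is exactly the paper's argument: apply Lemma~\ref{l:shape} with $\nu = \nu_D$ an equilibrium measure, using $h_{\nu_D} = h_f(D)$ and $E[\nu_D] = 1/\capa_f(D)$. Your separate handling of $E[\eta]=0$ mirrors the reduction already made at the start of the proof of Lemma~\ref{l:shape}, so it is harmless but not strictly needed.
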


The second auxiliary result is a stability property of the equilibrium measure. This states roughly that if $\nu \in \mathcal{P}(D)$ approximates an equilibrium measure $\nu_D$ for $D$ in the sense that $E[\nu] \approx E[\nu_D] = 1/\capa_f(D)$, then $h_\nu$ approximates the equilibrium potential $h_f(D)$ in the sense that $\langle h_\nu, \eta \rangle \approx \langle h_f(D),\eta \rangle$ for every $\eta \in \mathcal{M}$. More precisely, we prove the following:

\begin{lemma}[Stability of the equilibrium potential]
\label{l:stab}
Let $D$ be a compact domain. Then for all $\nu \in \mathcal{P}(D)$ such that $E[\nu] \capa_f(D) \le 2$ and all $\eta \in \mathcal{M}$,
\[ |  \langle h_f(D) , \eta \rangle - \langle h_\nu , \eta \rangle   | \le 2 \sqrt{ E[\eta] \capa_f(D)(E[\nu] \capa_f(D)-1 ) } . \]
\end{lemma}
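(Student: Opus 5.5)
The plan is a direct Hilbert-space computation in the energy inner product $E[\cdot,\cdot]$. Fix an equilibrium measure $\nu_D$ for $D$, so that $E[\nu_D]=1/\capa_f(D)$ and $h_f(D)=h_{\nu_D}$. Abbreviate $C=\capa_f(D)$ and $x=E[\nu]C$. By \eqref{e:heta} we have
\[ \langle h_f(D),\eta\rangle = \frac{E[\nu_D,\eta]}{E[\nu_D]}, \qquad \langle h_\nu,\eta\rangle=\frac{E[\nu,\eta]}{E[\nu]} . \]
(If $E[\eta]=\infty$ or $E[\nu]=\infty$ the right-hand side of the claimed bound is infinite and there is nothing to prove, so assume both are finite.)

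\textbf{Step 1: the equilibrium inequality.} Since $h_f(D)\ge 1$ on $D$ and $\nu\in\mathcal{P}(D)$, \eqref{e:heta} gives
\[ E[\nu,\nu_D]/E[\nu_D]=\langle h_f(D),\nu\rangle\ge 1, \qquad\text{i.e.}\qquad E[\nu,\nu_D]\ge E[\nu_D]. \]
This has two consequences.
\begin{itemize}
\item By Cauchy--Schwarz, $E[\nu]E[\nu_D]\ge E[\nu,\nu_D]^2\ge E[\nu_D]^2$. Hence $E[\nu]\ge E[\nu_D]>0$, so $h_\nu$ is well defined and $x\ge 1$.
\item Expanding the square,
\[ E[\nu-\nu_D]=E[\nu]-2E[\nu,\nu_D]+E[\nu_D]\le E[\nu]-E[\nu_D]=(x-1)/C . \]
\end{itemize}
This Pythagorean-type estimate is the key step: it says that energy-near-optimality forces $\nu$ to be close to $\nu_D$ in energy norm.

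\textbf{Step 2: split the difference.} Write
\[ \frac{E[\nu,\eta]}{E[\nu]}-\frac{E[\nu_D,\eta]}{E[\nu_D]} = \frac{E[\nu-\nu_D,\eta]}{E[\nu]} + E[\nu_D,\eta]\Big(\frac1{E[\nu]}-\frac1{E[\nu_D]}\Big). \]

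For the first term, Cauchy--Schwarz and Step 1 give
\[ \Big|\frac{E[\nu-\nu_D,\eta]}{E[\nu]}\Big| \le \frac{\sqrt{E[\eta]}\,\sqrt{(x-1)/C}}{x/C} \le \sqrt{E[\eta]\,C\,(x-1)}, \]
using $x\ge1$ in the last step.

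For the second term, $|E[\nu_D,\eta]|\le\sqrt{E[\eta]/C}$ and $\big|1/E[\nu]-C\big| = C(x-1)/x$. So this term is at most
\[ \sqrt{E[\eta]\,C}\;\frac{x-1}{x} \le \sqrt{E[\eta]\,C\,(x-1)}, \]
since $\sqrt{x-1}\le x$ for all $x\ge 1$.

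Adding the two bounds gives the claimed estimate with constant $2$. The hypothesis $E[\nu]\capa_f(D)\le 2$ is not actually needed in this argument.

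\textbf{Main obstacle.} The only non-routine point is Step 1. It relies on the equilibrium potential satisfying $h_f(D)\ge1$ on all of $D$, which holds because $h_f(D)$ is the optimiser in \eqref{e:cap2} (Proposition~\ref{p:basiccap}), rather than only $\nu_D$-almost everywhere. If only an almost-everywhere statement were available, I would instead derive $E[\nu,\nu_D]\ge E[\nu_D]$ from the minimality of $\nu_D$. Concretely, I would use the first variation of $t\mapsto E[(1-t)\nu_D+t\nu]$ at $t=0^+$, via the identity \eqref{e:convex}, which yields $E[\nu_D,\nu-\nu_D]\ge 0$ directly.
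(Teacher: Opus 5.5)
Your argument is correct, and it takes a genuinely different route from the paper's. The paper's proof is probabilistic. It combines Lemma~\ref{l:infcap} with Lemma~\ref{l:shape} to show that for every $\Delta$ with $1/E[\nu]+\Delta^2/E[\eta]>\capa_f(D)$ and every sufficiently large $\ell$, conditionally on $\{\inf_D f\ge \ell\}$, the variable $\ell^{-1}\langle f,\eta\rangle$ lies within $\Delta$ of $\langle h_\nu,\eta\rangle$ with probability greater than $1/2$, and likewise of $\langle h_f(D),\eta\rangle$. A common realisation and the triangle inequality then give a difference below $2\Delta$, after which $\Delta$ is optimised. The hypothesis $E[\nu]\capa_f(D)\le 2$ is needed there only so that the $\min$ in Lemma~\ref{l:shape} does not bind for the chosen $\Delta$.

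You instead work directly with the semi-inner product $E[\cdot,\cdot]$. The inequality $E[\nu,\nu_D]\ge E[\nu_D]$ expresses that $\nu_D$ is the minimal-energy point of the convex set $\mathcal{P}(D)$. It yields the Pythagorean bound $E[\nu-\nu_D]\le E[\nu]-E[\nu_D]$, and Cauchy--Schwarz on the two pieces of your splitting finishes. This is precisely the kind of non-probabilistic proof the authors say they could not locate. It is shorter and purely deterministic: it uses nothing about $K$ beyond positive semi-definiteness, and needs no Gaussian field and no limit $\ell\to\infty$. It also removes the assumption $E[\nu]\capa_f(D)\le 2$. Moreover, with your $x=E[\nu]\capa_f(D)$, it gives the slightly sharper bound $\sqrt{E[\eta]\capa_f(D)}\,\bigl(\sqrt{x-1}+x-1\bigr)/x$.

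Of your two justifications of $E[\nu,\nu_D]\ge E[\nu_D]$, the first-variation argument via \eqref{e:convex} is the more robust. It uses only the minimality of $\nu_D$ over $\mathcal{P}(D)$, not the pointwise bound $h_f(D)\ge 1$ on all of $D$, although the latter is indeed available here from Proposition~\ref{p:basiccap}.

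What the paper's route buys is economy within the paper: it reuses Lemma~\ref{l:shape}, which is needed anyway for Proposition~\ref{p:er2}. It also makes transparent the heuristic that the field conditioned on high-level persistence concentrates around $\ell\, h_f(D)$.
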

Our proof is probabilistic (see Section~\ref{s:stab}); presumably there exists a non-probabilistic proof of this fact, but we were not able to locate it in the literature. 
As a consequence of this stability, we show that the regularity of the capacity implies an analogous property for the equilibrium potential averaged against rescaled test functions $\eta_T := T^{-d} \eta(\cdot/T)$:

\begin{corollary}[Asymptotic stability to change of scale]\label{c:regh}
Suppose that \eqref{e:capcon},~\eqref{e:doub} and ~\eqref{e:ds} all hold. Then for every $\eps > 0$ and $\eta \in \mathcal{M}$ that is compactly supported with bounded density, 
\[  \lim_{T\to\infty} \sup_{s \in [0, T^{1-\eps}] } \big| \langle  h_T, \eta_T \rangle - \langle h_{T - s}, \eta_T \rangle    \big| =0 .  \]
\end{corollary}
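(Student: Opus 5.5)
Fix $\eps>0$, $\eta$ as in the statement, and $s\in[0,T^{1-\eps}]$. The plan is to apply Lemma~\ref{l:stab} with $D=B(T)$, taking as the competitor measure the equilibrium measure of the smaller ball $B(T-s)$, and to test against $\eta_T$. Write $\nu=\nu_{B(T-s)}\in\mathcal{P}(B(T-s))\subset\mathcal{P}(B(T))$. Then
\[ E[\nu]=\frac{1}{\capa_f(B(T-s))}, \qquad h_\nu=h_{T-s}, \]
the latter because the equilibrium potential does not depend on the choice of equilibrium measure. Consequently
\[ E[\nu]\,\capa_f(B(T))=\frac{\capa_f(B(T))}{\capa_f(B(T-s))}\ge 1 \]
by monotonicity (Claim~\ref{c:mon}).

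Next I will show this ratio tends to $1$ uniformly in $s\in[0,T^{1-\eps}]$. Monotonicity gives the bound
\[ 1\le \frac{\capa_f(B(T))}{\capa_f(B(T-s))}\le \frac{\capa_f(B(T))}{\capa_f(B(T-T^{1-\eps}))}. \]
Set $T'=T-T^{1-\eps}$. For large $T$ we have $T'+(T')^{1-\eps/2}\ge T$, since $(T')^{1-\eps/2}\sim T^{1-\eps/2}\gg T^{1-\eps}$. Hence \eqref{e:capcon}, applied at scale $T'$ with exponent $\eps/2$, gives
\[ \capa_f(B(T))\le \capa_f\big(B(T'+(T')^{1-\eps/2})\big)\sim\capa_f(B(T')). \]
So the supremum over $s$ of $E[\nu]\capa_f(B(T))-1$ tends to $0$. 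In particular it is eventually at most $1$, so the hypothesis $E[\nu]\capa_f(D)\le 2$ of Lemma~\ref{l:stab} holds for large $T$.

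Lemma~\ref{l:stab} then gives
\[ \big|\langle h_T,\eta_T\rangle-\langle h_{T-s},\eta_T\rangle\big|\le 2\sqrt{E[\eta_T]\,\capa_f(B(T))\,\big(E[\nu]\capa_f(B(T))-1\big)}. \]
It remains to show $E[\eta_T]\capa_f(B(T))$ is bounded. Here I use Corollary~\ref{c:enbound}(1), which requires \eqref{e:doub} and \eqref{e:ds} and applies because $\eta$ is compactly supported with bounded density, hence $\|\eta\|_{L^2}<\infty$. It gives $E[\eta_T]=O(1/\capa_f(B(T)))$. The right-hand side is therefore $O(1)\cdot o(1)^{1/2}\to0$ uniformly in $s$.

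The main obstacle is the uniformity in $s$, which is handled by the monotonicity reduction to the single worst case $s=T^{1-\eps}$, together with the small exponent shift needed to invoke \eqref{e:capcon}. A secondary point is that $\eta$ may be signed; Lemma~\ref{l:stab} allows arbitrary $\eta\in\mathcal{M}$, and Corollary~\ref{c:enbound} only needs $\|\eta\|_{L^2}<\infty$, so this causes no difficulty. In the discrete case the same argument applies verbatim with balls intersected with $\Z^d$.
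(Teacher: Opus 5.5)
Your proof is correct and follows the paper's argument: you apply Lemma~\ref{l:stab} with $D=B(T)$ and $\nu$ an equilibrium measure of $B(T-s)$, bound $E[\eta_T]\capa_f(B(T))$ via Corollary~\ref{c:enbound}(1), and use \eqref{e:capcon} to send the capacity ratio to $1$. Your reduction, by monotonicity, to the worst case $s=T^{1-\eps}$, followed by applying \eqref{e:capcon} at scale $T'=T-T^{1-\eps}$ with exponent $\eps/2$, spells out how the forward-shift condition \eqref{e:capcon} controls the backward shift $T-s$ uniformly in $s$, which the paper leaves implicit.
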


\begin{proof}
Let $T > 0$ and $s \in [0, T^{1-\eps}]$, and let $\nu$ be an equilibrium measure for $B(T-s)$. By~\eqref{e:capcon} we have $\capa_f(B(T)) \le 2 \capa_f(B(T - s))$ for sufficiently large $T$. Applying Lemma \ref{l:stab} (with $D = B(T)$ and $\eta = \eta_T$) we see that
\begin{align}
\nonumber
& \big| \langle  h_T, \eta_T \rangle - \langle h_{T - s}, \eta_T \rangle    \big| \\
\label{e:regh1} & \qquad \le  2 \sqrt{ E[\eta_T] \capa_f(B(T)) \big(  \capa_f(B(T)) / \capa_f(B(T - s)) - 1 \big) } . 
 \end{align} 
Since we assume \eqref{e:doub}, \eqref{e:ds}, and $\eta$ is compactly supported with bounded density, by the first statement of Corollary \ref{c:enbound} $\E[\eta_T] \capa_f(B(T))$ is bounded over $T \ge 1$. Thus using \eqref{e:capcon} again, the right-hand side of \eqref{e:regh1} tends to zero as $T \to \infty$, uniformly over  $s \in [0, T^{1-\eps}]$.
\end{proof}

\subsection{Proof of Proposition ~\ref{p:er2}: Persistence with atypical shape}
Let $\eta \in \mathcal{T}$ and observe that, by \eqref{e:heta}, the Cauchy-Schwarz inequality, and the first statement of Corollary \ref{c:enbound},  for all $T\ge1 $ we have
\[ \langle h_T, \eta_T \rangle  = \frac{E[\mu_{B(T)},\eta_T]}{E[\mu_{B(T)}]} \le \sqrt{E[\eta_T]\capa_f(B(T))  }\le c_\eta, \]
 for some $c_\eta > 0$. Hence we assume without loss of generality that 
\begin{equation}
\label{e:hbound}
    \text{$|\langle h_T, \eta_T \rangle| \le 1$  for all $T \ge 1$,}
\end{equation} 
as otherwise we may replace $\eta$ by $\eta/c_\eta$ and $\Delta$ by $\Delta/c_\eta$, and adjust $\gamma > 0$ accordingly.

We decompose the spectral measure as in Section~\ref{s:ub}:
\[\mu = \mu_1 + \mu_2 + \mu_3,\] where
\[\mu_1 = \mu|_{B(\delta)} \phi^2_{T^{1-\eps} }, \quad
\mu_2 = \mu|_{B(\delta)^c},  \text{   and }\mu_3 = \mu|_{B(\delta)}  (1 - \phi^2_{ T^{1-\eps} } ),\] 
where  $\delta, \eps>0$ small constants defined as in Section~\ref{s:ub}. Write $f_{\mu_1}$ and $f_{\mu_2}$ for two independent SGFs with respective spectral measures $\mu_1$ and $\mu_2$.

As in Section~\ref{s:ub} we set $\ell = \sqrt{2 m (d-\alpha)} - 3 \gamma > 0$. Writing $\ell_T =  \sqrt{2 m (d-\alpha) \log T} = (\ell+3\gamma)\sqrt{\log T}$ we have, using \eqref{e:hbound},
\[   \big|\langle f, \eta_T \rangle -  \ell_T  \langle h_T, \eta_T \rangle \big| \ge \ell_T \Delta   \implies   \big|\langle f, \eta_T \rangle -  \ell \sqrt{\log T}  \langle h_T, \eta_T \rangle \big| \ge (\ell \Delta-3\gamma)\sqrt{\log T} .  \]
Applying Lemma \ref{l:trunc}, and abbreviating $\widetilde{B}(T)=B(T-T^{1-\eps})$, we then have
 \begin{align} 
\nonumber & \P\left[ f \in \per(B(T)) ,   \big|\langle f, \eta_T \rangle -  \ell_T  \langle h_T, \eta_T \rangle \big| \ge \ell_T \Delta    \right]  \\
\nonumber &  \qquad  \le  \P\left[ f \in \per(B(T)) ,   \langle f, \eta_T \rangle \ge  \ell \sqrt{\log T} \langle h_T, \eta_T \rangle  + (\ell \Delta - 3\gamma) \sqrt{\log T}   \right]  \\
\nonumber & \qquad \qquad + \P\Big[ f \in \per(B(T)) ,  \: \langle f, -\eta_T \rangle \ge  \ell \sqrt{\log T}  \langle h_T,- \eta_T \rangle  + (\ell \Delta - 3\gamma) \sqrt{\log T} \Big]  \\
  & \qquad \le  \P \big[ f_{\mu_1} + f_{\mu_2}   \in \per(\widetilde{B}(T)), \: E \big]    + 2 \P[F]  ,\label{e:erbound1}
    \end{align}
where
\[ E = \Big\{ \big| \langle f_{\mu_1} + f_{\mu_2}  , \eta_T \rangle -    \ell \sqrt{\log T}  \langle h_T, \eta_T \rangle \big| \ge (\ell \Delta - 4\gamma ) \sqrt{\log T}  \Big\}, \]
and, abbreviating $\eta_{T,u} = \eta_T(\cdot-u)$, 
\[ F =  \Big\{ \sup_{u \in B(T^{1-\eps})} \big| \langle f, \eta_T - \eta_{T,u} \rangle \big| \ge \gamma \sqrt{\log T} \Big\} . \]

It suffices to bound \eqref{e:erbound1}. We make the following claim, analogous to Claim \ref{cl:events containment} in Section~\ref{s:ub}:

\begin{claim}
    \label{cl:events containment2}
For every $L$ such that $L \le T - T^{1-\eps}$ we have
\[\{f_{\mu_1} + f_{\mu_2} \in \per(\widetilde{B}(T)), E \}\subset (A_1 \cap A_1'')  \cup A_2 \cup A_1' \cup A_2',\] where, abbreviating $\widehat{B}(T) = B(T - T^{1-\eps} - L)$:
 \begin{itemize}[itemsep=4pt]
    \item $A_1 =  \Big\{ \inf_{\widehat{B}(T) } f_{\mu_1} \ge  \ell \sqrt{\log T}    \Big\}$;
    \item $A_2 = \{ \exists x \in \widetilde{B}(T) \, \text{ s.t.} \, \ \inf_{B_x(L)} f_{\mu_2} \ge -(\ell+\gamma) \sqrt{  \log T} \}$;
   \item $A_1' = \{ \sup_{\widetilde{B}(T)} \| \nabla f_{\mu_1} \|_2  \ge  \gamma  L^{-1}  \sqrt{\log T}  \} $.
     \item $A_1'' =  \Big\{ \big|  \langle f_{\mu_1}  , \eta_T \rangle -   \ell \sqrt{\log T}  \langle h_T, \eta_T \rangle \big| \ge (\ell \Delta - 5\gamma) \sqrt{ \log T}   \Big\}$
   \item $A_2' =  \{  | \langle f_{\mu_2}  , \eta_T \rangle | \ge \gamma  \sqrt{\log T}  \}.$
 \end{itemize} 
\end{claim}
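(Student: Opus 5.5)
The plan is to argue deterministically on the complement, following the proof of Claim~\ref{cl:events containment}. Fix a realisation in $\{f_{\mu_1} + f_{\mu_2} \in \per(\widetilde{B}(T))\} \cap E$. Assume that none of $A_2$, $A_1'$, $A_2'$ occurs. We then show that both $A_1$ and $A_1''$ must occur.

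\textbf{Step 1 ($A_1$).} This is exactly the argument of Claim~\ref{cl:events containment}, since the event $E$ plays no role in it. Suppose $A_1$ fails. Let $x_0 = \arg\min_{\widehat{B}(T)} f_{\mu_1}$, so that $f_{\mu_1}(x_0) < \ell\sqrt{\log T}$. Since $(A_1')^c$ holds, the gradient bound $\|\nabla f_{\mu_1}\|_2 < \gamma L^{-1}\sqrt{\log T}$ is valid on $\widetilde{B}(T)$. We have $B_{x_0}(L) \subset \widetilde{B}(T)$ because $x_0 \in \widehat{B}(T) = B(T - T^{1-\eps} - L)$. The mean value theorem along segments inside this ball then gives $f_{\mu_1} < (\ell + \gamma)\sqrt{\log T}$ on $B_{x_0}(L)$. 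Persistence on $\widetilde{B}(T)$ forces $f_{\mu_2} \ge -f_{\mu_1} > -(\ell+\gamma)\sqrt{\log T}$ on $B_{x_0}(L)$. This means $A_2$ holds at $x = x_0$, a contradiction. Hence $A_1$ occurs.

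\textbf{Step 2 ($A_1''$).} This step uses only $E$ and $(A_2')^c$, via the triangle inequality. On $E$,
\[ \big|\langle f_{\mu_1},\eta_T\rangle - \ell\sqrt{\log T}\langle h_T,\eta_T\rangle\big| \ge (\ell\Delta - 4\gamma)\sqrt{\log T} - |\langle f_{\mu_2},\eta_T\rangle| . \]
On $(A_2')^c$ we have $|\langle f_{\mu_2},\eta_T\rangle| < \gamma\sqrt{\log T}$. The right-hand side is therefore at least $(\ell\Delta - 5\gamma)\sqrt{\log T}$, which is precisely $A_1''$.

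Combining the two steps, the complement of $A_2 \cup A_1' \cup A_2'$ intersected with the event on the left is contained in $A_1 \cap A_1''$, which gives the stated inclusion. No step is a real obstacle. The only points to check are, first, that $x_0 \in \widehat{B}(T)$ really places $B_{x_0}(L)$ inside $\widetilde{B}(T)$, where both persistence and the gradient bound apply; this holds by the definition of $\widehat{B}(T)$ together with $L \le T - T^{1-\eps}$, which makes $\widehat{B}(T)$ well defined. Second, in the discrete case the gradient event refers to the canonical extension of $f_{\mu_1}$, exactly as in Section~\ref{s:dc}, and the argument is otherwise unchanged.
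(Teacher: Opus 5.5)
Your proof is correct and is essentially the paper's argument: the paper notes that persistence on $\widetilde{B}(T)$ lies in $A_1 \cup A_2 \cup A_1'$ by Claim~\ref{cl:events containment}, and that $E \subset A_1'' \cup A_2'$. Your Steps 1 and 2 establish exactly these two inclusions in contrapositive form, and your added details (that $B_{x_0}(L) \subset \widetilde{B}(T)$, the mean value theorem along segments in this convex ball, and the triangle inequality with the $\gamma\sqrt{\log T}$ margin) correctly fill in what the paper leaves implicit.
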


\begin{proof}
The events $A_1$, $A_2$, $A_1'$ are the same as in Claim \ref{cl:events containment}, and so $\{f_{\mu_1} + f_{\mu_2} \in \per(\widetilde{B}(T)) \}\subset A_1 \cup A_2 \cup A_1'$. On the other hand it is clear that $E \subset A_1'' \cup A_2'$, and the claim follows.
\end{proof}

Fixing $L$ as in Section \ref{s:ub}, we are left to show that each of the events $A_1 \cap A''_1$, $A_2$, $A_1'$, $A_2'$, and $F$, have sufficiently small probability.

\medskip
\textbf{Bounding $\P(A_1 \cap A''_1)$.} By Corollary \ref{c:regh}, as $T \to \infty$,
\[ | \langle h_T, \eta_T \rangle - \langle h_{f, \widehat B(T)}, \eta_T \rangle | \to 0 .\]
Hence for all $T$ sufficiently large, $A_1'$ implies
that
\[ \Big\{ \inf_{x \in \widehat B(T) } f_{\mu_1} \ge  \ell \sqrt{\log T}, \:\:  \big|  \langle f_{\mu_1} , \eta_T \rangle -   \ell \sqrt{\log T }  \langle h_{\widehat B(T)}, \eta_T \rangle \big| \ge (\ell \Delta - 5\gamma) \sqrt{ \log T}   \Big\}.   \]
Applying Corollary \ref{l:infcap2}, and using the monotonicity of Claim~\ref{c:mon} and the regularity of the capacity \eqref{e:capcon}, we have
\begin{align}
 \nonumber   &  -\log \P[A_1 \cap A_1'']  \ge  \frac{1}{2} \ell^2 (\log T)  \Big(  \capa_{ \mu_1 }( \widehat B(T) )  +  \min \Big\{ \frac{(\Delta-5\gamma/\ell)^2}{E_{\mu_1}[\eta_T]}, \capa_{ \mu_1 }(\widehat B(T) )   \Big\} \Big)  \\
    \label{e:erbound6}    & \qquad \quad  \ge  \frac{1+o(1)}{2} \ell^2 (\log T)  \Big(  \capa_f(B(T) )  +  \min \Big\{ \frac{(\Delta-5\gamma/\ell)^2}{E[\eta_T]}, \capa_f(B(T) )   \Big\} \Big)  .
\end{align}

\medskip
\textbf{Bounding $\P(A_2)$ and $\P(A_1')$.}  As in Section~\ref{s:ub}, we obtain
\begin{equation}
     \label{e:erbound7} 
 -\log \P[A_2] \ge \capa_f(B(T)) T^{ \zeta + o(1)} \textrm{and} \quad - \log \P[A_1']   \ge  T^{\zeta + o(1)}  \,\capa_f(B(T)  ) T^{\zeta + o(1)}, 
\end{equation}
for some constant $\zeta > 0$.

\medskip
\textbf{Bounding $\P(A_2')$.} 
Notice that $ \langle f_{\mu_2} , \eta_T \rangle $ is a centred Gaussian variable with variance 
\[  \textrm{Var}[ \langle f_{\mu_2} , \eta_T \rangle ] = E_{\mu_2}[ \eta_T ] = o(1/\capa_f(B(T)) )  , \]
where we used the second statement of Corollary \ref{c:enbound}. Hence, by the standard Gaussian tail bound in Claim \ref{c:gtb}, as $T \to \infty$
\begin{equation}
     \label{e:erbound8} 
\frac{ - \log \P[A_2'] }{ (\log T) \capa_f(B(T)) } \to \infty . 
\end{equation}

\medskip
\textbf{Bounding $\P(F)$.} We will prove that, as $T \to \infty$
\begin{equation}
    \label{e:erbound2}
 - \log \P [F] \ge T^{ \min\{ \alpha + \eps, d \} + o(1) } .
 \end{equation}
For this we use a fine mesh approximation as follows. Let $c > 0$ be a constant that depends only on $f$, $\eta$, and $d$ and may change from line to line.  Fix $\zeta \ge d/2 - 1$, and observe that 
\begin{align*}
 \sup_{u \in B(T^{1-\eps})} | \langle f, \eta_T- \eta_{T,u} \rangle|  
&\le  \sup_{u \in B(T^{1-\eps}) \cap T^{-\zeta} \Z^d} | \langle f, \eta_T- \eta_{T,u} \rangle| +  \sup_{\substack{u,v \in B(T^{1-\eps}),\\ |u-v| \le \sqrt{d} T^{-\zeta}}}  | \langle f, \eta_{T,u}- \eta_{T,v} \rangle|.
 \end{align*}
By the union bound,
\begin{align}
    \label{e:erbound3}
\P[F] & \le c T^{\zeta d} \sup_{u \in B(T^{1-\eps}) \cap T^{-\zeta} \Z^d}   \P \big[  | \langle f, \eta_T- \eta_{T,u} \rangle| \ge (\gamma/2)  \sqrt{\log T} \big]  \\
\nonumber & \qquad + \P \Big[ \sup_{\substack{u,v \in B(T^{1-\eps}),\\ |u-v| \le \sqrt{d} T^{-\zeta}}}  | \langle f, \eta_{T,u}- \eta_{T,v} \rangle| \ge (\gamma/2) \sqrt{\log T} \Big]  .  \end{align}
To control the terms in \eqref{e:erbound3} we use the following  modulus of continuity estimates:
\begin{align}
\sup_{u \in B(T^{1-\eps})}  E [   \eta_T- \eta_{T,u}   ]  
\label{e:enbound3}  & \le c T^{-\alpha -  \eps + o(1) },  \\ 
\|\eta_{T,u} - \eta_{T,v}\|_{L^1}  &\le c|u-v|/T.\label{e:decompeta2}
\end{align}

To establish these, recall that $\eta \in \mathcal{T}$ is supported on a domain $D$ with piece-wise smooth boundary, and is Lipschitz on its interior. Decompose 
\begin{equation*}
\eta_{T,u}- \eta_{T} =\eta^1_{T,u}  + \eta^2_{T,u},
\end{equation*}
where $\eta^1_{u,T}$ is supported on $(u+TD) \cap TD$ with $\|\eta^1_{T,u}\|_\infty \le c (|u|/T) T^{-d}  $, and $\eta^2_{T,u}$ is supported on the symmetric difference $(u+TD)\!\ominus\!TD$ with $\|\eta^2_{T,u}\|_\infty \le c T^{-d}$. Observing that, as $D$ is piece-wise smooth, $\vol((u+TD)\!\ominus\!TD)\le c |u| T^{d-1}$, we obtain for $T\ge |u|$
\begin{align*}
    \max_{i=1,2} \| \eta^i_{T,u} \|_{L^1} \le c |u|/T  \quad \text{and} \quad   \max_{i=1,2} \| \eta^i_{T,u} \|^2_{L^2} \le c (|u|/T) T^{-d},
\end{align*}
from which \eqref{e:decompeta2} follows by $\|\eta_{T,u} - \eta_{T,v}\|_{L^1} = \|\eta_{T,u-v}\|_{L^1} \le c|u-v|/T$. Next, applying convexity of the energy (Claim \ref{c:convex}), and Lemma \ref{l:enbound}, as $T \to \infty$
\begin{align}
 \nonumber \sup_{u \in B(T^{1-\eps})}  E [   \eta_T- \eta_{T,u}   ]  & \le   2  \sup_{u \in B(T^{1-\eps})}  E [\eta^1_{T,u}] + E[\eta^2_{T,u}]   \\
 \nonumber &  \le   \sup_{u \in B(T^{1-\eps})} c(|u|/T) / \capa_f(B(T))
\end{align}
from which \eqref{e:enbound3} readily follows by Corollary~\ref{c:capbounds}. 

We may now bound the terms of \eqref{e:erbound3}. Note that $\langle f, \eta_T- \eta_{T,u} \rangle$ is a centred Gaussian variable such that, by \eqref{e:enbound3},
\[   \sup_{u \in B(T^{1-\eps})}  \textrm{Var} [  
 \langle f, \eta_T- \eta_{T,u}  \rangle ]   = \sup_{u \in B(T^{1-\eps})}  E [   \eta_T- \eta_{T,u}   ] \le  T^{-\alpha -\eps + o(1)} . \]
Then the first term in \eqref{e:erbound3} is at most $e^{-  T^{\alpha +  \eps + o(1) } }$ by a standard tail estimate (Claim~\ref{c:gtb}).

Turning to the second term,  applying \eqref{e:decompeta2} we have
\begin{align*}
 \sup_{\substack{u,v \in B(T^{1-\eps}),\\ |u-v| \le \sqrt{d} T^{-\zeta}}}  | \langle f, \eta_{T,u}- \eta_{T,v} \rangle| \le  c  T^{-\zeta-1}
 \sup_{u \in B(T^{1-\eps})} |f| , 
 \end{align*}
 and so the second term  of \eqref{e:erbound3} is at most
 \[ \P \Big[c  T^{-\zeta-1}
 \sup_{u \in B(T^{1-\eps})} |f| \ge (\gamma/2) \sqrt{\log T} \Big]\]
Taking a union bound over translated copies of $B(1)$, and by stationarity, this is bounded from above by 
 \[ c T^d \P \Big[ \sup_{u \in B(1)} |f| \ge (\gamma/(2c)) T^{\zeta+1} \sqrt{\log T}\Big].\]
 Since $f$ is continuous and stationary, by the Borell-TIS inequality, the second term in \eqref{e:erbound3} is therefore at most $ e^{- T^{2(\zeta+1) + o(1) }} \le e^{-T^{d + o(1)}}$ completing the proof of \eqref{e:erbound2}.

 \medskip
\textbf{Concluding the proof.}
Gathering the estimates \eqref{e:erbound6}-- \eqref{e:erbound2}, and using Corollary \ref{c:capbounds} to obtain $\capa_f(B(T)) = T^{\alpha + o(1)}$, we deduce that
\begin{align*}
    & - \log \Big( \P \big[ f_{\mu_1} + f_{\mu_2}    \in  \per(\widetilde{T})  , E    \big] + 2 \P[F] \Big)   \\
    & \qquad \ge   \frac{1+o(1)}{2} \ell^2 (\log T)  \Big(  \capa_f(B(T) )  +   \min\Big\{ \frac{(\Delta-5\gamma/\ell)^2}{E[\eta_T]} ,  \capa_f(B(T) )\Big\} \Big) .
    \end{align*}
    Recalling \eqref{e:erbound1}, and that  $\ell = \sqrt{2 m (d-\alpha)} - 3 \gamma$, this concludes the proof of Proposition \ref{p:er2} by adjusting the constants $\gamma$ and $\Delta$.
\qed

\subsection{Adapting the argument to discrete fields}
\label{s:dc2}
For discrete fields we replace the spectral truncation function $\phi_{T^{1-\eps}}$ with its periodisation $\tilde{\phi}_{T^{1-\eps}}$, and the proof proceeds as in the continuous case, with the exception of: (i) as described in Section \ref{s:dc}, the event $A_1'$ must be redefined; and (ii) the argument to bound the second term of \eqref{e:erbound3} needs to be adapted, since it relied on the continuity. In regards to (ii), we simply replace the `fine mesh' $T^{-\zeta} \Z^d$ used to control \eqref{e:erbound2} with the lattice $\Z^d$, and then apply the same point-wise bound we used to control the first term of \eqref{e:erbound3}.

\subsection{Proof of auxiliary results}\label{s:stab}

\begin{proof}[Proof of Lemma~\ref{l:shape}]
 Without loss of generality, assume that $E[\eta]>0$ (as otherwise we have $\textrm{Var}[\langle f, \eta \rangle] = E[\eta] = 0$, implying $\P\left(| \langle f,\eta \rangle - \ell \langle h_\nu, \eta \rangle| \ge \ell \Delta   \right)=0$).
 We may also assume that 
 \begin{equation}
     \label{e:assumptdelta}
 \Delta \le \sqrt{\frac{E(\eta)}{E(\nu)}},
 \end{equation}
since the left-hand-side of \eqref{e:eraux1} is monotone increasing in $\Delta$ while the right-hand-side remains constant for $\Delta\ge \sqrt{\frac{E(\eta)}{E(\nu)}}$.
 %To ease notation we abbreviate $h = h_\nu$. 

Recall that the energy functional $E[\nu, \eta] = \int K(x-y)  d\nu(x) d\eta(y)$ defines an inner product, and also that $\langle h_\nu, \eta \rangle = E[\nu,\eta] / E[\nu]$ by \eqref{e:heta}. 
Fix $\Delta\in \R$ and define the event
\begin{align*}
A_{\Delta} &= \left\{  \inf_{x \in D} f(x) \ge \ell , \:\:  \left\langle  \ell h_\nu-f, \frac{\eta}{\Delta} \right\rangle \ge \ell   \right\}
\\ & 
= \left\{  \inf_{x \in D} f(x) \ge \ell , \:\: 
\ell\cdot\frac{E[\nu,\tfrac{\eta}{\Delta}]}{E[\nu]} -\left\langle f,\frac{\eta}{\Delta}\right\rangle \ge \ell   \right\}
\end{align*}

Note that the probability we seek to bound is $\P[A_\Delta] + \P[A_{-\Delta}]$. 
We introduce an auxiliary parameter $a\in \mathbb{R}$, such that
\begin{equation}\label{eq: cond on a}
\frac{E[\nu,\tfrac {\eta}{\Delta}]
}{E[\nu]} + a > 0.
\end{equation}
Since $\nu \in \mathcal{P}(D)$, we have $\langle f,\nu\rangle \ge \ell$ on the event $A_\Delta$, and therefore
\begin{align}
\P[A_\Delta] & 
=\P \left[ 
\inf_{ D} f \ge \ell , \:\: 
\ell \left( \tfrac{
E[\nu,\tfrac {\eta}{\Delta}]
}{E[\nu]} + a\right) - \langle f,\tfrac {\eta}{\Delta}\rangle \ge \ell (1+a)
\right] \notag
\\ &
\le \P \left[   
\langle f,\nu\rangle \left( \tfrac{
E[\nu,\tfrac {\eta}{\Delta}]
}{E[\nu]} + a\right) - \langle f,\tfrac {\eta}{\Delta}\rangle \ge \ell (1+a)
\right] \notag
\\ & = 
\P \left[  
\left\langle f, a\nu - \left( 
\frac{\eta}{\Delta}- \tfrac{
E[\nu,\frac {\eta}{\Delta}]
}{E[\nu]} \nu 
\right) \right\rangle \ge \ell (1+a)
\right]. \label{eq: AD first}
\end{align}
Because of the
orthogonality of $\nu$ and 
$\tfrac{\eta}{\Delta}- \tfrac{
E[\nu,\frac {\eta}{\Delta}]
}{E[\nu]} \nu $, we have
\begin{align*}
E \left[
 a\nu - \left( 
\frac{\eta}{\Delta}- \tfrac{
E[\nu,\frac {\eta}{\Delta}]
}{E[\nu]} \nu 
\right) 
\right]   &=a^2 E[\nu]
 + E\left[ 
 \frac {\eta}{\Delta} - 
\frac{E[\nu,\tfrac {\eta}{\Delta}]
}{E[\nu]}  \nu \right]\\&
= a^2 E[\nu] + E\left[\frac \eta \Delta\right]- 
\frac{(E[\nu,\tfrac \eta \Delta])^2}{E[\nu]}  
=E[\nu] (a^2 + \gamma),
\end{align*} 
where
\[
\gamma = \frac{E[\tfrac \eta \Delta]}{E[\nu]} -\frac{(E[\nu,\tfrac \eta \Delta])^2}{E[\nu]^2}
\le \frac{E[\tfrac \eta \Delta]}{E[\nu]}.
\]

Applying the Gaussian tail bound from Claim~\ref{c:gtb}, we deduce from \eqref{eq: AD first} that
\begin{equation}\label{eq: AD bd}
\P(A_\Delta) \le \frac 1 2 
\exp\left( - \frac {\ell ^2
(1+a)^2
}{ 2  E[\nu] (a^2 + \gamma) }\right)
\le \frac 1 2 
\exp\left( - \frac {\ell ^2}{2 E[\nu]}\cdot \frac{
(1+a)^2
}{ a^2 + \frac{E\big[\tfrac \eta \Delta\big]}{E[\nu]}  }\right).
\end{equation}
This bound is brought to minimum by the choice
$a = E[\tfrac{\eta}{\Delta}]/E[\nu]$, which indeed obeys condition~\eqref{eq: cond on a} 
since, by assumption \eqref{e:assumptdelta} on $\Delta$, we have 
\[
\big| E[\nu,\tfrac{\eta}{\Delta}]\big|
 \le \sqrt{E[\nu]E[\tfrac{\eta}{\Delta}]} \le E[\tfrac{\eta}{\Delta}].
\]
Plugging this choice of $a$ into \eqref{eq: AD bd}, we conclude that
\[ \P[A_\Delta] \le \frac 1 2 \exp\left(
-\frac{\ell^2}{2} \left(\frac 1 {E[\nu]} + \frac{\Delta^2}{E[\eta]} \right)
\right) .\]
As this bound holds regardless of the sign of $\Delta$, we conclude the proof.
\end{proof}

\begin{proof}[Proof of Lemma~\ref{l:stab}]
Let  $\nu \in \mathcal{P}(D)$ be such that $E[\nu] \capa_f(D) \le 2$, and let $\eta \in \mathcal{M}$. We shall prove that for all $\Delta > 0$ such that
\begin{equation}
    \label{e:stab1}
  \frac{1}{E[\nu]}   + \frac{\Delta^2}{E[\eta]} > \capa_f(D) , 
  \end{equation} 
we have 
\begin{equation}
    \label{e:stab2}
 \big| \langle h_\nu, \eta \rangle - \langle h_D ,\eta \rangle \big| < 2 \Delta  .
 \end{equation}
From this we may obtain the lemma by setting
\[ \Delta = \sqrt{ \frac{ ( E[\nu] \capa_f(D) -1)  E[\eta]}{E[\nu]} } + \eps,
\]
and observing that 
\[
\big| \langle h_\nu, \eta \rangle - \langle h_D ,\eta \rangle \big| <  2\Delta\le \sqrt{ ( E[\nu] \capa_f(D) -1) E[\eta] \capa_f(D)  } + \eps,\]
where the first inequality uses \eqref{e:stab2}, and the second uses the fact that 
$1/E[\nu] \le \capa_f(D)$, which follows from  definition \eqref{e:cap1}.
Taking $\eps \downarrow 0$, the lemma follows.

To show \eqref{e:stab2}, it would suffice to  find $x\in \R$
satisfying 
\begin{equation}\label{eq:trinagle eq}
|x - \langle h_\nu, \eta\rangle|+|x-\langle h_D,\eta\rangle |<2\Delta 
\end{equation}
as \eqref{e:stab2} would then follow using the triangle inequality. This we establish using a probabilistic method, showing that \eqref{eq:trinagle eq} holds with positive probability for the random variable $\frac{1}{\ell}\langle f,\eta \rangle$ conditioned on $\{\inf_{x \in D} f(x)>\ell\}$ for sufficiently large $\ell$.

Indeed, combining Lemmas \ref{l:infcap} and \ref{l:shape}, we obtain that for all  $\ell \ge \ell'$ we have
\begin{align*}
& - \log \P \left[ | \langle f,\eta \rangle - \ell \langle h_\nu, \eta \rangle| \ge \ell \Delta 
 \: \big| \:   \inf_{x \in D} f(x) \ge \ell  \right]   \\
 & \qquad \qquad \ge  \frac{\ell^2}{2} \left( \frac{1}{E[\nu]}   + \frac{\Delta^2}{E[\eta]}  \right)  - \frac{1}{p(\ell')} \left( \frac{(\ell-\ell')^2 \capa_f(D) }{2} + e^{-1} \right)  
 +\log p(\ell'),
 \end{align*}
 where  $p(\ell') =  \P[ \inf_{x \in D} f(x) \ge \ell'] $. By assumption~ \eqref{e:stab1}, and since $p(\ell') \to 1$ as $\ell' \to \infty$, we can choose $\ell$ and $\ell'$ sufficiently large so that 
 \[ \P \Big[ | \langle f,\eta \rangle - \ell \langle h_\nu, \eta \rangle| \ge \ell \Delta 
 \: \big| \:   \inf_{x \in D} f(x) \ge \ell  \Big]  <  1/2  .\]
 Since \eqref{e:stab1} also holds for the equilibrium measure, we may apply this to get
  \[ \P \Big[ | \langle f,\eta \rangle - \ell \langle h_D, \eta \rangle| \ge \ell \Delta 
 \: \big| \:   \inf_{x \in D} f(x) \ge \ell  \Big]  <  1/2  .\]
 Combining these, we see that indeed
 \[ \P \Big[ \left| \tfrac 1 \ell \langle f,\eta \rangle - \langle  h_\nu,  \eta \rangle\right|+ \left| \tfrac 1 \ell \langle f,\eta \rangle -  \langle h_D, \eta \rangle\right| <  2\Delta  
 \:\ \Big|\ \inf_{x \in D} f(x) \ge \ell  \Big]  >  0,\]
 from which \eqref{eq:trinagle eq} follows.
\end{proof}

 %%%%%%%%%%%
 %%%%%%%%%%%

%%%%%
\medskip
\section{Universal scaling of the capacity and equilibrium potential}
\label{s:rv}

In this section we study universality properties of the capacity and equilibrium potential. Recall from Section \ref{ss:uni} the $\alpha$-Riesz kernel $k_\alpha(x) = B_{\alpha,d}  |x|^{-\alpha}$, $\alpha \in [0,d)$, and its associated spectral measure $\mu_\alpha$, density $\rho_\alpha(s)  = A_{\alpha,d}|s|^{\alpha-d}$ (only if $\alpha > 0$), capacity $c_{\alpha,d} > 0$, and equilibrium potential $h_{\alpha,d}(\cdot)$. Let $\mathcal{M}_c^\infty \subset \mathcal{M}$ be the set of signed measures on $\R^d$ with compact support and bounded~density. Our main result is the following:

\begin{theorem}
\label{t:capuniv}
Let $\alpha \in [0,d)$, and let $f$ be an SGF on $\R^d$ or $\Z^d$ satisfying one of the conditions~\ref{item:1}, \ref{item:2}, \ref{item:3} or \ref{item:4} in Theorem~\ref{t: univ pro}.  Then
\begin{equation}
\label{e:univcap1}
\capa_f(B(T)) \sim \frac{c_{\alpha,d}}{\mu[B(\tfrac 1 T)]}, \quad \text{ as }T\to\infty .
\end{equation}
If, in addition, $\mu$ is origin dominant~\eqref{e:ds}, then for every $\eta \in \mathcal{M}_c^\infty$ (required to be radial in case~\ref{item:3}),
 \begin{equation}
     \label{e:univcap2}
  \lim_{T \to \infty}\langle h_T, \eta_T \rangle =  \langle h_{\alpha,d}, \eta \rangle  \in (-\infty,\infty) .  
 \end{equation}
\end{theorem}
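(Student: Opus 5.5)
The plan is a scaling argument. Rescale space by $T$ and spectrum by $1/T$, and show that the normalised spectral measures $\mu^{(T)} := \mu(\cdot/T)/\mu[B(1/T)]$ converge vaguely to $\mu_\alpha$ on $\R^d$. Capacity is $\Gamma$-continuous under this convergence once tails are controlled, which gives \eqref{e:univcap1}. The potential statement \eqref{e:univcap2} then follows from the stability Lemma~\ref{l:stab}.

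\textbf{Step 1: rescaled capacities.} By definition \eqref{e:cap1} and \eqref{e:fenergy}, for $\nu\in\mathcal P(B(T))$ with $\nu=\nu^1(\cdot/T)$ and $\nu^1\in\mathcal P(B(1))$,
\[ E_\mu[\nu]=\int|\mathcal F[\nu^1](T\lambda)|^2d\mu(\lambda)=\mu[B(1/T)]\int|\mathcal F[\nu^1]|^2 d\mu^{(T)} . \]
Hence
\[ \capa_f(B(T))\,\mu[B(1/T)]=\Big(\min_{\nu^1\in\mathcal P(B(1))}E_{\mu^{(T)}}[\nu^1]\Big)^{-1} . \]
In the discrete case $\nu^1$ lives on $T^{-1}\Z^d\cap B(1)$, and $\mu^{(T)}$ lives on a torus of size $T$; I will handle this by periodising, or by comparing lattice measures with smoothed ones via Lemma~\ref{l:captrunc}.

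\textbf{Step 2: $\mu^{(T)}\to\mu_\alpha$ vaguely on $\R^d\setminus\{0\}$, and as masses of balls.} In cases (i) and (ii) this is the Tauberian Proposition~\ref{p:taub} together with Proposition~\ref{p:cond}(1). Slow variation of $\delta^{-\alpha}\mu[B(\delta)]$ gives $\mu^{(T)}[B(r)]\to r^\alpha=\mu_\alpha[B(r)]$, and the slowly varying density ratio (case (ii)), or the Fourier-side regular variation of $K$ (case (i)), gives local convergence of the shape. In case (iii) only radial masses converge, so I restrict to radial test measures; this is allowed by Claim~\ref{c:iso}, since $\mathcal F[\nu^1]$ is then radial. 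In case (iv) the atom at $0$ dominates, giving $\mu^{(T)}\to\delta_0=\mu_0$.

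\textbf{Step 3: upper and lower bounds.} For the upper bound on the capacity, take the $k_\alpha$-equilibrium measure of $B(1)$, mollified slightly so that $|\mathcal F[\nu^1]|^2$ decays fast. Then $E_{\mu^{(T)}}[\nu^1]\to E_{\mu_\alpha}[\nu^1]\approx 1/c_\alpha$, where:
\begin{itemize}
\item near $0$, convergence uses $|\mathcal F[\nu^1]|\le1$ and ball-mass convergence;
\item at infinity, the tail is controlled using the decay of $\mathcal F[\nu^1]$ and the bound $\mu^{(T)}[B(R)]\lesssim R^{\alpha+o(1)}$ from Proposition~\ref{p:cond}(1) (Potter bounds);
\item the cost of mollification is absorbed by \eqref{e:capcon}.
\end{itemize}
The lower bound on the capacity is the main obstacle: I need lower semicontinuity $\liminf_T \min_{\nu^1} E_{\mu^{(T)}}[\nu^1]\ge 1/c_\alpha$ over all $\nu^1$. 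The plan is to take minimisers $\nu^1_T$, pass to a weakly convergent subsequence $\nu^1_T\to\nu^1_*\in\mathcal P(B(1))$, and note that $\mathcal F[\nu^1_T]\to\mathcal F[\nu^1_*]$ locally uniformly. Then Fatou on compact subsets of $\R^d\setminus\{0\}$ gives $\liminf\int|\mathcal F[\nu^1_T]|^2d\mu^{(T)}\ge \int_{B(R)\setminus B(r)}|\mathcal F[\nu^1_*]|^2d\mu_\alpha$. Letting $r\to0$ and $R\to\infty$ yields at least $E_{\mu_\alpha}[\nu^1_*]\ge 1/c_\alpha$, plus the atom contribution $\ge\mu_\alpha(\{0\})$ in the $\alpha=0$ case, using $|\mathcal F[\nu^1]|=1$ at $0$. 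A subtlety is that vague convergence only tests continuous compactly supported functions, whereas $|\mathcal F[\nu^1_T]|^2$ varies with $T$; equicontinuity of Fourier transforms of probability measures on $B(1)$ resolves this. In the lattice case I first smooth with Lemma~\ref{l:captrunc} at scale $s=T^{1-\eps}$ and use \eqref{e:capcon}.

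\textbf{Step 4: potentials.} By \eqref{e:heta}, $\langle h_T,\eta_T\rangle = E_\mu[\nu_T,\eta_T]/E_\mu[\nu_T]$, which after rescaling equals $E_{\mu^{(T)}}[\nu^1_T,\eta]/E_{\mu^{(T)}}[\nu^1_T]$. Rather than proving convergence of the minimisers directly, I apply Lemma~\ref{l:stab} with $D=B(T)$ and $\nu=\nu_T^\star$, the rescaled fixed (mollified) $k_\alpha$-equilibrium measure from Step 3. Step 3 shows $E[\nu_T^\star]\capa_f(B(T))\to1+o_{\text{moll}}(1)$, and Corollary~\ref{c:enbound}(1) (this is where \eqref{e:ds} is used) gives $E[\eta_T]\capa_f(B(T))=O(1)$. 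Hence $\langle h_T,\eta_T\rangle-\langle h_{\nu_T^\star},\eta_T\rangle\to0$. Finally,
\[ \langle h_{\nu^\star_T},\eta_T\rangle=\frac{E_{\mu^{(T)}}[\nu^{\star},\eta]}{E_{\mu^{(T)}}[\nu^\star]}\to\frac{E_{\mu_\alpha}[\nu^\star,\eta]}{E_{\mu_\alpha}[\nu^\star]} \]
by the convergence of Step 2, where tails are controlled by $\eta\in\mathcal M_c^\infty$ (so $\mathcal F[\eta]\in L^2$) together with origin dominance. This limit is close to $\langle h_{\alpha,d},\eta\rangle$ as the mollification is removed, which again follows from Lemma~\ref{l:stab} applied to $k_\alpha$ itself.
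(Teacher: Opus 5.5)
Your overall structure matches the paper's. You rescale to $B(1)$, use a smoothed Riesz equilibrium measure as a test measure, and get the matching bound from weak compactness of $\mathcal P(B(1))$ plus Fatou. You then pass to potentials via Lemma~\ref{l:stab}, Corollary~\ref{c:enbound}(1) and the smoothed measure.

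The real difference is the energy lower bound. You run one Fourier-side Fatou argument from vague convergence $\mu^{(T)}\to\mu_\alpha$. The paper argues case by case. In case~\ref{item:1} it works in physical space, which forces it to pass to a positive kernel $K_1\sim K$ taken from \cite{ms22}. In case~\ref{item:3} it uses radial integration by parts, with an extra cutoff $\phi_{\theta T}^2$ and Lemma~\ref{l:captruncriesz}. Your route is uniform across cases and uses only positivity of $\mu$.

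In case~\ref{item:1}, however, vague convergence does not follow from Proposition~\ref{p:taub}, since convergence of ball masses does not determine a non-radial limit. You have to prove it. One way is to pair $\mu^{(T)}$ with Fourier transforms of Schwartz functions (Parseval). Dominate $K(Tx)/(T^{-\alpha}w(T))$ by $C|x|^{-\alpha}\max\{|x|^{\eps},|x|^{-\eps}\}$ via Potter bounds. Then use positivity to upgrade distributional convergence to vague convergence. Conversely, in case~\ref{item:3} a radial $\mu^{(T)}$ whose radial profile converges already converges vaguely, so you do not need Claim~\ref{c:iso}.

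The step that fails as written is absorbing the mollification cost, and the lattice smoothing, by \eqref{e:capcon}. That condition is not a hypothesis of the theorem. Under \ref{item:1}--\ref{item:4} the paper derives \eqref{e:capcon} from \eqref{e:univcap1} (Proposition~\ref{p:cond}(2)), so appealing to it is circular. It would also not apply: mollifying at a fixed scale $\theta$ enlarges $B(T)$ to $B((1+\theta)T)$, far beyond a $T^{1-\eps}$ enlargement. Repair it as the paper does, by shrinking back into the ball:
\[ \nu_\alpha^\theta=(\nu_\alpha*\mathcal F[\phi_\theta])((1+\theta)\,\cdot\,)\in\mathcal P(B(1)), \qquad c_\alpha^{-1}\le E_\alpha[\nu_\alpha^\theta]\le (1+\theta)^{\alpha}c_\alpha^{-1}. \]
The upper bound holds because $0\le\phi_\theta\le 1$, and dilating by $(1+\theta)^{-1}$ multiplies Riesz energy by $(1+\theta)^{\alpha}$. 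Then let $T\to\infty$ before $\theta\to 0$. Consequently, in Step~4 the bound from Lemma~\ref{l:stab} gives $\limsup_T|\langle h_T,\eta_T\rangle-\langle h_{\nu^\star_T},\eta_T\rangle|=O\big(\sqrt{c_\alpha E_\alpha[\nu_\alpha^\theta]-1}\big)$, not $0$; it vanishes only as $\theta\to 0$.

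On the lattice you need no smoothing for the liminf. Measures in $\mathcal P(B(1)\cap T^{-1}\Z^d)$ are still weakly precompact in $\mathcal P(B(1))$, and $\mu^{(T)}$ on the growing torus still converges vaguely. For the test measure, discretise the smooth density of $\nu_\alpha^\theta$ and control aliasing by Poisson summation.

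Finally, Lemma~\ref{l:stab} has a probabilistic proof that needs a genuine SGF, so it cannot be applied to $k_\alpha$ directly. Instead use \eqref{e:convex} with $(\nu+\nu_\alpha)/2\in\mathcal P(B(1))$. This gives $E_\alpha[\nu-\nu_\alpha]\le 2(E_\alpha[\nu]-E_\alpha[\nu_\alpha])$, and Cauchy--Schwarz finishes; alternatively use Lemma~\ref{lem:thetatozero}.

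One labelling slip: your upper and lower labels are swapped. The test measure bounds the capacity from below, and Fatou bounds it from above.
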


\begin{remark}
\label{r:k2}
As in Remark \ref{r:k}, one can use Proposition \ref{p:taub} to rewrite~\eqref{e:univcap1} as follows:
\[
 \capa_f(B(T)) \sim  
 \begin{cases}
 c_{\alpha,d} \, B_{\alpha,d} K(T)^{-1} , & \text{case~\ref{item:1}}\\
c_{\alpha,d} \, A_{\alpha,d} T^{d} \rho(\tfrac 1 T)^{-1}, & \text{case~\ref{item:2}}.
  \end{cases}
\]
\end{remark}

Using this, we can now deduce Theorems \ref{t: univ pro} and \ref{t: univ ent} (and Proposition \ref{p:cond}), from Theorems \ref{t:main} and \ref{t:er} respectively:

\begin{proof}[Proof of Proposition \ref{p:cond} and Theorems \ref{t: univ pro} and \ref{t: univ ent}]
The first item of Proposition \ref{p:cond} is implied by Proposition \ref{p:taub}, and the second item follows from the first item and \eqref{e:univcap1}. 

Now assume the conditions of Theorem \ref{t: univ pro} hold. Then by Proposition \ref{p:cond}, the conditions of Theorem \ref{t:main} are satisfied. Theorem \ref{t: univ pro} follows by combining the conclusions of Theorems \ref{t:main} and \eqref{e:univcap1}. The proof of Theorem \ref{t: univ ent} is similar, combining Theorems \ref{t:er} and \eqref{e:univcap2}.
\end{proof}

In the remainder of the section we prove Theorem \ref{t:capuniv} following the approach to similar results in \cite{ams14,ms22}. To this end, we first present approximations of the Riesz kernel in Section~\ref{s:Rapprox}. The proof of Theorem~\ref{t:capuniv} 
 appears in Section~\ref{s:t7proof}; it is given for continuous fields. The differences to the discrete case are discussed in Section~\ref{s:fordiscrete}.

\subsection{The Riesz kernel and its smooth approximations}\label{s:Rapprox} 
We begin by recalling some properties of the Riesz kernel and its various smooth approximations. These will be used to justify the dominated convergence arguments that underpin the proof. 

Define the energy forms associated to the Riesz kernel
\begin{align}\label{eq: riesz}
E_\alpha[\nu, \eta] &=   B_{\alpha,d} \int \int |x-y|^{-\alpha} d\nu(x) d\eta(y) \notag \\
& = \begin{cases} A_{\alpha,d} \int \mathcal{F}[\nu](s)  \overline{\mathcal{F}[\eta](s)}  |s|^{\alpha-d} \, ds & \alpha \in (0,d), \\ \mathcal{F}[\nu](0)  \mathcal{F}[\eta](0) 
 & \alpha = 0 , \end{cases}
 \end{align}
and $ E_\alpha[\nu] = E_\alpha[\nu,\nu]$. By the classical theory of the Riesz kernel~\cite{lan72}, $E$ defines an inner product.
We define the equilibrium measure of the unit ball
$\nu_\alpha \in \mathcal{P}(B(1))$ satisfying
\[c_\alpha^{-1}  = \inf_{\nu \in \mathcal{P}(B(1))} E_\alpha[\nu] = E_\alpha[\nu_\alpha]  . \]  
For $\alpha>0$ it is uniquely defined, while for $\alpha=0$ every $\nu \in \mathcal{P}(B(1))$ is an equilibrium measure, and for concreteness we define $\nu_0$ to be the uniform measure on $B(1)$. The equilibrium potential $h_\alpha$ can be written as 
\[ h_\alpha = c_\alpha (k_\alpha * \nu_\alpha) , \]
so that $\langle h_\alpha,\eta \rangle = c_\alpha E[ \nu_\alpha,\eta]$. 
Explicit expressions for
$c_\alpha$, $h_\alpha$  and
$\nu_\alpha$ are known but unneeded here
(see Appendix~\ref{a:explicit} for $c_\alpha$, $h_\alpha$, and \cite[p.163]{lan72} for $\nu_\alpha$).

\smallskip
Next, we introduce two approximations of the Riesz capacity $c_\alpha$, defined by either smoothing the equilibrium measure $\nu_\alpha$ or the Riesz kernel $k_\alpha$.

\subsubsection{Smoothing the equilibrium measure}
Recall the spectral truncation function $\phi_s$ given in~\eqref{eq:stf}. For $\theta > 0$, define 
\[ \nu_\alpha^\theta(\cdot) = (\nu_\alpha * \mathcal{F}[\phi_\theta])( (1+\theta) \cdot)  \in \mathcal{P}(B(1)) .  \]
By the stretched-exponential decay of $\phi_1$ and its derivatives, $\mathcal{F}[\nu_\alpha^\theta](\lambda)$ is smooth, satisfying for every multi-index $k$,
\begin{equation}
\label{e:nudecay}
 | \partial^k \mathcal{F}[\nu_\alpha^\theta](\lambda) | \le  c_1 e^{-c_2|\lambda|^v}, 
\end{equation}
for some $c_1,c_2>0$ (that may depend on $k, \alpha$ and $\theta$).

The next lemma shows that, as $\theta \to 0$, this smoothing has negligible effect on the energy.
\begin{lemma}\label{lem:thetatozero}
\begin{equation} \label{e:thetatozero}
\lim_{\theta \to 0} E_\alpha[\nu_\alpha^\theta] = E_\alpha[ \nu_\alpha] = c_\alpha^{-1} \in (0,\infty).
\end{equation}
Moreover, for every $\eta \in \mathcal{M}_c^\infty$,
\begin{equation}
    \label{e:thetatozero1}
\lim_{\theta \to 0} E_\alpha[\nu_\alpha^\theta,\eta] = E_\alpha[ \nu_\alpha,\eta] \in (-\infty,\infty) .
\end{equation}
\end{lemma}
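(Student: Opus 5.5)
The plan is to work on the Fourier side throughout, via \eqref{eq: riesz}. Write $\psi_\theta := \mathcal{F}[\phi_\theta]$. This is a probability density supported on $B(\theta/2)$ with $\mathcal{F}[\psi_\theta] = \phi_\theta$ (up to the Hermitian reflection, which is harmless since $\phi_\theta$ is radial). The dilation by $(1+\theta)$ gives
\[ \mathcal{F}[\nu_\alpha^\theta](\lambda) = (1+\theta)^{-d}\cdot(1+\theta)^{d}\,\mathcal{F}[\nu_\alpha]\big(\lambda/(1+\theta)\big)\,\phi_\theta\big(\lambda/(1+\theta)\big), \]
that is, $\mathcal{F}[\nu_\alpha^\theta](\lambda) = \mathcal{F}[\nu_\alpha](\lambda/(1+\theta))\,\phi_\theta(\lambda/(1+\theta))$, since $\nu_\alpha^\theta$ remains a probability measure. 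I would first check this identity and that $\nu_\alpha^\theta \in \mathcal{P}(B(1))$: the convolution is supported in $B(1+\theta/2)$, and the rescaling maps it into $B(1)$.

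The case $\alpha = 0$ is trivial. There $E_0[\nu] = |\mathcal{F}[\nu](0)|^2 = 1$ for every probability measure, so $E_0[\nu_0^\theta] = 1 = E_0[\nu_0]$. Likewise $E_0[\nu_0^\theta,\eta] = \mathcal{F}[\eta](0) = E_0[\nu_0,\eta]$ for all $\theta$.

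For $\alpha \in (0,d)$, I would write
\[ E_\alpha[\nu_\alpha^\theta] = A_{\alpha,d}\int |\mathcal{F}[\nu_\alpha](\lambda/(1+\theta))|^2 \phi_\theta^2(\lambda/(1+\theta))|\lambda|^{\alpha-d}\,d\lambda \]
and substitute $\lambda = (1+\theta)s$. This gives $(1+\theta)^{\alpha}A_{\alpha,d}\int |\mathcal{F}[\nu_\alpha](s)|^2\phi_\theta^2(s)|s|^{\alpha-d}\,ds$. Now $0 \le \phi_\theta \le 1$, and $\phi_\theta(s) = \zeta(2\theta s) \to \zeta(0) = 1$ pointwise as $\theta \to 0$. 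The integrand $|\mathcal{F}[\nu_\alpha]|^2|s|^{\alpha-d}$ is integrable precisely because $E_\alpha[\nu_\alpha] = c_\alpha^{-1} < \infty$. Dominated convergence together with $(1+\theta)^\alpha \to 1$ then gives \eqref{e:thetatozero}. Alternatively, one could skip the limit argument and simply sandwich: $c_\alpha^{-1} \le E_\alpha[\nu_\alpha^\theta] \le (1+\theta)^\alpha c_\alpha^{-1}$, where the lower bound is minimality and the upper bound uses $\phi_\theta^2 \le 1$. Positivity and finiteness of $c_\alpha^{-1}$ are classical (and $c_\alpha \in (0,\infty)$ was asserted earlier).

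For \eqref{e:thetatozero1}, the same substitution gives
\[ E_\alpha[\nu_\alpha^\theta,\eta] = (1+\theta)^{\alpha} A_{\alpha,d}\int \mathcal{F}[\nu_\alpha](s)\phi_\theta(s)\,\overline{\mathcal{F}[\eta]((1+\theta)s)}\,|s|^{\alpha-d}\,ds. \]
The step needing care is the domination, since $\mathcal{F}[\eta]$ now appears at a $\theta$-dependent argument. I would handle it via Cauchy--Schwarz in the $E_\alpha$ inner product. Because $\eta \in \mathcal{M}_c^\infty$ is compactly supported with bounded density, $E_\alpha[\eta] < \infty$: the kernel $|x|^{-\alpha}$ is locally integrable for $\alpha<d$ and $\eta$ has bounded density and compact support. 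Hence $E_\alpha[\nu_\alpha^\theta,\eta]$ is finite, and the limit is finite as well.

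For convergence, I would stay in physical space instead. Write $E_\alpha[\nu_\alpha^\theta,\eta] = \int (k_\alpha*\eta)\,d\nu_\alpha^\theta$. The function $k_\alpha * \eta$ is continuous, because $\eta$ has bounded density and compact support and $k_\alpha \in L^1_{loc}$; it is also bounded on compacts. Meanwhile $\nu_\alpha^\theta \to \nu_\alpha$ weakly as $\theta \to 0$, since $\psi_\theta$ is an approximate identity and the dilation tends to the identity, and all of these measures are supported in $B(1)$. Weak convergence against a continuous function on the compact set $\overline{B(1)}$ then gives \eqref{e:thetatozero1}. The main point to verify is the continuity of $k_\alpha*\eta$. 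I would get it from the uniform integrability of $|x|^{-\alpha}$ near $0$ combined with the $L^1$-continuity of translations of the bounded density of $\eta$.
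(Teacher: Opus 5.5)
Your proof is correct, but it is organised differently from the paper's.

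\textbf{The paper's route.} It stays in physical space for both limits and moves the mollifier onto the kernel. It sets $\Phi^1_\theta = k_\alpha * \mathcal{F}[\phi_\theta]$ and $\Phi^2_\theta = k_\alpha * \mathcal{F}[\phi_\theta] * \mathcal{F}[\phi_\theta]$, checks that $\Phi^i_\theta \to k_\alpha$ pointwise away from the origin, and uses the uniform bound $\Phi^i_\theta \le c\,|x|^{-\alpha}$ (a routine estimate). It then applies dominated convergence against $\nu_\alpha \otimes \nu_\alpha$ and against $\nu_\alpha \otimes \eta$. Along the way it silently drops the harmless factor $(1+\theta)^{\alpha}$ coming from the dilation.

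\textbf{Your route.} You treat the self-energy on the Fourier side. There, $0 \le \phi_\theta \le 1$ together with minimality of $\nu_\alpha$ over $\mathcal{P}(B(1))$ gives the sandwich
\[
c_\alpha^{-1} \le E_\alpha[\nu_\alpha^\theta] \le (1+\theta)^\alpha c_\alpha^{-1}.
\]
This needs no pointwise kernel estimate, tracks the dilation explicitly, and even yields a rate. For the cross term you move the convolution onto $\eta$ instead. You use continuity of the potential $k_\alpha * \eta$, which follows from local integrability of $|x|^{-\alpha}$ and the bounded, compactly supported density of $\eta$. Combined with weak convergence $\nu_\alpha^\theta \to \nu_\alpha$ on $\overline{B(1)}$, this gives the limit.

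\textbf{Comparison.} The paper's argument uses a single mechanism for both statements and never invokes the Fourier representation of $E_\alpha$. Its cost is the kernel domination $\Phi^i_\theta \le c\,k_\alpha$. Your argument relies only on soft facts: $\phi_\theta \le 1$, minimality, and weak convergence against a continuous function on a compact set. It uses the hypothesis on $\eta$ in the same place the paper does.
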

\begin{proof}
We first observe  that $E_\alpha[\eta] < \infty$ since $\eta$ is compactly supported with bounded density, which implies the finiteness of $E_\alpha[ \nu_\alpha,\eta]$ by the Cauchy-Schwarz inequality. Next, for $x \in B(1)$ define
\[ \Phi^1_\theta(x) = (k_\alpha * \mathcal{F}[\phi_\theta] )(x) = B_{\alpha,d} \int |x-s|^{-\alpha} \mathcal{F}[\phi_\theta](s) \, ds  \] 
and
\[ \Phi^2_\theta(x) = B_{\alpha,d} (k_\alpha * \mathcal{F}[\phi_\theta]  * \mathcal{F}[\phi_\theta] )(x) = B_{\alpha,d} \int \int |x-s-t|^{-\alpha} \mathcal{F}[\phi_\theta](s)  \mathcal{F}[\phi_\theta](t) \, ds dt . \]
Since $\mathcal{F}[\phi_\theta](s)\to \delta_0(s)$ as $\theta \to 0$, and using the diminishing support of $\mathcal{F}[\phi_\theta](s)$, for every $x \neq 0$,  we obtain, by dominated convergence,
\[ \Phi^1_\theta(x) \to  k_\alpha(x)  \quad \text{and} \quad \Phi^2_\theta(x) \to  k_\alpha(x). \]  
We can also check that, since $\mathcal{F}[\phi_1]$ is a probability measure with bounded density, there exists $c > 0$ depending only on $\alpha$, $d$, and $\phi_1$ for which 
\[  \Phi^1_\theta(x), \Phi^2_\theta(x)  \le  c |x|^{-\alpha} = c k_\alpha(x), \]
uniformly in $\theta$ and $x$. Hence, applying dominated convergence once again, we obtain
\[ \lim_{\theta \to 0} E_\alpha[\nu_\alpha^\theta] = \lim_{\theta \to 0} \int \int \Phi_\theta^2(x-y) \, d\nu_\alpha(x)  d\nu_\alpha(y) =  E_\alpha[\nu_\alpha] , \]
and
\[ \lim_{\theta \to 0} E_\alpha[\nu_\alpha^\theta,\eta] = \lim_{\theta \to 0} \int \int \Phi_\theta^1(x-y) \, d\nu_\alpha(x)  d\eta(y) =  E_\alpha[\nu_\alpha,\eta] \]
as required.
\end{proof}

\subsubsection{Smoothing the kernel} 
The second approximation involves smoothing the Riesz kernel. For $\theta > 0$, recall the truncated measure \[\mu_\alpha^\theta = \phi_\theta^2 \mu_\alpha.\] In the case $\alpha \in (0,d)$ this has density $A_{\alpha,d} \phi_\theta^2(\lambda) |\lambda|^{\alpha-d}ds$, whereas if $\alpha = 0$ then $\mu_\alpha^\theta = \mu_\alpha = \delta_0$. We then define the \emph{smoothed Riesz kernel} 
\[k_\alpha^\theta := \mathcal{F}[\mu_\alpha^\theta] = k_\alpha * \mathcal{F}[\phi_\theta] *  \mathcal{F}[\phi_\theta].\] Similarly to \eqref{e:thetatozero}, 
this smoothing has negligible effect on the capacity:

\begin{lemma}
 \label{l:captruncriesz}
 For every $s>0$,
\[ (1+\theta)^\alpha  c_\alpha \ge \capa_{\mu_\alpha^\theta}(B(1)) \ge   c_\alpha .\]
Consequently,
\begin{equation*}
%\label{e:thetatozero2}
\lim_{\theta \to 0} \capa_{\mu_\alpha^\theta}(B(1))  = c_\alpha.
\end{equation*}
\end{lemma}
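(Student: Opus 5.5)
The lower bound $\capa_{\mu_\alpha^\theta}(B(1)) \ge c_\alpha$ is immediate from monotonicity (Claim~\ref{c:mon}). Since $\phi_\theta = \zeta(2\theta\,\cdot) \le 1$ by Proposition~\ref{c:1}, we have $\mu_\alpha^\theta = \phi_\theta^2 \mu_\alpha \le \mu_\alpha$. Hence $E_{\mu_\alpha^\theta}[\nu] \le E_\alpha[\nu]$ for every $\nu \in \mathcal{P}(B(1))$. Taking the infimum over $\nu$ in the dual formula \eqref{e:cap1} gives $1/\capa_{\mu_\alpha^\theta}(B(1)) \le 1/c_\alpha$. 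For $\alpha = 0$ both sides equal $c_0 = 1$, since $\mu_0^\theta = \delta_0$.

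For the upper bound I will mimic the smoothing argument of Lemma~\ref{l:captrunc}, combined with the exact scaling of the Riesz kernel. Let $h_{\alpha,(1+\theta)}$ denote the Riesz equilibrium potential of $B(1+\theta)$, i.e.\ the minimiser in \eqref{e:cap2} for the RKHS of $\mu_\alpha$. Since $k_\alpha$ is homogeneous of degree $-\alpha$, we have $E_\alpha[\nu(\cdot/r)] = r^{-\alpha}E_\alpha[\nu]$ for push-forwards of probability measures under dilation by $r$. Therefore the Riesz capacity of $B(r)$ equals $r^\alpha c_\alpha$, so $\|h_{\alpha,(1+\theta)}\|^2_{H_{\mu_\alpha}} = (1+\theta)^\alpha c_\alpha$.

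Now set $h' = h_{\alpha,(1+\theta)} * \mathcal{F}[\phi_\theta]$. By \eqref{e:stfsupp}, $\mathcal{F}[\phi_\theta]$ is a probability density supported in $B(\theta/2) \subset B(\theta)$. Since $h_{\alpha,(1+\theta)} \ge 1$ on $B(1+\theta)$, it follows that $h' \ge 1$ on $B(1)$. If $h_{\alpha,(1+\theta)} = \mathcal{F}[g\,\mu_\alpha]$, then $h' = \mathcal{F}[g\phi_\theta\,\mu_\alpha]$, which we regard as $\mathcal{F}[(g/\phi_\theta)\,\mu_\alpha^\theta]$. Its norm in $H_{\mu_\alpha^\theta}$ is therefore $\int |g|^2\phi_\theta^{-2}\phi_\theta^2\,d\mu_\alpha = \|h_{\alpha,(1+\theta)}\|^2_{H_{\mu_\alpha}}$. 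Here $\phi_\theta > 0$ everywhere by Proposition~\ref{c:1}, so the division is legitimate. Using $h'$ as a competitor in \eqref{e:cap2} yields $\capa_{\mu_\alpha^\theta}(B(1)) \le (1+\theta)^\alpha c_\alpha$. Letting $\theta \to 0$ then gives the limit.

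The main obstacle is that $\mu_\alpha$ is an infinite measure. The RKHS framework, existence of the equilibrium potential, and the representation $h = \mathcal{F}[g\mu_\alpha]$ with $g \in L^2_{\mu_\alpha}$ are only justified for finite spectral measures in Appendix~\ref{a:capacity}. If this is a problem, I would instead argue on the dual side, which avoids potentials. Take the Riesz equilibrium measure $\nu$ of $B(1+\theta)$ and compare the energies via \eqref{e:cap1}. Alternatively, work throughout with the Riesz theory of \cite{lan72}, where the capacity of $B(r)$ is classically $r^\alpha c_\alpha$ and the variational characterisation \eqref{e:cap2} holds over the Riesz energy space.
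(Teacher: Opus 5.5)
Your proposal is correct and follows the paper's proof essentially verbatim: monotonicity (Claim~\ref{c:mon}) gives the lower bound, and the upper bound comes from the smoothing argument of Lemma~\ref{l:captrunc} applied to $B(1+\theta)$, combined with the scaling $\capa_{\mu_\alpha}(B(r)) = r^\alpha c_\alpha$. One remark on your fallback: the dual version should smooth an arbitrary $\nu \in \mathcal{P}(B(1))$, not use the Riesz equilibrium measure of $B(1+\theta)$ (which bounds the wrong infimum), because $\nu * \mathcal{F}[\phi_\theta] \in \mathcal{P}(B(1+\theta))$ has Riesz energy exactly $E_{\mu_\alpha^\theta}[\nu]$, which gives $E_{\mu_\alpha^\theta}[\nu] \ge (1+\theta)^{-\alpha} c_\alpha^{-1}$ without any appeal to potentials or to the RKHS of an infinite measure.
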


\begin{proof}
Since $\mu_\alpha^\theta \le \mu_\alpha$, we have 
$\capa_{\mu_\alpha^\theta}(B(1)) \ge   c_\alpha$ by monotonicity of the capacity (Claim~\ref{c:mon}). For the other bound, the same proof as in Lemma \ref{l:captrunc} shows that, for every $0 <\theta  < T$,
\[ \capa_{\mu_\alpha}(B(T)) \ge \capa_{\mu_\alpha^\theta}(B(T-\theta)) . \]
Then the bound follows by setting $T = \theta+1$ and using that $\capa_{\mu_\alpha}(B(T)) = T^\alpha c_\alpha$ by the scale invariance of the Riesz kernel.
  \end{proof}

\subsection{Proof of Theorem \ref{t:capuniv}}\label{s:t7proof}
First, let us rewrite the conditions of the theorem more explicitly.
Conditions~\ref{item:1}, \ref{item:2} and \ref{item:3} guarantee 
the existence of a function 
$w:(0,\infty)\to(0,\infty)$ which is slowly varying at $\infty$, obeying, respectively, one of the following conditions:
\begin{enumerate}[{\rm (i')}]

\item \label{item:1w}
$
K(x)  \sim B_{\alpha,d} |x|^{-\alpha} w(|x|), \text{  as  } x\to\infty$.

\item \label{item:2w}
$\rho(\lambda) \sim A_{\alpha,d} |\lambda|^{\alpha-d} w(1/|\lambda|)$, as $\lambda \to 0$, where $\alpha>0$ and $\rho$ is the density of the absolutely continuous part of $\mu$.
\item\label{item:3w} $\mu[B(1/T)] \sim T^{-\alpha} w(T)$, as $T\to\infty$, and $\mu$ is radial.
\end{enumerate}
Without loss of generality we are free to modify $w$ in a compact set around the origin. Under condition $(iv)$ we set $w \equiv 1$.

By Proposition \ref{p:taub} of the appendix, we observe that, in all cases,
\begin{equation}\label{e:common asym} \mu[B(1/T)] \sim T^{-\alpha} w(T), \quad\text{ as  } T\to\infty.
\end{equation}

We shall reduce the theorem to the following statements about convergence of energies, first for the smoothed equilibrium measures $\nu_\alpha^\theta$ and then for the equilibrium measure $\nu_{B(T)}$, both normalised using $w$.

\begin{proposition}\label{prop: for univ}
Under each of the conditions~\ref{item:1}, \ref{item:2}, \ref{item:3}, \ref{item:4}, 
we have:

\begin{enumerate}[{\rm (a)}]
\item\label{item:a}
For every $\theta>0$ and $\eta \in \mathcal{T}\subseteq \mathcal{M}_c^\infty$ (required to be radial in case~\ref{item:3}), we have
\begin{equation}
\label{e:rvsuff2}
\lim_{T\to\infty} \frac{E[ \nu_\alpha^\theta(\cdot/T), \eta_T ] }{T^{-\alpha} w(T)}
=E_\alpha[\nu_\alpha^\theta,\eta]. 
\end{equation}
In particular,
\begin{equation}
\label{e:rvsuff1}
 \lim_{T\to\infty}\frac{ E[ \nu_\alpha^\theta(\cdot/T) ] }{T^{-\alpha} w(T)} = E_\alpha[\nu_\alpha^\theta].
\end{equation}
\item\label{item:b}
\begin{equation*}%\label{eq:enu}
\lim_{T\to\infty}\frac{ E[\nu_{B(T)}] }{T^{-\alpha} w(T)}  = E_\alpha[\nu_\alpha].
\end{equation*}
\end{enumerate}
\end{proposition}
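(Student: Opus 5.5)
The plan is to rescale everything to the unit ball and reduce both statements to the convergence, after normalisation, of the rescaled spectral measure to $\mu_\alpha$. For $T>0$ let $\mu^{(T)}$ be the push-forward of $\mu$ under $\lambda\mapsto T\lambda$, normalised by $T^{-\alpha}w(T)$:
\[
\mu^{(T)}[A]=\mu[A/T]\big/\big(T^{-\alpha}w(T)\big).
\]
Then $\mathcal F[\nu(\cdot/T)](\lambda)=\mathcal F[\nu](T\lambda)$ and $\mathcal F[\eta_T](\lambda)=\mathcal F[\eta](T\lambda)$. Hence, by \eqref{e:fenergy},
\[
\frac{E[\nu(\cdot/T),\eta_T]}{T^{-\alpha}w(T)}=\int \mathcal F[\nu]\,\overline{\mathcal F[\eta]}\,d\mu^{(T)} .
\]
By \eqref{e:common asym}, $\mu^{(T)}[B(r)]\to r^\alpha=\mu_\alpha[B(r)]$ for every $r>0$. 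I will upgrade this to vague convergence $\mu^{(T)}\to\mu_\alpha$ case by case, with uniform control near the origin and at infinity.

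\textbf{Step 1: part (a).} The test function $g=\mathcal F[\nu_\alpha^\theta]\overline{\mathcal F[\eta]}$ is bounded. It is smooth with stretched-exponential decay \eqref{e:nudecay} coming from the $\nu_\alpha^\theta$ factor, and $\mathcal F[\eta]$ is bounded since $\eta$ has finite mass.
\begin{itemize}
\item The far region $|\lambda|\ge T^{-1/2}$ of the unscaled integral contributes at most $\|\mu\|e^{-cT^{v/2}}$ times a constant. This is $o(T^{-\alpha}w(T))$ because $w$ is slowly varying.
\item In case \ref{item:2} the density of $\mu^{(T)}$ is $T^{-d}\rho(s/T)/(T^{-\alpha}w(T))$, which tends to $A_{\alpha,d}|s|^{\alpha-d}$ pointwise. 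Potter bounds dominate it by $C|s|^{\alpha-d\pm\epsilon}$, so dominated convergence applies on $|s|\le T^{1/2}$.
\item In case \ref{item:3}, $\nu_\alpha^\theta$ and $\eta$ are radial, so $g=g(|s|)$. Integrating by parts gives $\int g\,d\mu^{(T)}=-\int_0^\infty g'(r)\,\mu^{(T)}[B(r)]\,dr$. Dominated convergence then follows from the decay of $g'$ and Potter bounds on $\mu^{(T)}[B(r)]$.
\item In case \ref{item:1} I work in space. $\nu_\alpha^\theta$ and $\eta$ both have bounded compactly supported densities, so
\[
\frac{E}{T^{-\alpha}w(T)}=\iint \frac{K(T(x-y))}{T^{-\alpha}w(T)}\,\nu_\alpha^\theta(x)\,\eta(y)\,dx\,dy .
\]
The integrand converges to $B_{\alpha,d}|x-y|^{-\alpha}$ off the diagonal. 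It is dominated by $C|x-y|^{-\alpha-\epsilon}$ (Potter bounds for $|T(x-y)|$ large, and $\|K\|_\infty\le T^{\alpha+\epsilon}w(T)^{-1}\cdot(T^{-\alpha}w(T))$ near the diagonal). This is locally integrable since $\alpha+\epsilon<d$.
\item In case \ref{item:4} the atom contributes $\mu(\{0\})\mathcal F[\nu](0)\mathcal F[\eta](0)$, matching \eqref{eq: riesz} with $w\equiv\mu(\{0\})$. The remainder of $\mu$ has $\mu[B(r)\setminus\{0\}]\to0$, so after scaling it contributes $o(1)$ by dominated convergence.
\end{itemize}
Taking $\eta=\nu_\alpha^\theta$ gives \eqref{e:rvsuff1}.

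\textbf{Step 2: part (b), upper bound.} Since $\nu_\alpha^\theta(\cdot/T)\in\mathcal P(B(T))$, we have $E[\nu_{B(T)}]\le E[\nu_\alpha^\theta(\cdot/T)]$. Step 1 then gives
\[
\limsup_{T\to\infty}\frac{E[\nu_{B(T)}]}{T^{-\alpha}w(T)}\le E_\alpha[\nu_\alpha^\theta],
\]
and this tends to $E_\alpha[\nu_\alpha]$ as $\theta\to0$ by Lemma \ref{lem:thetatozero}.

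\textbf{Step 3: part (b), lower bound (main obstacle).} Here one must control the infimum over all of $\mathcal P(B(T))$. For $\nu\in\mathcal P(B(T))$ write $\tilde\nu=\nu(T\cdot)\in\mathcal P(B(1))$. Since $\phi\le1$, Claim \ref{c:mon} gives $E_\mu[\nu]\ge E_{\phi_{\theta T}^2\mu}[\nu]$, so
\[
\frac{E_\mu[\nu]}{T^{-\alpha}w(T)}\ge\int|\mathcal F[\tilde\nu]|^2\,\phi_\theta^2\,d\mu^{(T)} .
\]
The functions $\{|\mathcal F[\tilde\nu]|^2:\tilde\nu\in\mathcal P(B(1))\}$ are bounded by $1$ and uniformly Lipschitz, so they form a compact family in the topology of locally uniform convergence. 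The measures $\phi_\theta^2\mu^{(T)}$ have uniformly small mass at infinity and converge vaguely to $\mu_\alpha^\theta$, by the same Potter-bound arguments as in Step 1. Hence the convergence of the integral is uniform over $\tilde\nu$, which gives
\[
\liminf_{T\to\infty}\inf_{\nu}\frac{E_\mu[\nu]}{T^{-\alpha}w(T)}\ge\inf_{\tilde\nu\in\mathcal P(B(1))}E_{\mu_\alpha^\theta}[\tilde\nu]=\frac{1}{\capa_{\mu_\alpha^\theta}(B(1))}\ge\frac{(1+\theta)^{-\alpha}}{c_\alpha},
\]
the last step by Lemma \ref{l:captruncriesz}. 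Letting $\theta\to0$ closes the argument.

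The delicate point is the uniform vague convergence near $\lambda=0$ in cases \ref{item:1} and \ref{item:3}. There the hypothesis does not directly give a density, so I expect to rely on Proposition \ref{p:taub} together with monotonicity of $r\mapsto\mu^{(T)}[B(r)]$ and integration by parts against the radial bump $\phi_\theta^2$. I also need uniformity over $\tilde\nu$: since $|\mathcal F[\tilde\nu]|\le1$ near the origin, it suffices that $\mu^{(T)}[B(r)]\to r^\alpha$ uniformly for $r$ in compacts.
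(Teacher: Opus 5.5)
Your Steps~1 and~2 are essentially the paper's proof of part (a) and of the upper bound in (b). For the lower bound in (b) you take a different, unified route. You truncate by $\phi_{\theta T}^2$, rescale, pass to the limit uniformly over the equicontinuous family $\{|\mathcal F[\tilde\nu]|^2\}$, and finish with Lemma~\ref{l:captruncriesz}. The paper uses this route only in case~\ref{item:3}. In case~\ref{item:2} it instead keeps only $\mu_{a.c.}$ and applies Fatou along weakly convergent $\nu_n$. In case~\ref{item:1} it works in space, after replacing $\mu$ by a component $\mu_1\le\mu$ (from \cite{ms22}) whose kernel $K_1$ is strictly positive with $K_1\sim K$, so that Fatou's lemma for weakly converging measures applies. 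Your route exploits spectral positivity (via $\phi\le 1$) rather than needing a positive kernel, which is a real simplification. It is sound in cases~\ref{item:2}--\ref{item:4}.

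\emph{The gap is in case~\ref{item:1}, exactly where you flag the difficulty: the mechanism you propose there fails.}

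\textbf{Why it fails.} In case~\ref{item:1}, $\mu$ need not be radial and $|\mathcal F[\tilde\nu]|^2\phi_\theta^2$ is not radial. So the convergence $\mu^{(T)}[B(r)]\to r^\alpha$, even locally uniformly via Proposition~\ref{p:taub} and monotonicity, does not determine $\int|\mathcal F[\tilde\nu]|^2\phi_\theta^2\,d\mu^{(T)}$. Integration by parts against a radial profile does not help either. That reduction is valid only for radial $\mu$, i.e.\ in case~\ref{item:3}, and even there you must first spherically average the integrand or restrict to radial $\nu$ via Claim~\ref{c:iso}. Your Step~1 does not supply vague convergence of $\mu^{(T)}$ either, since it only handles fixed pairs with bounded densities.

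\textbf{How to fix it.} The missing ingredient is a Tauberian step using the hypothesis on $K$, and your truncation makes it easy. Set $K^{(T)}:=K(T\cdot)/(T^{-\alpha}w(T))$ and $\Psi_\theta:=\mathcal F[\phi_\theta]*\mathcal F[\phi_\theta]$, which is a bounded, compactly supported probability density. The kernel of $\phi_\theta^2\mu^{(T)}$ is $K^{(T)}*\Psi_\theta$, and that of $\mu_\alpha^\theta$ is $k_\alpha*\Psi_\theta$. Hence
\[
\sup_{|z|\le 2}\big|(K^{(T)}*\Psi_\theta)(z)-(k_\alpha*\Psi_\theta)(z)\big|\le\|\Psi_\theta\|_\infty\int_{B(R)}\big|K^{(T)}(v)-k_\alpha(v)\big|\,dv\longrightarrow 0
\]
for a suitable $R$. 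The convergence follows by dominated convergence, using the bound $|K^{(T)}(v)|\le C|v|^{-\alpha-\epsilon}$ on $B(R)$ that you already derived in Step~1.

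Since $\tilde\nu\otimes\tilde\nu$ is a probability measure on $\{|x-y|\le 2\}$, this gives $E_{\phi_\theta^2\mu^{(T)}}[\tilde\nu]\to E_{\mu_\alpha^\theta}[\tilde\nu]$ uniformly over $\tilde\nu\in\mathcal P(B(1))$, and your Step~3 then goes through. Equivalently, the same domination plus Parseval gives $\mu^{(T)}\to\mu_\alpha$ in $\mathcal S'$, hence vaguely by positivity.
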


\begin{proof}[Proof of Theorem \ref{t:capuniv} from Proposition~\ref{prop: for univ}]

By definition of the capacity \eqref{e:cap1} and the asymptotics~\eqref{e:common asym}, we have 
\begin{equation*}  
\frac{ E[\nu_{B(T)}] }{T^{-\alpha} w(T)} 
\sim
\frac{\capa_f(B(T))^{-1}}{\mu(B(\frac 1 T))}.
\end{equation*}
By Proposition~\ref{prop: for univ}--\ref{item:b}, the left-hand-side converges to $c_{\alpha,d}^{-1} = E_{\alpha}(\nu_\alpha)$, which yields ~\eqref{e:univcap1}.

\medskip
For \eqref{e:univcap2} we assume further that 
\eqref{e:ds} holds. By \eqref{e:common asym}, also \eqref{e:doub} holds. We apply Lemma \ref{l:stab} with $D = B(T)$, $\nu =  \nu_\alpha^\theta(\cdot/T)$, and $\eta \mapsto \eta_T$, which gives, assuming $E[\nu_\alpha^\theta(\cdot/T)] \capa_f(B(T))  \le 2$,
\[ \big| \langle h_T, \eta_T \rangle  -  \langle h_{ \nu_\alpha^\theta(\cdot/T) }, \eta_T \rangle   \big|  \le  2 \sqrt{ \big(E[\nu_\alpha^\theta(\cdot/T)] \capa_f(B(T))  - 1  \big) E[\eta_T] \capa_f(B(T) ) } . \]
Since we assume \eqref{e:doub}--\eqref{e:ds}, by the first statement of Corollary \ref{c:enbound}, $E[\eta_T] \capa_f(B(T))$ is bounded over $T \ge 1$.  Moreover, by 
both parts of Proposition~\ref{prop: for univ}, 
\[ 
E[\nu_\alpha^\theta(\cdot/T)] \capa_f(B(T)) =  \frac{ E[\nu_\alpha^\theta(\cdot/T)]}{T^{-\alpha} w(T)} \cdot \frac{T^{-\alpha} w(T)}{E[\nu_{B(T)}]} \overset{T\to\infty}{\longrightarrow}\frac{E_\alpha[\nu_\alpha^\theta]}{E_\alpha[\nu_\alpha]}. 
\]
Hence we deduce that 
\[ \limsup_{T \to \infty} \big| \langle h_T, \eta_T \rangle  -  \langle h_{\nu_\alpha^\theta(\cdot/T) }, \eta_T \rangle   \big|  \le c \sqrt{ \frac{E_\alpha[\nu_\alpha^\theta]}{E_\alpha[\nu_\alpha]}  - 1}  \]
for some $c > 0$ independent of $\theta$. To complete the proof we observe that
 \[ \langle h_{\nu_\alpha^\theta(\cdot/T) }, \eta_T \rangle  = \frac{E[\nu_\alpha^\theta(\cdot/T), \eta_T ] }{E[\nu_\alpha^\theta(\cdot/T)]}  \overset{T\to\infty}{\longrightarrow}  \frac{E_\alpha[\nu^\theta_\alpha,\eta] }{E_\alpha[\nu^\theta_\alpha] }   ,  \]
 where we used Proposition~\ref{prop: for univ}--\ref{item:a}. Letting $\theta \to 0$ and combining with Lemma~\ref{lem:thetatozero} yields
 \[ \lim_{T\to\infty}\langle h_T, \eta_T \rangle  =\frac{E_\alpha[\nu_\alpha,\eta]}{E_\alpha[\nu_\alpha]} = \langle h_{\alpha,d},\eta\rangle     , \]
 which is precisely \eqref{e:univcap2}.
\end{proof}

The rest of the section is devoted to the proof of Proposition~\ref{prop: for univ} under its various conditions. We shall use the following technical lemma.

\begin{lemma}\label{lem: use Potter}
Let $w:(0,\infty)\to(0,\infty)$ be slowly varying at $\infty$. Let $g:\R^d\to\R$ be a function such that there exists $\ep>0$ for which 
\[\int_{\R^d}\max(|x|^\ep,|x|^{-\ep}) |g(x)| \: dx < \infty.\]
Let $k_T:\R^d\to\R$ be a family of uniformly bounded functions such that for all $z\in \R^d$, $k(z):=\lim\limits_{T\to\infty} k_T(z)$. 
Then 
\[
\lim_{T\to\infty }
\int g(z) k_T(z) \frac{w(T|z|)}{w(T)} \: dz =
\int g(z) k(z) \: dz.
\]
\end{lemma}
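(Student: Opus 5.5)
The plan is a dominated convergence argument, with the Potter bounds for slowly varying functions supplying the domination. I would first record the Potter bounds. For every $\delta>0$ there exist $A_\delta\ge 1$ and $x_0>0$ such that
\[ \frac{w(y)}{w(x)} \le A_\delta \max\Big\{\Big(\frac{y}{x}\Big)^{\delta},\Big(\frac{y}{x}\Big)^{-\delta}\Big\} \qquad \text{for all } x,y\ge x_0. \]
To get this on all of $(0,\infty)$ one uses that $w$ may be modified on a compact set around the origin, as the paper allows. Concretely, replace $w$ on $(0,x_0]$ by the constant $w(x_0)$; this changes nothing in the limit, since the asymptotic hypotheses only concern large arguments. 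I am assuming, as is standard, that $w$ is measurable and locally bounded away from $0$ and $\infty$ on $[x_0,\infty)$. I would apply this with $\delta=\ep$, where $\ep$ is the exponent in the hypothesis on $g$.

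Next I fix $z\ne 0$ and examine the integrand pointwise. For each such $z$, $w(T|z|)/w(T)\to 1$ by slow variation and $k_T(z)\to k(z)$, so the integrand converges to $g(z)k(z)$. The set $\{z=0\}$ is Lebesgue-null and can be ignored.

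For the domination, take $T\ge x_0$. If $T|z|\ge x_0$, the Potter bound with $x=T$ and $y=T|z|$ gives $w(T|z|)/w(T)\le A_\ep\max(|z|^\ep,|z|^{-\ep})$. If $T|z|<x_0$, the modified $w$ gives $w(T|z|)=w(x_0)$, and Potter with $x=T$, $y=x_0$ gives
\[ \frac{w(x_0)}{w(T)} \le A_\ep\Big(\frac{T}{x_0}\Big)^{\ep} \le A_\ep |z|^{-\ep}, \]
where the last step uses $T/x_0< 1/|z|$. In both cases, with $M=\sup_{T,z}|k_T(z)|<\infty$,
\[ |g(z)k_T(z)|\,\frac{w(T|z|)}{w(T)} \le M A_\ep \max(|z|^{\ep},|z|^{-\ep})\,|g(z)|. \]
The right-hand side is integrable by hypothesis, so dominated convergence yields the claimed limit.

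The main obstacle is exactly this uniform domination near $z=0$ and near infinity. A slowly varying $w$ need not be bounded in either regime, which is why the integrability weight $\max(|z|^\ep,|z|^{-\ep})$ is imposed. The small-$|z|$ region where $T|z|$ falls below the Potter threshold needs the separate handling above. If modifying $w$ is considered illegitimate, I would instead split that region off and bound it using local boundedness of $w$ together with the lower Potter bound $w(T)\ge A^{-1}T^{-\ep}w(x_0)$. That split gives the same $|z|^{-\ep}$ domination.
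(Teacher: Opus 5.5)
Your proposal is correct and follows the same route as the paper. Both arguments take the pointwise limit $w(T|z|)/w(T)\to 1$ for $z\neq 0$, dominate by $C\max(|z|^{\varepsilon},|z|^{-\varepsilon})\,|g(z)|$ using Potter's bounds, and conclude by dominated convergence. The only difference is that the paper quotes the global form of Potter's bound, which holds for all arguments once $w$ is suitably bounded near the origin, whereas you derive that bound by hand in the region $T|z|<x_0$.

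One point to tighten is your remark that replacing $w$ by a constant on $(0,x_0]$ ``changes nothing in the limit''. This is not automatic: with $w(x)=x^{-d}$ on $(0,1)$ the integrals are infinite. So both your argument and the paper's tacitly need $w$ bounded above near $0$. That is exactly the assumption your fallback uses, and it is harmless in the paper's applications, where $w$ is only determined up to asymptotic equivalence at infinity.
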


\begin{proof}
Since $w$ is slowly varying, we have $\lim_{T\to\infty} \frac{w(T|x|)}{w(T)}=1$. Moreover, by Potter's bounds (see \cite[Theorem 1.5.6-(ii)]{bgt87}), for any fixed $\ep>0$ we have
\[  
\exists C>0 \: \forall z\in\R^d: \quad \frac{w(T|z|)}{w(T)}   \le C\max(|z|^{-\ep},|z|^{\ep}).
\]
The lemma follows by a simple application of the dominated convergence theorem.
\end{proof}

\subsubsection{Proof of Proposition \ref{prop: for univ} under condition \ref{item:1}}\label{sec: prop for univ item 1}

By the definition of energy,
\begin{equation*}%\label{eq:e_nu_theta}
\frac{E[\nu^\theta_\alpha(\cdot/T)]}{T^{-\alpha}w(T)} = 
\iint \frac{K(T(x-y))}{T^{-\alpha} w(T)} 
    d\nu^\theta_\alpha(x) \,d\nu^{\theta}_\alpha(y).
\end{equation*}
Fix $z$ and consider the decomposition
\begin{equation}\label{eq: 3parts}
\frac{ K(Tz)} {T^{-\alpha} w(T)} = 
|z|^{-\alpha} \cdot \frac{K(Tz)}{(T|z|)^{-\alpha}\, w(T|z|)} \cdot \frac{w(T|z|)}{w(T)}.    
\end{equation}
By assumption~\ref{item:1w} we have $
\lim\limits_{T\to\infty} \frac{K(Tz)}{(T|z|)^{-\alpha}\, w(T|z|)}  = B_{\alpha,d}$, which yields the pointwise limit
\begin{equation}\label{e:limit}
\lim_{T\to\infty}\frac{K(Tz)}{T^{-\alpha}w(T)}= B_{\alpha,d}|z|^{-\alpha}.
\end{equation} 
Moreover, by modifying $w$ so that $|x|^{-\alpha} w(|x|) \to 1$ as $|x| \to 0$, we obtain that $\frac{K(x)}{|x|^{-\al}w(|x|)}$ is a bounded function.
Recalling that also $\nu^\theta_\al$ is compactly supported and    has bounded density, we may use Lemma~\ref{lem: use Potter} with any $\eps \in (0,d-\alpha)$ to get
\begin{align}
\label{e:unikerdom}
    \lim_{T\to\infty} \iint 
    \frac{K(T(x-y))}{T^{-\alpha} w(T)} %\id_{\{|x-y| \ge \tfrac{t_0}{T}\}}
    d\nu^\theta_\alpha(x) \,d\nu^{\theta}_\alpha(y)
    =  B_{\alpha,d} \iint |x-y|^{-\alpha} \, d\nu_\alpha^\theta(x) d\nu_\alpha^\theta(y) =E_\alpha[\nu^\theta_\alpha].
 \end{align}
A similar application of Lemma~\ref{lem: use Potter} yields 
 \[
 \lim_{T\to\infty} \frac{E[ \nu_\alpha^\theta(\cdot/T), \eta_T ] }{T^{-\alpha} w(T)}
=E_\alpha[\nu_\alpha^\theta,\eta],
 \]
which proves part~\ref{item:a}.

\medskip
By the definition of energy,
\begin{equation*}
\frac{ E[\nu_{B(T)}] }{T^{-\alpha} w(T)} 
=\inf_{\nu \in \mathcal{P}(B(1))}  \iint \frac{ K(T(x-y))} {T^{-\alpha} w(T)} \, d\nu(x) d\nu(y)  . 
\end{equation*}
Thus item~\ref{item:b} reduces to showing 
\begin{equation}\label{e:univcap4}
     \lim_{T \to \infty}  \inf_{\nu \in \mathcal{P}(B(1))}  \iint \frac{ K(T(x-y))} {T^{-\alpha} w(T)} \,d\nu(x) d\nu(y)  = E_{\alpha}[\nu_\alpha].
     %c_\alpha^{-1} . 
\end{equation}
We use a decomposition appearing in \cite[Proposition C.6]{ms22}\footnote{Although the statement of \cite[Prop. C.6]{ms22} assumed isotropy, one can check that this is not needed in the proof. Also \cite[Prop. C.6]{ms22} claims that $K_1 \ge 0$, but the proof gives $K_1 > 0$.}: there exist non-negative measures $\mu_1$ and $\mu_2$ such that $\mu = \mu_1 + \mu_2$, and $K_1=\mathcal{F}[\mu_1]$ obeys $K_1 > 0$ and $K_1(x) \sim K(x)$ as $x \to \infty$. By monotonicity (Claim~\ref{c:mon}), for all $T$ we have \[  \capa_\mu(B(T))\le  \capa_{\mu_1}(B(T))  , \]
which by the definition of capacity~\eqref{e:cap1} translates into
 \[
 \inf_{\nu \in \mathcal{P}(B(1))}  \iint \frac{ K(T(x-y))} {T^{-\alpha} w(T)} \,d\nu(x) d\nu(y)
 \ge 
 \inf_{\nu \in \mathcal{P}(B(1))}  \iint \frac{ K_1(T(x-y))} {T^{-\alpha} w(T)} \,d\nu(x) d\nu(y).
  \]
Recall the pointwise limit ~\eqref{e:limit}, which also holds for $K_1$. In fact for $K_1$ we can upgrade \eqref{e:limit} to 
\begin{equation*}
\lim_{T\to\infty, z_T \to z}\frac{K_1(Tz_T)}{T^{-\alpha}w(T)}= B_{\alpha,d}|z|^{-\alpha}.
\end{equation*} 
For $z \neq 0$, this follows from the uniform convergence theorem for regularly varying functions \cite[Theorem 1.5.2]{bgt87}. For $z = 0$, this is since $K_1$ is asymptotic to an eventually decreasing function (see Remark \ref{r:taub}) and is strictly positive. Then for any $T_n \to \infty$ and $\nu_n \in \mathcal{P}(B(1))$ which converge weakly to $\nu_\infty \in \mathcal{P}(B(1))$, we have, by Fatou's lemma for weakly convergent probability measures~\cite{fkz14},
\[ \liminf_{n \to \infty}  \iint \frac{ K_1(T_n(x-y))} {T_n^{-\alpha} w(T_n)} \, d\nu_n(x) d\nu_n(y) 
\ge B_{\alpha,d}\iint |x-y|^{-\alpha}
d\nu_\infty(x)d\nu_\infty(y)\ge E_{\alpha}[\nu_\alpha], \]
where the last inequality uses the minimality of $c_{\alpha}^{-1}$.
To conclude the proof of \eqref{e:univcap4}, we are thus left with showing 
\begin{equation}\label{e:capup} \limsup_{T \to \infty}  \inf_{\nu \in \mathcal{P}(B(1))}  \iint \frac{ K(T(x-y))} {T^{-\alpha} w(T)} \,d\nu(x) d\nu(y)   \le  E_{\alpha}[\nu_\alpha] . \end{equation}
To this end, we note that for any $\theta>0$ we have $\nu_\alpha^\theta\in\mathcal{P}(B(1))$, and hence
\begin{equation*}%\label{e:totheta} 
\inf_{\nu\in \mathcal{P}(B(1))} \iint \frac{K(T(x-y))}{T^{-\alpha}  w(T) } \, d\nu(x) d\nu(y)\le 
\iint \frac{K(T(x-y))}{T^{-\alpha}  w(T) } \, d\nu_\alpha^\theta(x) d\nu_\alpha^\theta(y). \end{equation*}
Taking $T\to\infty$ (by part~\ref{item:a}) and then $\theta\to 0$ (using Lemma~\ref{lem:thetatozero}) %in~\eqref{e:totheta} 
establishes our goal~\eqref{e:capup}.  
\qed

\subsubsection{Proof of Proposition~\ref{prop: for univ} under condition \ref{item:2}}
Let $\mu = \mu_{a.c.}\id_{B(a)} + \mu'$, 
where $\mu' = \mu_{a.c.}\id_{B(a)^c} + \mu_s$, and let $\rho$ denote the density of $\mu_{a.c.}\id_{B(a)}$.  
As described in~\ref{item:2w}, we assume that $\alpha>0$ and $\rho(\lambda) \sim A_{\alpha,d} |\lambda|^{\alpha-d} w(1/|\lambda|)$ as $\lambda \to 0$, where $w$ is slowly varying as $\lambda \to 0$.
The number $a$ is chosen such that 
$\frac{\rho(\lm) }{|\lm|^{\al-d} w(1/|\lm|)}$ is bounded on $B(a)$.

After a Fourier transform and change of variables,
\begin{equation}\label{e:rvsuff3}
  \frac{ E[\nu_\alpha^\theta(\cdot/T)] }{T^{-\alpha} w(T)}  
   =   \int_{|\lambda| \le a T} |\mathcal{F}[\nu^\theta_\alpha](\lm)|^2\cdot \frac{  \rho(\lm/T)}{T^{d-\alpha} w(T)}   \, d\lambda +  \frac{ E_{\mu'}[\nu_\alpha^\theta(\cdot/T)]}{T^{-\alpha} w(T)}. 
 \end{equation}
Fix $\lm>0$ and consider the decomposition
\[
 \frac{\rho(\lm/T)}{T^{d-\alpha} w(T)} =
 |\lambda|^{\alpha-d} \cdot \frac{ \rho(\lambda/T)  }{ (|\lambda|/T)^{\alpha-d} w(T/|\lambda|)} \cdot \frac{ w(T/|\lambda|) }{ w(T)}.
\] 
Assumption~\ref{item:2w} yields the pointwise limit
 \begin{equation}\label{e:limit2}
 \lim_{T\to\infty}\frac{ \rho(\lm/T)}{T^{d-\alpha} w(T)} = A_{\alpha,d} |\lm|^{\alpha-d}.
 \end{equation}
Recalling that $|\mathcal{F}[\nu^\theta_\alpha](\lambda)|^2$ is non-negative and integrable, we may apply Lemma~\ref{lem: use Potter} (with some $\eps \in (0,\alpha)$) to get
\begin{align}
    \label{e:unidendom}
\lim_{T\to\infty} \int_{|\lambda| \le aT} |\mathcal{F}[\nu^\theta_\alpha](\lambda)|^2 \frac{\rho(\lm/T)}{T^{d-\alpha} w(T) }\,  d\lambda 
= A_{\alpha,d} \int |\mathcal{F}[\nu^\theta_\alpha](\lambda)|^2  |\lambda|^{\alpha-d} \, d\lambda  = E_\alpha[ \nu_\alpha^\theta] .
\end{align}
We are left with showing that the second term in~\eqref{e:rvsuff3} vanishes as $T\to\infty$. Indeed, recalling that $\mu_s$ is supported in $B(r)^c$ for some $r>0$, we have
\[ \frac{E_{\mu'}[\nu_\alpha^\theta(\cdot/T)] }{T^{-\alpha} w(T)}
= \frac{\int_{|\lm|>\min\{1,r\} }|\mathcal{F}[\nu^\theta_\alpha]|^2(T\lm) d\mu'(\lm) }{T^{-\alpha} w(T) }
\le O(e^{-c T^v}), \quad \text{as  } T\to\infty, \]
where $c,v>0$ are constants coming from the rapid decay of $\mathcal{F}[\nu_\alpha^\theta]$, as stated in~\eqref{e:nudecay}. A similar application of Lemma~\ref{lem: use Potter} establishes that \[\lim_{T\to\infty} \frac{E[ \nu_\alpha^\theta(\cdot/T), \eta_T ] }{T^{-\alpha} w(T)}=\lim_{T\to\infty} \frac{\int \mathcal{F}[\nu^\theta_\alpha](T\lm) \overline{\mathcal{F}[\eta](T\lm)} d\mu(\lm) }{T^{-\alpha} w(T) }
=E_\alpha[\nu_\alpha^\theta,\eta],\] using that $|\mathcal{F}[\eta]|$ is bounded.
This establishes part~\ref{item:a}. 

\medskip
We turn to prove part~\ref{item:b}.
By definition~\eqref{e:cap1}, 
\begin{align*}
 \frac{E[\nu_{B(T)}]}{T^{-\alpha} w(T)} & = \frac{1}{T^{-\alpha}  w(T)}   \min_{\nu \in \mathcal{P}(B(T))}  \int |\mathcal{F}[\nu]|^2  d\mu \\
 & \ge \frac{1}{T^{-\alpha}  w(T)}   \min_{\nu \in \mathcal{P}(B(T))}  \int |\mathcal{F}[\nu]|^2  d\mu_{a.c.} \\
 & = \frac{1}{T^{d-\alpha}  w(T)}   \min_{\nu \in \mathcal{P}(B(1))}  \int |\mathcal{F}[\nu](\lambda)|^2  \rho(\lambda/T) \, d\lm  .
 \end{align*} 
Recall the pointwise limit~\eqref{e:limit2}. For any $T_n \to \infty$ and $\nu_n \in \mathcal{P}(B(1))$ which converge weakly to $\nu_\infty \in \mathcal{P}(B(1))$, we have, by Fatou's lemma
\begin{align*} 
\liminf_{n \to \infty}  &\frac{1}{T_n^{d-\alpha}  w(T_n)}  \int |\mathcal{F}[\nu_n](\lambda)|^2  \rho(\lambda/T_n) \, d\lambda  
 \ge 
A_{\alpha,d}\int |\lm|^{\alpha-d}|\mathcal{F}[\nu_\infty](\lambda)|^2   d\lm
\ge E_{\alpha}[\nu_\alpha].
%c_\alpha^{-1} . 
\end{align*}
We conclude that 
\[
\liminf_{T\to\infty}\frac{E[\nu_{B(T)}]}{T^{-\alpha}w(T)}\ge E_{\alpha}[\nu_{\alpha}].
\]
The reverse inequality is precisely~\eqref{e:capup}, which has been shown to follow from item~\ref{item:a} and Lemma~\ref{lem:thetatozero}. We thus conclude the proof of item~\ref{item:b}.

\subsubsection{Proof of Proposition \ref{prop: for univ} under condition \ref{item:3}}
 As described in~\ref{item:3w}, we assume that $\mu$ is radial and $\mu[B(1/T)] \sim T^{-\alpha} w(T)$, where $w$ is slowly varying as $T \to \infty$. Moreover, arguing as in the previous case, we may assume that $\mu$ is supported on $B(1)$.
Abbreviate $G(\delta) = \mu[B(\delta)]$. We write:
\begin{align*}
E[\nu_\alpha^\theta(\cdot/T)] 
& = 
\int |\F[\nu_\al^\theta]|^2(T\lm) d\mu(\lm) & \gray{}
\\ & = \int_0^\infty |\F[\nu_\al^\theta]|^2(T\lm) dG(\lm)  & 
\text{(radiality)}
\\ & = \int _0^\infty -\tfrac{d}{d\lm}\left(|\F[\nu_\al^\theta]|^2(T\lm)\right) G(\lm) d\lm & \text{(integration by parts)}
\\ & =\int_0^\infty -(|\F[\nu_\al^\theta]|^2)'(\lm) \, G(\tfrac \lm T) d\lm,  &\text{(change of variable)}
\end{align*}
where to justify the integration-by-parts
we use that $\mathcal{F}[\nu_\al^\theta]$ is smooth and rapidly decaying by~\eqref{e:nudecay}, and also that $G(\lambda) \in [0,1]$ with $G(0) = 0$. 
Thus
\begin{align} 
\notag \frac{ E[\nu_\alpha^\theta]}{T^{-\alpha} w(T) }  
& = 
\frac{\int_0^\infty -( |\mathcal{F}[\nu^\theta_\alpha]|^2)'(\lambda) G(\tfrac \lm T) d\lm} {T^{-\al}w(T)} \\ &=   \int_0^\infty -( |\mathcal{F}[\nu^\theta_\alpha]|^2)'(\lambda)  \cdot \lambda^\alpha \cdot \frac{ G(\tfrac {\lambda}{ T})}{ \left(\tfrac{\lm}{T}\right)^{\alpha} \,w(\tfrac{T}{\lambda})  } \cdot \frac{w(\tfrac T \lambda)}{w(T)} \, d\lambda. 
\label{e:decomp3}
\end{align}
By assumption~\ref{item:3w}, we have 
\begin{equation}\label{e:limit3}
\lim\limits_{T\to\infty}\frac{ G\left(\tfrac {\lambda}{ T}\right)}{ \left(\tfrac{\lm}{T}\right)^{\alpha}  \,w\left(\tfrac{T}{\lambda}\right)  } =1 \qquad \text{and} \qquad  \sup_{\delta < 1} \frac{G(\delta)}{\delta^\alpha w(1/\delta) } < \infty . \end{equation}

Recalling again the smoothness and rapid decay of derivatives of $\mathcal{F}[\nu_\al^\theta]$ given in~\eqref{e:nudecay}, we apply Lemma~\ref{lem: use Potter} with $\ep\in (0,1)$ to get
\begin{align*}
\lim_{T\to\infty} \frac{E[\nu_\al^\theta]}{T^{-\al} w(T)} 
& =\int_0^\infty -(|\F[\nu_\al^\theta]|^2)'(\lm) \cdot \lm^\al \, d\lm 
\\ & =  \begin{cases}
     A_{\al,d}\int |\F[\nu_\al^\theta]|^2 (\lm) \cdot |\lm|^{\al-d} \, d\lm  &  \alpha \in (0,d)  \\
 |\F[\nu_\al^\theta]|^2(0) & \alpha = 0  
    \end{cases} \\
    & = E_\al[\nu_\al^\theta],
\end{align*}
where the second equality is another integration by parts. 
This establishes \eqref{e:rvsuff1}. The proof of \eqref{e:rvsuff2} is similar, using that $\mathcal{F}[\eta]$ is radial and bounded. We have thus proved part~\ref{item:a}.

\medskip
We turn to prove part~\ref{item:b}. 
Let $\mathcal{P}_{rad}(B(T))$ be the subset of $\mathcal{P}(B(T))$ which are radial. We have
\begin{align*}
E[\nu_{B(T)}] & = \inf_{\nu \in \mathcal{P}_{rad}(B(T))}  \int |\mathcal{F}[\nu]|^2(\lambda)  \, d\mu(\lambda)  &\text{(Claim \ref{c:iso})}
 \\ & \ge   \inf_{\nu \in \mathcal{P}_{rad}(B(T))}  \int |\mathcal{F}[\nu]|^2(\lambda)  \phi_{\theta T}^2(\lambda) \, d\mu(\lambda)
 &(\phi_s\le 1,   \text{by Prop.~\ref{c:1})}
 \\
 & =  \inf_{\nu \in \mathcal{P}_{rad}(B(T))}  \int_0^\infty -( |\mathcal{F}[\nu]|^2 \phi_{\theta T}^2)'(\lambda) G(\lambda) \, d\lambda 
 & \text{(integration by parts)}
 \\ &= \inf_{\nu \in \mathcal{P}_{rad}(B(1))}  \int_0^\infty -( |\mathcal{F}[\nu]|^2 \phi_\theta^2  )'(\lambda)   G(\tfrac{\lambda}{T})  \, d\lambda & \text{(change of variable)}
\end{align*} 
where to justify the integration-by-parts
we use that $\mathcal{F}[\nu] \in C^\infty$, that $\phi_s^2$ and its derivatives have rapid decay (by Prop.~\ref{c:1}), and that $G(\lambda) \in [0,1]$ with $G(0) = 0$. 

\smallskip
 For any $T_n \to \infty$ and $\nu_n \in \mathcal{P}(B(1))$ which converge weakly to $\nu_\infty \in \mathcal{P}(B(1))$, we have, by another application of Lemma~\ref{lem: use Potter},
\begin{equation}\label{eq: dom limit3} 
\lim_{n \to \infty}   \, \frac{ \int_0^\infty -( |\mathcal{F}[\nu_n]|^2 \phi_\theta^2)'(\lambda)   G(\tfrac{\lambda}{T_n})  \, d\lambda }{T_n^{-\alpha}  w(T_n)} 
 = 
 \int_0^\infty -(|\F[\nu_\infty]|^2 \phi_\theta^2))'(\lm) \cdot \lm^\al \, d\lm.
\end{equation}
(Unlike the previous cases, we cannot apply Fatou's lemma here, since $-(|\mathcal{F}[\nu_n](\lambda)|^2 \phi_\theta^2(\lambda))'$ is not necessarily positive). To justify \eqref{eq: dom limit3}, recall the decomposition in~\eqref{e:decomp3} and the bounds in~\eqref{e:limit3}. Moreover, note that
$(|\mathcal{F}[\nu_n](\lambda)|^2\phi_\theta^2(\lambda)) ' \to (\mathcal{F}[\nu_\infty](\lambda)|^2 \phi_\theta^2(\lambda))'$ since $\nu_n$ is compactly supported and converges weakly to $\nu_\infty$. We also have the uniform decay estimate
\[ (|\mathcal{F}[\nu_n](\lambda)|^2 \phi_\theta^2(\lambda))' \le c_1 e^{ - c_2 |\lambda|^v }\]
since $|\mathcal{F}[\nu_n]|^2$ and its derivatives are uniformly bounded and $\phi_\theta$ and its derivatives have rapid decay. This justifies~\eqref{eq: dom limit3}, from which we conclude that
\begin{align*} \liminf_{T\to\infty}\frac{E[\nu_{B(T)}]}{T^{-\alpha}w(T)}
 &\ge 
 \int_0^\infty -(|\F[\nu_\infty]|^2 \phi_\theta^2))'(\lm) \cdot \lm^\al \, d\lm
 \\ &=
 \begin{cases}
 A_{\alpha,d}\int |\mathcal{F}[\nu_\infty]|^2(\lambda) \phi_\theta^2(\lm) \cdot |\lm|^{\alpha-d} d\lm & \alpha \in (0,d) \\
  |\mathcal{F}[\nu_\infty]|^2(0)  & \alpha = 0 
 \end{cases} 
 \\&  \ge \left(\capa_{\mu^\theta_\al}(B(1))\right)^{-1}
\end{align*}
Taking $\theta\to 0$, by Lemma~\ref{l:captruncriesz} we conclude that 
\[
\liminf_{T\to\infty}\frac{E[\nu_{B(T)}]}{T^{-\alpha}w(T)}\ge c_\al^{-1} = E_{\alpha}[\nu_{\alpha}].
\]
The reverse inequality is precisely~\eqref{e:capup}, which has been shown in Section~\ref{sec: prop for univ item 1} to follow from item~\ref{item:a} and Lemma~\ref{lem:thetatozero}. We thus conclude the proof of item~\ref{item:b}.

\subsubsection{Proof of Proposition~\ref{prop: for univ} under condition~\ref{item:4}}
In this case $\alpha = 0$, $\mu$ has an atom at the origin of mass $a$, and $w\equiv a$. Recall that the energy $E_0$ can be computed by~\eqref{eq: riesz}. Denoting by $\mu'$ the measure without the atom at $0$, we have
\begin{align*}
\frac{E[\nu_0^\theta\left(\cdot/T\right)]}{w(T)} &=
a^{-1}  \int \left|\mathcal{F}[\nu_0^\theta](T\lm) \right|^2 d\mu
\\ & = a^{-1}\cdot a|\mathcal{F}[\nu_0^\theta](0)|^2 + O\left( \int e^{-cT^\nu \lm^\nu} d\mu'(\lm) \right) = E_0[\nu_0^\theta] + o(1),
\end{align*}
as $T\to\infty$. This proves part~\ref{item:a}. 
For part~\ref{item:b}, we have:
\begin{align*}
\frac {E[\nu_{B(T)}]}{w(T)}  &= a^{-1}\cdot\min_{\nu\in \mathcal{P}(B(T))} \left( a|\mathcal{F}[\nu](0)|^2 + 
 \int |\mathcal{F}[\nu](\lm)|^2 d\mu'(\lm)\right) \\ &= 1 + o(1) = E_0[\nu_0] + o(1).
\end{align*}

\subsection{Adapting the argument to discrete fields}\label{s:fordiscrete}
For discrete fields, Proposition \ref{prop: for univ}-\ref{item:a} does not make sense as stated, since $\nu_\alpha^\theta(\cdot/T)$ is not in $\mathcal{P}(B(T) \cap \Z^d)$, and the proof of Proposition \ref{prop: for univ}-\ref{item:b} is not valid for similar reasons. To remedy \ref{item:b}, one can simply replace $\nu_\alpha^\theta(\cdot/T)$ by any probability measure supported on $B(T) \cap \Z^d$ that is obtained by moving mass by a bounded distance $\sqrt{d}$, and the arguments go through with minimal change. Similarly, for \ref{item:b} one replaces $\nu_n \in \mathcal{P}(B(1))$ with $\nu_n \in \mathcal{P}(B(1) \cap ( \Z^d/T_n) )$, and also replaces $\nu_\alpha^\theta$ in a similar way to as described above.

%%%%%
%%%%%%

\medskip
\section{Regularity of the capacity}
\label{s:ca}

In this section we study the asymptotic regularity of the capacity, proving Propositions \ref{p:suffcapcon} and \ref{p:counterexample2}.

\subsection{Sufficient conditions for capacity regularity}

In this section we prove Proposition~\ref{p:suffcapcon}, handling the two alternate conditions separately. In both cases we use that, since \eqref{e:blowup} holds, by Corollary \ref{c:capbounds} we have the \emph{a priori} estimate
\begin{equation}
    \label{e:apriori}
\capa_f(B(T)) = T^{\alpha + o(1)} .
\end{equation}

\begin{proof}[Proof of Proposition \ref{p:suffcapcon} under the first condition]
 Let $\eps \in (0,1)$ be given, let $T \ge 1$, and abbreviate $S = T^{1-\eps}$. Consider covering $ B(T + S) \setminus B(T)$ with $c_d T^{\eps(d-1)}$ translated copies of $B(S)$. By subadditivity of the capacity for non-negative $K$ (Claim \ref{c:sa}), we have
\[  \capa_{f}(B(T + S))  -  \capa_{f}(B(T)) \le c_d T^{\eps(d-1)} \capa_f(B(S)) , \]
so that by \eqref{e:apriori}
 \[  \frac{ \capa_f(B(T + S))  -  \capa_f(B(T )) }{\capa_f(B(T)) }  =  T^{\eps(d-1) + \alpha(1-\eps) - \alpha + o(1) }  = T^{\eps(d-1-\alpha) + o(1)}.\]
Since we assume $\alpha > d-1$, the latter tends to zero, as required.
\end{proof}

\begin{proof}[Proof of Proposition \ref{p:suffcapcon} under the second condition]
Recall the spectral truncation function $\phi_s$ from \eqref{eq:stf}, defined with respect to some fixed parameter $v \in (0,1)$. Let $\eps \in (0,1)$ be given, let $T \ge 1$, abbreviate $S = T^{1-\eps}$, and let $g_S$ denote an SGF with spectral measure $\mu \phi_S^2$. 

To exploit isotropy and enact smoothing, we first claim that removing a small ball around the origin and replacing $f$ by $g_S$ does not significantly reduce capacity. In particular, we claim that to show capacity regularity, it would suffice to find an $\eps' = \eps'(\eps) \in (0,1)$ that would satisfy $\alpha < (d-1)(1-\eps')$ and
\begin{equation}
    \label{e:suffe1}
\Big| \Big(\capa_{g_S}(B(T+S) \setminus B(T^{1-\eps'}) ) \Big)^{-1}  -  \Big(  \capa_{g_S}(B(T-S))  \Big)^{-1} \Big| \le T^{-\alpha-\eps' + o(1)}   . 
 \end{equation}
 
To prove this is sufficient, note that since $f$ is isotropic and $K$ is eventually non-negative, by  \eqref{e:saext2} of Claim \ref{c:sa}
\begin{align*} 
\capa_f(B(T+S)) &\le \left(\frac{1}{   \capa_f(B(T+S) \setminus B(T^{1-\eps'}) ) +  \capa_f(B(T^{1-\eps'}))  } - c T^{-(1-\eps')(d-1)}\right)^{-1} \\
& \le (1+o(1))  \Big( \capa_f(B(T+S) \setminus B(T^{1-\eps'}) ) +  \capa_f(B(T^{1-\eps'}))  \Big) \\ 
& \le (1+o(1))  \capa_f(B(T+S) \setminus B(T^{1-\eps'}) ) +  o( \capa_f(B(T)) ) ,
\end{align*}
where the second inequality used monotonicity, \eqref{e:apriori} (applied to $T+S \le 2T$), and the fact that $\alpha < (d-1)(1-\eps')$, and the third used \eqref{e:apriori} and that $\alpha > 0$. We further observe that, by monotonicity (Claim~\ref{c:mon}),
\begin{equation*}    \capa_f(B(T+S) \setminus B(T^{1-\eps'}) )\le \capa_{g_S}(B(T+S) \setminus B(T^{1-\eps'}) )   \end{equation*}
and by the smoothing lemma (Lemma \ref{l:captrunc}),
\begin{equation*}   \capa_f(B(T))\ge \capa_{g_S}(B(T-S)) . \end{equation*}
Combining the previous three displays, 
\begin{align*}
&\capa_f(B(T+S)) - \capa_f(B(T)) \\
 & \qquad\le \Big|  \capa_{g_S}(B(T+S) \setminus B(T^{1-\eps'}) )  -  \capa_{g_S}(B(T-S)) \Big|(1+o(1)) +  o( \capa_f(B(T)) )  \\
 & \qquad \le  T^{-\alpha-\eps' + o(1)}   \capa_{g_S}(B(T+S))  \capa_{g_S}(B(T-S))  +   o(\capa_f(B(T))) , \\
 & \qquad \le  T^{-\eps' + o(1)}      \capa_{f}(B(T))  +   o(\capa_f(B(T))) ,
 \end{align*}
 where the second inequality uses \eqref{e:suffe1} and monotonicity, and the third uses the smoothing lemma and \eqref{e:apriori} (applied to $T+2S \le 3T$). This completes the proof of the lemma, assuming~\eqref{e:suffe1}. 

\medskip
Towards showing \eqref{e:suffe1}, fix some $\eps' \in (0,1)$ to be determined later and denote $S' = T^{1-\eps'}$. Write $\nu$ for a radial equilibrium measure for $B(T+S) \setminus B(S')$ with respect to the field $g_S$, whose existence is guaranteed by Claim~\ref{c:iso}. Next, define $\nu' \in \mathcal{P}(B(T-S))$ by rescaling the component of $\nu$ supported in $B(T+S)\setminus B(T-S)$ onto $B(T-S)$, i.e.,
\[ \nu'(\cdot) = \nu|_{B(T-S)}(\cdot) + \theta \nu|_{B(T+S)\setminus B(T-S)} \big(\theta \cdot \big),\]
  where %$\theta = \frac{T-S}{T+S}$ and $\pi(x)=\theta x$ for $x\in \R^d$. We note that 
  \begin{equation}\label{eq:theta}
  \theta = \frac{T-S}{T+S} = 1 - 2\frac{S}{T} + O\Big( \Big( \frac{S}{T} \Big)^2\Big). 
  \end{equation}
Denote $q_1(\lm) = \mathcal{F}[ \nu|_{B(T+S)\setminus B(T-S)} ](\lm) $ and $q_2(\lm) = \mathcal{F}[ \nu|_{B(T-S)} ](\lm) $, so that
\[  \capa_{g_S}( B(T+S) \setminus B(S')) = \Big( \int \Big|q_1(\lambda) + q_2(\lambda) \Big|^2 \phi_S^2(\lambda) d\mu(\lambda) \Big)^{-1}.\]
By maximality of the capacity,
\[  \capa_{g_S}( B(T-S) ) \ge  \left( E[\nu']\right)^{-1}=\left( \int \left| q_1(\theta \lambda ) + q_2(\lambda) \right|^2 \phi_S^2(\lambda)  d \mu(\lambda) \right)^{-1} . \]
Hence \eqref{e:suffe1} reduces to 
\begin{equation}
    \label{e:suffe1.5}
\left| \int \left| q_1(\theta\lambda ) + q_2(\lambda) \right|^2   - \left|q_1(\lambda) + q_2(\lambda) \right|^2 \phi_S^2(\lambda)  d \mu(\lambda)\right| \le T^{-\alpha-\eps' + o(1)}   . 
 \end{equation}
Using the identity $(a+c)^2 - (b+c)^2 = (a-b)(a+b+2c)$ and the boundedness and fast decay of $\phi_S$ given by Proposition~\ref{c:1},
%~\eqref{e:stfdecay}, 
this further reduces to showing
\begin{equation}
\label{e:suffe2}
\int_{B(S^{-1+\eps''})} \big| q_1(\theta\lambda) - q_1(\lambda) \big| \big| q_1(\theta\lambda) + q_1(\lambda) + 2 q_2(\lambda) \big|   d\mu(\lambda)   \le T^{-\alpha-\eps' + o(1)},
\end{equation}
for sufficiently small $\eps'' > 0$.

\smallskip
Let us now estimate these terms. Since $\nu$ is radial and supported on $B(T+S) \setminus B(S')$, by Proposition~\ref{c:3} we have for $i\in\{1,2\}$,
\[  |q_i(\lambda)| \le c \left(1 + (\lambda S')^{-(d-1)/2}\right)  \quad \text{and} \quad |\partial^k q_i(\lambda) | \le  c (T+S) \left( 1 +  (\lambda S')^{-(d-1)/2}  \right)  \]
for all $\lambda$ and multi-indices $k$ with $|k| = 1$. Plugging in the first order estimate of $\theta$ given in~\eqref{eq:theta}, 
%$$(T-S)/(T+S) = 1 - 2S/T + O((S/T)^2),$$
we obtain the bounds
\[  | q_1(\theta\lambda) - q_1(\lambda) | \le c \begin{cases} \left( |\lambda|\frac{S}{T}\right) T  ( |\lambda| S')^{-(d-1)/2}   & |\lambda| \ge 1/S', \\   |\lambda|S  &  |\lambda| \le 1/S', \end{cases} \]
and 
\[ |q_1(\theta\lambda) + q_1(\lambda) + 2 q_2(\lambda)| \le c  \begin{cases}   ( |\lambda| S')^{-(d-1)/2}  & |\lambda| \ge 1/S', \\   1 &  |\lambda| \le 1/S' . \end{cases}   \]
Dividing the integration region radially at radii $2^k/S'$, $k\in \N_0$, the left-hand side of \eqref{e:suffe2} is thus at most
\begin{align*}
    & c \cdot \frac{S}{S'} \sum_{2^k \le S'/S^{1-\eps''}+1}     ( 2^k)^{-(d-2)}    \mu( B( 2^k/S') ) \\
    & \qquad \le  c \cdot \frac{S}{S'}\cdot  (S')^{-\alpha + o(1)}   \cdot\sum_{2^k \le S'/S^{1-\eps''}+1}    (2^k)^{\alpha - (d-2)  + o(1)}    \\
    & \qquad  \le   c\cdot \frac {S}{S'}\cdot (S')^{-\alpha}  \left(\frac{S'}{S^{1-\eps''}}\right)^{ \max\{\alpha-(d-2),0\} + o(1)} \\
& \qquad \le T^{-\alpha + \eps'-\eps + \eps' \alpha + (\eps + \eps'' + 2 \eps \eps'' - \eps' )\max\{\alpha-(d-2),0\}  + o(1) } .
    \end{align*}
Since we assume $\alpha < d-1$, we can choose $\eps',\eps''>0$ sufficiently small so that 
\[  \eps'-\eps + \eps' \alpha + (\eps + \eps'' + 2 \eps \eps'' - \eps' )\max\{\alpha- (d-2),0\}  < - \eps', \]
which completes the proof of \eqref{e:suffe2}.
\end{proof}

\subsection{Examples of irregular capacity growth}
We now consider whether $\capa_f(B(T+g_T))$ and $\capa_f(B(T))$ are asymptotically equivalent for a more general class of $g_T$ than in \eqref{e:capcon}. 

\smallskip
We first show that $g_T \to \infty$ is insufficient for asymptotic equivalence in general, no matter how slow the growth of $g_T$, and even if we demand that $\mu[B(\delta)] \asymp \delta^\alpha$. In particular this proves Proposition \ref{p:counterexample2}. For simplicity we focus on the $d=1$ case, although similar examples could be constructed in higher dimensions.

\begin{proposition}
\label{p:irreg}
For every $\alpha > 0$, $g_T \to \infty$, and $\rho > 1$, there exists a smooth stationary Gaussian process $f$ satisfying 
\begin{equation}
    \label{e:mureg}
\mu[B(\delta)] \asymp \delta^\alpha  
\end{equation}
as $\delta \to 0$, for which 
 \begin{equation}
    \label{e:capreg}
    \limsup_{T \to \infty} \frac{ \capa_f(B(T + g_T)) }{ \capa_f(B(T)) }  > \rho . 
    \end{equation}
\end{proposition}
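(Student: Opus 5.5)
We construct $f$ on $\R$ with a spectral measure built from a lacunary sequence of scales, tuned so that the capacity of $B(T)$ jumps by a definite factor as $T$ crosses $T_k$ by an amount $g_{T_k}$. The starting point is Corollary~\ref{cc:4}, which suggests discrete spectral components: a measure $\mu_k$ of the form $\mathcal{F}[\text{Dis}_{1/T_k}(g)]$, essentially a comb of atoms at spacing $1/T_k$ weighted by a Schwartz bump at scale $1$, rescaled to scale $1/T_k$. Such a component is nearly invisible to equilibrium measures supported on $B(T)$ with $T$ slightly below $T_k$, but becomes visible once the interval exceeds the comb's period.

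We take $\mu = \mu_{\mathrm{base}} + \sum_k a_k \nu_k$. The base measure $\mu_{\mathrm{base}}$ has density $|\lambda|^{\alpha-1}$ near $0$ (plus a smooth tail), so that $\mu_{\mathrm{base}}$ alone gives $\capa(B(T)) \asymp T^\alpha$. The $\nu_k$ are smooth non-negative spectral perturbations concentrated at frequency scale $\sim 1/T_k$, and the sequence $T_k \to \infty$ is chosen very sparse. We fix the weights $a_k \asymp T_k^{-\alpha}$, so that $\mu[B(\delta)] \asymp \delta^\alpha$ persists, giving \eqref{e:mureg}: the contributions from $k$ with $T_k \gg 1/\delta$ are geometrically small, and the one with $T_k \asymp 1/\delta$ is comparable. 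Sparsity also lets the components for $j \ne k$ be treated as perturbations when analysing scale $T_k$.

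The core step is a two-sided estimate at $T=T_k$. For the lower bound on $\capa(B(T_k+g_{T_k}))$ we use the dual formula \eqref{e:cap1}. We must show that every probability measure on the slightly larger interval has energy smaller by a factor $\rho$ than the optimum on $B(T_k)$. Equivalently, in the RKHS picture \eqref{e:cap2}, there is a test function $h \ge 1$ on $B(T_k)$ of norm roughly $\rho^{-1}$ times that of any admissible function on $B(T_k+g_{T_k})$. To achieve this we design $\nu_k$ so that its kernel $\mathcal F[\nu_k]$ equals a large positive constant on $B(T_k)$ but oscillates and becomes negative just outside. The natural candidate is an almost-periodic kernel with period about $2T_k+g_{T_k}$, for instance $\cos(\pi x/(T_k+g_{T_k}/2))$ times a slowly varying envelope. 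Then $h=c\,\mathcal F[\nu_k]$ is cheap on $B(T_k)$, which gives a capacity upper bound near $a_k^{-1}$ via Lemma~\ref{l:capbounds}-type constructions. On the larger interval, the sign change forces any admissible function to use the base component, whose cost is $\asymp T^\alpha$ times a large constant.

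The proof then closes by choosing the constant in $a_k$ (relative to $T_k^{-\alpha}$) so that the ratio exceeds $\rho$, and by letting $T_k$ grow fast enough that cross terms between components and the smoothing error (controlled by Lemma~\ref{l:captrunc}) are negligible. The main obstacle is the rigorous lower bound on the larger interval: we must show that no clever combination of probability measures exploits the component $\nu_k$. We would handle this via Lemma~\ref{l:infcap}/energy considerations, testing against the equilibrium measure of $B(T_k+g_{T_k})$ and using that $\int\mathcal F[\nu_k]\,d\nu$ can be made small by symmetrising mass near both endpoints.
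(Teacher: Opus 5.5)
There is a genuine gap: your construction never supplies a mechanism that makes $\capa_f(B(T_k+g_{T_k}))$ exceed $\rho\,\capa_f(B(T_k))$, and the one you sketch cannot work.

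\textbf{Direction of the bound.} In \eqref{e:cap1}, a lower bound on the capacity of the larger ball needs \emph{one} probability measure on $B(T_k+g_{T_k})$ whose energy under the \emph{whole} of $\mu$ is at most $(\rho\,\capa_f(B(T_k)))^{-1}\asymp T_k^{-\alpha}$. It is not a statement about every measure, and not about ruling out clever combinations.

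\textbf{The base component.} With your absolutely continuous base, that measure must have base energy $O(T_k^{-\alpha})$, so it cannot concentrate near the endpoints. The symmetric endpoint measure you hint at, $\frac12(\delta_{-x_0}+\delta_{x_0})$, has base energy $\frac12\big(K_{\mathrm{base}}(0)+K_{\mathrm{base}}(2x_0)\big)$, which is of order one.

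\textbf{Narrow-band perturbations cannot separate the balls.} A perturbation concentrated at frequency scale $1/T_k$ cannot tell $B(T_k)$ from $B(T_k+g_{T_k})$ when $g_{T_k}=o(T_k)$. This is the relevant regime, e.g.\ $g_T=T^{1-\eps}$ for Proposition~\ref{p:counterexample2}. Suppose $\nu_k$ is supported in $B(A/T_k)$. Push any $\nu\in\mathcal{P}(B(T_k+g_{T_k}))$ forward under $x\mapsto\frac{T_k}{T_k+g_{T_k}}x$. This gives a measure in $\mathcal{P}(B(T_k))$ whose $\nu_k$-energy differs by at most $4\pi A(g_{T_k}/T_k)\|\nu_k\|$. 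So $\nu_k$ cannot be cheap on the smaller ball and negligible on the larger one, which is exactly what your design needs.

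\textbf{Your explicit kernel.} It fails on its own terms. $\cos(\pi x/(T_k+g_{T_k}/2))$ is close to $-1$ at $|x|=T_k$, not a large positive constant on $B(T_k)$. If you fix the period so the cosine is positive on $B(T_k)$ but negative at $T_k+g_{T_k}$, it is only $O(g_{T_k}/T_k)$ at $|x|=T_k$. Using it as the cheap test function then forces $a_k\gg T_k^{-\alpha}$, contradicting \eqref{e:mureg}.

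\textbf{Kernel versus RKHS.} A sign change of $\mathcal{F}[\nu_k]$ outside $B(T_k)$ says nothing about whether other elements $\mathcal{F}[g\,\nu_k]$ of the RKHS are admissible on the larger ball.

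\textbf{The missing idea: exact annihilation, with no base at all.} The paper takes $T_i/T_{i-1}$ odd. It builds $\mu$ solely from atoms at $\pm1$ and combs $\mu_i$ on $(2\Z+1)/T_i$, with weights $T_i^{-1}|\lambda|^{\alpha-1}$ truncated at frequencies of order $\eps/T_{i-1}$.
\begin{enumerate}
\item The combs jointly imitate the density $|\lambda|^{\alpha-1}$, which gives \eqref{e:mureg}.
\item The odd ratios put every $\mu_j$, $j\le i$, on the lattice $(2\Z+1)/T_i$.
\item For $\eta=\frac12(\delta_{-T_i/4}+\delta_{T_i/4})$ one has $\mathcal{F}[\eta](\lambda)=\cos(\pi\lambda T_i/2)$, which vanishes on that lattice. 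So $E[\eta]\le\sum_{j>i}\|\mu_j\|\lesssim\eps^\alpha T_i^{-\alpha}$, giving $\capa_f(B(T_i/4))\gtrsim\eps^{-\alpha}T_i^\alpha$. Equivalently, every function in the RKHS of $\sum_{j\le i}\mu_j$ is $T_i/2$-antiperiodic, so it cannot be $\ge1$ at both $\pm T_i/4$.
\item Conversely, Poisson summation (Corollary~\ref{cc:4}) shows the RKHS of $\mu_i$ contains a smoothed $T_i$-periodic square wave. It is $\ge1$ on $B(T_i/4-T_{i-1}/\eps)$ and has squared norm $\lesssim T_i^\alpha$. This needs comb frequencies up to the reciprocal $\eps/T_{i-1}$ of the transition width, exactly what the rescaling obstruction above demands.
\item Choosing the $T_i$ so sparse that $g_{T_i/4-T_{i-1}/\eps}\ge T_{i-1}/\eps$ yields a capacity ratio $\gtrsim\eps^{-\alpha}>\rho$.
\end{enumerate}

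Your opening comb and the symmetric endpoint measure are the right ingredients. But the final design, narrow-band $\nu_k$ on top of an absolutely continuous base, discards the atomic odd-lattice structure that makes the argument work.
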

\begin{proof}[Proof of Proposition \ref{p:counterexample2} given Proposition \ref{p:irreg}]
Fix $\eps \in (0,1)$ and set $g_T = T^{1-\eps}$ in Proposition \ref{p:irreg}.
\end{proof}
\begin{proof}[Proof of Proposition \ref{p:irreg}]
Let $\alpha >0$ be given, let $\eps \in (0,1)$ be a small constant to be determined later, and fix a positive sequence $(T_i)_{i \ge 0}$ such that $T_0 = 1$, $T_i/T_{i-1} \in \{3,5,\ldots\}$, and
\begin{equation}
    \label{e:gt}
 g_{T_i/4 -  T_{i-1}/\eps} \ge  T_{i-1} /\eps, 
 \end{equation}
which is possible since $g_T \to \infty$. We will show that there exists a smooth Gaussian process $f$ satisfying \eqref{e:mureg}, and constants $c_1,c_2>0$ depending only on $\alpha$, such that, for all $i \ge 1$,
\begin{equation}
    \label{e:capbound1}
\capa_f( B(T_i/4-  T_{i-1}/\eps)) < c_1  T_i^{\alpha} 
\end{equation}
and
\begin{equation}
    \label{e:capbound2} \capa_f(B(T_i/4)) > c_2  \eps^{-\alpha} T_i^\alpha .
    \end{equation}
By \eqref{e:gt}, monotonicity of the capacity (Claim \ref{c:mon}), and taking $\eps>0$ sufficiently small, this demonstrates \eqref{e:capreg}. 

Before constructing the process, let us convey the intuition. Roughly speaking our process has a spectrum, when viewed on scale $1/T_i$, which is approximately supported on the lattice $(1/T_i)(2 \mathbb{Z} + 1)$. Since the RKHS associated to this portion of the spectrum only contains periodic functions with period $T_i$, any function in the RKHS of $f$ that persists over a longer length $T_i + s \gg T_i$ must charge the measure corresponding to the subsequent scale $1/T_{i+1}$. By construction, such functions will have much larger norm, which leads to $\capa_f(B(T_i + s)) \gg \capa_f(B(T_i))$. By defining the scales appropriately, we can ensure \eqref{e:mureg} still holds.
 
Let us now construct the process formally. Let $c > 0$ be a constant that depends only on $\alpha$ but may change from line to line. Recall the spectral truncation function $\phi_s$ given in \eqref{eq:stf}. Define the measure 
\[ \mu'_0(\cdot) = \frac12 \delta_{-1}(\cdot) + \frac12 \delta_{1}(\cdot)    \]
and, for $i \ge 1$, the measures
\[ \mu'_i(\cdot) := s_i(\cdot)  \sum_{k \in 2 \Z + 1}  \delta_{k/T_i}(\cdot) \, , \quad s_i(\lambda) =  T_i^{-1}  |\lambda|^{\alpha-1}  \phi_{\eps/T_{i-1}}(\lambda)\]
where $\delta_{x}$ is a Dirac mass at the point $x$.  Note that $\mu_i$ is supported on 
\[ (1/T_i)(2\Z + 1) \subseteq \cap_{j \ge i}(1/T_j)(2 \Z + 1), \]
and also that, for $i \ge i$,
\[  \|\mu'_i\| = T_i^{-\alpha} \sum_{k \in 2 \Z + 1}  |k|^{\alpha-1} \phi_{\eps/T_{i-1}}(k/T_i) \le c T_i^{-\alpha} ( \eps T_i / T_{i-1})^\alpha = c \eps^\alpha T_{i-1}^{-\alpha} ,  \]
where we used the decay of the truncation. Recalling that $\eps < 1$ and $T_i / T_{i-1} \ge 3$, the latter bound ensures that 
\[ \|\mu'\| := \sum_{i \ge 0} 
\|\mu'_i\| \in (1, 
 c)  < \infty, \]
 so we may define rescaled measures $\mu_i := \mu'_i / \|\mu'\| $ and a probability measure $\mu := \sum_{i \ge 0} \mu_i$. Let $f$ be the stationary Gaussian process with spectral measure~$\mu$. 

We claim that $f$ is smooth and satisfies \eqref{e:mureg}. Indeed smoothness follows by the decay properties of the truncation $\phi$, so let us to check \eqref{e:mureg}. Fix $\delta \le 1$, and let $i \ge 1$ be such that $1/T_{i-1} > \delta \ge 1/T_i$. Then
\[ \mu[B(\delta)] \le \mu_i[B(\delta)] + \sum_{j \ge i+1} \|\mu_j\| \le T_i^{-\alpha} \sum_{k \in 2\Z + 1 : |k| \le \delta/T_i } |k|^{\alpha-1}  +  c T_i^{-\alpha}  \le c \delta^\alpha  \]
and moreover
\[ \mu[B(\delta)] \ge  \mu_i[B(\delta)] \ge c T_i^{-\alpha}  \sum_{k \in 2\Z + 1 : |k| \le \delta/T_i } |k|^{\alpha-1}  \phi_{\eps/T_{i-1}}(k/T_i) \ge c_\eps \delta^\alpha , \]
where we used Claim \ref{c:2} to bound $\phi$ from below. This verifies \eqref{e:mureg}.

It remains to establish \eqref{e:capbound1} and \eqref{e:capbound2}, beginning with the latter. Fix $i$ and consider the probability measure $\eta \in \mathcal{P}(B(T_i/4))$ given by
\[  \eta(\cdot) = \frac12 \delta_{-T_i/4} + \frac12 \delta_{T_i/4} \, , \quad \mathcal{F}[\eta](\lambda) = \cos(  \lambda T_i  \pi / 2 ). \]
We claim that 
\[  \int  | \mathcal{F}[\eta](\lambda)|^2  d \mu(\lambda)  = \sum_j \int | \mathcal{F}[\eta](\lambda)|^2 d\mu_j(\lambda)  < c \eps^{\alpha} T_i^{-\alpha} , \]
which verifies \eqref{e:capbound2} by taking $\eta$ as a candidate in \eqref{e:cap1}. Indeed, for all $j \le i$,
\[  \int | \mathcal{F}[\eta](\lambda)|^2 d\mu_j(\lambda)  = 0 \]
since $\mu_j$ is supported on odd multiples of $1/T_i$, and $\mathcal{F}[\eta]$ vanishes at such points. On the other hand, since $|\mathcal{F}[\eta]|\le 1$, for all $j \ge 1$ we have
\[  \int  | \mathcal{F}[\eta](\lambda)|^2 d\mu_j(\lambda)  \le \|\mu_j\| \le \|\mu'_j\| \le c \eps^{\alpha} T_{j-1}^{-\alpha} .\]
Hence 
\[ \int  | \mathcal{F}[\eta](\lambda)|^2  d \mu(\lambda)  \le c  \eps^{\alpha} \sum_{j \ge i+1}  T_{j-1}^{-\alpha} \le c \eps^{\alpha} T_i^{-\alpha} \]
as required.

We turn to \eqref{e:capbound1}. By monotonicity (Claim \ref{c:mon}) it suffices to bound $\capa_{\mu_i}(B( T_i/4 - T_{i-1}/\eps))$. Consider the function 
\[ g(\lambda) = (2 \|\mu'\|)|\lambda|^{1-\alpha}  \mathcal{F}[ \id_{B(T_i/4)}](\lambda)  \]
and define $h = \mathcal{F}[ g \mu_i ]$. We claim that
\begin{equation}
    \label{e:h1}
h(x) \ge 1 \ \text{for} \ |x| \le T_i/4 - T_{i-1} /  \eps
\end{equation}
and also that
\begin{equation}
\label{e:h2}
\|h\|_H^2 = \int g(\lambda)^2 d\mu_i(\lambda) \le  c T_i^{-\alpha} 
\end{equation}
 Given this, using $h$ as a candidate in definition \eqref{e:cap2} of the capacity shows that
\[\capa_{\mu_i}(B( T_i/4 -  T_{i-1}/\eps)) \le  c T_i^{-\alpha}  , \]
which verifies \eqref{e:capbound1}.

So let us establish \eqref{e:h1} and \eqref{e:h2}. By the identity in Corollary~\ref{cc:4},
\begin{align}
\nonumber h(\cdot) &= (1/\|\mu'\|) \mathcal{F}\Big[ g(\cdot)s_i(\cdot)  \sum_{k \in 2\Z+1}  \delta_{k/T_i}(\cdot)  \Big](\cdot) \\&= 
(1/\|\mu'\|) \mathcal{F}[\text{Dis}_{1/T}(g)](\cdot) - \mathcal{F}[\text{Dis}_{2/T}(g)](\cdot)\nonumber \\
\label{e:h3} & = (1/\|\mu'\|) (T_i/2) \sum_{k \in \Z} (-1)^k  \mathcal{F}[g(\cdot) s(\cdot)]    * \delta_{k T_i/2 }(\cdot) . 
\end{align}
Moreover 
\[ T_i/(2 \|\mu'\|) \mathcal{F}[g(\cdot) s(\cdot)] =   \mathcal{F}[ \mathcal{F}[ \id_{B(T_i/4)}] \phi_{\eps/T_{i-1}} ] = \id_{[-T_i/4,T_i/4]} * \mathcal{F}[ \phi_{\eps/T_{i-1}} ] , \]
and so, recalling that $\mathcal{F}[ \phi_{s} ]$ is supported on $B(s/2)$,
\[ T_i/(2 \|\mu'\|)  \mathcal{F}[g(\cdot) s(\cdot)] \]
is supported on $|x| \le T_i/4 + T_{i-1}/\eps$, and has value exceeding $1$ on $|x| \le T_i/4 - T_{i-1}/\eps$. Combining with \eqref{e:h3}, we verify \eqref{e:h1}.

 On the other hand, recalling that $|\phi_s| \le 1$,
\[ \int g(\lambda)^2 d\mu_i(\lambda) \le  c T_i^{-\alpha} \sum_{k \in 2 \Z + 1}  |k|^{1-\alpha}  (\mathcal{F}[ \id_{B(1/4)}](k))^2 \le c T_i^{-\alpha} , \]
where we used that $|k|^{1-\alpha}  (\mathcal{F}[ \id_{B(1/4)}](k))^2$ is summable by Claim \ref{c:3}. This proves \eqref{e:h2}.
\end{proof}

We conclude with some final remarks:

\begin{enumerate}
\item The process we constructed in Proposition \ref{p:irreg} had a purely atomic spectrum, however by appropriate smoothing one could also create an example with spectral density.

\item It would be interesting to determine whether $g_T \to \infty$ is necessary in Proposition \ref{p:irreg}, i.e.\ whether, for fixed $c > 0$ and assuming \eqref{e:mureg}, $\capa_f(B(T + c)) / \capa_f(B(T))$ always converges (it is easy to see that this may fail if one does not impose~\eqref{e:mureg}).
\end{enumerate}

%%%%%%%
%%%%%%

\medskip
\section{The effect of singular continuous measures}
\label{s:sc}

In this section we prove Proposition~\ref{p:counterlim}, which shows that the limit in Theorem~\ref{t:main} may not exist in the presence of a singular continuous component of the spectral measure.

\smallskip
Let $(s_n)_{n \ge 1}$ be a rapidly increasing sequence of positive integers such that
\begin{equation}
    \label{e:sgrow}
 \lim_{n \to \infty} s_{n+1}/ s_n = \infty . 
\end{equation}
For instance one could take $s_n = 2^{2^n}$. Define the set of positive integers
\[ J = \bigcup_{n=1}^\infty \big[ s_{2n-1}, s_{2n} \big] \subset \N, \]
and the measurable set
\[ E = \cup_{m \in \N}E_m \ , \quad E_m = \Big\{1 + \sum_{j \in J \cap [m]} b_j 2^{-j} \mid b_j\in \{0,1\} \Big\}, \]
where $[m] = \{1,2,\ldots,m\}$. Denote by $\mu_E$ the singular continuous measure supported on $E$, which may be characterised by its action on dyadic intervals: 
\[\mu_E\left([k2^{-m},(k+1)2^{-m}]\right) = \begin{cases} 1/|E_m| = 2^{-|J\cap [m]|} &  k \in E_m , \\ 0 & k \notin E_m.  \end{cases} \]

Fix $\alpha\in (0,1)$ and construct the spectral measure $\mu = \mu_{a.c.}+\mu_{s.c.}$, where:
\begin{itemize}
\item $\mu_{a.c.}$ has the canonical density $\rho_\alpha(\lambda) = A_{\alpha,1}|\lambda|^{\alpha-1}$ restricted to $[-1,1]$ (see Section~\ref{ss:uni}).
\item $\mu_{s.c.}$ is supported on $[-2,-1]\cup [1,2]$, and is the symmetrisation of $\mu_E$.
\end{itemize}
Observe that $\mu$ satisfies \eqref{e:blowup} and that, by Theorem \ref{t:capuniv}, as $T \to \infty$
 \begin{equation}
     \label{e:capexample}
 \capa_\mu(B(T)) \sim c_{\alpha,1} T^\alpha . 
  \end{equation} 
  Hence, applying Theorem \ref{t:bounds},
\begin{equation}
\label{e:thm5}
\liminf_{T \to \infty} 
 \frac{ \theta_\mu(T) / (1-\alpha)}{\capa_f(B(T)) \log T} \ge \|\mu_{a.c.}\|  \quad \text{and} \quad  \limsup_{T \to \infty} 
 \frac{ \theta_\mu(T)/(1-\alpha) }{\capa_f(B(T)) \log T}  \le \|\mu\| .
 \end{equation}
Let $(T_n)_{n \ge 1}$ be an increasing sequence of positive integers such that
\begin{equation}\label{eq:T and s} \lim_{n \to \infty} \frac{\log  T_n  }{ s_n}   =  \lim_{n \to \infty}  \frac{s_{n+1} }{  \log T_n } = \infty ,  \end{equation}
which exists by \eqref{e:sgrow}. We aim to show that, as $n \to \infty$,
\begin{equation}
    \label{e:twolimits}
 \frac{ \theta_\mu(T_{2n})/(1-\alpha)}{\capa_f(B(T_{2n})) \log T_{2n}}  \le \| \mu_{a.c.} \| + o(1)   \ \ \text{and} \ \  \frac{ \theta_\mu(T_{2n+1})/(1-\alpha)}{\capa_f(B(T_{2n+1}))\log T_{2n+1}}  \ge \| \mu\| - o(1) .
\end{equation}
Combining with \eqref{e:thm5}, this will complete the proof of Proposition \ref{p:counterlim}

\subsection{Even indices}
\smallskip

 Let us first examine $\mu_{s.c.}$ when viewed at scales $T_{2n}^{-1}$. Since $J \cap (s_{2n}, s_{2n+1})$ is empty, $\mu_{s.c.}$ is supported on the union of the $2^{|J \cap [s_{2n}]|} \le 2^{s_{2n}}$ disjoint intervals 
 \[  [k 2^{-(s_{2n+1}-1)},(k+1)2^{-(s_{2n+1}-1)}] \ ,  \quad k \in E_{s_{2n}} = E_{s_{2n+1}-1}  \]
of width $2^{-(s_{2n+1}-1)}$. As such $\mu_{s.c.}$ appears to be `zero dimensional' when viewed at scales $T_{2n}^{-1}$ (recall that $2^{s_2n}=T_{2n}^{o(1)}$).
Such zero dimensional measures behave very much like discrete measures.
    
We formalise this intuition by showing that, on scales $T_{2n}$, $f_{\mu_{s.c.}}$ has moderate deviations of order $\ll \sqrt{\log T_{2n}}$, just as if $\mu_{s.c.}$ were discrete (cf.\ Proposition \ref{p:m2}):

\begin{proposition}
\label{p:m3}
For every $\delta > 0$, as $n \to \infty$
\[ - \log \P \Big[ \sup_{x \in B(T_{2n})} |f_{\mu_{s.c.}}(x)|  \le \delta \sqrt{\log T_{2n}} \Big]   \le T_{2n}^{o(1)} .\]
\end{proposition}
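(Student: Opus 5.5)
Write $N=N_n=2^{|J\cap[s_{2n}]|}\le 2^{s_{2n}}=T_{2n}^{o(1)}$ (by \eqref{eq:T and s}). Let $I_1,\dots,I_{2N}$ be the intervals of width $w=2^{-(s_{2n+1}-1)}$ supporting $\mu_{s.c.}$ (including the symmetrised copies), and let $\lambda_k$ be their centres. The plan is to compare $f_{\mu_{s.c.}}$ with the finite-rank process
\[ g(x)=\sum_{k} \sqrt{\mu_{s.c.}(I_k)}\,\big(\xi_k\cos(2\pi\lambda_k x)+\xi_k'\sin(2\pi\lambda_k x)\big), \]
grouped Hermitian-symmetrically. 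We use the spectral decomposition $\mu_{s.c.}=\sum_k \mu_{s.c.}|_{I_k}$ and the representation $f_{\mu_{s.c.}|_{I_k}}(x)=\mathrm{Re}\big(e^{2\pi i\lambda_k x}Z_k(x)\big)$, where $Z_k$ is a complex stationary process with spectral measure supported in $[-w/2,w/2]$. Since $wT_{2n}\to 0$ super-polynomially (because $s_{2n+1}\gg\log T_{2n}$), the envelope $Z_k$ is essentially constant on $B(T_{2n})$.

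\textbf{Step 1.} We show $\sup_{x\in B(T_{2n})}|Z_k(x)-Z_k(0)|$ is tiny. Its variance of increments is at most $c\,(w T_{2n})^2\mu_{s.c.}(I_k)$, so by Lemma~\ref{lem: mon of Esup}-type/Dudley bounds (or Lemma~\ref{l:DudBTIS} applied to the rescaled field) together with Borell--TIS, the event
\[ \sum_k \sup_{B(T_{2n})}|Z_k(x)-Z_k(0)|\le \tfrac{\delta}{2}\sqrt{\log T_{2n}} \]
has probability $\ge 1/2$ for large $n$. Note that $\sum_k\sqrt{\mu(I_k)}\le\sqrt{2N}$ and $\sqrt{N}\,wT_{2n}\to0$.

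\textbf{Step 2.} On the event that all $|Z_k(0)|\le \delta\sqrt{\log T_{2n}}/(4N)$, we have $\sup_{B(T_{2n})}|\sum_k \mathrm{Re}(e^{2\pi i\lambda_kx}Z_k(0))|\le \tfrac\delta4\sqrt{\log T_{2n}}$. The $Z_k(0)$ are Gaussian with total variance at most $\|\mu_{s.c.}\|$, and $\delta\sqrt{\log T}/N\ge T^{-o(1)}$. Each small-ball probability is therefore at least $T_{2n}^{-o(1)}$, and by the Gaussian correlation inequality \cite{roy14} (the events are symmetric convex) the joint probability is at least $(T_{2n}^{-o(1)})^{2N}=T_{2n}^{-o(1)}$, using $N=T_{2n}^{o(1)}$ with room to spare. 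More precisely, $N\log(N/\delta)\le s_{2n}^2=o(\log T_{2n})$ if we choose $s_{2n}\ll\sqrt{\log T_{2n}}$; otherwise we rely on the stated form $-\log\P\le T^{o(1)}$, which only needs $N\log N\le T^{o(1)}$, and that is true.

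\textbf{Step 3.} We combine the two events. The envelope event from Step 1 is again symmetric convex in the underlying Gaussian field, so the Gaussian correlation inequality gives a joint probability at least the product, namely $\tfrac12 T_{2n}^{-o(1)}$, which yields the claim. The main obstacle is Step 1: making the envelope decomposition rigorous (the $Z_k$ are correlated across $k$ only via the Hermitian pairing) and controlling the supremum over a domain of size $T_{2n}$ when the spectral width is $w$. This works because the rescaled process $Z_k(T_{2n}\cdot)$ has spectral measure in $B(wT_{2n})$, so its derivative has variance $O((wT_{2n})^2)$ and Kolmogorov/Dudley bounds on $B(1)$ apply uniformly.
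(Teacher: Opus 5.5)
Your argument is correct, but it takes a different route from the paper.

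\textbf{The paper's route.} The paper does not use the cluster structure of the support directly. It imports the general decomposition of \cite[Lemma~4.2]{ffm25}: partition the spectrum into $mT$ intervals of width $2/(mT)$ and write $f_{\mu_{s.c.}}\stackrel{d}{=}A_{mT,\eps}\oplus B_{mT,\eps}\oplus R_{mT}$.
\begin{itemize}
\item $A$ collects the heavy intervals. There are at most $d'=T^{o(1)}$ of them, and this is the only place the geometry of $E$ enters.
\item $B$ collects the light intervals, with variance $O(\eps)$.
\item $R$ is the discretisation remainder, with variance $O(m^{-2})$.
\end{itemize}
The finite-dimensional part $A$ is controlled at integer points by the Gaussian correlation inequality together with the rescaling bound $\gamma_{d'}(K/a)\ge a^{-d'}\gamma_{d'}(K)$. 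This costs $d'\log(c/\delta)$, and a continuity estimate then extends it to $[0,T]$. The parts $B$ and $R$ are handled by Dudley plus Borell--TIS, which is why the threshold there must be of order $\sqrt{\log T}$.

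\textbf{Your route.} You use the fact that at scale $T_{2n}$ the support consists of $N\le 2^{s_{2n}}=T_{2n}^{o(1)}$ clusters of width $w=T_{2n}^{-\omega(1)}$. After Hermitian grouping, the field is a sum of $N$ independent modulated envelopes. Each envelope is constant on $B(T_{2n})$ up to an error of expected total size $O(wT_{2n}\sqrt{N})$, which is super-polynomially small. A crude per-cluster small-ball bound then suffices.

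\textbf{Comparison.} Your argument is more elementary and self-contained. Because the grouped envelopes are independent, Step~2 needs no correlation inequality. In Step~3 even Borell--TIS would do instead of the correlation inequality, since the envelope event fails with probability $e^{-T^{\omega(1)}}$. The paper's argument is more robust: it only needs few heavy spectral intervals at scale $1/T$, not a support made literally of few tiny clusters, and it reuses existing machinery.

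\textbf{Bookkeeping slips.} Your exponent bookkeeping contains errors that you should fix, although they do not affect the conclusion.
\begin{itemize}
\item $(T_{2n}^{-o(1)})^{2N}$ is not $T_{2n}^{-o(1)}$, because $N\to\infty$. It equals $\exp(-O(N\log N))=e^{-T_{2n}^{o(1)}}$.
\item For the same reason, Step~3 gives the lower bound $\tfrac12 e^{-T_{2n}^{o(1)}}$, not $\tfrac12 T_{2n}^{-o(1)}$.
\item $N\log(N/\delta)$ is of order $2^{s_{2n}}s_{2n}$, not $s_{2n}^2$. The remark about choosing $s_{2n}\ll\sqrt{\log T_{2n}}$ is therefore wrong, and in any case the sequence $(s_n)$ is fixed before $(T_n)$.
\end{itemize}
Your fallback observation is the correct justification: the claimed bound $-\log\P\le T_{2n}^{o(1)}$ only requires $N\log N=T_{2n}^{o(1)}$, which holds.
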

This proposition, whose proof is given below, allows us to establish the first statement in \eqref{e:twolimits}, by repeating the arguments in the proof of the lower bound of Theorem~\ref{t:bounds} (see Section~\ref{ss:lowerbd}). Recall that $m = \|\mu_{a.c.}\|$. Let $\delta > 0$ be given and define $\ell =  \sqrt{ 2m(1-\alpha)} +  3\delta  > 0$. Fix $\eps > 0$ sufficiently small so that $\|\mu|_{B(\eps)}\| <  \delta /2$, and also
\begin{equation}
    \label{e:mu2tag}
 - \log \P\Big[ \inf_{x \in B(T)} f_{\mu_{a.c.}|_{B(\eps)^c}}(x) \ge (- \ell +2\delta) \sqrt{\log T}   \Big] \le T^{ \alpha - c_\delta + o(1) } , 
 \end{equation}
which is possible by the definitions of $\ell$, $m^+$ as in~\eqref{e:m+}, and the fact that $m^+(\mu_{a.c.})=m$ (Corollary~\ref{c:m}). For $n \ge 1$, decompose the absolutely continuous part of the spectral measure 
 \[ \mu_{a.c.} = \mu_1 + \mu_2 + \mu_3, \] 
 where $\mu_1 = \mu \phi_{T_{2n}^\delta}^2$, $\mu_2 = \mu(1-\phi_{T_{2n}^\delta}^2)|_{B(\eps)^c}$, and $\mu_3 = \mu(1-\phi_{T_{2n}^\delta}^2)|_{B(\eps)}$. 
 Consider the independent events:
 \begin{itemize}
     \item $A_1= \Big\{ \inf_{x \in B(T_{2n})} f_{\mu_1}(x) \ge \ell \sqrt{\log T_{2n}}  \Big\} $;
     \item $A_2 =  \Big\{ \inf_{x \in B(T_{2n})}    f_{\mu_2}(x)   \ge  (-\ell+\delta) \sqrt{\log T_{2n}} \Big\} $;
     \item $A_3 =  \Big\{ \sup_{x \in B(T_{2n})}    |f_{\mu_3}(x) |  \le  \tfrac{\delta}2 \sqrt{\log T_{2n}} \Big\}   $;
     \item $A_4 = \Big\{\sup_{x\in B(T_{2n})} |f_{\mu_{s.c.}}(x)|\le \tfrac \delta 2 \sqrt{\log T_{2n}} \Big\} $.
 \end{itemize}
 The events $A_1$ and $A_2$ are exactly the same as in Section~\ref{ss:lowerbd}, while in $A_3$ the constant $\delta$ was replaced by~ $\delta/2$. Clearly $\cap_{j=1}^4 A_j  \implies \per(B(T_{2n}))$, so that
\begin{equation*}\label{e:lb_dec}
-\log \P(\per(B(T_{2n}))) \le -\log \P\left[A_1\cap A_2 \cap A_3\right]
-\log \P \left[A_4 \right].
\end{equation*}
By the arguments in Section~\ref{ss:lowerbd} we obtain that $-\log \P[A_1\cap A_2\cap A_3]$ is at most
\begin{equation*}
-\log \P\left[A_1\cap A_2\cap A_3\right] \le 
 (\ell+ \delta)^2 \capa_f(B(T_{2n} + T_{2n}^\delta)) \log T_{2n} \,( 1 + o(1) )    + T_{2n}^{\alpha - c_\delta + o(1) } .
 \end{equation*}
On the other hand, by Proposition \ref{p:m3}, as $n \to \infty$ 
\[  -\log \P[A_4] \le T_{2n}^{o(1)} . \]
Combining the previous three displays, and recalling \eqref{e:capexample}, completes the proof.

\medskip
\noindent\emph{Proof of Proposition~\ref{p:m3}.}
The proof follows the lines of \cite[Theorem 2]{ffm25}. Fix $\enn\in\N$ and denote the intervals
$I^{\enn}_{\pm j} =\pm \big((j-1)\frac{2}{\enn},j\frac{2}{\enn}\big]$ for $j\in [\enn]$. Write
\begin{equation*}\label{eq: Cj and Sj}
C_j(t) =\frac 1{\mu_{s.c.}(I_{j})}  \int_{I_{j}} \cos(\lm t) d\mu_{s.c.}(\lm),
\quad
S_j(t) = \frac 1{\mu_{s.c.}(I_{j})}  \int_{I_{j}} \sin(\lm t) d\mu_{s.c.}(\lm).
\end{equation*}
Note that by Jensen's inequality
\begin{equation}\label{eq: C2+S2}
|C_j(t)|^2+ |S_j(t)|^2  \le 1,
\end{equation}
and, by simple trigonometric inequalities, for every $t,s\in\R $ and $j\in [n]$
\begin{equation}\label{eq: C and S dif}
    \max\left\{|C_j(t)-C_j(s)|, |S_j(t)-S_j(s)|\right\} \le 2|t-s|.
\end{equation}
Then, by \cite[Lemma 4.2]{ffm25} we have
the following decomposition: 
\begin{equation}\label{eq: def R}
f_{\mu_{s.c.}}(t) \overset{d}{=}
 \sum_{j=1}^{ \enn  }    \sqrt{\mu_{s.c.}(I_j\cup I_{-j})} \Big(\zeta_j C_j(t)\oplus \eta_j S_j(t)\Big)  \oplus R_{\enn}(t),
\end{equation}
where $\{\zeta_j\}_{j=1}^{\enn}\cup\{\eta_j\}_{j=1}^{\enn}$ are i.i.d.\ standard Gaussian random variables, and $R_{\enn}(t)$ is an independent Gaussian process for which
\begin{equation}\label{eq: goal var}
\sup_{t\in [0,T]} \var(R_{\enn}(t)  ) \le  C\left(\frac{T}{\enn}\right)^2
\end{equation}
and for any $t\in [0,T],\:h>0$,
\begin{equation}\label{eq: goal dif}
\var(R_{\enn}(t)-R_{\enn}(t+h)  ) \le  C h^2,
\end{equation}
where $C$ is a constant depending only on $\|\mu\|$.

Fix $\ep>0$ and partition the indices in $[\enn]$ into
\begin{equation}\label{eq: AB}
 \cA_{\enn,\ep}  =\left\{j\in [\enn] : \: \mu_{s.c.}(I_j\cup I_{-j})\ge  \frac {\pi \ep}{\enn}\right\}, \quad \cB_{\enn,\ep}  = [\enn]  \setminus \cA_{\enn,\ep} ,
\end{equation}
defining accordingly the functions
\begin{align*}
A_{\enn,\ep} (t) &= \sum_{j\in \cA_{\enn,\ep} }  \sqrt{\mu_{s.c.}(I_j\cup I_{-j})} \Big(\zeta_j C_j(t)\oplus \eta_j S_j(t)\Big),\\
B_{\enn,\ep} (t) &=\sum_{j\in \cB_{\enn,\ep} }   \sqrt{\mu_{s.c.}(I_j\cup I_{-j})} \Big(\zeta_j C_j(t)\oplus \eta_j S_j(t)\Big).
\end{align*}
Thus \eqref{eq: def R} becomes
$f_{\mu_{s.c.}} \overset{d}{=} A_{\enn,\ep}  \oplus B_{\enn,\ep}  \oplus R_\enn$, and for any $\delta>0$
we have
\begin{align*}
& -\log \P\Big( |f_{\mu_{s.c.}}| < \delta \sqrt{\log T}\: \text{ on } [0,T] \Big) \\
& \qquad \qquad  \le  -\log \P\Big(\sup_{[0,T]}\big| A_{\enn,\ep}  \big| <\tfrac  \delta 3\sqrt{\log T}\Big) 
-\log \P\Big(\sup_{[0,T]} \big| B_{\enn,\ep}   \big| <\tfrac  \delta 3 \sqrt{\log T}\Big)\\
& \qquad \qquad \qquad \qquad
-\log \P\Big(\sup_{[0,T]} \big| R_{\enn}   \big| <\tfrac  \delta 3 \sqrt{\log T} \Big).
\end{align*}

We carry out this decomposition with $\enn=mT$, where $m$ will be chosen shortly. Throughout, we consider $\delta>0$ as fixed.
The proposition follows easily by combining the next three claims. 

\begin{claim}\label{clm: A}
For any fixed  $m \in \N$ and $\ep>0$, writing $T=T_{2n}$,   as $n\to\infty$ we have
\[  -\log \P\Big(\sup_{[0,T]} \big| A_{mT,\ep}  \big| <\tfrac \delta 3 \sqrt{\log T} \Big) \le T^{o(1)} .\]
\end{claim}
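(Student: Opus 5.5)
The key observation is that $\cA_{mT,\ep}$ contains few indices at scale $T=T_{2n}$. With $\enn = mT$, the intervals $I_j\cup I_{-j}$ have width $2/\enn$. At scale $T_{2n}$, $\mu_{s.c.}$ is supported on at most $2^{|J\cap[s_{2n}]|}\le 2^{s_{2n}}$ dyadic intervals of width $2^{-(s_{2n+1}-1)}$. By \eqref{eq:T and s}, $2^{-s_{2n+1}} \ll 1/T$, so each such tiny interval meets at most two of the $I_j$. Hence the number of indices $j$ with $\mu_{s.c.}(I_j\cup I_{-j})>0$ is at most $N:=2\cdot 2^{s_{2n}}$. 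Since $s_{2n}\ll \log T_{2n}$ by \eqref{eq:T and s}, we get $N = T^{o(1)}$. In particular $|\cA_{mT,\ep}|\le N = T^{o(1)}$; the threshold defining $\cA$ is not even needed for this count.

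Next I bound $A_{mT,\ep}$ through its coefficients. By \eqref{eq: C2+S2},
\[
\sup_t |A_{mT,\ep}(t)| \le \sum_{j\in\cA}\sqrt{\mu_{s.c.}(I_j\cup I_{-j})}\,\big(|\zeta_j|+|\eta_j|\big) \le \sqrt{\|\mu_{s.c.}\|}\,\sqrt{N}\,\max_{j\in\cA}\big(|\zeta_j|+|\eta_j|\big)\cdot \sqrt{2},
\]
using Cauchy--Schwarz. So it suffices to ask that every $|\zeta_j|,|\eta_j|\le \tau$ with
\[
\tau := \frac{\delta\sqrt{\log T}}{c\sqrt{N\|\mu_{s.c.}\|}}.
\]
This is too crude if $\sqrt N$ is comparable to $\sqrt{\log T}$, because then $\tau$ need not be large. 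I therefore switch to a small-ball estimate instead of a large-coordinate estimate. The requirement is $\sum_{j\in\cA}\sqrt{\mu_j}(|\zeta_j|+|\eta_j|)\le \tfrac\delta3\sqrt{\log T}$, and it is implied by $|\zeta_j|,|\eta_j|\le \tau$ for all $j\in\cA$, where now $\tau = \delta\sqrt{\log T}/(6N\sqrt{\|\mu_{s.c.}\|})$ and $\tau$ is allowed to be small. For a standard Gaussian, $\P[|Z|\le \tau]\ge \min(1/2,\, c\tau)$. By independence of the $2|\cA|\le 2N$ variables, the probability is at least $\min(1/2,c\tau)^{2N}$. Taking logarithms gives
\[
-\log \P(\cdots)\le 2N\big(\log 2 + |\log(c\tau)|\big) = 2N\cdot O\big(\log N + \log\log T+1\big) = T^{o(1)},
\]
since $N=T^{o(1)}$ and $\log N = O(s_{2n}) = o(\log T)$. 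This proves the claim, and the bound holds uniformly in the fixed $m,\ep$.

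The main obstacle is the counting step: checking that intervals of width $2/(mT)$ each catch at most boundedly many of the $2^{s_{2n}}$ supporting dyadic intervals. Equivalently, the support at scale $T_{2n}$ is a union of at most $2^{s_{2n}}$ clusters of diameter $2^{-(s_{2n+1}-1)}\ll 1/(mT)$. This uses $\log T_{2n}\ll s_{2n+1}$ from \eqref{eq:T and s}, together with the fact that $E_{s_{2n}} = E_{s_{2n+1}-1}$ because $J\cap(s_{2n},s_{2n+1})=\emptyset$. The Gaussian small-ball computation after that is routine.
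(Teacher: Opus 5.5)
Your proof is correct. It shares the paper's counting step but then takes a different, more elementary route. The counting step is that only $d'=2|\cA_{mT,\ep}|\le 4\cdot 2^{s_{2n}}=T^{o(1)}$ Gaussian coefficients are present. You justify this exactly as the paper intends, via $E_{s_{2n}}=E_{s_{2n+1}-1}$ and $2^{-(s_{2n+1}-1)}<2/(mT)$.

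The paper then proceeds as follows:
\begin{enumerate}
\item It writes $A_{mT,\ep}(t)=\langle u(t),\zeta\rangle$ with $\|u(t)\|^2\le\|\mu_{s.c.}\|$.
\item It normalises and rescales the strips at the integer points $k\le T$, using the scaling property of symmetric convex sets (Observation 2.2 of \cite{ffm25}).
\item It intersects the $T$ strips with the Gaussian correlation inequality, getting probability at least $(\delta/(3\sqrt2\|\mu_{s.c.}\|))^{d'}e^{-1}$.
\item It passes from integer points to all of $[0,T]$ using the correlation inequality, Anderson's inequality, and Borell--TIS applied to the derivative.
\end{enumerate}

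You instead use $|C_j|,|S_j|\le 1$ to bound $\sup_{t\in\R}|A_{mT,\ep}(t)|$ deterministically by $\sum_{j}\sqrt{\mu_{s.c.}(I_j\cup I_{-j})}\,(|\zeta_j|+|\eta_j|)$. This removes the discretisation and continuity step altogether. Everything then reduces to independence of the coefficients and a one-dimensional small-ball bound.

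The price is a cost of order $d'\log d'$ rather than the paper's $O(d')$, which is immaterial since $d'=T^{o(1)}$. The paper's version is slightly sharper and is a direct transcription of \cite[Claim 4.3]{ffm25}. Yours is shorter and avoids the Gaussian correlation inequality entirely.

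Two cosmetic points. First, require $|\zeta_j|,|\eta_j|<\tau$ (the probability bound is unchanged) to get the strict inequality in the claim. Second, your count needs $2^{-(s_{2n+1}-1)}<2/(mT_{2n})$. So the bound holds eventually for each fixed $m$, not uniformly in $m$; that is all the claim requires.
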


\begin{claim}\label{clm: B}
There exists $T_0(\delta),\ep_0(\delta)>0$, such that for all $T>T_0$, $m\in\N$ and $\ep<\ep_0$,
\begin{equation*}
- \log \P\Big(\sup_{[0,T]} \big| B_{mT,\ep}   \big| <\tfrac \delta 3 \sqrt{\log T}\Big) \le \log 2.
\end{equation*}
\end{claim}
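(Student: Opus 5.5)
The plan is to treat $B_{mT,\ep}$ as a centred Gaussian process on $[0,T]$ whose total variance is forced to be small by the definition of $\cB_{\enn,\ep}$. I will bound its expected supremum by $c\sqrt{\ep\log T}$ using Dudley's entropy bound, and then conclude with Markov's inequality. The key point is that nothing depends on $m$: the number $\enn = mT$ of intervals cancels against the per-interval mass bound $\pi\ep/\enn$.

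\textbf{Step 1 (variance and increment bounds).} By independence of the $\zeta_j,\eta_j$, for all $t,s$ we have
\[ \var(B_{\enn,\ep}(t)) = \sum_{j\in\cB_{\enn,\ep}} \mu_{s.c.}(I_j\cup I_{-j})\big(C_j(t)^2+S_j(t)^2\big) \le |\cB_{\enn,\ep}|\cdot\frac{\pi\ep}{\enn}\le \pi\ep, \]
where we use \eqref{eq: C2+S2} and \eqref{eq: AB}. In the same way, \eqref{eq: C and S dif} gives
\[ \var\big(B_{\enn,\ep}(t)-B_{\enn,\ep}(s)\big) \le \sum_{j\in\cB_{\enn,\ep}} \mu_{s.c.}(I_j\cup I_{-j})\cdot 8|t-s|^2 \le 8\pi\ep|t-s|^2. \]
So the canonical metric satisfies $d(t,s)\le \min\{c\sqrt{\ep}\,|t-s|,\, 2\sqrt{\pi\ep}\}$, with $c$ absolute. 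Both bounds hold uniformly in $\enn$, hence uniformly in $m$.

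\textbf{Step 2 (expected supremum).} Covering $[0,T]$ in the metric $d$ at scale $r$ needs at most $N(r)\le 1+ c\sqrt{\ep}\,T/r$ balls, and the diameter is at most $c\sqrt{\ep}$. Dudley's bound then gives
\[ \E\sup_{[0,T]}|B_{\enn,\ep}| \le \E|B_{\enn,\ep}(0)| + c\int_0^{c\sqrt\ep}\sqrt{\log N(r)}\,dr \le c'\sqrt{\ep}\,\sqrt{\log T} \]
for $T\ge T_0$. After substituting $r=\sqrt{\ep}\,u$, the integral becomes $\sqrt{\ep}\int_0^{c}\sqrt{\log(1+cT/u)}\,du = O(\sqrt{\ep\log T})$, with an absolute constant and no dependence on $m$.

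\textbf{Step 3 (conclusion).} By Markov's inequality,
\[ \P\Big(\sup_{[0,T]}|B_{mT,\ep}|\ge \tfrac\delta3\sqrt{\log T}\Big)\le \frac{3c'\sqrt\ep}{\delta}\le\frac12 \]
once $\ep<\ep_0(\delta):=(\delta/6c')^2$. This gives $-\log\P(\cdot)\le\log 2$ for all $T>T_0$, all $m\in\N$ and all $\ep<\ep_0$.

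The only step needing care is Step 2. There one must check that the Dudley integral really produces $\sqrt{\ep}\sqrt{\log T}$ uniformly in $\enn$, i.e.\ that the Lipschitz constant of the canonical metric carries the factor $\sqrt{\ep}$ and not the full mass $\|\mu_{s.c.}\|$. This is exactly what the per-interval mass bound in $\cB_{\enn,\ep}$ provides. Alternatively, one can avoid Dudley and use a union bound over a mesh of spacing $1/T$ combined with Borell--TIS; this gives the same $\sqrt{\ep\log T}$ order.
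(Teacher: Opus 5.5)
Your proposal is correct and follows essentially the same route as the paper. Both arguments bound the variance by $O(\ep)$ and the canonical metric by $O(\sqrt{\ep}\,|t-s|)$, using \eqref{eq: C2+S2}, \eqref{eq: C and S dif} and \eqref{eq: AB}, uniformly in $m$, and then apply Dudley's bound to get $\E\sup_{[0,T]}|B_{mT,\ep}| = O(\sqrt{\ep\log T})$. The only difference is the last step, where you use Markov's inequality and the paper uses Borell--TIS; since only a probability bound of $1/2$ is needed, Markov suffices and is slightly simpler.
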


\begin{claim}\label{clm: R}
There exists $T_0(\delta), m_0(\delta)>0$, such that for all $T>T_0$ and $m>m_0$,
\[
-\log \P\Big(\sup_{[0,T]} \big| R_{mT}  \big| <\tfrac \delta 3 \sqrt{\log T} \Big) \le 
\log 2.
\]
\end{claim}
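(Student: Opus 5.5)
The plan is a first-moment argument. I will bound $\E \sup_{[0,T]}|R_{mT}|$ by a small multiple of $\sqrt{\log T}$, using Dudley's entropy bound with the two variance estimates \eqref{eq: goal var} and \eqref{eq: goal dif}. Markov's inequality then gives the claim. Independence of $R_{mT}$ from the other terms plays no role here, since Claim~\ref{clm: R} concerns $R_{mT}$ alone.

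\textbf{Step 1: the canonical metric.} Set $\enn = mT$. The canonical metric of $R_{\enn}$ on $[0,T]$ is $d(s,t) = \var(R_\enn(s)-R_\enn(t))^{1/2}$. By \eqref{eq: goal dif}, $d(s,t) \le \sqrt{C}\,|s-t|$. By \eqref{eq: goal var} and the triangle inequality, $d(s,t) \le 2\sqrt{C}\, T/\enn = 2\sqrt{C}/m$. So the $d$-diameter of $[0,T]$ is at most $\sigma_m := 2\sqrt C/m$. Moreover, $[0,T]$ can be covered by $N(\epsilon) \le \lceil \sqrt{C}\, T/\epsilon \rceil$ $d$-balls of radius $\epsilon$.

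\textbf{Step 2: Dudley's bound.} Dudley's inequality gives
\[ \E \sup_{s,t\in[0,T]} \big(R_\enn(s)-R_\enn(t)\big) \le c \int_0^{\sigma_m} \sqrt{\log N(\epsilon)}\, d\epsilon \le c \int_0^{\sigma_m} \sqrt{\log (c' T/\epsilon)}\, d\epsilon . \]
For $\sigma_m \le 1$ and $T$ large, the right-hand side is at most $c''\sigma_m \sqrt{\log (T/\sigma_m)}$. For fixed $m$ and large $T$ this is at most $c_0 m^{-1}\sqrt{\log T}$, but with $m$ free the factor is $\sqrt{\log T + \log m}$. This does not matter: we only need some fixed $m_0$ to work, and the claim can be read as holding for $m$ in a fixed range above $m_0$ once $T_0$ is chosen depending on that range. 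Alternatively, one can use monotonicity of the bound in $m$, since $\sigma_m$ decreases. In addition, $\E|R_\enn(0)| \le \sqrt{C}/m$ by \eqref{eq: goal var}. Combining, and using $\sup|R| \le |R(0)| + \sup_{s,t}|R(s)-R(t)|$ together with the symmetry of the Gaussian process, we get
\[ \E \sup_{[0,T]} |R_{mT}| \le c_1 m^{-1} \sqrt{\log T}, \quad \text{for all } T>T_0, \]
where $c_1$ depends only on $C$, that is, on $\|\mu\|$.

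\textbf{Step 3: conclusion.} Choose $m_0 = m_0(\delta)$ with $c_1/m_0 \le \delta/6$. By Markov's inequality, for $m > m_0$,
\[ \P\Big(\sup_{[0,T]}|R_{mT}| \ge \tfrac\delta3\sqrt{\log T}\Big) \le \tfrac12 , \]
which is the claim. If needed, Borell--TIS can replace Markov to get a probability tending to $1$, but this is unnecessary.

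The main obstacle is the uniformity in $m$ in Step 2. The entropy integral has a $\sqrt{\log(Tm)}$ factor, while the prefactor decays like $1/m$. Since $m^{-1}\sqrt{\log m}$ is bounded and the relevant bound is $\sigma_m\sqrt{\log T} + \sigma_m\sqrt{\log m} \le c(m^{-1}\sqrt{\log T} + 1/\sqrt{m})$, uniformity holds once $T_0$ is large enough that $1/\sqrt m \le \sqrt{\log T}/m$ fails harmlessly. To see this, bound $\sigma_m\sqrt{\log m} \le c\,\sigma_m^{1/2}$, which is at most $(\delta/12)\sqrt{\log T}$ for $T > T_0(\delta)$ uniformly in $m \ge 1$. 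I expect to handle this term exactly that way.
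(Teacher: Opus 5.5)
Your proof is correct and follows the paper's route: bound $\E\sup_{[0,T]}|R_{mT}|$ by Dudley's entropy integral using \eqref{eq: goal var} and \eqref{eq: goal dif}, then pass to the probability. The paper uses Borell--TIS for the last step, while your Markov bound suffices since only probability at least $1/2$ is needed. One caveat: the Step 2 display with $T_0$ independent of $m$ is only justified after your final correction. That correction works because $\sigma_m\sqrt{\log m}$ is bounded uniformly in $m\ge 1$ (equivalently, the entropy integral is increasing in its upper limit $\sigma_m$), and this gives the required uniformity over $m>m_0$.
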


\begin{proof}[Proof of Claim~\ref{clm: A}]
The proof is very similar to the proof of Claim 4.3 in \cite{ffm25}. We write 
$$u(t) = \Big(\sqrt{\rho(I_j \cup I_{-j}) } C_j(t),\sqrt{\mu_{s.c.}(I_j \cup I_{-j}) } S_j(t)  \Big)_{j\in \cA_{mT,\ep}}$$
Denote $d'=2|\cA_{mT,\ep}|$. As pointed out earlier, $d'=e^{c(m)s_{2n}} = T_{2n}^{o(1)}$.
We recall that $A_{mT,\ep}(t)= \langle  u(t), \zeta \rangle$ where $\zeta\sim \gamma_{d'}$ is a standard $d'$-dimensional Gaussian random vector.
By \eqref{eq: C2+S2},
\begin{equation}\label{eq: ubound}
\norm{ u (t)}^2 = \sum_{j\in\cA_{mT,\ep}} \mu_{s.c.}(I_j\cup I_{-j} ) (C_j^2(t) +S_j^2(t) ) \le \sum_{0\le j< mT} \mu_{s.c.}(I_j \cup I_{-j}) \le \norm{\mu_{s.c.}}.
\end{equation}
Hence,
 \begin{align*}
&\P\Big( \bigcap_{k=1}^{T} \left\{ |A_{mT,\ep}(k)| <  \tfrac {\delta} 3 \sqrt{\log T}\right\}\Big)
 = \gamma_{d'} \Big(  \bigcap_{k=1}^{T}
\left\{ \zeta \in \R^d : \ \left| \left\langle \zeta,  u(k) \right\rangle \right| \le \tfrac {\delta} 3 \sqrt{\log T} \right\} \Big) \notag
\\ & \qquad \qquad \ge  \gamma_{d'} \Big(  \bigcap_{k=1}^{T }
\left\{ \zeta \in \R^{d'} : \ \left| \left\langle  \zeta, \tfrac {  u(k)}{ \norm{u(k)} } \right\rangle \right| \le  \tfrac {\delta} {3\sqrt{2}\norm{\mu_{s.c.}}} \sqrt{ 2\log T}\right\} \Big)
\\ &\qquad \qquad \ge  \left(\tfrac {\delta} {3\sqrt{2}\norm{\mu_{s.c.}}}\right)^{d'}
 \gamma_{d'} \Big(  \bigcap_{k=1}^{T}
\left\{ \zeta \in \R^d : \ \left| \left\langle  \zeta, \tfrac {  u(k)}{ \norm{u(k)} } \right\rangle \right| \le \sqrt{2 \log T} \right\} \Big)
  \\ &  \qquad \qquad  \ge  \left(\tfrac {\delta} {3\sqrt{2}\norm{\mu_{s.c.}}}\right)^{d'} \gamma_1 \big(\{ \zeta\in\R: \: |\zeta|\le \sqrt{2\log T}]\} \big)^T
   \\ & \qquad \qquad \ge 
     \left(\tfrac {\delta} {3\sqrt{2}\norm{\mu_{s.c.}}}\right)^{d'} e^{-1}.
\end{align*}
The first inequality used~\eqref{eq: ubound}, the second is an application of Observation 2.2 in \cite{ffm25} (used with $\alpha = 3\sqrt{2}\norm{\mu_{s.c.}}\delta^{-1}>1$), the third is an application of the Gaussian correlation inequality, while the fourth is due to Gaussian tail bounds (Claim~\ref{c:gtb}).  This yields, by the Gaussian correlation inequality, that
\begin{align*}
-\log \P &\Big(\sup_{[0,T]} \big| A_{mT,\ep}  \big| <\tfrac \delta 3 \sqrt{\log T} \Big)
 \\ &\le -\log 
\P\Big( \bigcap_{k=1}^{T} \left\{ |A_{mT,\ep}(k)| <  \tfrac {\delta} 6 \sqrt{\log T}\right\}\Big)\\ &
\qquad \qquad -\frac{\lceil T\rceil }{T}\log \P\Big(\sup_{t\in[0,1]} |A_{mT,\ep}(t)-A_{mT,\ep}(0)|<\tfrac \delta 6 \sqrt{\log T}\Big)
\\ & \le
d' \log \left(\tfrac {\delta} {3\sqrt{2}\norm{\mu_{s.c.}}}\right)    - 1 -2\log \P\Big(\sup_{[0,1]}|f'_{\mu_{s.c.}}| < \tfrac \delta 6 \sqrt{\log T} \Big)
\\ & 
\le T^{o(1)}.
\end{align*}
In the second inequality we used the previous estimate and Anderson's inequality, and in the third inequality we used the fact that $d'= T^{o(1)}$ and the Borell-TIS inequality.
\end{proof}

\begin{proof}[Proof of Claim~\ref{clm: B}]
Fix $m\in \N$.
Using \eqref{eq: C2+S2} and \eqref{eq: AB}, we have for any $0\le t\le T$ and $\ep>0$,
\begin{align*}
\var (B_{mT,\ep}(t) ) &= \sum_{j\in\cB_{mT,\ep}} \mu_{s.c.}(I_j \cup I_{-j}) \left( C_j^2(t)+S_j^2(t)\right)
\\ & \le |\cB_{mT,\ep}|\cdot 2\max_{j\in\cB_{mT,\ep}} \mu_{s.c.}(I_j) \sup_{t\in [0,T]} \left(C_j^2(t) +S_j^2(t)\right)
\\ & \le  mT \cdot \frac{2\pi \ep}{mT} \cdot 1 = 2\pi \ep.
\end{align*}

Similarly, by \eqref{eq: C and S dif}, the canonical distance induced by $B$ obeys
\begin{align*}
d_B(s,t)^2 &= \var[B(s)-B(t)]
 \\ &=\sum_{j\in\cB_{mT,\ep}}\mu_{s.c.}(I_j \cup I_{-j}) \left( (C_j(t)-C_j(s))^2+(S_j(t)-S_j(s))^2\right)
 \\ &\le |\cB_{mT,\ep}|\cdot 2\max_{j\in\cB_{mT,\ep}} \mu_{s.c.}(I_j) \left((C_j(t)-C_j(s))^2 + (S_j(t)-S_j(s))^2\right)
 \\ & \le c\ep |t-s|^2,
\end{align*}
for an absolute constant $c>0$. 

Therefore, $N_B(x) \le\frac{\sqrt{c\ep}T}{x}$ and 
\[\text{diam}_B([0,T]) =\sup_{s,t\in [0,T]}d_B(s,t)\le \sqrt{2 \sup_{t\in[0,T]}\var B(t) }\le 2\sqrt{\pi\ep}.
\]
By Dudley's bound,
\[
\E \sup_{[0,T]} B_{mT,\ep} \le K\int_0^{2\sqrt{\pi\ep}} \sqrt{\log \left(\frac{\sqrt{c\ep}T}{x}\right)} \: dx \le \tilde{K} \sqrt{\ep\log T},
\]
for universal constants $K,\tilde{K}>0$.
By the Borell-TIS inequality, if $\tilde{K}\sqrt{\ep}<\delta/6$, then
\[
\P\Big( \sup_{[0,T]}|B_{mT,\ep}| >\frac{\delta}{3}\sqrt{\log T} \Big)\le 
\P\Big( \sup_{[0,T]}|B_{mT,\ep}| - \E\sup_{[0,T]}|B_{mT,\ep}| >\frac{\delta}{6}\sqrt{\log T} \Big)\le T^{-\delta^2/36}\le \frac 1 2,
\]
for $T>T_0(\delta)$. The result then follows by considering the complementary event.
\end{proof}

\begin{proof}[Proof of Claim~\ref{clm: R}]
In view of \eqref{eq: goal var} and \eqref{eq: goal dif}, the claim is again a routine application of Dudley's entropy bound on $\E \sup_{[0,T]} R_{mT}$ and the Borell-TIS inequality, and we omit the details.
\end{proof}

\subsection{Odd indices}
Now let us examine $\mu_{s.c.}$ when viewed at scales $T_{2n+1}^{-1}$. At this scale $\mu_{s.c.}$ `looks like' an absolutely continuous measure, in the sense that it is approximately uniform on intervals of width $s_{2n+1}^{-1} \gg T_{2n+1}^{-1}$. In particular, on balls of radius $T_{2n+1}$ the infimum of $f_\mu$ has moderate deviations of scale $\sqrt{2 \|\mu\| \log T_{2n+1}}$, just as if $\mu_{s.c.}$ were absolutely continuous (cf. Proposition \ref{p:m1}):

\begin{proposition}
\label{p:odd}
For every $\eps \in (0,\|\mu\|/2)$ there exists $\delta_0 > 0$ such that, for all $\delta \in (0, \delta_0)$ and $\beta \in [0,1)$, as $n \to \infty$,
\[ 
-\log \P \Big[ \inf_{x \in B(T_{2n+1})}  f_{\mu|_{B(\delta)^c}}(x) \ge - \sqrt{2 (\|\mu\|-\eps) ( 1 - \beta)   \log T_{2n+1}} \Big] \ge T_{2n+1}^{\beta + \frac{\eps}{2\|\mu\|}(1-\beta) + o(1)} . \]
\end{proposition}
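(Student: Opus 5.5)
We follow the proof of Proposition~\ref{p:m1} via Lemma~\ref{l:m1}, with one change: at scale $T=T_{2n+1}$ we replace the singular continuous part $\mu_{s.c.}$ by an absolutely continuous surrogate. This is legitimate because, at that scale, $\mu_{s.c.}$ cannot be distinguished from one. Fix $\eps\in(0,\|\mu\|/2)$. Choose $\delta_0\in(0,1)$ so small that $\|\mu_{a.c.}|_{B(\delta_0)^c}\|\ge m-\eps/8$. Since $\mu_{s.c.}$ is supported in $[-2,-1]\cup[1,2]$, it is unaffected by removing $B(\delta)$ for $\delta<\delta_0$.

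\textbf{Step 1 (smoothing $\mu_{s.c.}$ at scale $s=s_{2n+1}$).} Set $r=2^{-s}$ and let $\mu^{(n)}$ be $\mu_{s.c.}$ convolved with the uniform density on $[-r,r]$. Since $J\supset[s_{2n+1},s_{2n+2}]$, the measure $\mu_E$ is exactly uniform at every dyadic scale between $2^{-s_{2n+1}}$ and $2^{-s_{2n+2}}$. Hence $\mu^{(n)}$ has a density bounded by $2^{|J\cap[s]|}/r\le 2^{2s}$. We then compare $f_{\mu_{s.c.}}$ with $f_{\mu^{(n)}}$ on $B(T)$, where $\log T\ll s_{2n+2}$ by \eqref{eq:T and s}. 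The plan is to write $\mu_{s.c.}=\mu^{(n)}+\rho_n$, where $\rho_n$ is a signed measure whose negative part has small ``effective mass on scale $1/T$''. For this we use that the Fourier transform of $\mu_{s.c.}-\mu^{(n)}$ on $B(T)$, tested against frequencies up to $T^{1+o(1)}\ll 2^{s_{2n+2}}$, is controlled by the self-similar uniform structure of $\mu_E$ at scales finer than $2^{-s}$. Concretely, we would decompose
\[
\mu_{s.c.}=\tilde\mu^{(n)}+\rho_n ,
\]
where $\tilde\mu^{(n)}$ is the measure $\mu_E$ conditioned on dyadic intervals of length $2^{-s_{2n+2}}$, made uniform on each. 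This $\tilde\mu^{(n)}$ has $C^2$-smoothable density of size $2^{O(s_{2n+2})}$, and $\rho_n$ has vanishing moments on those intervals, so $|\mathcal{F}[\rho_n](x)|\le c\,|x|\,2^{-s_{2n+2}}$ for $|x|\le T$.

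\textbf{Step 2 (applying Lemma~\ref{l:m1}).} Put $\mu'=\mu_{a.c.}|_{B(\delta)^c}$ (mollified as in the proof of Proposition~\ref{p:m1}) $+\,\tilde\mu^{(n)}$ (mollified). We arrange $\|\mu'\|\ge\|\mu\|-\eps/2$. The remainder $\rho$ gathers the mollification errors and $\rho_n$. Lemma~\ref{l:m1} then yields
\[
-\log\P[\cdot]\ge \frac{c_1\|\mu'\|}{\|\mathcal{F}[\mu']\|_{L^1(\Z^d)}}\,T^{\beta+\frac{\eps}{\|\mu\|}(1-\beta)-c_2\sqrt{\|\rho_-\|/\|\mu'\|}} .
\]
Two quantities must be controlled. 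First, $\|\mathcal{F}[\mu']\|_{L^1(\Z)}$ must be $T^{o(1)}$. Second, the $\rho_-$ correction must be small.

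\textbf{Main obstacle.} The remainder $\rho_n$ does not have small total variation, so Lemma~\ref{l:m1} cannot be applied verbatim. We instead reprove the first inequality of that proof directly for $f_{\rho_n}$ restricted to $B(T)$. Its covariance on $B(T)$ is $\mathcal{F}[\rho_n](x-y)=O(T2^{-s_{2n+2}})=o(1)$, so by Lemma~\ref{l:DudBTIS}-type bounds $\sup_{B(T)}|f_{\rho_n}|\le u=o(\sqrt{\log T})$ with probability at least $1/2$. This replaces \eqref{eq:tmp}; strictly, one should split $\rho_n$ into positive and negative Gaussian parts via a dominating measure. The decoupling constant $\|\mathcal{F}[\mu']\|_{L^1}$ is the delicate point. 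The density of $\tilde\mu^{(n)}$ has roughness scale $2^{-s_{2n+2}}$, so its Fourier transform decays only beyond $2^{s_{2n+2}}\gg T$. To handle this, we first apply Lemma~\ref{l:trunc} with $s=T^{1-o(1)}$ to mollify the spectrum at scale $1/T$. After that, $\|\mathcal{F}[\cdot]\|_{L^1}$ is bounded by $\int_{B(T)}|\mathcal{F}[\mu_{s.c.}]|$. By the uniformity of $\mu_E$ at scales $2^{-s_{2n+1}}\gg 1/T$, $\mathcal{F}[\mu_{s.c.}](x)$ decays like that of a bounded density with scale $s_{2n+1}$-adapted corrections, which gives $T^{o(1)}$. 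Absorbing all $T^{o(1)}$ factors and adjusting the mollification parameters gives the claimed exponent $\beta+\frac{\eps}{2\|\mu\|}(1-\beta)+o(1)$.
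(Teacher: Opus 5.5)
Your overall plan matches the paper's: replace $\mu_{s.c.}$ at scale $T=T_{2n+1}$ by an absolutely continuous surrogate, then run Lemma~\ref{l:m1}. Your surrogate $\tilde\mu^{(n)}$ is exactly the paper's $\mu_{s.c.}^{s_{2n+2}}$, and your bound $|\mathcal{F}[\rho_n](x)|\le c|x|2^{-s_{2n+2}}$ is correct.

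\textbf{The gap: transferring the bound back to $\mu$.} There is no Gaussian field $f_{\rho_n}$. By Bochner's theorem $\mathcal{F}[\rho_n]$ is a covariance only if $\rho_n\ge 0$; indeed $\mathcal{F}[\rho_n](0)=\rho_n(\R)=0$ although $\rho_n\neq 0$. So the statement that ``its covariance on $B(T)$ is $o(1)$'' has no probabilistic content.

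The mechanism behind \eqref{eq:tmp} is the identity $f_{\mu+\rho}\oplus f_{\rho_-}\stackrel{d}{=}f_{\mu+\rho_+}$, which is only useful when $\|\rho_-\|$ is small. Here $\mu_{s.c.}$ and $\tilde\mu^{(n)}$ are mutually singular. Hence $\rho_{n,+}=\mu_{s.c.}$ and $\rho_{n,-}=\tilde\mu^{(n)}$ both have mass $\|\mu_{s.c.}\|$. Any positive ``dominating measure'' $\sigma$ in a splitting $\mu_{s.c.}+\sigma=\tilde\mu^{(n)}+\sigma'$ must satisfy $\sigma\ge\tilde\mu^{(n)}$. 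It therefore produces a field of pointwise variance at least $\|\mu_{s.c.}\|$, whose infimum over $B(T)$ is of order $-\sqrt{\log T}$.

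Substituting that field into \eqref{eq:tmp} shifts the level by order $\sqrt{\log T}$. The result is then no better than simply discarding $\mu_{s.c.}$ via Lemma~\ref{l:trunc}, i.e.\ Proposition~\ref{p:m1} with $m=\|\mu_{a.c.}\|$, which is exactly what this proposition must improve on.

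\textbf{What is needed instead.} $\mu_{s.c.}$ and $\tilde\mu^{(n)}$ are close only weakly (Wasserstein, or at low frequencies), and exploiting that requires a coupling, not a spectral decomposition. The paper uses \cite[Theorem 4.1]{bm22}. Since $W_2(\mu,\tilde\mu_n)\le 2^{-s_{2n+2}}\le T^{-1/o(1)}$, one can couple $f_{\mu|_{B(\delta)^c}}$ and $f_{\tilde\mu_n|_{B(\delta)^c}}$ so that their difference and its derivative have variance at most $cT^2W_2^2$ on $B(T)$. Kolmogorov's theorem and Borell--TIS then give $\sup_{B(T)}|f_{\mu|_{B(\delta)^c}}-f_{\tilde\mu_n|_{B(\delta)^c}}|\le 1$ off an event of probability $e^{-T^{1/o(1)}}$. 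A level shift by $1$ is harmless.

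Here the coupling can even be written by hand. Drive both fields by one spectral white noise, composed with a monotone transport map that moves $\mu_{s.c.}$ to $\tilde\mu^{(n)}$ inside each dyadic interval of length $2^{-s_{2n+2}}$.

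\textbf{A second error: the Fourier $L^1$ bound.} In Step~1 you note that $\mu_E$ is uniform at all dyadic scales between $2^{-s_{2n+1}}$ and $2^{-s_{2n+2}}$, but you do not draw the consequence the paper relies on. The dyadic smoothings satisfy $\mu_{s.c.}^{m+1}=\mu_{s.c.}^{m}$ whenever $m+1\in J$, so $\tilde\mu^{(n)}=\mu_{s.c.}^{s_{2n+2}}=\mu_{s.c.}^{s_{2n+1}}$. Its density is therefore piecewise constant on intervals of length $2^{-s_{2n+1}}$ and bounded by $2^{s_{2n+1}}$; it is not rough at scale $2^{-s_{2n+2}}$.

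A mollification within $\eps'$ in total variation then has $C^2$-norm $2^{O(s_{2n+1})}=T^{o(1)}$, so $\|\mathcal{F}[\mu_n']\|_{L^1(\Z)}=T^{o(1)}$ directly. The remainder fed into Lemma~\ref{l:m1} is then genuinely small in total variation.

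Your proposed workaround also misreads Lemma~\ref{l:trunc}. That lemma replaces $\mu_2$ by $\phi_s^2\mu_2$: it averages the field spatially at scale $s$ and localises the spectrum to $|\lambda|\lesssim 1/s$. It does not mollify the spectrum at scale $1/T$. Applied with $s=T^{1-o(1)}$ to a measure supported in $[-2,-1]\cup[1,2]$, or to $\mu|_{B(\delta)^c}$, it essentially annihilates that measure and yields a trivial upper bound.
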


We complete the proof of the second statement in \eqref{e:twolimits}. The proof is identical to the proof of the upper bound of Theorem~\ref{t:bounds} (see Section~\ref{s:ub}), except that we replace $m^-$ with $\|\mu\| $ and use %the conclusion of 
Proposition \ref{p:odd} rather than \eqref{e:mixing1} 
%the definition of $m^-$ 
to establish \eqref{e:mixing}. 
We deduce that, for every $\delta > 0$, there exist $\eps,\eps' > 0$ such that, as $n \to \infty$, eventually
\[   - \log \P \big[ \per(B(T_{2n+1}))   \big]   \ge  \min\big\{ T_{2n+1}^{\alpha + \eps' + o(1)},(\norm{\mu}(1-\alpha) -  \delta)(\log T_{2n+1}) \capa_f(B(T_{2n+1}-T_{2n+1}^{1-\eps}))  \big\}  .\]
Recalling \eqref{e:capexample}, this gives the result.

\begin{proof}[Proof of Proposition \ref{p:odd}]
For $m \ge 1$ define the absolutely continuous measure
\begin{align}
\label{e:muapprox}
  \nonumber  d\mu_E^m &= \sum_{k \in  E_m}  \mu_E\left([ k 2^{-m},(k+1)2^{-m}]\right) 2^m 
 \id_{[k2^{-m},(k+1)2^{-m}]} \, dx \\
    &= 2^{m-|J \cap [m]|}  \sum_{k \in  E_m}  \id_{[k2^{-m},(k+1)2^{-m}]}  \, dx ,
    \end{align}
    which `smooths' $\mu_E$ on scale $2^{-m}$, and let $\mu_{s.c.}^m$ be the symmetrisation of $\mu_E^m$. 
    Writing $W_2$ for the $2$-Wasserstein distance, we have
\begin{equation}\label{eq:W2}
    W_2(\mu_E, \mu_E^m) \le 2^{-m}.
    \end{equation}
    Note that if $m+1\in J$ then
    \begin{align*}
    d\mu_E^{m+1} & =
    2^{m+1-|J \cap [m+1]|}  \sum_{k \in  E_{m+1}}  \id_{[k2^{-(m+1)},(k+1)2^{-(m+1)}]}  \, dx 
    \\ & =2^{m-|J \cap [m]|}  \sum_{k \in  E_m}  \id_{[k2^{-m},(k+1)2^{-m}]}  \, dx =d\mu_E^{m},
    \end{align*}
    so that $\mu_{s.c.}^m = \mu_{s.c.}^{m+1}$.
    Recalling that $J$ includes all integers in $[s_{2n+1},s_{2n+2}]$, we have that  $\mu_{s.c.}^{s_{2n+1}} = \mu_{s.c.}^{s_{2n+2}}$. On the one hand this implies that $\tilde{\mu}_n := \mu_{s.c.}^{s_{2n+1}} + \mu_{a.c.}$ is a good approximation of $\mu$ in the sense that 
    \begin{equation}
        \label{e:wass}
    W_2 \big(\mu, \tilde{\mu}_n \big) =  W_2 \big(\mu_{s.c.}, \mu_{s.c.}^{s_{2n+2}} \big) \le  2^{-s_{2n+2}} \le T_{2n+1}^{-1/o(1)} , 
     \end{equation} 
  where the first inequality follows from~\eqref{eq:W2} and the second from~\eqref{eq:T and s}. On the other hand, for every $\eps' > 0$ and every $n \ge 1$ there exists a smooth measure $\mu'_n$ such that
\begin{equation}
\label{e:newdecomp} 
\|\mu_{s.c.}^{s_{2n+1}}-\mu'_n\| < \ep',
\end{equation} 
and $\mu'_n$ is a measure supported on $[-2,-1] \cup [1,2]$ which has a smooth density $w_n$ satisfying
\begin{equation}  
\label{e:denbound} \|w_n\|_{C^2(\R)} \le  c_{\eps'} 2^{3s_{2n+1}}. \end{equation}
 For instance one can obtain $\mu'_n$ by approximating each indicator in \eqref{e:muapprox} separately and summing them up. In particular \eqref{e:denbound} implies that the Fourier transform $\mathcal{F}[\mu'_n]$ satisfies
\begin{equation}
    \label{e:fourierbound}
    \|\F[\mu'_n]\|_{L^1(\Z)} \le  c \|w_n\|_{C^2(\R)} \le  c'_{\eps'} (2^{s_{2n+1}})^3 \le  T_{2n+1}^{{o(1)}}   . 
    \end{equation}

By \cite[Theorem 4.1]{bm22}, for every $\delta > 0$ and sufficiently large $n$ there exists a coupling of $f_{\mu|_{B(\delta)^c}}$ and $f_{\tilde{\mu}_n|_{B(\delta)^c}}$ such that
\[ \sup_{x \in B(T_{2n+1})} \max_{|k| \le 1} \textrm{Var}[ \partial^k(f_{\tilde{\mu}_n|_{B(\delta)^c}} - f_{\mu|_{B(\delta)^c}})(x) ] \le c T_{2n+1}^2 W^2_2(\mu, \tilde{\mu}_n)  \]
where $c > 0$ is an absolute constant. By Kolmogorov's theorem and the Borell-TIS inequality this implies that, for sufficiently large $n$
\begin{equation}
    \label{e:m5}
-\log \P \Big[ \sup_{x \in B(T_{2n+1})} | f_{\mu|_{B(\delta)^c}}(x) -  f_{\tilde{\mu}_{n}|_{B(\delta)^c}}(x)   | \ge 1 \Big]  \ge \frac{ c' T_{2n+1}^2}{ W^2_2(\mu, \tilde{\mu}_n) }  = T_{2n+1}^{1/o(1)} 
\end{equation}
where the final inequality used \eqref{e:wass}.

By applying the second statement of Lemma~\ref{l:m1} with the decomposition 
\[
(\mu_{s.c.}^{s_{2n}+1} + \mu_{a.c.})|_{B(\delta)^c}  =(\mu_n'+\mu_{a.c.})|_{B(\delta)^c}+\rho|_{B(\delta)^c} ,
\]
and using the bounds in \eqref{e:newdecomp}--\eqref{e:fourierbound}, we obtain the following: for every $\eps \in (0,\|\mu\|/2)$ there exists $\delta_0 > 0$ such that, for all $\delta \in (0, \delta_0)$ and $\beta \in [0,1)$, as $n \to \infty$
\begin{align}
\notag
-\log \P \Big[ \inf_{x \in B(T_{2n+1})}  &f_{\tilde{\mu}_n|_{B(\delta)^c}}(x) \ge - \sqrt{2 (\|\mu\|-\eps) ( 1 - \beta)   \log T_{2n+1}} \Big] \\ &
\notag
\ge \tfrac{ c \|\mu\|}{ \|\mathcal{F}[\mu_n']\|_{L^1(\Z)} + \|\mathcal{F}[\mu_{a.c.}|_{B(\delta)^c}]\|_{L^1(\Z)}}T^{\beta + \frac{\eps}{\|\mu\|}(1-\beta) -c' \sqrt{\frac{\|\rho\|}{\|\mu\|} } + o(1)}\\ &
\ge T_{2n+1}^{\beta + \frac{\eps}{2\|\mu\|}(1-\beta) + o(1)} 
\label{e:m4}
\end{align}
where $c,c' > 0$ are absolute constants, and the second inequality is by taking $\eps'$ sufficiently small. The result follows by combining \eqref{e:m5} and \eqref{e:m4} and adjusting constants.
\end{proof}

%%%%%%%%
\medskip
\appendix

\section{Basic properties of the capacity}
\label{a:capacity}
In this section we establish the dual representations of the capacity in \eqref{e:cap2} and \eqref{e:cap1}. While this result is classical (see, e.g., \cite{fug60}), we did not find the exact statement in the literature.  We work in a more general set-up of positive definite kernels, not necessarily stationary, and follow closely the presentation in \cite{ams14} which considered the one-dimensional case.

Let $K = K(x,y)$ be a symmetric positive definite kernel on either $\Z^d$ or $\R^d$, and in the latter case assume that $K$ is continuous. For $\nu,\eta \in \mathcal{M}$ define the energy
\[  E[ \nu, \eta ] = \int \int K(x,y) d\nu(x) d\eta(y) \ , \quad E[\nu] = E[\nu,\nu] . \]
Denote by $\mathcal{M}_{<\infty} \subset \mathcal{M}$ the subset of measures of finite energy $E[\nu] < \infty$. The reproducing kernel Hilbert space (RKHS) associated to $K$ is the set
\[ H = \left\{ h(\cdot) = \int K(\cdot,y) d\nu(y) :\: \nu \in \mathcal{M}_{<\infty}  \right\} \]
equipped with the inner product inherited from $E$, i.e.\
\[ \Big\langle \int K(\cdot,y) d\nu(y) ,  \int K(\cdot,y) d\eta(y) \Big\rangle_H  = E[\nu,\eta] . \] 
The RKHS has the following `reproducing property': for all $h \in H$,
\begin{equation}
\label{e:rep}
\langle h(\cdot), K(\cdot, y) \rangle_H = h(y) .
\end{equation}

\begin{proposition}
\label{p:basiccap}
For every compact $D$,
   \begin{equation}
\label{e:cap3}
\capa(D) := \Big( \inf_{ \nu \in \mathcal{P}(D) } \int_D \int_D K(x,y) d\nu(x) d\nu(y) \Big)^{-1}  = \inf \big\{ \|h\|^2_H : h \in H, h|_D  \ge 1 \big\} .
\end{equation}
Moreover if $\capa(D) < \infty$, then optimisers on both sides of \eqref{e:cap3} exist, and the optimiser on the right-hand side of \eqref{e:cap3} is unique and can be represented as
\[ h = \capa(D) \int K(\cdot,y) \, d\nu(y) , \]
where $\nu$ is any of the optimisers on the left-hand side.
\end{proposition}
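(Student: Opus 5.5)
We will prove Proposition~\ref{p:basiccap} by Hilbert-space duality in the RKHS $H$. Write $C_1 := (\inf_{\nu\in\mathcal P(D)} E[\nu])^{-1}$ for the left-hand side and $C_2 := \inf\{\|h\|_H^2 : h\in H,\, h|_D\ge 1\}$ for the right-hand side. The plan has three steps: prove $C_1\le C_2$, show that a minimising measure exists, and then produce a feasible potential from that measure, which gives $C_2\le C_1$ together with the description of the optimiser.

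\textbf{Step 1: $C_1\le C_2$.} Fix $h\in H$ with $h\ge1$ on $D$ and any $\nu\in\mathcal P(D)$ with $E[\nu]<\infty$. By the reproducing property \eqref{e:rep}, integrated against $\nu$,
\[ 1 \le \int h\,d\nu = \Big\langle h, \int K(\cdot,y)\,d\nu(y)\Big\rangle_H \le \|h\|_H\, E[\nu]^{1/2}. \]
Here exchanging the inner product with the integral is justified because $\nu$ has finite energy, so the Bochner integral of $y\mapsto K(\cdot,y)$ converges in $H$. This gives $\|h\|_H^2 \ge 1/E[\nu]$, and optimising over $h$ and $\nu$ yields $C_1\le C_2$. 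If every $\nu\in\mathcal P(D)$ has infinite energy, then $C_1=0$ and the inequality is trivial.

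\textbf{Step 2: existence of a minimising measure.} Assume $C_1<\infty$, so that $\inf_{\mathcal P(D)} E>0$. Since $D$ is compact, $\mathcal P(D)$ is weakly compact. In the continuous case $K$ is continuous on $D\times D$, hence bounded, so $E$ is weakly continuous and a minimiser exists. In the lattice case $D$ is finite and existence is immediate.

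\textbf{Step 3: $C_2\le C_1$ and the optimiser.} Let $\nu$ be a minimiser and set $e=E[\nu]$ and $h=\capa(D)\int K(\cdot,y)\,d\nu(y) = e^{-1}K*\nu$. Then $\|h\|_H^2 = e^{-2}E[\nu]=1/e = C_1$. It remains to show $h\ge1$ on $D$, which is the main obstacle. We use a first-variation argument: for $x\in D$ and $t\in(0,1)$, the competitor $(1-t)\nu+t\delta_x\in\mathcal P(D)$ has finite energy because $K$ is bounded on $D$. Minimality gives
\[ 0\le \frac{d}{dt}\Big|_{t=0^+} E[(1-t)\nu+t\delta_x] = 2\big((K*\nu)(x)-e\big), \]
so $(K*\nu)(x)\ge e$, that is $h(x)\ge1$. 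This shows $h$ is feasible, so $C_2\le C_1$, equality holds in \eqref{e:cap3}, and $h$ attains the infimum on the right-hand side.

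\textbf{Uniqueness.} The feasible set $\{h\in H: h|_D\ge1\}$ is convex, and it is closed because point evaluations are continuous on $H$ by \eqref{e:rep}. Since $h\mapsto\|h\|_H^2$ is strictly convex (Claim~\ref{c:convex}), the minimiser on the right-hand side is unique. In particular $\capa(D)\,K*\nu$ is the same function for every minimising $\nu$. Its existence could alternatively be obtained directly as the projection of $0$ onto this closed convex set.
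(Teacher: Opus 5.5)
Your proof is correct, but it reaches the equality by a different route from the paper. The paper applies an abstract Lagrangian duality theorem to the cone of non-positive continuous functions on $D$. It computes the inner infimum over $h$ explicitly and uses the reproducing property to identify $\sup_{\|h\|_H=1}\int h\,d\nu$ with $E[\nu]^{1/2}$. The existence of the optimal $\nu$ is delivered by the duality theorem. The case where no feasible $h$ exists is handled separately, by perturbing a minimiser $\nu$ towards $\delta_u$ at a point $u\in D$ where $K*\nu$ vanishes and concluding $E[\nu]=0$.

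You instead split the argument into three parts:
\begin{enumerate}
\item weak duality, from Cauchy--Schwarz and the reproducing property;
\item existence of an equilibrium measure, from weak compactness of $\mathcal P(D)$ and weak continuity of $E$ (using continuity of $K$ on $D\times D$);
\item the first-variation inequality $(K*\nu)(x)\ge E[\nu]$ for all $x\in D$, which shows directly that $\capa(D)\,K*\nu$ is feasible with squared norm $1/E[\nu]$.
\end{enumerate}
Step 3 is the same perturbation computation the paper uses only in the infeasible case. You use it as the engine of strong duality, and it gives the paper's infeasible-case conclusion as a special case. As a result, the infeasible case ($\inf E=0$) follows automatically from weak duality. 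You also need no external convex-duality theorem, with its interior-point (Slater) hypothesis. The paper's route, in exchange, obtains the equality and the dual optimiser in one abstract step without constructing the minimising measure by hand.

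Two small side remarks, neither affecting your main argument. First, uniqueness does not need closedness of the feasible set: applying the parallelogram identity to two minimisers suffices. Second, your alternative ``projection of $0$'' existence argument would need completeness of $H$. The definition of $H$ as the set of potentials of finite-energy signed measures does not obviously provide this.
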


\begin{proof}
Suppose first that the set $\{ h \in H : h|_D \ge 1\}$ is non-empty. Define the cone $P = \{ z  \in C^0(D) : z(x) \le 0\}$, where $C^0(D)$ is the set of continuous functions on $D$. The dual cone $P^\ast$ can be identified with the set of measures on $D$. Then by Lagrangian duality (see, e.g., \cite[Theorem 1, page 224]{lu69}) we have
\begin{align*}
      \inf \{ \|h\|_H  :  h \in H, h|_D \ge 1 \}    & =   \inf \{ \|h\|_H  :  h \in H,  (1 - h)|_D \in P \}     \\ 
      &  =   \sup_{\nu \in   \mathcal{P}(D) } \sup_{a \ge 0}  \inf_{h \in H} \Big(  \|h\|_H + a \int_D (1-h)(y) d\nu(y)   \Big)   
    \end{align*}
    and an optimiser $\nu$ in this expression exists. Observe that, for every $\nu \in \mathcal{P}(D)$ and $a \ge 0$,
    \begin{align*}
     \inf_{h \in H} \Big(  \|h\|_H + a \int_D (1-h)(y) d\nu(y)   \Big)  &  =  a +   \inf_{b \ge 0}   b \Big(  1 - a \sup_{ h \in H: \|h\|_H = 1} \int_D h(y) d\nu(y)   \Big)    \\
& = \begin{cases}
   -\infty & \text{if } a > \big( \sup_{ h \in H: \|h\|_H = 1} \int_D h(y) d\nu(y)   \Big)    \big)^{-1}  ,\\
  a  &  \text{if } a \le  \big( \sup_{ h \in H: \|h\|_H = 1} \int_D h(y) d\nu(y)   \Big)    \big)^{-1} ,\\
\end{cases}
\end{align*}
from which it follows that
\[  \inf \{ \|h\|_H  :  h \in H, h|_D \ge 1 \}    = \Big( \inf_{\nu \in   \mathcal{P}(D) }   \sup_{ h \in H: \|h\|_H = 1} \int_D h(y) d\nu(y)   \Big)^{-1}. \]
By the reproducing property \eqref{e:rep} we also have, for any $\nu \in \mathcal{P}(D)$,
\begin{align*}
 \sup_{ h \in H : \|h\|_H = 1}  \int_D h(y) d\nu(y)   & =   \sup_{ h \in H : \|h\|_H = 1}  \int_D \langle h(\cdot) , K(\cdot, y) \rangle_H  d \nu(y)   \\
  &  =  \sup_{ h \in H : \|h\|_H = 1}  \Big\langle h(\cdot) , \int_D  K(\cdot,y) d \nu(y)  \Big\rangle_H    \\
&  = ( E(\nu) )^{1/2} ,
\end{align*}
where the supremum in the previous expression is achieved for $h$ a constant multiple of $\int_D K(\cdot,y) d\nu(y)$. This proves that 
\begin{equation}
    \label{e:cap4}
  \inf \{ \|h\|_H  :  h \in H, h|_D \ge 1 \}   =     ( \inf_{\nu \in \mathcal{P}(D)}E(\nu) )^{-1/2}  =: \capa(D)^{1/2}  , 
  \end{equation}
and that the optimiser on the left-hand side of \eqref{e:cap4} is achieved by a constant multiple of $\int K(\cdot,y) d\nu(y)$ for any $\nu$ which minimises $E(\nu)$; one then checks that the relevant constant must be $\capa(D)$. The claimed uniqueness of the optimiser is a consequence of the convexity of $\|\cdot\|_H^2$ (see Claim \ref{c:convex}). This establishes the result in the case that $\capa(D) < \infty$. 

Suppose instead that $\{ h \in H :  h_D \ge 1 \}$ is empty; we need to prove that in this case $ \capa(D) = \infty$. Consider $\nu \in \mathcal{P}(D)$ which minimises~$E(\nu)$. If $\int K(\cdot,y) \, d\nu(y) \in H$ does not vanish, then by compactness it is bounded away from zero on $D$, and so after multiplying with a suitable constant, it exceeds $1$ on $D$, a contradiction. Hence there exists a $u \in D$ such that $\int K(u,y) \, d\nu(y)  = 0$. For every $\eps \in [0,1]$, define the measure $\nu_\eps = (1-\eps)\nu + \eps \delta_{u}$, where $\delta_u$ is a Dirac mass at $u$. One can check that this satisfies
\[ \frac{d}{d \eps} E(\nu_\eps) \Big|_{\eps = 0} = \frac{d}{d \eps}  \Big( ( 1 - \eps)^2 E(\nu) + \eps^2 K(u,u)  \Big) \Big|_\eps = - 2 E(\nu) ,  \]
where the first equality used that $\int K(u,y) \, d\nu(y)  = 0$. By the assumption of minimality we conclude that $E(\nu) = 0$, and so $ \capa(D) = \infty$.
\end{proof}

\smallskip
\section{Regular varying spectral singularities}
\label{a:taub}

In this section we give sufficient conditions for the spectral singularity to be regularly varying. This is a slight generalisation of known Tauberian-type results that are usually stated only for SGFs which are continuous and isotropic (see e.g.\ \cite[Theorem 3]{lo13}). Let $A_{\alpha,d}$ and $B_{\alpha,d}$ be constants defined as in Section \ref{ss:uni}.

\begin{proposition}
\label{p:taub}
Let $\alpha \in [0,d)$, let $w$ be slowly varying at infinity, and let $f$ be an SGF on $\R^d$ or $\Z^d$ satisfying one of the following:
\begin{enumerate}[(i)]
\item\label{itemtaub:1} As $x \to \infty$,
\[ K(x) \sim  B_{\alpha,d} |x|^{-\alpha} w(|x|) . \] 
\item\label{itemtaub:2} $\alpha>0$, and $\mu$ has density $\rho$ in a neighbourhood of the origin such that, as $\lambda \to 0$,
\[ \rho(\lambda) \sim A_{\alpha,d} |\lambda|^{\alpha-d} w(1/|\lambda|). \]
\end{enumerate}
Then, as $T \to \infty$,
\[\mu[B(1/T)] \sim T^{-\alpha} w(T) \]
\end{proposition}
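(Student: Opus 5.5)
The plan is to treat the two hypotheses separately. Case \ref{itemtaub:2} is a direct integration, while case \ref{itemtaub:1} is a genuine Tauberian argument. That second argument will be run through a smoothed version of $\mu[B(1/T)]$, using the truncation functions $\phi_s$ and the Potter-bound lemma (Lemma~\ref{lem: use Potter}).

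\textbf{Case \ref{itemtaub:2}.} For large $T$, the ball $B(1/T)$ lies in the neighbourhood where $\mu$ has density $\rho$. Changing variables $\lambda = u/T$ gives
\[ \mu[B(1/T)] = T^{-d}\int_{B(1)} \rho(u/T)\,du = T^{-\alpha} w(T) \int_{B(1)} \frac{\rho(u/T)}{T^{d-\alpha}w(T)}\,du. \]
The integrand converges pointwise to $A_{\alpha,d}|u|^{\alpha-d}$, exactly as in \eqref{e:limit2}. Domination comes from Potter's bounds, $w(T/|u|)/w(T)\le C|u|^{-\eps}$ with $\eps<\alpha$, together with boundedness of $\rho(\lambda)/(|\lambda|^{\alpha-d}w(1/|\lambda|))$ near $0$. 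These give the integrable bound $C|u|^{\alpha-d-\eps}$ on $B(1)$. The limit is $A_{\alpha,d}\int_{B(1)}|u|^{\alpha-d}du = \mu_\alpha[B(1)] = 1$, which proves this case.

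\textbf{Case \ref{itemtaub:1}: Abelian step.} For a smooth radial test function $\psi\ge 0$ with $\mathcal F[\psi]$ rapidly decaying, I first compute $\int \psi(T\lambda)\,d\mu(\lambda)$ by Parseval:
\[ \int \psi(T\lambda)\,d\mu(\lambda) = T^{-d}\int K(x)\,\mathcal F[\psi](x/T)\,dx = \int K(Ty)\,\mathcal F[\psi](y)\,dy . \]
In the discrete case the integral over $x$ becomes a sum over $\Z^d$, which is handled with the canonical extension or a Riemann-sum comparison. Dividing by $T^{-\alpha}w(T)$ and applying Lemma~\ref{lem: use Potter} with the decomposition \eqref{eq: 3parts}, I get
\[ \lim_{T\to\infty}\frac{\int \psi(T\lambda)\,d\mu(\lambda)}{T^{-\alpha}w(T)} = \int k_\alpha(y)\,\mathcal F[\psi](y)\,dy = \int \psi\,d\mu_\alpha . \]
Lemma~\ref{lem: use Potter} applies here: the ratio $K(x)/(|x|^{-\alpha}w(|x|))$ is bounded after modifying $w$ near $0$, and $|y|^{-\alpha\pm\eps}\mathcal F[\psi](y)$ is integrable for $\eps<d-\alpha$.

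\textbf{Case \ref{itemtaub:1}: Tauberian step, the main obstacle.} I then pass from smooth $\psi$ to the indicator $\id_{B(1)}$. When $\mu$ is a positive measure this is standard sandwiching. I choose smooth $\psi_-\le \id_{B(1)}\le\psi_+$ with $\int(\psi_+-\psi_-)\,d\mu_\alpha<\eta$. Such $\psi_\pm$ exist because $\mu_\alpha$ has no atom on the sphere, and for $\alpha=0$ we have $\mu_0=\delta_0$, where $\psi_\pm(0)=1$ suffices. Positivity of $\mu$ then gives $\limsup$ and $\liminf$ bounds within $\eta$ of $\mu_\alpha[B(1)]=1$.

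The difficulty is that $\psi_\pm$ must have Fourier transforms decaying fast enough for the Abelian step. Compactly supported smooth $\psi$ have Schwartz Fourier transforms, so this is fine, but $\mathcal F[\psi]$ is then not nonnegative. That does not matter, because only the positivity of $\mu$ is used in the sandwiching. The remaining care is in the domination argument near $y=0$, where $|y|^{-\alpha}$ is integrable since $\alpha<d$, and in the discrete case, where contributions of $\psi(T\lambda)$ at other lattice periods must vanish; these are negligible for $T$ large since $\psi(T\cdot)$ is supported in $B(2/T)$. Letting $\eta\to0$ completes the proof.
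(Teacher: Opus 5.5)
Your proposal is correct and is essentially the paper's proof. Case (ii) goes by rescaling and Potter-bound dominated convergence. Case (i) sandwiches $\id_{B(1)}$ between smooth compactly supported test functions, applies Parseval and dominated convergence, and lets $\theta\to0$. The paper builds your $\psi_\pm$ as $\id_{B(1\pm\theta)}*\xi^\theta$, via the mollified measure $\nu^\theta=\nu*\xi^\theta$ and the bounds $\nu^\theta[B(\delta-\theta)]\le\nu[B(\delta)]\le\nu^\theta[B(\delta+\theta)]$.

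For $\Z^d$, use your Riemann-sum comparison rather than the canonical extension. The extended covariance off the lattice need not inherit the asymptotics in (i).
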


\begin{remark}
\label{r:taub}
As a consequence of Proposition \ref{p:taub}, if $K$ is a regularly varying kernel on $\R^d$ or $\Z^d$ with index $\alpha \in [0,d)$, then it is asymptotic to a monotone decreasing function. For $\alpha \in (0,d)$ this is a general property of regularly varying functions \cite[Theorem 1.5.3]{bgt87}, whereas if $\alpha = 0$ it relies on positive definiteness, since it is not true of every slowly varying function.
\end{remark}
\begin{proof}[Proof of Proposition \ref{p:taub}]
We use dominated convergence arguments similar to those in Section \ref{s:rv}. We begin by working under the condition $(i)$. Let $\mathcal{S}_d$ denote the Fourier transform of the uniform probability measure on $B(T)$. Fix $v \in (0,1)$ and recall the pair $(\xi,\zeta)$ from Proposition~\ref{c:1} (we do not use $\phi$ from \eqref{eq:stf}, since here we use $(\xi,\zeta)$ on opposites sides of the Fourier transform). For a locally finite measure $\nu$ and $\theta > 0$, define $\nu^\theta = \nu * \xi^\theta$, where $\xi^\theta(\cdot) =  \theta^{-d} \xi( \cdot \theta) $. Since $\xi^\theta \ge 0$, $\|\xi^\theta\| = 1$, and $\xi^\theta$ is supported on $B(\theta)$, for every $\delta > 0$ we have
\begin{equation}
    \label{e:nubounds}
\nu^\theta[B(\delta -\theta)] \le \nu[B(\delta)] \le \nu^\theta[B(\delta +\theta)] . 
\end{equation}
Now fix $\theta > 0$. By \eqref{e:nubounds}, Parseval's theorem, and a change of variables
\begin{align*}
  \frac{ \mu[B(1/T)] }{T^{-\alpha} w(T) }  & \le \frac{ \mu^{\theta/T}[B( (1+\theta)/T)] }{T^{-\alpha} w(T) } \\
&  = (1+\theta)^{-d}   \int  |x|^{-\alpha}  \zeta(x/\theta) \mathcal{S}_d( x/(1+\theta)) \frac{ K(xT) }{ |xT|^{-\alpha} w(|x|T) }   \frac{ w(|x|T) }{w(T)} \, dx  .
 \end{align*}
Using condition $(i)$, and since $\zeta$ is bounded and rapidly decaying at infinity and $\mathcal{S}_d$ is bounded, a dominated convergence argument (as in \eqref{e:unikerdom}) gives that, as $T \to \infty$,
\[ \frac{ \mu^{\theta/T}[B( (1+\theta)/T)] }{T^{-\alpha} w(T) } \to  (1+\theta)^{-d}  B_{\alpha,d} \int  |x|^{-\alpha}  \zeta(x/\theta) \mathcal{S}_d( x/(1+\theta))   \, dx  . \]
By Parseval's theorem and \eqref{e:nubounds} again,
\[  (1+\theta)^{-d}   B_{\alpha,d} \int  |x|^{-\alpha}  \zeta(x/\theta) \mathcal{S}_d( x/(1+\theta))   \, dx   = \mu_{\alpha}^{\theta}[ B(1+\theta) ] \le \mu_\alpha[B(1 + 2\theta) ]   = (1+2\theta)^\alpha, \]
and so
\[ \limsup_{T \to \infty} \frac{ \mu[B(1/T)] }{T^{-\alpha} w(T) }   \le \lim_{\theta \to 0}  (1+2\theta)^\alpha  = 1 . \]
A similar argument shows that
\[ \liminf_{T \to \infty} \frac{ \mu[B(1/T)] }{T^{-\alpha} w(T) }   \ge \lim_{\theta \to 0} (1-2\theta)^\alpha   = 1 , \]
and combining gives the result.

The proof under condition $(ii)$ is simpler.  Let $T$ be sufficiently large so that $\mu|_{B(1/T)}$ has density $\rho$. By a change of variables,
\begin{align*}
&  \frac{ \mu[B(1/T)] }{T^{-\alpha} w(T)}  =    \int_{B(1)}  |\lambda|^{-\alpha} \frac{ \rho(|\lambda/T|) }{ |\lambda/T|^{\alpha-d} w(|T/\lambda|) }  \frac{ w(|T/\lambda|) }{w(T)} \, d\lambda .
 \end{align*}
Then using $(ii)$, a dominated convergence argument (as in \eqref{e:unidendom}) gives that, as $T \to \infty$,
\begin{align*} 
&   \int_{B(1)}  |\lambda|^{-\alpha} \frac{ \rho(|\lambda/T|) }{ |\lambda/T|^{\alpha-d} w(|T/\lambda|) }  \frac{ w(|T/\lambda|) }{w(T)} \, d\lambda   \to   A_{\alpha,d} \int_{B(1)}  |\lambda|^{-\alpha}   \, d\lambda  = \mu_\alpha[B(1)] = 1. \qedhere
\end{align*}
\end{proof}

\smallskip

\section{The Riesz capacity and equilibrium potential of the unit ball}

\label{a:explicit}

For completeness we record the explicit forms for the capacity $c_\alpha$ and equilibrium potential $h_\alpha$ of $B(1)$ with respect to the $\alpha$-Riesz kernel $k_\alpha$, which appear in our universality results stated in Section \ref{ss:uni}:

\begin{equation*}
c_\alpha = \begin{cases}  1 & \alpha = 0 ,\\  \frac{2^{2+\alpha-d} \pi^{\alpha+1/2} \Gamma( (2d-\alpha)/2 - 1 ) \Gamma((d-\alpha)/2) }{ \Gamma( (d-\alpha-1)/2) \Gamma(1+\alpha/2) \Gamma(d/2)^2 } & \alpha \in (0,d-2) , \\   
\frac{\pi^{d-2}}{\Gamma(d/2)^2} & \alpha = d-2 ,\\
\frac{\pi^\alpha}{\Gamma(1+\alpha/2)^2  } &  \alpha \in (d-2,d) , \end{cases}  
\end{equation*}
and 
\begin{equation*}
h_\alpha(x) = \begin{cases}  1 & \alpha = 0, \\  (\int_{\mathbb{S}^{d-1} } |1-y|^{-\alpha} \, dy )^{-1} \int_{\mathbb{S}^{d-1} } |x-y|^{-\alpha} \, dy  & \alpha \in (0,d-2) , \\ 
 \min\{1, |x|^{2-d}\} & \alpha = d-2, \\
  (\int_{B(1)}  |1-y|^{-\alpha}  \, d\nu_\alpha(y) )^{-1} \int_{B(1)}  |x-y|^{-\alpha}  \,  d\nu_\alpha(y)&  \alpha \in (d-2,d) , \end{cases}  
\end{equation*}
where $d \nu_\alpha(y) =  (1 - |y|^2)^{-(d-\alpha)/2}  \, dy$. A proof can be found in \cite[p.163]{lan72} (note the different normalisation of $k_\alpha$ used therein).

%%%%%%%%%
\smallskip
\bibliographystyle{halpha-abbrv}
\bibliography{persist}

\end{document}